\pdfoutput=1
\documentclass[11pt, reqno]{amsart}

\usepackage{
 amsmath, 
 amsxtra, 
 amsthm, 
 amssymb, 
 etex, 
 mathrsfs, 
 mathtools, 
 tikz-cd, 
 bbm,
 xr,
 comment}
\usepackage[all]{xy}
\usepackage{hyperref}

\voffset -0.71in
\hoffset -0.51in
\marginparwidth -4.1in
\setlength{\oddsidemargin}{15.5pt}
\setlength{\evensidemargin}{15.5pt}
\setlength{\textwidth}{7.0in}
\setlength{\textheight}{9.2in}

\newtheorem{theorem}{Theorem}[subsection]
\newtheorem{lemma}[theorem]{Lemma}
\newtheorem{conjecture}[theorem]{Conjecture}
\newtheorem{proposition}[theorem]{Proposition}
\newtheorem{corollary}[theorem]{Corollary}
\newtheorem{defn}[theorem]{Definition}

\newtheorem{notation}[theorem]{Notation}

\newtheorem{lthm}{Theorem} 
\newtheorem{corl}[lthm]{Corollary}

\theoremstyle{remark}
\newtheorem{remark}[theorem]{Remark}
\newtheorem{conv}[theorem]{Convention}
\setlength{\parskip}{.5\baselineskip}

\usepackage[OT2,T1]{fontenc}

\DeclareSymbolFont{cyrletters}{OT2}{wncyr}{m}{n}
\DeclareMathSymbol{\Sha}{\mathalpha}{cyrletters}{"58}

\newcommand{\tupH}{\textup{H}}
\newcommand{\QQ}{\mathbb{Q}}
\newcommand{\Qp}{\mathbb{Q}_p}
\newcommand{\Qpn}{\QQ_{p,n}}
\newcommand{\ZZ}{\mathbb{Z}}
\newcommand{\Zp}{\mathbb{Z}_p}

\newcommand{\NN}{\mathbb{N}}
\newcommand{\DD}{\mathbb{D}}
\renewcommand{\AA}{\mathbb{A}}
\newcommand{\Brig}{\mathbb{B}_{\rig,\Qp}^+}
\newcommand{\AQp}{\AA_{\Qp}^+}
\newcommand{\Dcris}{\DD_{\mathrm{cris}}}

\newcommand{\TT}{\mathbf{T}}
\newcommand{\GL}{\mathbf{GL}}

\newcommand{\cBF}{\mathcal{BF}}
\newcommand{\BF}{\textup{BF}}

\newcommand{\cyc}{\textup{cyc}}

\newcommand{\Gr}{\textup{Gr}}
\newcommand{\aA}{\mathbf{A}}

\DeclareMathOperator{\Gal}{Gal}
\DeclareMathOperator{\Fil}{Fil}
\DeclareMathOperator{\Hom}{Hom}
\DeclareMathOperator{\Sel}{Sel}
\DeclareMathOperator{\coker}{coker}
\DeclareMathOperator{\Fitt}{Fitt}

\newcommand{\ord}{\mathrm{ord}}
\newcommand{\vp}{\varphi}
\newcommand{\Iw}{\mathrm{Iw}}
\newcommand{\HIw}{\tupH^1_{\Iw}}
\newcommand{\col}{\mathrm{Col}}
\newcommand{\image}{\mathrm{Im}}

\newcommand{\Char}{\mathrm{char}}

\newcommand{\cor}{\mathrm{cor}}

\newcommand{\Tw}{\mathrm{Tw}}
\newcommand{\lb}{[[}
\newcommand{\rb}{]]}

\newcommand{\rig}{\mathrm{rig}}
\newcommand{\p}{\mathfrak{p}}

\newcommand{\fM}{\mathfrak{M}}

\newcommand{\cF}{\mathcal{F}}
\newcommand{\cG}{\mathcal{G}}
\newcommand{\cK}{\mathcal{K}}
\newcommand{\cH}{\mathcal{H}}
\newcommand{\cL}{\mathcal{L}}
\newcommand{\cX}{\mathcal{X}}
\newcommand{\cO}{\mathcal{O}}

\newcommand{\cT}{\mathcal{T}}
\newcommand{\cV}{\mathcal{V}}

\newcommand{\BK}{\mathrm{BK}}

\newcommand{\bz}{\mathbf{z}}
\newcommand{\bh}{\mathbf{h}}
\newcommand{\Tr}{\mathrm{Tr}}
\newcommand{\Ind}{\mathrm{Ind}}
\newcommand{\cE}{\mathcal{E}}

\definecolor{Green}{rgb}{0.0, 0.5, 0.0}

\newcommand{\draftcolor}{Green}

\newcommand{\bgreen}{\begin{color}{\draftcolor}}
\newcommand{\egreen}{\end{color}}

\begin{document}

\title[Mazur--Tate conjectures for Rankin--Selberg convolutions]{On  analogues of Mazur--Tate type conjectures in the Rankin--Selberg setting}

\begin{abstract}
We study the Fitting ideals over the finite layers of the cyclotomic $\Zp$-extension of $\QQ$ of Selmer groups attached to the Rankin--Selberg convolution of two modular forms $f$ and $g$. Inspired by the Theta elements for modular forms defined by Mazur and Tate in ``Refined conjectures of the Birch and Swinnerton-Dyer type'', we define new Theta elements for Rankin--Selberg convolutions of $f$ and $g$ using Loeffler--Zerbes' geometric $p$-adic $L$-functions attached to $f$ and $g$.

Under certain technical hypotheses, we generalize a recent work of Kim--Kurihara on elliptic curves to prove a result  very close to the \emph{weak main conjecture} of Mazur and Tate for Rankin--Selberg convolutions. Special emphasis is given to the case where $f$ corresponds to an elliptic curve $E$ and $g$ to a two dimensional odd irreducible  Artin representation $\rho$ with  splitting field $F$. As an application, we give an upper bound of the dimension of the $\rho$-isotypic component of the Mordell--Weil group of $E$ over the finite layers of the cyclotomic $\Zp$-extension of $F$ in terms of the order of vanishing of our Theta elements. 
\end{abstract}

\author[A. Cauchi]{Antonio Cauchi}
\address[Cauchi]{Departament de Matem\`atiques\\ Universitat Polit\`ecnica de Catalunya\\ C. Jordi Girona 1-3\\ 08034 Barcelona\\ Spain}
\email{antonio.cauchi@upc.edu}

\author[A. Lei]{Antonio Lei}
\address[Lei]{D\'epartement de Math\'ematiques et de Statistique\\
Universit\'e Laval, Pavillion Alexandre-Vachon\\
1045 Avenue de la M\'edecine\\
Qu\'ebec, QC\\
Canada G1V 0A6}
\email{antonio.lei@mat.ulaval.ca}

\numberwithin{equation}{subsection}

\subjclass[2010]{11R23 (primary); 11F11, 11R20 (secondary) }
\keywords{Iwasawa theory, Rankin--Selberg convolution, elliptic modular forms, Mazur--Tate conjectures}
\maketitle

\setcounter{tocdepth}{1}
\tableofcontents

\section{Introduction}\label{sec:intro}

\subsection{Refined main conjectures for elliptic curves} Let $E/\QQ$ be an elliptic curve over $\QQ$ and let $p$ be an odd prime of good reduction for $E$. In their seminal paper \cite{MaTa}, Mazur and Tate proposed refinements of the Birch and Swinnerton-Dyer conjecture and their $p$-adic analogue (cf. \cite{MTT}) for $E$, by working at ``finite layers'', which we now recall.

For $M$ a positive integer, let $\chi$ be an even Dirichlet character of conductor $M$. In \cite[\S 1]{MaTa}, the authors introduced the \emph{modular element} $\theta_{E,M} \in \QQ[\Gal(\QQ(\mu_M)/\QQ)/\{ \pm 1 \}]$, with the property that
\begin{equation} \label{eqn:mazur-tate1}
\chi(\theta_{E,M}) \approx L^{\mathrm{alg}}(E, \chi^{-1}, 1) 
\end{equation} 
where $L^{\mathrm{alg}}(E, \chi^{-1}, 1)$ denotes the algebraic part of the twisted $L$-value. Mazur and Tate conjectured that the order of vanishing of $\theta_{E,M}$ at $\chi$ is greater or equal to the dimension of the $\chi$-part of the Mordell--Weil group of $E$. They actually proposed more precise refinements of this conjecture, aiming at giving an explicit description of the structure of the Selmer group of $E$ over finite abelian extension of $\QQ$. 
Of particular interest to us are these refinements over the finite layers of the cyclotomic $\Zp$-extension of $\QQ$, which we now describe. 

Fix embeddings $\iota_p : \overline{\QQ} \hookrightarrow \overline{\QQ}_p$ and $\iota_{\infty} : \overline{\QQ} \hookrightarrow \mathbb{C}$. Denote by $K_n \subset \QQ(\mu_{p^{n+1}})$ the field  extension of $\QQ$ with Galois group $G_n : = \textup{Gal}(K_n/\QQ) \simeq \ZZ / p^n \ZZ$. As $\Gal(\QQ(\mu_{p^{n+1}})/\QQ) \simeq G_n \times \ZZ/(p-1)\ZZ$, we have a map $\textup{pr}_{G_n}:\Gal(\QQ(\mu_{p^{n+1}})/\QQ)/\{ \pm 1 \} \to G_n$. Finally, denote by $K_\infty= \cup_n K_n$ the cyclotomic $\Zp$-extension of $\QQ$. 

Given a field $\cK$, we write $G_\cK$ for the absolute Galois group of $\cK$.
Let $\Theta_n(E) \in \QQ[G_n]$ denote the image of $\theta_{E,p^{n+1}}$ under the map $\textup{pr}_{G_n}$. Under the hypothesis of irreducibility of the $G_\QQ$-representation $E[p]$ and that $\ord_p\left(L(E,1)/\Omega_E\right)=0$, $\Theta_n(E)$ belongs to $\Zp[G_n ]$ (cf. \cite[pp. 200-201]{Kurihara2002}). Inspired by the \emph{weak main conjecture} of Mazur and Tate (cf. {\cite[Conjecture 3]{MaTa}}), Kurihara proposed the following \emph{refined main conjecture} for the ``finite layer'' $K_n$. 

\begin{conjecture}[{\cite[Conjecture 0.3]{Kurihara2002}}]\label{kuriconj} Let $\cX_n(E)$ denote the Pontryagin dual of the Selmer group $\textup{Sel}(K_n,E[p^\infty])$. Assume that  $E(\QQ)[p]$ is trivial and that $p$ does not divide the Tamagawa number of $E$.
Then \[ (\Theta_n(E), \nu_n(\Theta_{n-1}(E))) = \textup{Fitt}_{\Zp[G_n ]}(\cX_n(E) ), \]
where $\textup{Fitt}_{\Zp[G_n ]}(\cX_n(E))$ denotes the Fitting ideal of $\cX_n(E)$ as a $\Zp[G_n ]$-module and  $\nu_n:\Zp[G_{n-1} ] \to \Zp[G_n ]$ is the trace map as in \S \ref{sec:Iwalg}.
\end{conjecture}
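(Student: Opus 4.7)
The plan is to reduce Conjecture~\ref{kuriconj} to the Iwasawa Main Conjecture (IMC) for $E$ along $K_\infty/\QQ$, together with a careful descent from the Iwasawa tower to the finite layer $K_n$. Writing $\cX_\infty(E)$ for the Pontryagin dual of $\Sel(K_\infty, E[p^\infty])$ and $\LL := \Zp\lb \Gal(K_\infty/\QQ) \rb$, I would first invoke the IMC, which in the good ordinary case asserts
$$\Char_{\LL}(\cX_\infty(E)) = (L_p(E)) \subset \LL,$$
where $L_p(E)$ denotes the Mazur--Swinnerton-Dyer $p$-adic $L$-function. The divisibility ``$\supseteq$'' follows from Kato's Euler system, and the reverse is known in wide generality by Skinner--Urban. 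By the interpolation property of $L_p(E)$ and \eqref{eqn:mazur-tate1}, the image of $L_p(E)$ in $\Zp[G_n]$ under the natural projection $\LL \twoheadrightarrow \Zp[G_n]$ agrees (up to explicit factors) with $\Theta_n(E)$.

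Second, under the hypotheses $E(\QQ)[p]=0$ and that $p$ does not divide the Tamagawa number of $E$, I would check that $\cX_\infty(E)$ carries no nonzero pseudo-null $\LL$-submodule and admits a minimal $\LL$-presentation of short length, so that one can promote characteristic-ideal equality to the Fitting-ideal equality $\Fitt_\LL(\cX_\infty(E)) = (L_p(E))$. A refined version of Mazur's control theorem, together with the same hypotheses on the local conditions away from $p$, then provides a clean descent sequence relating $\cX_\infty(E)/(\gamma_n-1)\cX_\infty(E)$ to $\cX_n(E)$, where $\gamma_n$ is a topological generator of $\Gal(K_\infty/K_n)$.

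The heart of the argument is the explicit computation of $\Fitt_{\Zp[G_n]}(\cX_n(E))$. One exploits a Poitou--Tate-type presentation of $\cX_\infty(E)$ in which the global relation is essentially $L_p(E)$, modified by the local condition at $p$ described via Coleman/Perrin-Riou theory. Descending this presentation to level $n$ does not yield a single relation in $\Zp[G_n]$; rather, as Kim--Kurihara observed for elliptic curves, the trace compatibility of local Iwasawa cohomology at $p$ forces the Fitting ideal at level $n$ to be generated by the specialization $\Theta_n(E)$ of $L_p(E)$ \emph{together with} the trace $\nu_n(\Theta_{n-1}(E))$ from the previous layer, the latter accounting for the fact that the Coleman map at level $n$ does not lift the relation at level $n-1$ cleanly. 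The main obstacles I would anticipate are: (i) upgrading characteristic-ideal equality to Fitting-ideal equality, which typically requires a vanishing-of-$\mu$ input and a careful pseudo-null analysis; (ii) tracking the local Coleman/Perrin-Riou maps through $\nu_n$ precisely enough to obtain $\nu_n(\Theta_{n-1}(E))$ as a Fitting \emph{generator} rather than merely as an element of the ideal; and (iii) the ordinary hypothesis itself, outside of which both $L_p(E)$ and the Selmer theory must be replaced by their signed $\pm$ or Wach-module analogues, which is precisely the type of refinement the Rankin--Selberg generalization later in the paper will have to confront.
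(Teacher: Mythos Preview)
The statement you are attempting to prove is presented in the paper as a \emph{conjecture} (Kurihara's refined main conjecture, cited from \cite{Kurihara2002}), not as a theorem, and the paper does not give a proof of it. There is therefore no ``paper's own proof'' against which to compare your proposal. The paper only records known partial results toward this conjecture: Kurihara's proof in the supersingular case under additional hypotheses, and the Kim--Kurihara result (Theorem~\ref{mainthmkk}) giving the inclusion $(\Theta_n(E), \nu_n(\Theta_{n-1}(E)))\subseteq \Fitt_{\Zp[G_n]}(\cX_n(E))$ under suitable assumptions. The full equality in Conjecture~\ref{kuriconj} is not established in this paper.

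Your outline correctly identifies the main ingredients that go into the known \emph{inclusion} --- the IMC divisibility, absence of finite submodules, and a control theorem --- and these are precisely what the paper uses in the Rankin--Selberg setting to prove its analogues (Theorems~\ref{maintheorem1} and \ref{intro:maintheoremss}). However, your proposal does not actually prove the reverse inclusion $\Fitt_{\Zp[G_n]}(\cX_n(E)) \subseteq (\Theta_n(E), \nu_n(\Theta_{n-1}(E)))$: the ``Poitou--Tate-type presentation'' step, where you claim the descent forces the Fitting ideal to be \emph{generated} by exactly these two elements, is asserted rather than argued, and this is exactly the direction that remains open in general (cf.\ the Remark following Theorem~\ref{mainthmkk}, which notes that the full equality requires, among other things, the full IMC and further hypotheses). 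So your proposal is a reasonable heuristic sketch of why the conjecture should hold, but it is not a proof, and the paper does not claim one either.
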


The conjecture of Kurihara gives more information than the usual Iwasawa main conjecture as it fully describes $\textup{Fitt}_{\Zp[G_n ]}(\cX_n(E) )$ and explains the growth of $\textup{Sel}(K_n,E[p^\infty])$ as $n$ goes to infinity. 

\subsection{The work of Kim and Kurihara}

In \cite{Kurihara2002},  Kurihara himself proved Conjecture \ref{kuriconj} for $E$ of good supersingular reduction at $p$, which additionally satisfies that $p$ does not divide $L^{\rm alg}(E,1)$ and that the representation $G_\QQ\rightarrow \GL(E[p])$ is surjective.

Under the assumption that $a_p(E)=0$, Pollack \cite{pollack05} reformulated (and proved) the conjecture in terms of the signed $p$-adic $L$-function attached to $E$. This was partly motivated by the explicit connection between the Theta elements and the $\pm$ $p$-adic $L$-functions attached to $E$ defined in \cite{pollack03}.

In \cite{KK}, Kim and Kurihara proved the following Theorem.

\begin{theorem}[{\cite[Theorem 1.14]{KK}}]\label{mainthmkk}
Let $E/\QQ$ be an elliptic curve with good reduction at an odd $p$, such that $E[p]$ is surjective if $E$ is not CM and $p \nmid \textup{Tam}(E)$. Assume one of the following:
\begin{itemize}
    \item[(ord)] $p \nmid a_p(E)$ and $a_p(E) \not \equiv 1\mod p$;
    \item[(ss)] $a_p(E)=0$.
\end{itemize}
Then \[ (\Theta_n(E), \nu_n(\Theta_{n-1}(E)))\subseteq \textup{Fitt}_{\Zp[G_n ]}(\cX_n(E) ). \]
\end{theorem}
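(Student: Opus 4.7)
My proposed approach mirrors Kurihara's original argument for the supersingular case, extended by Kim and Kurihara to the ordinary setting: combine Kato's Euler system with Poitou--Tate duality and an explicit reciprocity law identifying the image of Kato's zeta element under a finite-level Coleman map with the Mazur--Tate Theta element.

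I would first set up the cohomological framework. Let $T = T_pE$ and let $S$ be a finite set of primes containing $p$ and all primes of bad reduction for $E$. Using Poitou--Tate duality together with the self-duality (up to Tate twist) of $T$, I would produce an exact sequence
\[
\tupH^1(\cO_{K_n,S},T) \xrightarrow{\loc_p} \frac{\tupH^1(K_{n,p},T)}{\tupH^1_f(K_{n,p},T)} \longrightarrow \cX_n(E) \longrightarrow A_n \longrightarrow 0,
\]
where $A_n$ is an error term controlled by Tamagawa numbers at primes in $S\setminus\{p\}$ and by mod-$p$ Mordell--Weil contributions. Under the hypotheses $p\nmid\textup{Tam}(E)$ and $E[p]$ surjective, $A_n$ vanishes, so $\mathrm{coker}\,\loc_p$ surjects onto $\cX_n(E)$; by monotonicity of Fitting ideals under surjections,
\[
\Fitt_{\Zp[G_n]}(\mathrm{coker}\,\loc_p) \subseteq \Fitt_{\Zp[G_n]}(\cX_n(E)).
\]

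Next I would bring in Kato's Euler system. Let $\bz_n \in \tupH^1(\cO_{K_n,S},T)$ be Kato's zeta element at level $n$ and let $\col_n$ be the appropriate finite-level Coleman map into $\Zp[G_n]$: Perrin-Riou's ordinary map in case (ord), or an assemblage of Kobayashi's $\pm$-maps in case (ss). An explicit reciprocity law yields, up to a unit,
\[
\col_n\bigl(\loc_p(\bz_n)\bigr) = \Theta_n(E),
\]
and viewing $\bz_{n-1}$ inside $\tupH^1(\cO_{K_n,S},T)$ via restriction, the compatibility of the Coleman maps under the trace $\nu_n$ gives
\[
\col_n\bigl(\loc_p(\mathrm{res}\,\bz_{n-1})\bigr) = \nu_n(\Theta_{n-1}(E)).
\]
These two identities place the generators of the left-hand ideal of the theorem in the image of $\col_n\circ\loc_p$, hence in $\Fitt_{\Zp[G_n]}(\mathrm{coker}\,\loc_p)$, and therefore in $\Fitt_{\Zp[G_n]}(\cX_n(E))$. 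The hypothesis $a_p(E)\not\equiv 1\bmod p$ in case (ord) is precisely what ensures that the Euler factor entering the finite-level Coleman map is a unit in $\Zp[G_n]$, securing integrality of $\Theta_n(E)$; in case (ss) Kobayashi's $\pm$-decomposition of the local condition plays the same role.

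The principal obstacle is the explicit reciprocity identification of $\col_n(\loc_p(\bz_n))$ with $\Theta_n(E)$: this demands a comparison of Kato's class under a Perrin-Riou/Coleman-type map with the Mazur--Swinnerton-Dyer modular symbol-valued Theta element through the interpolation formula of the $p$-adic $L$-function, and a careful accounting of the Euler factors at $p$ that appear upon descent from the Iwasawa algebra to $\Zp[G_n]$---particularly delicate in the supersingular case, where the two $\pm$-components must be reassembled consistently at finite level. A secondary technical issue is establishing the clean form of the Poitou--Tate cokernel, free of parasitic torsion at primes in $S\setminus\{p\}$, which is exactly where the Tamagawa hypothesis and the irreducibility of $E[p]$ are needed.
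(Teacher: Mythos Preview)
This theorem is quoted from \cite{KK} and is not proved in the present paper; however, the paper's own generalizations (Theorems~\ref{mainordinary} and \ref{thm:final}) explicitly follow the method of \cite{KK}, so your proposal can be compared against that.

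Your route is genuinely different from the one in \cite{KK} and in this paper, and your opening sentence mischaracterizes \cite{KK}. You propose to work directly at finite level: Poitou--Tate duality presents $\cX_n(E)$ as a quotient of the local cohomology at $p$, and Kato's zeta element together with an explicit reciprocity law places $\Theta_n(E)$ and $\nu_n(\Theta_{n-1}(E))$ in the image of $\col_n\circ\loc_p$, hence in the Fitting ideal. This is the strategy of Kurihara's original paper \cite{Kurihara2002}. But \cite{KK} does \emph{not} extend that argument; instead it works at infinite level: one inputs Kato's divisibility $L_p\in\Char_\Lambda\cX_\infty$, upgrades it to $L_p\in\Fitt_\Lambda\cX_\infty$ using the non-existence of finite $\Lambda$-submodules (the analogue here is Proposition~\ref{nofinitesubmod} and Corollary~\ref{cor:Fittchar}), descends to level $n$ via a control theorem (here Theorems~\ref{controlordinary} and \ref{thm:controlpm}), and finally converts $L_p\bmod\omega_n$ into the Theta elements via an elementary identity (here Proposition~\ref{prop:Theta-L} and Lemma~\ref{auxiliarylemma}). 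The virtue of the \cite{KK} approach is modularity: the Euler-system input is isolated in the single inclusion at the $\Lambda$-level, and the rest is soft algebra, which is exactly what lets the present paper transport the argument to the Rankin--Selberg setting. Your approach, by contrast, needs a direct handle on the $\Zp[G_n]$-module structure of $\tupH^1(K_{n,p},T)/\tupH^1_f$; in the supersingular case this module is not free of rank one, and your phrase ``an assemblage of Kobayashi's $\pm$-maps'' hides precisely the computation (carried out in \cite{Kurihara2002} and, in the present paper, via Lemma~\ref{lem:Fittlocal} and Corollary~\ref{cor:includeFitt}) that produces both generators $\Theta_n$ and $\nu_n(\Theta_{n-1})$.
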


\begin{remark}
In \cite{KK}, the full Conjecture \ref{kuriconj} is proved under various additional assumptions on $E$, such as the full equality of the ordinary and plus/minus Iwasawa Main conjectures. We refer to \cite[Theorems 1.18 and 1.20]{KK} for further details. 
\end{remark}

As a corollary of Theorem \ref{mainthmkk}, using \cite[Proposition 3]{MaTa}, Kim and Kurihara proved the following.

\begin{corollary}\label{cor:KKonrank}
 Under the assumptions of Theorem \ref{mainthmkk}, the order of vanishing of $\Theta_n(E)$ at $\chi:G_n \to \overline{\QQ}_p^*$ is greater or equal to the dimension of the $\chi$-part of the Mordell Weil group of $E(K_n)$.
\end{corollary}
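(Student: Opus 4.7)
The plan is to chain together three ingredients: the Fitting-ideal containment supplied by Theorem \ref{mainthmkk}, the natural surjection from the dual Selmer group onto the dual of the $p$-adic completion of the Mordell--Weil group, and \cite[Proposition 3]{MaTa} as a black box translating Fitting-ideal membership into lower bounds on orders of vanishing at characters.

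First, Theorem \ref{mainthmkk} yields in particular
\[
\Theta_n(E) \in \textup{Fitt}_{\Zp[G_n]}(\cX_n(E)).
\]
The Kummer descent exact sequence
\[
0 \lra E(K_n)\otimes_\ZZ \Qp/\Zp \lra \textup{Sel}(K_n, E[p^\infty]) \lra \Sha(E/K_n)[p^\infty] \lra 0
\]
realizes, after Pontryagin duality, the module $\Hom(E(K_n)\otimes_\ZZ\Qp/\Zp,\Qp/\Zp)$ as a $\Zp[G_n]$-quotient of $\cX_n(E)$. Since Fitting ideals only grow under surjections of modules, this gives
\[
\Theta_n(E) \in \textup{Fitt}_{\Zp[G_n]}\bigl(\Hom(E(K_n)\otimes_\ZZ\Qp/\Zp, \Qp/\Zp)\bigr).
\]
Applying \cite[Proposition 3]{MaTa} then converts this membership into the desired lower bound: for every $\chi\colon G_n \to \overline{\QQ}_p^*$, the order of vanishing of $\Theta_n(E)$ at $\chi$ is at least $\dim_{\overline{\QQ}_p}(E(K_n)\otimes_\ZZ \overline{\QQ}_p)^\chi$, which is precisely the dimension of the $\chi$-part of the Mordell--Weil group of $E(K_n)$.

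I do not anticipate any substantive obstacle: the whole deduction is essentially formal once the Fitting-ideal containment of Theorem \ref{mainthmkk} is in hand. The only bookkeeping point to verify is that the integrality hypotheses appearing in Theorem \ref{mainthmkk} (in particular $E(\QQ)[p]=0$, coming from surjectivity of $G_\QQ \to \GL(E[p])$, together with $p\nmid \textup{Tam}(E)$) are exactly those under which $\Theta_n(E)$ lies in $\Zp[G_n]$, so that the integral framework of \cite[Proposition 3]{MaTa} applies to $\Theta_n(E)$ without modification.
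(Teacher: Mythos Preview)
Your proposal is correct and matches the paper's approach: the paper simply records that the corollary follows from Theorem~\ref{mainthmkk} together with \cite[Proposition 3]{MaTa}, and your write-up unpacks exactly this deduction (Fitting-ideal containment, surjection onto the dual of the Mordell--Weil part, then \cite[Proposition 3]{MaTa}). The only minor quibble is your remark on integrality of $\Theta_n(E)$: the paper attributes this to irreducibility of $E[p]$ together with $\ord_p(L(E,1)/\Omega_E)=0$ rather than to the Tamagawa condition, but this does not affect the argument.
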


\subsection{Theta elements for Rankin--Selberg convolutions}

The purpose of the present article is to generalize the results for elliptic curves stated above to the case of the Rankin--Selberg convolution of two normalized cuspidal eigen-newforms $f$ and $g$ of level $N_f,N_g$, characters $\epsilon_f,\epsilon_g$, and weights $k_f+2 > k_g+2\ge1$. The main reason for assuming that the weight of $f$ is strictly greater than that of $g$ is to ensure the existence of critical $L$-values of the Rankin--Selberg convolution. We assume throughout that $p\nmid N_fN_g$.

Let $\alpha_f,\beta_f$ and $\alpha_g, \beta_g$ denote the roots of  the Hecke polynomials of $f$ and $g$ at $p$ respectively. Throughout, we assume that $\alpha_f\ne\beta_f$ and $\alpha_g\ne\beta_g$. Let $L$ be a finite extension of $\QQ_p$, which contains the coefficients of $f$ and $g$ as well as $\alpha_f,\beta_f,\alpha_g,\beta_g$. Define $V:=V_f \otimes V_g$ to be the $L$-linear $G_{\QQ}$-representation associated to the convolution of $f$ and $g$. If $\cO$ denotes the ring of integers of $L$, we let $T$ denote a Galois-stable $\cO$-lattice in $V$, coming from certain integral sheaves on the modular curves of levels $N_f$ and $N_g$ (see \S\ref{sec:basis} below for more details). We set $A:= V / T$, and $\Lambda_n:= \cO[G_n]$.

For every $n \geq 0$ and $k_g+1 \leq j \leq k_f$, we construct  Theta elements \[ \Theta_{j,n}, \Theta^{\pm}_{j,n} \in L[G_n]\] by means of the geometric $p$-adic $L$-functions attached to $f$ and $g$ (cf. Definition \ref{analyticdeftheta}). These elements satisfy interpolation formulae similar to \eqref{eqn:mazur-tate1} for the Theta elements for elliptic curves. For instance, when $g$ is $p$-ordinary, $\theta$ is a non-trivial character on $G_n$ of conductor $p^m$ (so that $1\le m\le n+1$) and $\alpha_f\ne -\beta_f$, in Lemma~\ref{lem:evaluateTheta} we show that 
\[
\Theta_{j,n}(\theta )=\frac{\beta_f^{2n-2m+4}-\alpha_f^{2n-2m+4}}{\beta_f^2-\alpha_f^2}\cdot c_{j,m,\theta}\cdot L(f,g,\theta^{-1},j+1),
\]
where  $c_{j,m,\theta}$ is a constant appearing in the interpolation formulae of the $p$-adic $L$-functions $L_p(f_\alpha,g)$ and $L_p(f_\beta,g)$  at $\theta\chi_\cyc^j$, with $\chi_\cyc$ being the $p$-adic cyclotomic character and $L(f,g,\theta^{-1},s)$ denotes the complex Rankin $L$-function defined as in \cite[\S2.7]{KLZ2}. Note that $c_{j,m,\theta}$ is a ratio of an algebraic number and a complex period, allowing us to regard the right-hand side as an element of $L(\mu_{p^m})$ via the fixed embeddings $\iota_p$ and $\iota_\infty$.

\begin{remark}We make a couple of remarks on the construction of our Theta elements.
\begin{itemize}
    \item[i)] An alternative (but equivalent) construction of the Theta elements is discussed in Appendix \ref{arithmeticconstructionappendix}, where they are shown to be equal to images of the Beilinson--Flach classes considered in \cite{LLZ14,KLZ1,KLZ2,LZ1}  under certain  Perrin-Riou pairings over $K_n$.  As an application, we show that, when the weight of $g$ is one, the Theta elements are integral as long as a Fontaine-Laffaille condition holds for the representation $V$ (cf. Proposition \ref{cohomconstruction}). 

\item[ii)] The Theta elements have bounded denominator as $n$ varies (see Lemma~\ref{lem:integral}). However, in the range of cases pertaining to our main arithmetic results, they are actually elements of $\Lambda_n$.
\end{itemize}
\end{remark}

In our quest to study the Fitting ideals of Selmer groups over $K_n$ attached to $f$ and $g$, we consider two different settings. The first is when $f$ and $g$ are both $p$-ordinary and the second is when $f$ corresponds to a $p$-supersingular elliptic curve $E$ with $a_p(E)=0$ and $g$ is of weight one. We now discuss them separately.
 
\subsection{Main theorems: the ordinary setting}
We consider a big enough prime $p$ coprime with $N_f N_g$ and let $f$ and $g$ be both $p$-ordinary, with unit roots $\alpha_f$ and $\alpha_g$; our main object of interest is the Greenberg Selmer group $\textup{Sel}_{\Gr}(K_n,A(j+1))$ (see \S \ref{subsec:doublyordinaryselmer} for details), for $k_g+1 \leq j \leq k_f$. We allow $n$ to be $\infty$, by defining \[\textup{Sel}_{\Gr}(K_\infty,A(j+1)) := \varinjlim_{\QQ \subseteq K \subseteq K_{\infty}} \textup{Sel}_{\Gr}(K,A(j+1)).\]

 We study the growth of $\textup{Sel}_{\Gr}(K_n,A(j+1))$ as $n$ varies by analysing the Fitting ideal over $\Lambda_n$ of its Pontryagin dual, which we denote by $\cX_{j,n}^\textup{Gr}$.
 
 By mimicking the techniques used in \cite{KK}, we prove the following theorem, which corresponds to the setting (ord) in Theorem~\ref{mainthmkk}. 

\begin{lthm}[{Theorem \ref{mainordinary}}]\label{maintheorem1} 
 Suppose that the residual representations of $f$ and $g$ are irreducible, that
$f$ is non-CM, that $\epsilon_f \epsilon_g \neq 1$, that $(N_f,N_g)=1$, and one of the following conditions: \begin{itemize}
    \item $g$ is non-CM and has weight $\geq 2$;
    \item $g$ has weight $\geq 2$, is of CM-type, and $\epsilon_g$ is not 1 nor the quadratic character attached to the corresponding CM field of $g$;
    \item $g$ has weight 1.
\end{itemize}  
Furthermore, suppose that there exists a Dirichlet character $\theta$ factoring through $\Gal(K_\infty/\QQ)$ such that $L_p(f_\alpha,g)(\theta\chi_{\rm cyc}^j) \ne 0$, with $k_g+1 \leq j \leq k_f$. If $(f,g,j)$ is non-anomalous ordinary at $p$, then
\[ \left(\Theta_{j,n}, \nu_n(\Theta_{j,n-1}) \right)  \subseteq \Fitt_{\Lambda_n}\cX_{j,n}^\textup{Gr}. \]
\end{lthm}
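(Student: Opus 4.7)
The strategy mirrors that of Kim--Kurihara~\cite{KK}, substituting the Beilinson--Flach Euler system of Kings--Loeffler--Zerbes for Kato's. Let $\LL = \cO\lb\Gal(K_\infty/\QQ)\rb$ denote the cyclotomic Iwasawa algebra, let $\cX_{j,\infty}^\textup{Gr}$ denote the Pontryagin dual of $\Sel_\Gr(K_\infty, A(j+1))$, and write $\pi_n : \LL \twoheadrightarrow \LL_n$ for the projection. The first step is to apply the Beilinson--Flach Kolyvagin system to establish that the $j$-twisted geometric $p$-adic $L$-function $L_p(f_\alpha,g)$ lies in $\Fitt_\LL(\cX_{j,\infty}^\textup{Gr})$. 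The residual irreducibility, non-CM, $\epsilon_f\epsilon_g\ne 1$, and coprime-level assumptions are precisely those needed to set up this Euler system machinery; the non-vanishing hypothesis on $L_p(f_\alpha,g)(\theta\chi_\cyc^j)$ ensures that $\cX_{j,\infty}^\textup{Gr}$ is $\LL$-torsion so that this Fitting ideal is non-zero.

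Next, I would prove a Greenberg-style control theorem identifying $\cX_{j,n}^\textup{Gr}$ with $\cX_{j,\infty}^\textup{Gr}/\omega_n\cX_{j,\infty}^\textup{Gr}$ up to bounded error, where the non-anomalous ordinary hypothesis at $(f,g,j)$---the Rankin--Selberg analogue of the condition $a_p(E)\not\equiv 1\pmod p$ in~\cite{KK}---plays the role of eliminating the error terms coming from the Greenberg local conditions at $p$. Functoriality of Fitting ideals under surjections then yields $\pi_n\bigl(L_p(f_\alpha,g)\bigr) \in \Fitt_{\LL_n}(\cX_{j,n}^\textup{Gr})$. To conclude, Lemma~\ref{lem:evaluateTheta} together with the construction of the Theta elements from the geometric $p$-adic $L$-functions of Loeffler--Zerbes shows that $\Theta_{j,n}$ is an explicit $\cO$-linear combination of the twisted projections of $L_p(f_\alpha,g)$ and $L_p(f_\beta,g)$, with the factor $(\beta_f^{2n-2m+4}-\alpha_f^{2n-2m+4})/(\beta_f^2-\alpha_f^2)$ capturing the Mazur--Tate modification. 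Substitution gives $\Theta_{j,n}\in\Fitt_{\LL_n}(\cX_{j,n}^\textup{Gr})$, and the corresponding statement for $\nu_n(\Theta_{j,n-1})$ follows by the same argument at level $n-1$ together with the compatibility of the descent isomorphism with the norm maps on the dual Selmer groups.

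The principal obstacle is the control theorem: in the Rankin--Selberg setting, both the Greenberg filtration at $p$ (which is a non-trivial sub/quotient filtration, unlike for elliptic curves where the ordinary case gives a one-dimensional quotient) and the ramification at primes dividing $N_fN_g$ contribute potential error terms that must be controlled layer-by-layer, and the accumulated technical hypotheses are exactly those needed to force these errors to vanish. A secondary challenge is integrality: while $L_p(f_\alpha,g)\in\LL$, the term $L_p(f_\beta,g)$ need not be $\LL$-integral, so one must verify that the specific combination defining $\Theta_{j,n}$ does lie in $\LL_n$ within the range relevant to the theorem.
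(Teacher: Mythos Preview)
Your overall architecture is right, and most ingredients match the paper: the inclusion $\pi_\Delta\circ\Tw^jL_p(f_\alpha,g)\in\Fitt_\Lambda(\cX_{j,\infty}^\Gr)$ comes from the Beilinson--Flach Euler system input via \cite[Theorem~11.6.4]{KLZ2}, and a control theorem plus functoriality of Fitting ideals pushes this down to $(L_p(\alpha,j,n))\subset\Fitt_{\Lambda_n}(\cX_{j,n}^\Gr)$. Two points, however, need correction.

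First, you have misplaced the role of the non-anomalous ordinary hypothesis. It is \emph{not} used in the control theorem (Theorem~\ref{controlordinary}); the local error at $p$ there is handled by Ochiai's results, which require only the $p$-ordinarity of $f$ and $g$ together with the fact that the Frobenius eigenvalues on $V$ are not roots of unity. You also omit the structural input (Proposition~\ref{nofinitesubmod}) showing $\cX_{j,\infty}^\Gr$ has no non-trivial finite $\Lambda$-submodule; this is what allows one to identify the characteristic ideal with the Fitting ideal before descending.

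Second, and more seriously, the final ``substitution'' step does not work as you describe it. By definition, $\Theta_{j,n}$ is an $\cO$-linear combination of $\alpha_f^{2n+4}L_p(\alpha,j,n)$ \emph{and} $\beta_f^{2n+4}L_p(\beta,j,n)$. But only $L_p(\alpha,j,n)$ is known to lie in the Fitting ideal; $L_p(f_\beta,g)$ is unbounded (it lies in $\cH_{k_f+1}(\Gamma)$, not $\Lambda$), and there is no Euler-system argument placing its projection in $\Fitt_{\Lambda_n}(\cX_{j,n}^\Gr)$. The paper resolves this via Lemma~\ref{auxiliarylemma}: an induction on $n$, seeded by the interpolation formula at $n=0$ and propagated through the two-term relation of Proposition~\ref{prop:Theta-L}, shows that
\[
\Theta_{j,n}=(\alpha_f^{2n+2}+C_n)\,L_p(\alpha,j,n)\quad\text{with }C_n\in\varpi\Lambda_n,
\]
so that $\Theta_{j,n}$ is a \emph{unit} multiple of $L_p(\alpha,j,n)$ alone. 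The non-anomalous condition $\cE(\alpha_f,j)\in\cO^\times$ enters exactly here, at the base of the induction, to guarantee the constant $C(f,j)$ relating $L_p(\alpha,j,0)$ and $\Theta_{j,0}$ is a unit. Once this lemma is in place, both $\Theta_{j,n}$ and $\nu_n(\Theta_{j,n-1})$ are visibly multiples of $L_p(\alpha,j,n)$ in $\Lambda_n$, and indeed the paper shows the stronger statement $(\Theta_{j,n},\nu_n(\Theta_{j,n-1}))=(\Theta_{j,n})$.
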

 
 The non-anomalous ordinary condition at $p$ is introduced in Definition \ref{def:nonanomalous}. In Theorem \ref{mainordinary}, we further show that $\left(\Theta_{j,n}, \nu_n(\Theta_{j,n-1}) \right)$ is the principal ideal generated by $\Theta_{j,n}$.  We also note that the existence of such a character $\theta$ implies that our Theta elements are non-trivial.
 We invite the reader to consult \S \ref{ordinarysectionmaintheorems} for more details. 
 
 \begin{remark}
 In the case where $f$ corresponds to a non-CM elliptic curve $E/\QQ$ and $g$ to a two dimensional odd irreducible Artin representation of $G_\QQ$ with splitting field $F$, Theorem \ref{maintheorem1} gives, in the same spirit of  Corollary \ref{cor:KKonrank}, an upper bound to the dimension of  the $\rho$-isotypic component of the Mordell--Weil group of $E(FK_n)$ in terms of the order of vanishing of $\Theta_{j,n}$ (cf. \S \ref{intro:ellcurves} for further details). 
 \end{remark}

Similar to \cite[Theorem 1.14]{KK}, Theorem \ref{maintheorem1} is a consequence of the inclusion of the Iwasawa main conjecture 
\begin{equation*}
 \pi_\Delta \circ \Tw^{j}L_p(f_\alpha,g) \in  \textup{char}_\Lambda( \cX_{j,\infty}^\textup{Gr} )
\end{equation*}
(see \S\ref{sec:Iwalg} for the definitions of $\pi_\Delta$ and $\Tw$), which follows from \cite[Theorem 11.6.4]{KLZ2}, as well as the non-existence of non-trivial finite $\Lambda$-submodules of $ \cX_{j,\infty}^\textup{Gr}$ (which we prove as part of Proposition \ref{nofinitesubmod}), and a control theorem for $\cX_{j,n}^\textup{Gr}$ as $n$ varies (which we establish in Theorem~\ref{controlordinary}). The last two results may be of independent interest. We invite the reader to consult \S \ref{subsec:nonexistenceoflambdasubmoduleoffiniteindex} and \S \ref{sec:ordinarycontroltheorem} for more details and comments.

\subsection{Main theorems: the supersingular setting}
The second setting we consider is when  $f$ corresponds to an elliptic curve $E$ with supersingular reduction at $p$ and $a_p(E)=0$. Since the weight of $f$ is now two, it forces $g$ to have weight one and $j=0$. Using the Coleman map technique developed in \cite{BLLV}, we define four plus and minus Selmer groups $\Sel_{\pm\pm}(K_n,A(1))$ as well as four signed $p$-adic $L$-functions $L_p^{\pm\pm}(f,g)$.  Our Theta elements $\Theta^\pm_{0,n}$ are intimately linked to $L_p^{++}(f,g)$ and $L_p^{--}(f,g)$  (see Proposition~\ref{prop:pmTheta}).

Our assumption that $a_p(E)=0$ and $g$ is of weight one allows us to describe the local conditions at $p$ of the plus and minus Selmer groups explicitly using Kobayashi's "trace-jumping conditions" studied in \cite{kobayashi03}. This explicit description in particular allows us to generalize Kobayashi's work to prove a control theorem for two of these Selmer groups $\Sel_{++}(K_n,A(1))$ and $\Sel_{--}(K_n,A(1))$ (see Theorem~\ref{thm:controlpm}). As in the ordinary case, we also prove in Proposition~\ref{nofinitesubmod} that $\Sel_{\pm\pm}(K_\infty,A(1))^\vee$ admit no non-trivial finite $\Lambda$-submodule. These results allow us to apply the techniques of Kim--Kurihara in \cite{KK} to prove the following generalization of the (ss) part of Theorem~\ref{mainthmkk}.

 Suppose that  \textbf{(H.Kim)} holds and that the inclusion  

\begin{lthm}[Theorem~\ref{thm:final}]\label{intro:maintheoremss}
Assume that $L_p^{++}(f,g)$ and $L_p^{--}(f,g)$ are non-zero elements of $\Lambda$ and that they are contained inside the characteristic ideals of the Pontryagin duals of $\Sel_{++}(K_n,A(1))$ and $\Sel_{--}(K_n,A(1))$ respectively. Furthermore, suppose that the hypothesis \textbf{(H.Kim)} introduced in \S\ref{sec:controlpm} holds. Let $\cX_{n}^\textup{BK}$ denote the Pontryagin dual of the Bloch--Kato Selmer group of $A(1)$ over $K_n$.
 {If $n \geq 2$ is even, then \[
\left(p\Theta_{0,n}^+,\Theta_{0,n}^-\right)\subset \Fitt_{\Lambda_n}\cX_n^\BK.
\]}
{If $n \geq 1$ is odd, then \[
\left(\Theta_{0,n}^+,p\Theta_{0,n}^-\right)\subset \Fitt_{\Lambda_n}\cX_n^\BK.
\]}
\end{lthm}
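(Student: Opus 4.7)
The plan is to mimic the supersingular argument of Kim--Kurihara \cite{KK}, using the two signed $p$-adic $L$-functions $L_p^{++}(f,g)$ and $L_p^{--}(f,g)$ in place of Pollack's $L_p^\pm$. First, I would descend the assumed main-conjecture inclusions
\[
L_p^{\pm\pm}(f,g) \in \Char_\Lambda\!\left(\Sel_{\pm\pm}(K_\infty,A(1))^\vee\right)
\]
to finite levels. The control theorem (Theorem~\ref{thm:controlpm}), combined with the absence of non-trivial finite $\Lambda$-submodules in $\Sel_{\pm\pm}(K_\infty,A(1))^\vee$ (Proposition~\ref{nofinitesubmod}), guarantees that taking coinvariants under $\omega_n$ is compatible with passing from characteristic to Fitting ideals. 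This yields that the image of $L_p^{\pm\pm}(f,g)$ in $\Lambda_n$ lies inside $\Fitt_{\Lambda_n}\Sel_{\pm\pm}(K_n,A(1))^\vee$.

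Next, I would bridge from the $\pm\pm$ Selmer groups to the Bloch--Kato Selmer group by exploiting the fact that $a_p(E)=0$ and $g$ has weight one: the $\pm\pm$ local conditions at $p$ admit an explicit description via Kobayashi's trace-jumping conditions, exactly as used in the proof of Theorem~\ref{thm:controlpm}. This produces a comparison exact sequence between $\cX_n^\BK$ and the Pontryagin duals of $\Sel_{++}(K_n,A(1))$ and $\Sel_{--}(K_n,A(1))$, whose error term is concentrated at the prime above $p$ and whose Fitting ideal turns out to be either $p\Lambda_n$ or $\Lambda_n$, depending on which of the two signed conditions is ``tautological'' at the top layer $K_{n,p}$. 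The parity of $n$ enters precisely here and is responsible for swapping the role of $\Theta_{0,n}^+$ and $\Theta_{0,n}^-$ in the statement. Finally, invoking Proposition~\ref{prop:pmTheta}, which identifies $\Theta_{0,n}^\pm$ with the images of $L_p^{\pm\pm}(f,g)$ in $\Lambda_n$ up to units, converts the two Fitting-ideal inclusions obtained so far into the desired containments.

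The main technical obstacle is the explicit local computation underlying the comparison exact sequence above. In \cite{KK}, the analogous step rests on Kobayashi's fine analysis of $H^1(K_{n,p},T)$ for supersingular elliptic curves; here one must carry out the analogous analysis for the Rankin--Selberg representation $V=V_f\otimes V_g(1)$, tracking the factors of $p$ contributed by the jumping conditions at each cyclotomic layer and verifying that only the diagonal pair $\Sel_{++}$ and $\Sel_{--}$---the ones for which the control theorem of \S\ref{sec:controlpm} is available---suffices to bound $\Fitt_{\Lambda_n}\cX_n^\BK$. This is the analogue of the delicate ``extra $p$-factor'' calculation that produces the asymmetric power of $p$ in the supersingular case of Theorem~\ref{mainthmkk}, and it is here that the hypothesis \textbf{(H.Kim)} is expected to play an essential role, exactly as its elliptic-curve namesake does in \cite{KK}.
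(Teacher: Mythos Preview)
Your three-step outline has the right architecture, but each step misidentifies the key factor $(\tilde\omega_n^\bullet)^2$, and as a result your accounting for the extra power of $p$ is wrong.

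First, the control theorem for the signed Selmer groups (Theorem~\ref{thm:controlpm}) is \emph{not} a statement about coinvariants under $\omega_n$: it only controls $\cX_\infty^\bullet/\omega_n^\circ \to \cX_n^\bullet/\omega_n^\circ$, where $\circ$ is the \emph{opposite} sign. Consequently you cannot conclude that the image of $L_p^{\bullet\bullet}(f,g)$ in $\Lambda_n$ lies in $\Fitt_{\Lambda_n}\cX_n^\bullet$; you only get an inclusion modulo $\omega_n^\circ$. The paper remedies this by multiplying by $(\tilde\omega_n^\bullet)^2$, which annihilates $\omega_n^\circ$ in $\Lambda_n$ since $\tilde\omega_n^\bullet\omega_n^\circ=\omega_n$ (Proposition~\ref{prop:inclueFittpm}).

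Second, the Fitting ideal of the local error term $(\tupH^1_f/\tupH^1_{\bullet\bullet})^\vee$ is not $p\Lambda_n$ or $\Lambda_n$; it is exactly $(\tilde\omega_n^\bullet)^2\Lambda_n$ (Lemma~\ref{lem:Fittlocal}). This is what makes Corollary~\ref{cor:includeFitt} give $(\tilde\omega_n^\bullet)^2\Fitt_{\Lambda_n}\cX_n^\bullet\subset\Fitt_{\Lambda_n}\cX_n^\BK$, and it matches precisely the factor introduced in the previous paragraph.

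Third, Proposition~\ref{prop:pmTheta} does \emph{not} say that $\Theta_{0,n}^\pm$ equals $L_p^{\pm\pm}(f,g)\bmod\omega_n$ up to units. It says $\Theta_{0,n}^\pm\equiv(-p)^{n+1+\delta}(\log_p^\pm)^2L_p^{\pm\pm}(f,g)\bmod\omega_n$, and it is Corollary~\ref{cor:generalizing-pollack} that converts $(\log_p^\pm)^2$ into $p^{-?}(\tilde\omega_n^\pm)^2$ modulo $\omega_n$. The extra factor of $p$ in the theorem arises \emph{here}, from the discrepancy between $(-p)^{n+1}$ and the power of $p$ in $\log_p^\pm\equiv p^{-\lfloor n/2\rfloor-1}\tilde\omega_n^+$ (resp.\ $p^{-\lfloor(n+1)/2\rfloor-1}\tilde\omega_n^-$), not from the local Fitting ideal. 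The hypothesis \textbf{(H.Kim)} enters only in the control theorem (via \cite[Proposition~2.18]{Kim}), not in this local computation.

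In short: the three appearances of $(\tilde\omega_n^\bullet)^2$---in the descent step, in the local Fitting ideal, and in the $\Theta$-to-$L_p$ relation---are the whole mechanism of the proof, and your proposal erases all three.
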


\subsection{Selmer groups of elliptic curves}\label{intro:ellcurves}
We may combine Theorems~\ref{maintheorem1} and \ref{intro:maintheoremss} to use our Theta elements to study Selmer groups of elliptic curves over number fields in a unified way.
Let $E$ be an elliptic curve over $\QQ$ without CM, of conductor $N_E$. Let $\rho$ be a two dimensional odd irreducible Artin representation of $G_\QQ$ and denote by $F$ its splitting field and by $N_\rho$ its conductor. We assume that $p\nmid N_EN_\rho[F:\QQ]$ and that $p \geq 5$. When $E$ has supersingular reduction we further assume that $4 \nmid [F:\QQ]$. The representation $\rho$ takes values in a finite extension $L_\rho$ of $\QQ$. Fix $\mathfrak{P}$ a prime of $L_\rho$ above $p$ and let $L$ be a finite extension of $L_{\rho,\mathfrak{P}}$ with ring of integers $\cO$.

Let $F_{\infty}=F K_\infty$ be the cyclotomic $\Zp$-extension of $F$, with finite layers $F_{n}$ so that $[F_{n}:F]=p^n$ and $F_{n}$ is the compositum of $K_n$ and $F$. Consider the Selmer group $\textup{Sel}_{n}( E/F)$ of $E$ over $F_n$  and its $\rho$-isotypic component $\textup{Sel}_{n}( E/F)_{(\rho)}$ (cf. Definitions \ref{def:selmerellipticc} and \ref{def:selmerrhoisot}). The results described above allow us to study the Fitting ideal of the Pontryagin dual $\cX_n(E)_{(\rho)}$ of $\textup{Sel}_{n}( E/F)_{(\rho)}$.

As $E$ and $\rho^*$ correspond respectively to a weight 2 modular form $f$ and a weight 1 modular form $g$, we can consider the Theta elements associated to $f$ and $g$.  We adopt the following notation. If $E$ has ordinary reduction at $p$, denote by $\Theta_n(E,\rho)$ the Theta element $\Theta_{0,n}$, while,  if $E$ has supersingular reduction at $p$ with $a_p(E)=0$, we write $\Theta_n^{\pm}(E,\rho) $ for the Theta elements $\Theta_{0,n}^\pm$.
These elements are integral as proved in Proposition~\ref{cohomconstruction}.

Theorems \ref{maintheorem1} and \ref{intro:maintheoremss} together give the following theorem.
\begin{lthm}[Theorem \ref{thm:EC}]\label{introthm:EC}
Suppose that  that $L(E,\rho, \theta^{-1},1) \ne 0$ for some finite order character $\theta$ on $\Gal(K_\infty/\QQ)$. Under certain technical hypotheses on $E$ and $\rho$ (cf. hypotheses (1)--(6) of \S \ref{subsec:ellipticcurvesovernumberfields}), we have:
\begin{itemize}
    \item[(i)] If $E$ has ordinary reduction at $p$, then for all $n \geq 0$, we have 
\[\left(\Theta_n(E,\rho) \right) \subseteq {\rm Fitt}_{\Lambda_n}\cX_n(E)_{(\rho)}.\]
\item[(ii)]Suppose that $E$ has supersingular reduction at $p$ with $a_p(E)=0$ and let $p^m$ be the conductor of $\theta$. 
If $m$ is odd, then for all even $n\ge0$, we have
\[\left(\Theta^-_n(E,\rho) \right) \subseteq {\rm Fitt}_{\Lambda_n}\cX_n(E)_{(\rho)}.\]
If $m$ is even, then for all odd $n\ge0$, we have
\[\left(\Theta^+_n(E,\rho) \right) \subseteq {\rm Fitt}_{\Lambda_n}\cX_n(E)_{(\rho)}.\]
\end{itemize}
\end{lthm}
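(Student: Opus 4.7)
The plan is to reduce Theorem~\ref{introthm:EC} to Theorems~\ref{maintheorem1} and \ref{intro:maintheoremss} by matching the $\rho$-isotypic component of $\textup{Sel}_n(E/F)$ with the Selmer group of the Rankin--Selberg convolution over $K_n$. Associate to $E$ the weight-$2$ newform $f$, and to $\rho^*$ the weight-$1$ newform $g$, so that $V:=V_f\otimes V_g$ satisfies $V(1)\simeq V_pE\otimes\rho^*$. Via Shapiro's lemma combined with the decomposition $\Ind_{G_F}^{G_\QQ}\overline{\QQ}_p \simeq \bigoplus_{\rho}\rho^{\dim\rho}$ (and using that $p\nmid[F:\QQ]$, so idempotents of $\Gal(F/\QQ)$ are integral over $\cO$), the $\rho$-isotypic component $\textup{Sel}_n(E/F)_{(\rho)}$ identifies, up to an $\cO[G_n]$-module direct summand, with a Selmer group attached to $V(1)$ over $K_n$. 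The first step is therefore to verify that the local conditions at all bad primes and at $p$ agree: at primes above $p$, good ordinary reduction of $E$ (together with good ordinary behaviour of $g$, inherited from $p\nmid[F:\QQ]$) means the local condition defining $\textup{Sel}_n(E/F)_{(\rho)}$ becomes precisely the Greenberg local condition for $V(1)$; at primes of bad reduction one uses the hypotheses (2)--(6) (notably coprimality of $N_E$ and $N_\rho$ and absence of $p$-torsion at inertia) to ensure the local conditions are the usual unramified ones. The supersingular case is similar, but with local condition at $p$ given by the $\pm\pm$ construction of \S\ref{sec:controlpm}, which by design matches Kobayashi's original plus/minus conditions for $E$.

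Granted this identification, the ordinary case follows directly from Theorem~\ref{maintheorem1} applied with $j=0$, once its hypotheses are checked under (1)--(6). The residual irreducibilities come from the non-CM hypothesis on $E$ (for $f$) and the irreducibility of $\rho$ (for $g$); the weight hypothesis on $g$ is $\mathrm{wt}(g)=1$, which is an admissible case in Theorem~\ref{maintheorem1}; the condition $\epsilon_f\epsilon_g\ne 1$ is among the technical hypotheses; coprimality of $N_f$ and $N_g$ is hypothesis~(3); and the non-anomalous ordinary condition at $p$ is a consequence of $p\ge 5$ being large and $E(\QQ)[p]=0$. The analytic non-vanishing hypothesis $L_p(f_\alpha,g)(\theta\chi_\cyc^0)\ne 0$ follows from the assumption $L(E,\rho,\theta^{-1},1)\ne 0$ via the interpolation formula (stated for $\Theta_{j,n}$ in the excerpt and in \cite{KLZ2} for $L_p(f_\alpha,g)$), noting that the Euler factors appearing are non-zero under (1)--(6). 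Integrality of $\Theta_n(E,\rho)$ is Proposition~\ref{cohomconstruction}, so Theorem~\ref{maintheorem1} yields exactly the desired inclusion $(\Theta_n(E,\rho))\subseteq \Fitt_{\Lambda_n}\cX_n(E)_{(\rho)}$.

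In the supersingular case with $a_p(E)=0$, Theorem~\ref{intro:maintheoremss} provides, for even $n\ge 2$, the inclusion $(p\Theta_{0,n}^+,\Theta_{0,n}^-)\subseteq\Fitt_{\Lambda_n}\cX_n^\BK$, and for odd $n\ge 1$, the inclusion $(\Theta_{0,n}^+,p\Theta_{0,n}^-)\subseteq\Fitt_{\Lambda_n}\cX_n^\BK$. Since trivially $(\Theta_n^-(E,\rho))\subseteq (p\Theta_n^+(E,\rho),\Theta_n^-(E,\rho))$ and $(\Theta_n^+(E,\rho))\subseteq (\Theta_n^+(E,\rho), p\Theta_n^-(E,\rho))$, we at once recover the stated one-sided inclusions provided we can match these with the parity condition on $m$. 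The parity condition on $m$ (the conductor of $\theta$) enters through the interpolation formulae for the signed $p$-adic $L$-functions: depending on the parity of $m$, one of $L_p^{++}(f,g)$ or $L_p^{--}(f,g)$ is forced to vanish at $\theta$ while the other interpolates $L(E,\rho,\theta^{-1},1)$; combined with the non-vanishing of the latter, this ensures the hypothesis of Theorem~\ref{intro:maintheoremss} that $L_p^{++}(f,g)$ and $L_p^{--}(f,g)$ are non-zero. Hypothesis \textbf{(H.Kim)} is verified using $p\ge 5$ together with hypothesis~(6) and $4\nmid[F:\QQ]$.

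The main obstacle is the first step, namely carefully relating the Selmer group $\textup{Sel}_n(E/F)_{(\rho)}$ with a Selmer group of $V(1)$ over $K_n$ so that the Fitting ideals are comparable: while Shapiro's lemma gives an identification of the global cohomology, one must match the local conditions, verify that no integrality is lost in the $\rho$-isotypic projection (which is why we need $p\nmid[F:\QQ]$), and ensure that the reinterpretation is compatible with the action of $G_n$ so that the Fitting-ideal statement transfers. A secondary technical obstacle is the non-anomalous ordinary check and the verification of \textbf{(H.Kim)} from the hypotheses on $E$ and $\rho$, both of which are checked via standard arguments using the largeness of $p$ and the irreducibility of the residual representations. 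Once these bookkeeping steps are in place, the theorem is essentially a packaging of Theorems~\ref{maintheorem1}, \ref{intro:maintheoremss}, and Proposition~\ref{cohomconstruction}.
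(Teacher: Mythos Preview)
Your overall strategy matches the paper's: identify $\cX_n(E)_{(\rho)}$ with the Pontryagin dual of the Rankin--Selberg Selmer group over $K_n$ via Lemma~\ref{comparisonrhoisotypiccompandGr} (Shapiro's lemma plus local checks, using $p\nmid[F:\QQ]$), then invoke Theorems~\ref{maintheorem1} and~\ref{intro:maintheoremss}. However, the supersingular argument has a genuine gap.

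You want to apply Theorem~\ref{intro:maintheoremss}, whose hypotheses require \emph{both} $L_p^{++}(f,g)$ and $L_p^{--}(f,g)$ to be nonzero and both inclusions~\eqref{eq:halfpmIMC} to hold. But a single character $\theta$ with $L(E,\rho,\theta^{-1},1)\ne 0$ only forces \emph{one} of them to be nonzero, depending on the parity of $m$: via~\eqref{eq:Lp++} and its $(--)$ analogue, $L_p^{++}$ (resp.\ $L_p^{--}$) is nonzero precisely when $\log_p^+$ (resp.\ $\log_p^-$) does not vanish at $\theta$, i.e.\ when $m$ is even (resp.\ odd). Your sentence ``one of $L_p^{++}$ or $L_p^{--}$ is forced to vanish at $\theta$'' has the logic reversed, and in any case you cannot conclude that both are nonzero. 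The paper handles this by observing that the \emph{proof} of Theorem~\ref{thm:final} treats the two signs independently: if, say, only the $(++)$ inclusion~\eqref{eq:halfpmIMC} is known, the argument (Proposition~\ref{prop:inclueFittpm} and Corollary~\ref{cor:includeFitt} for $\bullet=+$) still yields $(\Theta_n^+)\subseteq\Fitt_{\Lambda_n}\cX_n^\BK$ for the relevant parity of $n$. So you should invoke the proof of Theorem~\ref{thm:final} for the single available sign, not the theorem as stated.

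A few minor corrections: the non-anomalous ordinary condition is not deduced from $p\ge5$ and absence of $p$-torsion---it is simply hypothesis~(5); coprimality of $N_f,N_g$ is hypothesis~(1), not~(3); and in the supersingular case the comparison in Lemma~\ref{comparisonrhoisotypiccompandGr} identifies $\cX_n(E)_{(\rho)}$ with the Bloch--Kato dual $\cX_n^\BK$, not with a signed Selmer group---the signed groups enter only through the machinery of \S\ref{sec:ssFitting} en route to $\cX_n^\BK$.
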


We conclude by stating a corollary to Theorem \ref{introthm:EC}, which is our analogous to Corollary~\ref{cor:KKonrank} in the current setting. 
Consider the $\rho$-isotypic component 
\[ E(F_{n})_{(\rho)}:= {\rm Hom}_{{\Gal(F_{n}/K_n)}}(\rho, E(F_{n})  \otimes L), \]
and, given $\chi:G_n \to \overline{\QQ}_p^\times$, \[ E(F_{n})_{(\rho)}^\chi:=\{P \in E(F_{n})_{(\rho)} \otimes \overline{\QQ}_p \;:\; \sigma \cdot P = \chi(\sigma)P\; \text{ for all }\sigma \in G_n \}.\]  

\begin{corl}[Corollary~\ref{cor:final}]\label{cor:introfinal}
We keep the hypotheses of Theorem C. Let $\chi$ be a character on $G_n$. 
\begin{itemize}
    \item[(i)] If $E$ has ordinary reduction at $p$, then $\dim_{\overline{\QQ}_p} E(F_{n})_{(\rho)}^\chi \le \ord_\chi\Theta_{n}(E,\rho)$. 
    \item[(ii)] If $E$ has supersingular reduction at $p$, then 
    \[
    \dim_{\overline{\QQ}_p} E(F_{n})_{(\rho)}^\chi\le\begin{cases}
    \ord_\chi\Theta_n^+(E,\rho)&\text{if $n$ is odd,}\\
    \ord_\chi\Theta_n^-(E,\rho)&\text{if $n$ is even.}
    \end{cases}
    \]
\end{itemize}

 \end{corl}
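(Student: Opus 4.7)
The plan is to derive the corollary from Theorem \ref{introthm:EC} along the lines of the deduction of Corollary \ref{cor:KKonrank} from Theorem \ref{mainthmkk}, which itself goes back to \cite[Proposition 3]{MaTa}. In each of the three cases (ordinary, and the two parities of $n$ in the supersingular case), Theorem \ref{introthm:EC} places the relevant Theta element $\Theta\in\Lambda_n$, one of $\Theta_n(E,\rho)$, $\Theta_n^+(E,\rho)$, or $\Theta_n^-(E,\rho)$, inside $\Fitt_{\Lambda_n}(\cX_n(E)_{(\rho)})$; the remaining task is to convert this Fitting-ideal containment into a lower bound on $\ord_\chi(\Theta)$.

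I would first relate $\cX_n(E)_{(\rho)}$ to the Mordell--Weil group. The Kummer sequence $0\to E(F_n)\otimes L/\cO\to\Sel_n(E/F)\to\Sha(E/F_n)[p^\infty]\to 0$, after applying $\Hom_{\Gal(F_n/K_n)}(\rho,-)$ and Pontryagin duality, yields a $\Lambda_n$-equivariant surjection from $\cX_n(E)_{(\rho)}$ onto $\Hom_\cO(E(F_n)_{(\rho)},\cO)$ modulo finite $\cO$-torsion whose contribution is controlled by hypotheses (1)--(6) of \S\ref{subsec:ellipticcurvesovernumberfields}. Since Fitting ideals are monotone under quotients, $\Theta\in\Fitt_{\Lambda_n}\bigl(\Hom_\cO(E(F_n)_{(\rho)},\cO)\bigr)$. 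Next I would fix $\chi:G_n\to\overline{\QQ}_p^\times$, extend scalars to $\cO_\chi:=\cO[\chi(G_n)]$, and consider the ideal $J_\chi:=(\gamma-\chi(\gamma))\subset\cO_\chi[G_n]$ (with $\gamma$ a fixed generator of $G_n$), which is the kernel of the evaluation-at-$\chi$ map; by definition $\ord_\chi(\Theta)$ is the largest $r$ with $\Theta\in J_\chi^r$. The key input, generalising \cite[Proposition 3]{MaTa}, is that for a finitely generated $\cO_\chi[G_n]$-module $N$ with $d:=\dim_{\overline{\QQ}_p}(N\otimes_{\cO_\chi[G_n],\chi}\overline{\QQ}_p)$, one has $\Fitt_{\cO_\chi[G_n]}(N)\subseteq J_\chi^d$. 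Applying this to $N=\Hom_\cO(E(F_n)_{(\rho)},\cO)\otimes_\cO\cO_\chi$ yields $\ord_\chi(\Theta)\geq\dim_{\overline{\QQ}_p}E(F_n)_{(\rho)}^{\chi}$, where the identification of $\chi$ and $\chi^{-1}$ isotypic components is either automatic from the self-duality of $\rho$ and $E$ or handled by applying the same argument to $\chi^{-1}$ and using that the order of vanishing of $\Theta$ is symmetric in $\chi\leftrightarrow\chi^{-1}$.

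The main obstacle is the Fitting-ideal bound $\Fitt(N)\subseteq J_\chi^d$ of the previous paragraph: $J_\chi$ is a non-maximal prime of $\cO_\chi[G_n]$, which is a local ring with unique maximal ideal $\mathfrak{m}=(\pi,\gamma-1)$ (where $\pi$ uniformises $\cO_\chi$), so a naive application of the standard minimal-generator bound yields only the weaker inclusion $\Fitt(N)\subseteq\mathfrak{m}^d$. Obtaining the finer bound inside $J_\chi^d$ requires working with a minimal presentation after localising at $J_\chi$ and examining the $d\times d$ minors there; this is the content of the Mazur--Tate proposition specialised to a cyclic $p$-group and is a formal but delicate Fitting-ideal computation. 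A secondary technical point is verifying that the finite $\cO$-torsion kernel appearing in Step 1 does not enlarge the Fitting ideal past $J_\chi^d$, which is again absorbed by the hypotheses that $p$ is large and coprime to $N_EN_\rho[F:\QQ]$.
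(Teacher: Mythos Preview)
Your proposal is correct and follows essentially the same approach as the paper: the paper's own proof is a single sentence stating that the result follows from Theorem~\ref{thm:EC} together with \cite[Proposition~3]{MaTa} (and citing \cite[Corollary~1.16]{KK}), and you have simply unpacked what that deduction entails. One minor point of care: $E(F_n)_{(\rho)}$ is defined as an $L$-vector space, so you should work with an integral lattice (coming from the $\cO$-structure on $E(F_n)$ and $\rho$) rather than literally $\Hom_\cO(E(F_n)_{(\rho)},\cO)$, but this does not affect the argument.
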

 
 The proof of  Corollary~\ref{cor:final} follows from Theorem \ref{introthm:EC} and \cite[Proposition 3]{MaTa}.
 
\subsection{Acknowledgements}
 We would like to thank Chan-Ho Kim for having taken the time to explain to us some of the calculations in \cite{KK} in details.
This work originated from discussions with Guhan Venkat at Universit\'e Laval. We thank him for his comments and suggestions. We also thank Joaquin Rodrigues Jacinto, David Loeffler and Victor Rotger for fruitful discussions.  We are indebted to the anonymous referees who have read very carefully and thoroughly an earlier version of the manuscript. Their very constructive comments and suggestions have led to many improvements of the  article.

A good part of this work was conducted while the first named author was a postdoctoral fellow at Universit\'e Laval, supported by Centre de Recherches Math\'ematiques and  the NSERC  Discovery Grants Program (RGPIN-2015-05710). He would like to thank Universit\'e Laval for their great hospitality and their support. The first named author's research is supported by the European Research Council (ERC) under the European Union's Horizon 2020 research and innovation programme (grant agreement No. 682152).
The second named author's research is supported by the NSERC  Discovery Grants Program (RGPIN-2020-04259 and RGPAS-2020-00096).

\section{Notation}\label{sec:review}
We fix some notation and  define objects that will be used throughout the paper. Throughout $p$ is  a fixed odd prime and  $L / \Qp$  is a fixed finite extension, which will be the coefficient field for all the representations we shall consider. The ring of integers of $L$ is denoted by $\cO$. We fix a uniformizer $\varpi$ of $\cO$.

\subsection{Iwasawa algebras and distribution algebras}\label{sec:Iwalg}

Let $\Gamma=\Gal(\QQ(\mu_{p^\infty})/\QQ)$. This group is isomorphic to a direct product $\Delta\times\Gamma_1$, where $\Delta$ is a finite group of order $p-1$ and $\Gamma_1=\Gal(\QQ(\mu_{p^\infty}) / \QQ(\mu_p))$, which we identify with $\Gal(K_\infty/\QQ)$. We fix a topological generator $\gamma$ of $\Gamma_1$, which determines an isomorphism of topological groups $\Gamma_1 \cong \Zp$.

We write $\Lambda=\cO\lb \Gamma_1\rb$, the Iwasawa algebra of $\Gamma_1$ over $\cO$. It can be identified with the formal power series ring $\cO\lb X \rb$, via the isomorphism sending $\gamma$ to $1 + X$.
We may consider $\Lambda$ as a subring of the ring $\cH$ of locally analytic $L$-valued distributions on $\Gamma_1$, which we identify with the ring of power series $F \in L\lb X \rb$ which converge on the open unit disc $|X| < 1$. Given a real number $r\ge0$, we write $\cH_r$ for the set of power series  $F=\sum_{n\ge0}c_nX^n\in\cH$ such that $\sup\frac{|c_n|_p}{n^r}<\infty$, where $|\ |_p$ denotes the $p$-adic norm of $L$ normalized by $|p|_p=p^{-1}$.

We write $\log_p\in \cH_1$ for the $p$-adic logarithm. Given an integer $m\ge1$, we define \[
\log_{p,m}=\prod_{i=0}^{m-1}\Tw^{-i}\log_p\in \cH_m.
\]
For $m=0$, we set $\log_{p,0}=1$.

We  write $\Lambda(\Gamma)=\Lambda[\Delta]$, $\cH(\Gamma)=\cH[\Delta]$ and $\cH_r(\Gamma)=\cH_r[\Delta]$. The projection maps $\Lambda(\Gamma)\rightarrow \Lambda$ and $\cH(\Gamma)\rightarrow\cH$ will be denoted by $\pi_\Delta$. We write $\Tw$ for the $\cO$-linear automorphism of $\cH(\Gamma)$ defined by $\sigma \mapsto \chi_\cyc(\sigma)\sigma$ for $\sigma\in \Gamma$, where $\chi_\cyc$ denotes the $p$-adic cyclotomic character.

For $n\ge0$, we write $\omega_{n}(X)$ for the polynomial $(1+X)^{p^{n}}-1$. We set $\Phi_0(X) = X$, and $\Phi_n(X) = \omega_n(X)/\omega_{n-1}(X)$ for $n \ge 1$. We also write $G_n:= \Gamma_1/\Gamma_1^{p^n}, \Lambda_n = \mathcal{O}[G_n]$ and denote by $\pi_n: \Lambda_n \to \Lambda_{n-1}$ the natural projection and by $ \nu_n: \Lambda_{n-1} \to \Lambda_n$ for trace map induced by $G_{n-1} \to G_n, \; \; \sigma \mapsto \sum_{ \pi_{n}(\tau)=\sigma} \tau$ for $n\ge1$. Notice that $\nu_n$ is given by multiplication by $\Phi_n(X)$.

In the rest of the paper, when no confusion arises, we simplify write $\omega_n$ and $\Phi_n$ for $\omega_n(X)$ and $\Phi_n(X)$  respectively.

\subsection{Galois representations and Dieudonn\'e modules}\label{notationmodularforms}\label{sec:basis}
Let $f$ and $g$ be two normalized cuspidal eigen-newforms of weights $k_f+2$ and $k_g+2$, levels $N_f$, $N_g$, and characters $\epsilon_f$, $\epsilon_g$ as in the introduction. Recall that we assume  $k_f > k_g \geq -1$ and  $p \nmid N_fN_g$.

Recall from the introduction that $L$  contains the coefficients of $f$ and $g$ as well as $\alpha_f \ne \beta_f$ and $\alpha_g\ne \beta_g$.
For $h\in\{f,g\}$, let $V_h$ be the Deligne $L$-linear $G_\QQ$-representation attached to $h$. It has Hodge--Tate weights $0$ and $-1-k_h$ (our normalization is that $\chi_\cyc$ has Hodge--Tate weight 1). Let $V_h^*=\Hom_L(V_h,L)$. There is a natural integral sheaf $\mathrm{TSym}^{k_h}\mathscr{H}_{\Zp}$ on the modular curve $Y_1(N_h)$ as defined in \cite[\S2.3]{KLZ2}, giving rise to a Galois-stable $\cO$-lattice $T_h^*$ inside $V_h^*$.  We define $T_h$ to be the $\cO$-linear dual of $T_h^*$. There is a natural $\cO$-basis $\omega_h$ for the $\cO$-submodule $\Fil^1\Dcris(T_h)$ (see for example \cite[\S6.1]{KLZ1}). We also have the differential  $\omega_{h^*}$ for the dual form $h^*$, as given in \cite[Definition~6.1.2]{KLZ1}, which we regard   as an element of $\Dcris(T_h^*)$. This in turn gives an $\cO$-basis of $\Fil^0\Dcris(T_h^*)$.

Let  $T$ denote the  Galois-stable $\cO$-lattice $T_f \otimes T_g$ inside the $L$-linear $G_{\QQ}$-representation $V:=V_f \otimes V_g$ associated to the convolution of $f$ and $g$. We let $T^*$,  $V^*$ denote the $\cO$-dual of $T$ and the $L$-dual of  $V$, respectively.

We now choose a basis of eigenvectors in the Dieudonn\'e modules attached to our modular forms $f$ and $g$. These elements allow us to relate Beilinson--Flach elements to $p$-adic $L$-functions and will play a role in the  definition of the Theta elements given in the next section. 

\begin{defn}\label{def:eigenvectors}
Let  $h\in\{f,g\}$ and $\lambda\in\{\alpha,\beta\}$. We define
the $\vp$-eigenvector in $\Dcris(V_h)$ (whose eigenvalue is $\lambda_h$):
\[v_{h,\lambda}=\frac{1}{\langle\vp(\omega_h),\omega_{h^*}\rangle}(\vp(\omega_h)-\lambda_h'\omega_h), \]
where $\lambda'$ is the unique element of $\{\alpha,\beta\}\setminus\{\lambda\}$ and  $\langle\sim,\sim\rangle$ denotes the natural pairing on $\Dcris(V_h)\times \Dcris(V_h^*)$. 

We define $\{v_{h,\alpha}^*,v_{h,\beta}^*\}$ to be the  basis of $\vp$-eigenvectors of $\Dcris(V_h^*)$ dual to $\{v_{h,\alpha},v_{h,\beta}\}$ under the pairing $\langle\sim,\sim\rangle$.
\end{defn}
\begin{remark}
Note that  \[
v_{h,\alpha}\equiv v_{h,\beta}\mod \Fil^1\Dcris(V_h).
\]
Dually, we have
\[
v_{h,\lambda}^*=\frac{\lambda_h}{(\lambda_h-\lambda'_h)}\left(\omega_{h^*}-\lambda'_h\vp(\omega_{h^*})\right),\]
with $v_{h,\alpha}^*+v_{h,\beta}^*\equiv 0\mod \Fil^0\Dcris(V_h^*)$. 
\end{remark}
\begin{defn}
For $\lambda,\mu\in\{\alpha,\beta\}$, we define $v_{\lambda,\mu}=v_{f,\lambda}\otimes v_{g,\mu}$ and $v_{\lambda,\mu}^*=v_{f,\lambda}^*\otimes v_{g,\mu}^*$.
\end{defn}

Let $\cT$ be a finite-rank free $\cO$-module with a continuous action of $G_F$, where $F$ is a finite  extension of $\Qp$ and $\cF$ a $p$-adic  Lie extension of $F$. Then the Iwasawa cohomology groups of $\cT$ over $\cF$ is defined by
\[  \tupH^i_{\Iw}(\cF, \cT) \coloneqq \varprojlim_{F\subset F'\subset \cF} \tupH^i(F', \cT),\]
where the inverse limit runs through finite extensions $F'/F$ such that $F'\subset \cF$ and the connection maps are corestrictions. If $\cV=\cT\otimes_{\Zp} \Qp$, we define $\tupH^i_{\Iw}(\cF, \cV)=\tupH^i_{\Iw}(\cF, \cT)\otimes_{\Zp} \Qp$. When $\cV$ is a crystalline representation of $G_{\Qp}$ with Hodge--Tate weights $\ge0$, we write 
\[
\cL_\cV:\tupH^1_{\Iw}(\Qp(\mu_{p^\infty}), \cV)\rightarrow \Dcris(\cV)\otimes \cH(\Gamma)
\]
for the Perrin-Riou map as defined in \cite[\S3.1]{LLZ2}.

Let $K$ be a number field and $\cK/K$ an infinite algebraic extension. If $\cT$ is equipped with a continuous $G_K$-action unramified outside a set of primes $\Sigma$, we define 
\[  \tupH^i_{\Iw}(\cK, \cT) \coloneqq \varprojlim_{K\subset K'\subset \cK} \tupH^i(G_{K',\Sigma}, \cT),\]
where $G_{K',\Sigma}$  denotes the Galois group of the maximal extension of $K'$ unramified outside $\Sigma$ and the inverse limit runs through finite extensions $K'/K$ such that $K'\subset \cK$ and the connection maps are corestrictions.

\section{Theta elements for Rankin--Selberg convolutions} \label{sec:thetaelements}

The main goal of this section is to define  Theta elements associated to the Rankin--Selberg convolution of $f$ and $g$ via the geometric $p$-adic $L$-functions  attached to $f$ and $g$. In Appendix \ref{arithmeticconstructionappendix}, we shall give a cohomological construction of these elements using Beilinson--Flach classes.

\subsection{Beillinson--Flach elements and $p$-adic $L$-functions}\label{S:BF}
We review the various $p$-adic $L$-functions that come up in our construction of Theta elements. We first introduce the notion of Beillinson--Flach elements.
\begin{defn}\label{defn:BF}
 For $\lambda,\mu\in\{\alpha,\beta\}$, $c > 1$ coprime to $6pN_f N_g$, $m \ge 1$ coprime to $pc$, and $a\in (\ZZ / mp^\infty \ZZ)^\times$, let
 \[
 {}_c\cBF^{\lambda,\mu}_{m, a} \in
 \cH_{\ord_p(\lambda_f\mu_g)}(\Gamma) \hat{\otimes}_{\Lambda} \HIw(\QQ(\mu_{mp^\infty}), T^*)
 \]
 be the Beillinson--Flach element as constructed in \cite[Theorem 5.4.2]{LZ1}.
\end{defn}
We will take $a = 1$ throughout.  If $\epsilon_f \epsilon_g$ is non-trivial, then we may remove the dependence on the auxiliary integer $c$. From now on, we fix a value of $c$ and drop it from the notation and, for simplicity, we  write  $\BF_{\lambda,\mu,m}$ for  $ {}_c\cBF^{\lambda,\mu}_{m, 1} $.

\begin{defn}\label{defn:geom}
Let us write $L_p(f_\alpha,g)$ and $L_p(f_\beta,g)$ for the two geometric $p$-adic $L$-functions attached to $f$ and $g$, with $f$ being the "dominant" form, by
\[
 L_p(f_\lambda,g)=\frac{\mu_g'-\mu_g}{\log_{p,1+k_g}}\langle \cL_{V^*}(\BF_{\lambda,\mu,1}),v_{\lambda,\mu'}\rangle,
\]
where we have identified $\BF_{\lambda,\mu,1}$ with its localization at $p$, the elements $\lambda,\mu$ are either $\alpha$ or $\beta$, the vector $v_{\lambda,\mu'}$ is the $\vp$-eigenvector as defined in \S\ref{sec:basis}, the element $\mu'$ denotes the unique element of $\{\alpha,\beta\}\setminus\{\mu\}$.
\end{defn}

\begin{remark}\label{rk:geo}
We recall the following properties satisfied by the geometric $p$-adic $L$-functions.
\begin{itemize}
    \item[(i)]  Since $\BF_{\lambda,\mu,1} \in
 \cH_{\ord_p(\lambda_f\mu_g)}(\Gamma) \hat{\otimes}_{\Lambda} \HIw(\QQ(\mu_{p^\infty}), T^*)$, the pairing $\langle \cL_{V^*}(\BF_{\lambda,\mu,1}),v_{\lambda,\mu'}\rangle$ gives an element in 
 \[
 \cH_{\ord_p(\lambda_f\mu_g)}(\Gamma) \hat{\otimes}_{\Lambda} \cH_{\ord_p(\lambda_f\mu_g')}(\Gamma)=\cH_{\ord_p(\lambda_f\mu_g)+\ord_p(\lambda_f\mu_g')}(\Gamma)=\cH_{2\ord_p(\lambda_f)+k_g+1}(\Gamma).
 \] Therefore, we have  $ L_p(f_\lambda,g)\in\cH_{2\ord_p(\lambda_f)}(\Gamma)$;
    \item[(ii)] Suppose that either $g$ is $p$-ordinary or $\lambda_f\in\cO^\times$. Let $\theta$ be a Dirichlet character of conductor $p^n$ and $k_g+1\le j\le k_f$
\begin{equation}
L_p(f_\lambda,g)(\theta\chi_\cyc^j)=
c_{j,n,\theta}\begin{cases}
\lambda_f^{-2n}L(f,g,\theta^{-1},j+1)&n>0,\\
\cE(\lambda,j)L(f,g,j+1)&n=0,
\end{cases}
    \label{eq:interpolation}
\end{equation}
where $c_{j,n,\theta}$ is independent of the choice of $\lambda$ and $\cE(\lambda,j)$ is given by
\[
\frac{\left(1-\frac{p^j}{\lambda_f\mu_g}\right)\left(1-\frac{p^j}{\lambda_f\mu_g'}\right)\left(1-\frac{\lambda_f'\mu_g}{p^{1+j}}\right)\left(1-\frac{\lambda_f'\mu_g'}{p^{1+j}}\right)}{\left(1-\frac{\lambda_f'}{p\lambda_f}\right)\left(1-\frac{\lambda_f'}{\lambda_f}\right)}
\]
(see \cite[Theorem~2.7.4]{KLZ2} and \cite[Theorem 6.3]{loeffler18}).
\item[(iii)]Our definition of geometric $p$-adic $L$-functions differs from the one given in \cite[Definition~3.6.4]{BLLV} by a scalar.
\end{itemize}
\end{remark}
To define our Theta elements, we will also utilize the following "extra" $p$-adic $L$-functions studied in \cite[\S3.7]{BLLV} :

\begin{defn}\label{defn:extra}
For $\lambda,\mu\in\{\alpha,\beta\}$, we define
\[
L_p^?(f_\lambda,g_\mu)=\frac{\mu_g'-\mu_g}{\log_{p,1+k_g}}\langle \cL_{V^*}(\BF_{\lambda,\mu,1}),v_{\lambda',\mu'}\rangle.
\]
\end{defn}
We finish this subsection with the following observations.
\begin{remark}\label{rk:?}
We make a number of remarks on these "extra" $p$-adic $L$-functions. 
\begin{itemize}
    \item[(i)]  Since $\BF_{\lambda,\mu,1} \in
 \cH_{\ord_p(\lambda_f\mu_g)}(\Gamma) \hat{\otimes}_{\Lambda} \HIw(\QQ(\mu_{p^\infty}), T^*)$, the pairing $\langle \cL_{V^*}(\BF_{\lambda,\mu,1}),v_{\lambda',\mu'}\rangle$ gives an element in 
 \[
 \cH_{\ord_p(\lambda_f\mu_g)}(\Gamma) \hat{\otimes}_{\Lambda} \cH_{\ord_p(\lambda_f'\mu_g')}(\Gamma)=\cH_{\ord_p(\lambda_f\mu_g)+\ord_p(\lambda_f'\mu_g')}(\Gamma)=\cH_{k_f+k_g+2}(\Gamma).
 \] Therefore, we have $L_p^?(f_\lambda,g_\mu)\in \cH_{k_f+1}(\Gamma)$;
    \item[(ii)]Our definition here differs from  the one given in \cite[\S3.7]{BLLV} by a non-zero scalar;
    \item[(iii)] Suppose either $g$ is $p$-ordinary or $\lambda_f'\in\cO^\times$. We have the following interpolation formulae.
    Let $\theta$ be a Dirichlet character of conductor $p^n$ and $k_g+1\le j\le k_f$.
Proposition~3.7.1 in \emph{op. cit.} says that
\begin{equation}
L_p^?(f_\lambda,g_\mu)(\theta\chi_\cyc^j)=R_{\lambda,\mu,j,n}\cdot L_p(f_{\lambda'},g)(\theta\chi_\cyc^j),
\label{eq:interpolate?}    
\end{equation}
where $R_{\lambda,\mu,j,n}$ is given by $(\lambda_f'/\lambda_f)^n$ when $n\ge1$. When $n=0$, it is given by
\[
R_{\lambda,\mu,j,0}=\frac{\left(1-\frac{\lambda_f'\mu_g}{p^{1+j}}\right)\left(1-\frac{p^j}{\lambda_f\mu_g}\right)}{\left(1-\frac{\lambda_f\mu_g}{p^{1+j}}\right)\left(1-\frac{p^j}{\lambda_f'\mu_g}\right)}.
\]
\end{itemize}
\end{remark}

\subsection{Definition and basic properties of Theta elements} \label{sec:Theta}

We first introduce the following projection of the geometric $p$-adic $L$-function as given in Definition~\ref{defn:geom}.
\begin{defn}
If $j$ is an integer such that $k_g+1\le j\le k_f$ and $\lambda\in\{\alpha,\beta\}$, we define 
\[
L_p(\lambda,j,n)\in L[G_n]=\cH/\omega_n(X)
\]
to be the natural image of $\pi_\Delta\circ\Tw^jL_p(f_\lambda,g)$ modulo $\omega_n(X)$.

Similarly, for $\lambda,\mu\in\{\alpha,\beta\}$, we define 
\[
L_p^?(\lambda,\mu,j,n)\in L[G_n]
\]
to be the natural image of $\pi_\Delta\circ \Tw^jL_p^?(f_\lambda,g_\mu)$ modulo $\omega_n(X)$. 
\end{defn}

The following lemma allows us to study the integrality of $L_p(f_\lambda,g)$. It is a well-known result, but we include a proof here due to our ignorance of a proper reference in the literature.
\begin{lemma}\label{lem:modulo}
Let $F\in\cH_r$, which we identify with a power series $\sum_{m\ge0}c_mX^m\in L[[X]]$.
Then, there exists an integer $s$ such that $F$ is congruent to a polynomial in $\varpi^{-s}p^{-rn}\cO[X]$ modulo $\omega_n$ for all $n\ge1$.
\end{lemma}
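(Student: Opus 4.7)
The plan is to reduce the problem to controlling the reductions of pure powers $X^N$ modulo $\omega_n$, and then sum these contributions against the coefficients of $F$.

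The first step is to establish the sharp divisibility
\[
X^N \bmod \omega_n \;\in\; p^{\lfloor N/p^n\rfloor}\cdot \cO[X]/\omega_n
\]
for every $N \ge 0$; equivalently, the canonical representative of degree less than $p^n$ has all its coefficients of $p$-adic valuation at least $\lfloor N/p^n\rfloor$. The case $N < p^n$ is immediate. For the inductive step the key congruence is
\[
X^{p^n} \;\equiv\; -\sum_{k=1}^{p^n - 1}\binom{p^n}{k}\,X^k \pmod{\omega_n},
\]
and by Kummer's theorem one has $v_p\binom{p^n}{k} = n - v_p(k) \ge 1$ for every $1 \le k \le p^n - 1$. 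Hence $X^{p^n}\bmod \omega_n$ already lies in the ideal $p\cdot \cO[X]/\omega_n$, and the general estimate follows from $X^N = X^{N-p^n}\cdot X^{p^n}$ by induction on $\lfloor N/p^n\rfloor$ together with the multiplicativity of this ideal.

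Armed with this, I would compute $F \bmod \omega_n$ coefficient by coefficient. Writing $F = \sum_m c_m X^m$ and denoting by $\alpha_{m,j}\in \cO$ the coefficient of $X^j$ in $X^m \bmod \omega_n$ for $0 \le j < p^n$, the previous step yields $|\alpha_{m,j}|_p \le p^{-\lfloor m/p^n\rfloor}$. The hypothesis $F\in \cH_r$ provides a constant $C > 0$ with $|c_m|_p \le Cm^r$ for $m \ge 1$, so $|c_m\alpha_{m,j}|_p \le Cm^r p^{-\lfloor m/p^n\rfloor}$ tends to $0$ as $m\to\infty$, and the $X^j$-coefficient of $F \bmod \omega_n$ is the convergent sum $\sum_m c_m \alpha_{m,j}$. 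Parametrising $m = lp^n + s$ with $l \ge 0$ and $0 \le s < p^n$ gives $m^r p^{-l} \le (l+1)^r p^{-l}\cdot p^{rn}$, and $C_{r,p}:=\sup_{l\ge 0}(l+1)^r p^{-l}$ is a finite constant depending only on $r$ and $p$. Therefore every coefficient of $F \bmod \omega_n$ in the monomial basis has $p$-adic absolute value at most $C\, C_{r,p}\, p^{rn}$, and the lemma follows by choosing any integer $s$ with $|\varpi|^{-s} \ge C\, C_{r,p}$.

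The main delicate point is the sharp valuation estimate of the first step: it is essential that $X^{p^n}$ gains a factor of $p$ upon reducing modulo $\omega_n$. A naive approach via Fourier inversion over characters of $\Gamma_n$ would only produce the weaker bound $\varpi^{-s}p^{-(r+1)n}$, losing an extra factor of $p^n = |\Gamma_n|_p^{-1}$ that would be insufficient for the subsequent applications to the integrality of $L_p(f_\lambda,g)$ and the Theta elements.
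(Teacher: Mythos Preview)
Your proof is correct and follows essentially the same approach as the paper's: both arguments write $m=lp^n+s$ (the paper uses $m=kp^n+t$), reduce to showing $X^{p^n}\equiv$ (something in $p\cO[X]$) modulo $\omega_n$ via the divisibility of the middle binomial coefficients $\binom{p^n}{k}$, and then bound $m^r\le (l+1)^r p^{rn}$ so that the factor $(l+1)^r p^{-l}$ is uniformly bounded in $l$. The only cosmetic difference is that the paper expands $((1+X)-1)^{p^n}$ whereas you expand $(1+X)^{p^n}-1$ directly; these are the same observation.
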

\begin{proof}
Let $m,n\ge1$ be integers and write $m=kp^n+t$, where $k$ and $t$ are integers with $k\ge 0$ and $0\le t<p^n$.
Consider 
\[
X^m=\left(\left((1+X)-1\right)^{p^n}\right)^kX^t.
\]
Note that $\left((1+X)-1\right)^{p^n}$ is congruent to a polynomial in $p\cO[X]$ modulo $\omega_n(X)$ by the binomial theorem. Thus, we may replace $X^m$ by a polynomial  $Q_m(X)\in p^k\cO[X]$.

By definition, there exists a constant $C$, independent of $m$ such that $|c_m|_p\le Cm^r$. Without loss of generality, we may assume that $C\in p^\ZZ$. Let $\ord_p$ denote the $p$-adic valuation on the integers. If $s$ is the smallest integer satisfying  $p^s\ge m^r$, then
\[
c_m\in p^{-\ord_p(C)-s}\cO=C^{-1}p^{-s}\cO.
\]

We now estimate $s$. Note that $m=kp^n+t<(k+1)p^n$ by the definition of $t$. Thus, $m <p^{\log(k+1)/\log p+n}$, where $\log$ denotes the natural logarithm on positive real numbers.  Hence, $m^r<p^{r(\log(k+1)/\log p+n)}$. The  minimality of $s$ then gives
$$s\le \lfloor r\log(k+1)/\log p\rfloor+rn,$$
where $\lfloor\ \rfloor$ is the floor function on the real numbers.
Therefore,
\[
c_m\in C^{-1}p^{-\lfloor r\log(k+1)/\log p\rfloor-rn}\cO,
\]
which implies that
\[
c_mQ_m(X)\in \left(C^{-1}p^{k-\lfloor r\log(k+1)/\log p\rfloor} \right)p^{-rn}\cO[X].
\]
Since $k-\lfloor r\log(k+1)/\log p\rfloor$ is bounded below as $k$ varies, the lemma follows.
\end{proof}

\begin{remark}\label{rk:integral}
It follows from Lemma~\ref{lem:modulo} and Remark~\ref{rk:geo}(i) that there exists a constant $s$ such that
\[
\lambda_f^{2n}L_p(\lambda,j,n)\in \varpi^{-s}\Lambda_n
\]
for both $\lambda\in\{\alpha,\beta\}$, all $k_f+1\le j\le k_f$ and $n\ge 0$. 

When $\alpha_f=-\beta_f$, we have $\ord_p(\alpha_f)=\ord_p(\beta_f)=\frac{k_f+1}{2}$. Thus, it follows from Lemma~\ref{lem:modulo} and Remark~\ref{rk:?}(i) that $\lambda_f^{2n} L_p^?(\lambda,\mu,j,n)\in \varpi^{-s}\Lambda_n$.
\end{remark}

We are now ready to define our Theta elements. Note that we have to consider the cases $\alpha_f\ne-\beta_f$ and $\alpha_f=-\beta_f$ separately. 
\begin{defn}\label{analyticdeftheta}
Let $n\ge 0$ and $k_g+1\le j\le k_f$. If $\alpha_f\ne-\beta_f$, we define 
\[
\Theta_{j,n}=
\frac{1}{\beta_f^2-\alpha_f^2} \left(\beta_f^{2n+4}L_p(\beta,j,n)-\alpha_f^{2n+4}L_p(\alpha,j,n) \right).
\]
In the case $\alpha_f=-\beta_f$, we define
\begin{align*}
\Theta_{j,n}^+&=\frac{\alpha_f^{2n+2}}{4}\left(L_p(\alpha,j,n)+L_p(\beta,j,n)+L_p^?(\beta,\beta,j,n)+L_p^?(\alpha,\beta,j,n)\right)\\
\Theta_{j,n}^-&=\frac{\alpha_f^{2n+2}}{4}\left(L_p(\alpha,j,n)+L_p(\beta,j,n)-L_p^?(\beta,\beta,j,n)-L_p^?(\alpha,\beta,j,n)\right).
\end{align*}
\end{defn}

\begin{lemma}\label{lem:integral}
If $\alpha_f\ne- \beta_f$, then there exists an integer $s$ such that $\Theta_{j,n}\in \varpi^{-s}\Lambda_n$ for all $j$ and $n$. Similarly, the same is true for $\Theta_{j,n}^\pm$ when $\alpha_f=-\beta_f$.
\end{lemma}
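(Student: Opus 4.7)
The plan is to reduce everything to the integrality bound provided by Remark~\ref{rk:integral} and then control the scalar prefactors, whose $\varpi$-adic valuations are uniformly bounded independently of $j$ and $n$.

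First, I would invoke Remark~\ref{rk:integral}, which supplies a single integer $s_0$ such that
\[
\lambda_f^{2n} L_p(\lambda,j,n)\in\varpi^{-s_0}\Lambda_n
\]
for both $\lambda\in\{\alpha,\beta\}$, all $k_g+1\le j\le k_f$ and all $n\ge 0$. In the case $\alpha_f\ne-\beta_f$, the quantity
\[
\beta_f^{2n+4}L_p(\beta,j,n)=\beta_f^{4}\bigl(\beta_f^{2n}L_p(\beta,j,n)\bigr)
\]
therefore lies in $\beta_f^{4}\varpi^{-s_0}\Lambda_n\subseteq\varpi^{-s_0}\Lambda_n$, since $\beta_f\in\cO$; the same argument handles $\alpha_f^{2n+4}L_p(\alpha,j,n)$. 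Finally, the scalar $(\beta_f^{2}-\alpha_f^{2})^{-1}=(\beta_f-\alpha_f)^{-1}(\beta_f+\alpha_f)^{-1}$ is a fixed non-zero element of $L$ because of the standing assumption $\alpha_f\ne\pm\beta_f$, so it has finite $\varpi$-adic valuation $-s_1$ for some $s_1\ge 0$. Taking $s=s_0+s_1$ then yields $\Theta_{j,n}\in\varpi^{-s}\Lambda_n$ uniformly in $j$ and $n$.

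For the case $\alpha_f=-\beta_f$, I would combine two bounds from Remark~\ref{rk:integral}: the one above for $L_p(\lambda,j,n)$, and the one for $L_p^?(\lambda,\mu,j,n)$, which gives $\lambda_f^{2n}L_p^?(\lambda,\mu,j,n)\in\varpi^{-s_0}\Lambda_n$ (enlarging $s_0$ if necessary). Since $\alpha_f=-\beta_f$, we have $\alpha_f^{2n}=\beta_f^{2n}$, so each of the four summands in the definition of $\Theta^\pm_{j,n}$ contributes an element of $\alpha_f^{2}\varpi^{-s_0}\Lambda_n\subseteq\varpi^{-s_0}\Lambda_n$. The remaining factor $1/4$ is a unit in $\cO$ because $p$ is odd, so again $\Theta^\pm_{j,n}\in\varpi^{-s_0}\Lambda_n$ uniformly.

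There is no genuine obstacle here: the heart of the argument is already contained in Lemma~\ref{lem:modulo} and its consequences recorded in Remark~\ref{rk:integral}, and the only extra work is to observe that all the finite-order scalars $\beta_f^4$, $\alpha_f^4$, $\alpha_f^2$, $1/4$ and $(\beta_f^2-\alpha_f^2)^{-1}$ appearing in Definition~\ref{analyticdeftheta} have $\varpi$-valuation independent of $j$ and $n$. The mild subtlety is simply to arrange one common $s$ that works for all four scalars in both cases, which is achieved by the previous paragraph by replacing $s_0$ and $s_1$ by their sum.
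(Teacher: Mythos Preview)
Your proof is correct and follows exactly the same approach as the paper, which simply says ``This follows immediately from Remark~\ref{rk:integral}.'' You have merely spelled out the details that the paper leaves implicit: invoking the bound from Remark~\ref{rk:integral} and then absorbing the fixed scalars $\alpha_f^4$, $\beta_f^4$, $\alpha_f^2$, $(\beta_f^2-\alpha_f^2)^{-1}$, and $1/4$ into the exponent $s$.
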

\begin{proof}
This follows immediately from Remark~\ref{rk:integral}.
\end{proof}

\begin{remark}We comment on the definition of $\Theta_{j,n}^\pm$, which might look unnatural to the reader at first sight.
\begin{itemize}
    \item[(i)] In Corollary~\ref{cor:generalizing-pollack} below, we show that these elements are closely related to certain plus and minus $p$-adic $L$-functions, which can be regarded as a generalization of \cite[Proposition~6.18]{pollack03} on the Mazur--Tate elements attached to an elliptic curve.
    \item[(ii)] It turns out that these two elements are sufficient to study the Selmer group over $K_n$ when $f$ corresponds to a $p$-supersingular elliptic curve and $g$ is of weight one (see Theorem~\ref{thm:final} below).
    \item[(iii)] In our quest to generalize the work of Kim--Kurihara \cite{KK}, we consider two different settings. The first is when both $f$ and $g$ are $p$-ordinary and the second is the setting discussed in (ii) above. In the ordinary setting, it turns out that we may study the Selmer groups using one single $p$-adic $L$-function and the Theta elements $\Theta_{j,n}$ are  sufficient. In the setting of (ii), we have a fairly straightforward control theorem for the plus and minus Selmer groups (see Theorem~\ref{thm:controlpm}), generalizing the work of Kobayashi \cite[Theorem~9.3]{kobayashi03}. This in turn allows us to apply the techniques of Kim--Kurihara in \cite{KK} to prove Theorem~\ref{thm:final}. In all other settings, the lack of control theorem stops us from studying Fitting ideals of Selmer groups using the techniques of \emph{loc. cit.} 
    \item[(iv)] Our calculations suggest that when the "dominant" form $f$ is $p$-non-ordinary and the "non-dominant" form $g$ is $p$-ordinary, two Theta elements are required to study the Fitting ideal of the Selmer group over $K_n$. It would seem reasonable to expect that if  both $f$ and $g$ are non-ordinary at $p$, one would need to consider four linearly independent combinations of $p$-adic $L$-functions to define the appropriate Theta elements in order to study the Fitting ideals of Selmer groups over $K_n$. In the case $\alpha_f=-\beta_f$, we might consider defining two extra Theta elements of the form
    \begin{align*}
&\frac{\alpha_f^{2n+2}}{4}\left(L_p(\alpha,j,n)-L_p(\beta,j,n)-L_p^?(\beta,\beta,j,n)+L_p^?(\alpha,\beta,j,n)\right),\\
&\frac{\alpha_f^{2n+2}}{4}\left(L_p(\alpha,j,n)-L_p(\beta,j,n)+L_p^?(\beta,\beta,j,n)-L_p^?(\alpha,\beta,j,n)\right).
\end{align*}
 When $\alpha_f\ne-\beta_f$, the linear combinations might involve the logarithmic matrices defined in \cite[\S5.1]{BLLV}.
\end{itemize}
\end{remark}

The Theta elements defined in Definition~\ref{analyticdeftheta} satisfy the following interpolation formulae, which can be regarded as a generalization of \cite[(1)]{MaTa}.

\begin{lemma}\label{lem:evaluateTheta}
Suppose that $g$ is $p$-ordinary. Let $\theta$ be a non-trivial character on $G_n$ of conductor $p^m$ (so that $1\le m\le n+1$). If $\alpha_f\ne -\beta_f$, then 
\[
\Theta_{j,n}(\theta)=\frac{\beta_f^{2n-2m+4}-\alpha_f^{2n-2m+4}}{\beta_f^2-\alpha_f^2}\cdot c_{j,m,\theta}\cdot L(f,g,\theta^{-1},j+1).
\]
If $\alpha_f=-\beta_f$, then
\[
    \Theta_{j,n}^\pm(\theta)=\frac{\alpha_f^{2n-2m+2}(1\pm(-1)^m)}{2}\cdot c_{j,m,\theta}\cdot L(f,g,\theta^{-1},j+1).
\]
\end{lemma}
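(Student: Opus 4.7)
The plan is to prove Lemma~\ref{lem:evaluateTheta} by a direct computation starting from Definition~\ref{analyticdeftheta}, reducing everything to the interpolation formulae already recorded in Remarks~\ref{rk:geo}(ii) and \ref{rk:?}(iii). The key preliminary observation is that since $\theta$ factors through $G_n=\Gamma_1/\Gamma_1^{p^n}$, extended trivially across $\Delta$ to a character of $\Gamma$, one has $\theta\circ\pi_\Delta=\theta$ on $\cH(\Gamma)$, and
\[
\theta(\omega_n(X))=\theta(\gamma^{p^n})-1=0,
\]
so reduction modulo $\omega_n$ is harmless when evaluating at $\theta$. Combining this with the definition of the twist $\Tw^j$ (which corresponds to multiplication by $\chi_\cyc^j$ on characters) gives
\[
L_p(\lambda,j,n)(\theta)=L_p(f_\lambda,g)(\theta\chi_\cyc^j),\qquad L_p^?(\lambda,\mu,j,n)(\theta)=L_p^?(f_\lambda,g_\mu)(\theta\chi_\cyc^j).
\]

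The hypothesis that $g$ is $p$-ordinary puts us in the regime where Remarks~\ref{rk:geo}(ii) and \ref{rk:?}(iii) apply for every $\lambda\in\{\alpha,\beta\}$. Hence, taking $m\ge1$ as the conductor exponent of $\theta$, we obtain
\[
L_p(\lambda,j,n)(\theta)=c_{j,m,\theta}\,\lambda_f^{-2m}\,L(f,g,\theta^{-1},j+1),
\]
and, in the case $\alpha_f=-\beta_f$ (so that $\lambda_f'/\lambda_f=-1$),
\[
L_p^?(\lambda,\mu,j,n)(\theta)=(-1)^m\,L_p(\lambda',j,n)(\theta).
\]

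For the case $\alpha_f\ne-\beta_f$, I substitute the formula for $L_p(\lambda,j,n)(\theta)$ into the definition of $\Theta_{j,n}$, factor out $c_{j,m,\theta}\,L(f,g,\theta^{-1},j+1)$, and simplify
\[
\beta_f^{2n+4}\beta_f^{-2m}-\alpha_f^{2n+4}\alpha_f^{-2m}=\beta_f^{2n-2m+4}-\alpha_f^{2n-2m+4},
\]
which yields the claimed coefficient. For the case $\alpha_f=-\beta_f$, the identity $\alpha_f^{2m}=\beta_f^{2m}$ shows that $L_p(\alpha,j,n)(\theta)=L_p(\beta,j,n)(\theta)$; denote this common value by $L^\circ$. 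Each of the two $L_p^?$-contributions then equals $(-1)^m L^\circ$, so
\[
\Theta_{j,n}^\pm(\theta)=\frac{\alpha_f^{2n+2}}{4}\bigl(2\pm2(-1)^m\bigr)L^\circ=\frac{\alpha_f^{2n-2m+2}(1\pm(-1)^m)}{2}\,c_{j,m,\theta}\,L(f,g,\theta^{-1},j+1).
\]

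The lemma is essentially a bookkeeping exercise once the interpolation formulae of \cite{KLZ2}, \cite{loeffler18}, \cite{BLLV} are in hand; there is no real obstacle. The only point requiring slight care is the degeneration $\alpha_f^2=\beta_f^2$, where the formula of the generic case becomes a $0/0$ indeterminacy and must instead be replaced by the plus/minus combinations of $p$-adic $L$-functions built into $\Theta_{j,n}^\pm$; the factor $(-1)^m$ coming from Remark~\ref{rk:?}(iii) is precisely what produces the parity-dependent vanishing $(1\pm(-1)^m)/2$ on the right-hand side.
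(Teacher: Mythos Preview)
Your proof is correct and follows exactly the same approach as the paper, which simply states that the lemma follows from \eqref{eq:interpolation} and \eqref{eq:interpolate?}. You have spelled out the computation in full detail, but the underlying idea---substituting the interpolation formulae from Remarks~\ref{rk:geo}(ii) and \ref{rk:?}(iii) into Definition~\ref{analyticdeftheta} and simplifying---is identical.
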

\begin{proof}
This follows from \eqref{eq:interpolation} and \eqref{eq:interpolate?}.
\end{proof}

It is of course possible to write down the values of the Theta elements evaluated at the trivial character using  \eqref{eq:interpolation} and \eqref{eq:interpolate?}. But since the formulae are much more tedious and we will only need the one for the Theta element $\Theta_{j,n}$ in \S \ref{ordinarysectionmaintheorems}, we have decided not to write them down here and to refer the interested reader to Lemma \ref{auxiliarylemma}.

While we have defined our Theta elements in terms of geometric  $p$-adic $L$-functions, we may go the other way to describe the $p$-adic $L$-functions in terms of these Theta elements. This is analogous to the relation  atisfied by the Mazur--Tate elements for modular forms as given in \cite[(10.2)]{MTT} (at least in the case where $\alpha_f\ne\beta_f$).

\begin{proposition}\label{prop:Theta-L}
Suppose that $g$ is $p$-ordinary. Let $n\ge 1$ be an integer. If $\alpha_f\ne -\beta_f$, we have
\[
L_p(\lambda,j,n)=\frac{1}{\lambda_f^{2n+2}}\left(\Theta_{j,n}-\frac{(\lambda_f')^2}{p}\nu_n(\Theta_{j,n-1})\right).
\]
In the case where $\alpha_f=-\beta_f$, we have 
\[
L_p(\alpha,j,n)+L_p(\beta,j,n)=\frac{2}{\lambda_f^{2n+2}}\left(\Theta^+_{j,n}+\Theta^{-}_{j,n}\right).
\]
\end{proposition}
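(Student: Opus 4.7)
The idea is to reduce the claim to a direct algebraic check, handled separately in the two cases. For $\alpha_f = -\beta_f$, the identity is immediate: summing the defining expressions of $\Theta_{j,n}^+$ and $\Theta_{j,n}^-$ from Definition~\ref{analyticdeftheta}, the $L_p^?$-contributions cancel, giving $\Theta_{j,n}^+ + \Theta_{j,n}^- = \tfrac{\alpha_f^{2n+2}}{2}(L_p(\alpha,j,n)+L_p(\beta,j,n))$; since $\alpha_f^{2n+2} = \lambda_f^{2n+2}$ in this case, rearranging yields the claim.

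For $\alpha_f \neq -\beta_f$, my plan is to verify the identity in $L[G_n]$ by evaluating both sides at every character of $G_n$. Since $G_n$ is a cyclic $p$-group, $L[G_n]\otimes_L\bar L$ splits as a product of copies of $\bar L$ indexed by the character group, so an element of $L[G_n]$ is determined by its character values. Fix $\lambda \in \{\alpha,\beta\}$ with the other root denoted $\lambda'$. For a non-trivial character $\theta$ of conductor $p^m$ (with $1 \le m \le n+1$), Lemma~\ref{lem:evaluateTheta} provides $\Theta_{j,n}(\theta)$ explicitly, and similarly $\Theta_{j,n-1}(\theta)$ when $m \le n$. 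Combined with the standard fact that $\nu_n(x)(\theta) = p\cdot x(\theta)$ if $\theta$ factors through $G_{n-1}$ and $\nu_n(x)(\theta) = 0$ otherwise, the right-hand side simplifies via the elementary telescoping identity
\[
(\beta_f^{2n-2m+4}-\alpha_f^{2n-2m+4}) - (\lambda_f')^2(\beta_f^{2n-2m+2}-\alpha_f^{2n-2m+2}) = \lambda_f^{2n-2m+2}(\beta_f^2-\alpha_f^2),
\]
producing $\lambda_f^{2n-2m+2}\, c_{j,m,\theta}\, L(f,g,\theta^{-1},j+1)$, which matches the left-hand side via the interpolation formula \eqref{eq:interpolation}. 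The boundary case $m = n+1$ falls out of the same calculation since the $\nu_n$-contribution vanishes identically.

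For the trivial character of $G_n$, the computation is analogous: using $L_p(\mu,j,n)(1) = c_{j,0,1}\cE(\mu,j)L(f,g,j+1)$ (from \eqref{eq:interpolation} at conductor one) in the definition of $\Theta_{j,n}(1)$ and $\Theta_{j,n-1}(1)$, the same telescoping identity applies with $\cE(\mu,j)$ playing the role previously taken by $\mu_f^{-2m}$, yielding $\lambda_f^{2n+2}\cE(\lambda,j)c_{j,0,1}L(f,g,j+1)$, matching the left-hand side. Altogether, the proof is a direct algebraic verification whose only inputs are Definition~\ref{analyticdeftheta}, Lemma~\ref{lem:evaluateTheta}, and the interpolation formula \eqref{eq:interpolation}; I do not anticipate a substantial obstacle, as the precise linear combinations defining $\Theta_{j,n}$ were engineered exactly to make this telescoping succeed.
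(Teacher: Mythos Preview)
Your proposal is correct and follows essentially the same approach as the paper: verify the identity by evaluating both sides at every character of $G_n$, using that $\nu_n$ is multiplication by $\Phi_n$ and that $\Phi_n(\theta)$ equals $0$ or $p$ according to whether $\theta$ has conductor $p^{n+1}$ or not. The one small economy in the paper's version is that for characters of conductor $p^m$ with $m\le n$ (including the trivial one) it bypasses Lemma~\ref{lem:evaluateTheta} and the interpolation formula entirely, instead plugging the \emph{definition} of $\Theta_{j,n}$ and $\Theta_{j,n-1}$ as linear combinations of $L_p(\alpha,j,\cdot)$ and $L_p(\beta,j,\cdot)$ directly into the right-hand side and cancelling the $\lambda'$-terms; this handles the trivial and non-trivial low-conductor cases uniformly without ever touching $L$-values or $\cE(\lambda,j)$.
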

\begin{proof}
The case $\alpha_f=-\beta_f$ is clear. We  assume that $\alpha_f\ne -\beta_f$ in the rest of the proof.

 Since we are comparing polynomials of degree $<p^n$. It is enough to show that these polynomials give the same values when evaluated at all characters of $G_n$. Moreover, recall that the trace map $\nu_n: \Lambda_{n-1} \to \Lambda_n$ (defined in \S \ref{sec:Iwalg}) is given by multiplication by $\Phi_n(X)$, thus evaluating $\nu_n(\Theta_{j,n-1})$ at a character $\theta$ of $G_n$ is the same as evaluating $\Phi_n(X)\Theta_{j,n-1}$ at $\theta$.  Without loss of generality, we may assume that $\lambda=\alpha$.

 Let $\theta$ be a character on $G_n$ of conductor $p^m$ (so that $m\le n+1$). Notice that $\theta$ sends $1+X$ to a primitive  $p^{m-1}$-th root of unity. If $m=n+1$, then $\theta$ vanishes at $\Phi_n(X)=((1+X)^{p^n} - 1) /((1+X)^{p^{n-1}} -1)$ as $\theta(1+X)$ is a primitive  $p^{n}$-th root of unity. Lemma~\ref{lem:evaluateTheta} tells us that when we evaluate the right-hand side at $\theta$, we have
\[
\alpha_f^{-2n-2}\Theta_{j,n}(\theta)=\alpha_f^{-2n-2}c_{j,n+1,\theta}L(f,g,\theta^{-1},j+1).
\]
This agrees with the value on the left-hand side by \eqref{eq:interpolation}.

Suppose now that $m<n+1$. In this case, as $\theta$ sends $1+X$ to a primitive  $p^{m-1}$-th root of unity, the value of $\Phi_n(X)$ at $\theta$ is $p$. We deduce that \[\frac{1}{\alpha_f^{2n+2}}\left(\Theta_{j,n}-\frac{\beta_f^2\Phi_n(X)}{p}\Theta_{j,n-1}\right) \] when evaluated at $\theta$ gives
\begin{align*}
&\ \frac{1}{\alpha_f^{2n+2}}\left(\Theta_{j,n}(\theta)-\beta_f^2\Theta_{j,n-1}(\theta)\right)\\
=&\ \frac{\left(\beta_f^{2n+4}L_p(f_\beta,g)-\alpha_f^{2n+4}L_p(f_\alpha,g)-\beta_f^{2n+4}L_p(f_\beta,g)+\alpha_f^{2n+2}\beta_f^2L_p(f_\alpha,g)\right)(\theta\chi_\cyc^j)}{\alpha_f^{2n+2}(\beta_f^2-\alpha_f^2)} \\
=&\ L_p(f_\alpha,g)(\theta\chi_\cyc^j)\\
=&\ L_p(\alpha,j,n)(\theta)
\end{align*}
as required.
\end{proof}

\section{Selmer groups and their structure} \label{sec:selmer}
The goal of this section is to first review the definitions of the various Selmer groups we are interested in. We will then show that the Selmer groups over $K_\infty$ contain no non-trivial $\Lambda$-submodule of finite index. We will treat the cases where $f$ is $p$-ordinary and $p$-non-ordinary  distinctly and separately. 
 
\subsection{Definitions of Selmer groups in the ordinary setting} \label{subsec:doublyordinaryselmer}
In this section, we assume that  the eigenform $f$ is $p$-ordinary, with $\alpha_f$ being the unit root of the Hecke polynomial at $p$. Note that we do not have to assume that $g$ is $p$-ordinary here. For $h \in \{ f,g \}$, we denote $A_h:= V_h / T_h$; similarly we set $A:=V/T$. As $f$ is $p$-ordinary, there is a $G_{\QQ_{p}}$-stable filtration (cf. \cite[(4.2)]{EPW})
\begin{equation*}
0 \rightarrow \mathcal{F}^{+}A_f \rightarrow A_f \rightarrow A_f/\mathcal{F}^{+}A_f \rightarrow 0,
\end{equation*}
where $\mathcal{F}^{+}A_{f} \subset A_{f}$ is of corank 1 over $\cO$ and $A_{f} / \mathcal{F}^{+}A_{f}$ is unramified.
\begin{notation}
For any integer $k_g + 1 \leq j \leq k_f$, denote $A_j:= A(1+j)$ and set
\[ \mathcal{F}^{+}A_j :=  \mathcal{F}^{+}A_{f}\otimes A_{g}(1+j)\subset A_j.  \]
\end{notation} 

Let $K$ be any finite extension of $\QQ$ contained in the cyclotomic $\Zp$-extension $K_{\infty}$. For any place $\nu$ of $K$, we define
\begin{align*}
      \tupH^{1}_{f}(K_{\nu}, A_j):=& \textup{ker}\big(\tupH^{1}(K_{\nu}, A_j) \rightarrow \tupH^{1}(I_{\nu}, A_j)\big)_{\rm div}  \text{ if } \nu \nmid p ,\\
\tupH^{1}_{\Gr}(K_{\nu}, A_j):=&\tupH^{1}(K_{\nu}, \mathcal{F}^{+}A_j)_{\rm div}  \; \; \; \; \; \;\; \; \; \; \; \;\; \; \; \; \; \; \;\; \; \;\;\text{ if } \nu \mid p.
\end{align*}
Here $I_{\nu}$ denotes the inertia group at the place $\nu$, while the subscript div stands for the maximal divisible subgroup. For primes $\nu \mid p$, the local condition is called the \emph{Greenberg condition}.
Further, we denote by 
\[ \tupH^{1}_{/\mathcal{L}_{\nu}}(K_{\nu}, A_j) := \frac{\tupH^{1}(K_{\nu}, A_j)}{\tupH^{1}_{\mathcal{L}_{\nu}}(K_{\nu}, A_j)}, \]
for $\mathcal{L}_{\nu}$ equal to $f$ if $\nu \nmid p$ or to $\Gr$ otherwise.

\begin{defn}\label{defn:selmergroupsordinary} Let $\Sigma$ be a finite set containing $p,\infty$, and the primes dividing $N_fN_g$ and denote by $\QQ_\Sigma$ the maximal extension of $\QQ$ unramified outside $\Sigma$. Then, we define
\begin{itemize}
\item[(i)]    the Greenberg Selmer group of $A_j$ over $K$ by 
\[ \textup{Sel}_{\Gr}(K,A_j) := \textup{ker} \Bigg( \tupH^{1}(\QQ_{\Sigma}/K, A_j) \rightarrow \prod\limits_{\nu\nmid p} \tupH^{1}_{/ f}(K_{\nu}, A_j) \times \prod\limits_{\nu \mid p}\tupH^{1}_{/\Gr}(K_{\nu}, A_j)   \Bigg), \]
where the first product runs through all primes of $K$ above $\Sigma\setminus\{p\}$. 
\item[(ii)]  the Greenberg Selmer group over $K_\infty $ by
\[\textup{Sel}_{\Gr}(K_\infty,A_j) := \varinjlim_{\QQ \subseteq K \subseteq K_{\infty}} \textup{Sel}_{\Gr}(K,A_j)\]
where the limits are taken over all finite subextensions of $K_\infty$ with respect to the restriction maps.
\end{itemize}
\end{defn}

\begin{remark}
For a prime $w$ of $K_{\infty}$, we can describe the local conditions defining the Selmer group $\textup{Sel}_{\Gr}(K_\infty,A_j)$ explicitly. At $w \nmid p$, it is given by
\[
\tupH^{1}_{f}(K_{\infty,w}, A_j) = 
\textup{ker}(\tupH^{1}(K_{\infty,w}, A_j) \rightarrow \tupH^{1}(I_{w}, A_j))_{\rm div}.\]
At $ w \mid p$, the limits of $\tupH^{1}(K_{n,w},\cF^+ A_j)_{\rm div}$ and  $\tupH^{1}(K_{n,w}, \cF^+A_j)$ as $n\to \infty$ are the same, which shows that 
\[ \tupH^{1}_{\Gr}(K_{\infty,w}, A_j) = \tupH^{1}(K_{\infty,w}, \mathcal{F}^{+}A_j).\]
\end{remark}

We conclude by recalling the Iwasawa main conjecture for $A_j$.

\begin{conjecture}
The Pontryagin dual of the Selmer group $\textup{Sel}_{\Gr}(K_\infty,A_j)$ is torsion over $\Lambda$ and its characteristic ideal is generated by the $p$-adic $L$-function $\pi_\Delta\circ{\rm Tw}^{j}L_p(f_\alpha,g)$.
\end{conjecture}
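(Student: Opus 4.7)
The plan is to split the conjecture into two divisibilities handled by different tools: (a) $\Lambda$-torsionness of $\cX_{j,\infty}^{\Gr}$ together with $\pi_\Delta\circ\Tw^{j}L_p(f_\alpha,g)\in\Char_\Lambda(\cX_{j,\infty}^{\Gr})$, and (b) the reverse divisibility. Part (a) is accessible via an Euler system argument; part (b) is the main obstacle.

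For (a) I would invoke the Euler system of Beilinson--Flach classes $\{\BF_{\alpha,\beta,m}\}$ constructed in \cite{LZ1}. The crucial input is the explicit reciprocity law from \cite{KLZ2}: localizing $\BF_{\alpha,\beta,1}$ at $p$ and pairing with $v_{\alpha,\beta'}$ via the Perrin--Riou regulator $\cL_{V^*}$ recovers $L_p(f_\alpha,g)$, as recorded in Definition~\ref{defn:geom}. Since $f_\alpha$ is the ordinary $p$-stabilization, the twist $\Tw^{j}\BF_{\alpha,\beta,m}$ lies in the integral Iwasawa cohomology $\HIw(\QQ(\mu_{mp^\infty}),T^*(-j))$, and the Kolyvagin--Rubin--Mazur machinery produces both the torsionness of $\cX_{j,\infty}^{\Gr}$ and the divisibility. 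This is precisely what is carried out in \cite[Theorem 11.6.4]{KLZ2}. The running hypotheses on the irreducibility of the residual representations of $f$ and $g$ and on the largeness of the image of $G_{\QQ}$ acting on $T$ are exactly what verifies the ``big image'' axioms of the Euler system, and non-vanishing of $L_p(f_\alpha,g)$ ensures the divisibility is meaningful.

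For (b) the natural strategy is a Rankin--Selberg analogue of the Skinner--Urban method: construct a quotient of $\cX_{j,\infty}^{\Gr}$ from Eisenstein congruences on a higher rank group (such as $\mathrm{GU}(2,2)$ or $\mathrm{GSp}_4$) and verify that its characteristic ideal is divisible by $\pi_\Delta\circ\Tw^{j}L_p(f_\alpha,g)$; a Poitou--Tate computation would then promote this to the opposite divisibility for $\cX_{j,\infty}^{\Gr}$ itself. When $g$ has weight one and corresponds to an Artin representation $\rho$, a lower-tech alternative is to combine Kato's main conjecture for $f$ over the splitting field $F$ of $\rho$ with a descent onto the $\rho$-isotypic component. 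Either route requires additional control over global Euler characteristics along the cyclotomic tower.

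The hard part is (b): no purely Euler-system input suffices, and one is forced into either higher-rank Eisenstein congruences or a delicate descent from Kato's divisibility. Accordingly, in the body of the paper one should assert (a) unconditionally, citing \cite[Theorem 11.6.4]{KLZ2}, and leave the reverse divisibility as a conjectural refinement except in those special settings where (b) is accessible. This is sufficient for the subsequent arithmetic applications, since the Kim--Kurihara method of \S\ref{ordinarysectionmaintheorems} only needs the upper bound from (a) to translate into the upper bound on $\Fitt_{\Lambda_n}\cX_{j,n}^{\Gr}$ required by Theorem~\ref{maintheorem1}.
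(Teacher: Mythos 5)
Your proposal matches the paper's treatment: the statement is left as a conjecture, and only the single divisibility $\pi_\Delta\circ\Tw^{j}L_p(f_\alpha,g)\in\Char_\Lambda(\cX_{j,\infty}^{\Gr})$ (together with cotorsionness) is invoked, exactly via the Beilinson--Flach Euler system and \cite[Theorem 11.6.4]{KLZ2}, as recorded in Remark~\ref{cnjordoneineq}; the subsequent Fitting-ideal arguments indeed use only this inclusion. The one small correction is that this inclusion is not asserted \emph{unconditionally} but under the hypotheses you already list (residual irreducibility, big image, non-Eisenstein, and non-vanishing of the $p$-adic $L$-function at some twist).
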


\begin{remark}\label{cnjordoneineq}
 If, in addition to our running hypotheses, we further assume that $g$ is $p$-ordinary, $f$ and $g$ are non-Eisenstein at $p$, that a big image assumption holds (cf. \cite[Hypothesis 11.1.1 and Remark 11.1.3]{KLZ2}), and that the $p$-adic $L$-function $\pi_\Delta\circ{\rm Tw}^{j}L_p(f_\alpha,g)$ is non-trivial, we can apply  \cite[Theorem 11.6.4]{KLZ2}, to deduce the cotorsionness of 
$\textup{Sel}_{\Gr}(K_\infty,A_j)$ and the inclusion of the main conjecture \[\pi_\Delta\circ {\rm Tw}^{j}L_p(f_\alpha,g) \in \textup{char}_\Lambda \textup{Sel}_{\Gr}(K_\infty,A_j)^\vee. \]
We conclude with the following remarks. \begin{itemize}
    \item The ``no exceptional zero'' hypothesis (NEZ) in Theorem 11.6.4 in \emph{op. cit.} is automatically satisfied in our setting as the weight of $f$ is strictly larger than the one of $g$.
    \item  The Selmer group appearing in Theorem 11.6.4 in \emph{op. cit.} is defined by using  the Selmer complex (introduced by Nekov\'a\v{r} in \cite{SelmerComplexes}) with the desired Greenberg local condition at $p$; it is isomorphic to $\textup{Sel}_{\Gr}(K_\infty,A_j)^\vee$ by \cite[Proposition 11.2.8]{KLZ2}.
\end{itemize} 
\end{remark}

\subsection{Plus and minus Selmer groups in the supersingular setting}\label{sec:setupSS}

In this section, we assume that $f$ corresponds to an elliptic curve $E/\QQ$ which has good supersingular reduction at $p$ with $a_p(E)=0$ and that $g$ is a weight-one form (so that $k_f=0$ and $k_g=-1$). Thus, the only integer $j$ satisfying $k_g+1\le j\le k_f$ is $j=0$. The Theta elements we defined in \S\ref{sec:Theta}  interpolate the $L$-values $L(E,g,\theta^{-1},1)$. Furthermore, $\beta_f=-\alpha_f$ with $\alpha_f^2=\beta_f^2=-p$. 

\subsubsection{Definitions of plus and minus Selmer groups}

Since $g$ is of weight one, the local representation  $T_g^*|_{G_{\Qp}}$ decomposes into a direct sum of two characters. This gives
\begin{equation}
    T^*|_{G_{\Qp}}\cong T_{f}^*|_{G_{\Qp}}(\theta_\alpha)\oplus T_{f}^*|_{G_{\Qp}}(\theta_\beta),
\label{eq:localdecompowt1}
\end{equation}
where $\theta_\mu$ denotes the finite unramified $L$-valued characters of $G_{\Qp}$ such that $\vp$ acts on $\Dcris(L(\theta_\mu))$ by multiplication by $\mu_g^{-1}$. For notational simplicity, we write $T_{f,\mu}^*$ for the $G_{\Qp}$-representation $T_{f}^*|_{G_{\Qp}}(\theta_\mu)$ and let $V_{f,\mu}^*$ denote $T_{f,\mu}^*\otimes\Qp$. It can be verified that the characteristic polynomial of $\vp|_{\Dcris(V_{f,\mu}^*)}$ is given by
\begin{equation}
    (X-\alpha_f^{-1}\mu_g^{-1})(X-\beta_f^{-1}\mu_g^{-1})=X^2+\left(\mu_g^2\epsilon_f(p)p^{k_f+1}\right)^{-1}.
\label{eq:charpoly}
\end{equation}

For $\lambda,\mu\in\{\alpha,\beta\}$, let 
\[
\cL_{\lambda,\mu}:\HIw(\Qp(\mu_{p^\infty}),T^*)\rightarrow \cH_{\ord_p(\lambda_f\mu_g)}(\Gamma)
\]
denote the map given by $(\mu'_g-\mu_g)\langle\cL_{T^*}(-),v_{\lambda,\mu}\rangle$.
By an abuse of notation, we shall write $\cL_{\lambda,\mu}$ for the composition
\[
\HIw(\Qp(\mu_{p^\infty}),T_{f,\mu}^*)\hookrightarrow \HIw(\Qp(\mu_{p^\infty}),V)\stackrel{\cL_{\lambda,\mu}}{\longrightarrow} \cH_{\ord_p(\lambda_f\mu_g)}(\Gamma).
\]
In \cite[\S3.4]{Lei2011}, it has been shown that there exist Coleman maps
\[
\col^\pm_\mu:\HIw(\Qp(\mu_{p^\infty}),T_{f,\mu}^*)\rightarrow \Lambda(\Gamma)
\]
such that
\begin{equation}
    \cL_{\lambda,\mu}=\log_p^+\col^+_\mu+\lambda_f\log_p^-\col_{\mu}^-,
\label{eq:decompose}
\end{equation}
where  $\log_p^\pm$ are Pollack's plus and minus logarithms defined in \cite{pollack03}.

\begin{remark}
We make a couple of remarks on the construction of the maps $\col^\pm_\mu$.
\begin{itemize}
    \item[(i)] In \cite{Lei2011}, we only studied the $p$-adic representation attached to a modular form. But the construction of Coleman maps is purely local and applies to any 2-dimensional crystalline representation whose trace of the Frobenius on the Dieudonn\'e module is zero. In particular, it applies to $T_{f,\mu}^*$ thanks to \eqref{eq:charpoly}.
    \item[(ii)]Note that a priori, the Coleman maps in \cite{Lei2011} take values in $\Lambda(\Gamma)\otimes L$. But we can show that they land inside $\Lambda(\Gamma)$ using the theory of Wach modules (see for example \cite[\S3.1]{LLZ1}). The link between $\col_{\mu}^\pm$ and $\cL_{\lambda,\mu}$ stated in \eqref{eq:decompose} follows from combining \cite[equation (2)]{LLZ2} and \cite[Corollary~5.11]{LLZ1}. 
\end{itemize}
\end{remark}

We may describe explicitly  $\ker\col^\pm_{\mu}$. Let us first introduce some notation and prove a preliminary lemma. 

\begin{defn}
Let $T_{f,\mu}=T_f|_{G_{\Qp}}(\theta_\mu^{-1})$ and write $A_{f,\mu}=T_{f,\mu}\otimes L/\cO$ (so that $A_{f,\mu}(1)=E[p^\infty](\theta_\mu^{-1})$ and $A_{f,\mu}$ is the Pontryagin dual of $T_{f,\mu}^*$).
\end{defn}
\begin{lemma}\label{lem:restriction}
The restriction maps
\begin{align*}
\tupH^1(\Qp(\mu_{p^m}),T_{f,\mu}(1))&\rightarrow \tupH^1(\Qp(\mu_{p^n}),T_{f,\mu}(1)),\\
\tupH^1(\Qp(\mu_{p^m}),A_{f,\mu}(1))&\rightarrow \tupH^1(\Qp(\mu_{p^n}),A_{f,\mu}(1))
\end{align*}
are injective for all $m\le n$.
\end{lemma}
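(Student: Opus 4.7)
The plan is to apply the inflation–restriction exact sequence, which identifies the kernel of each restriction map with $\tupH^1(\Gal(\Qp(\mu_{p^n})/\Qp(\mu_{p^m})), M^{G_{\Qp(\mu_{p^n})}})$ for $M = T_{f,\mu}(1)$ or $M = A_{f,\mu}(1)$. It therefore suffices to prove the vanishing $M^{G_{\Qp(\mu_{p^n})}} = 0$ for all $n \ge 0$, and since $G_{\Qp(\mu_{p^n})} \supseteq G_{\Qp(\mu_{p^\infty})}$, it even suffices to establish the stronger statement $M^{G_{\Qp(\mu_{p^\infty})}} = 0$.

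I would first handle $M = A_{f,\mu}(1)$. Because $\chi_\cyc$ is trivial on $G_{\Qp(\mu_{p^\infty})}$, a $G_{\Qp(\mu_{p^\infty})}$-invariant vector of $A_{f,\mu}(1)$ is the same as a $\theta_\mu$-eigenvector for the $G_{\Qp(\mu_{p^\infty})}$-action on $E[p^\infty]$. As $\theta_\mu$ is unramified and cut out by a finite unramified extension $L_\mu/\Qp$, any such eigenvector lies in $E[p^\infty](L_\mu(\mu_{p^\infty})) \subseteq E[p^\infty](\Qp^{\ur}(\mu_{p^\infty}))$. Hence the task reduces to showing $E[p^\infty]^{I_{\Qp(\mu_{p^\infty})}} = 0$, where $I_{\Qp(\mu_{p^\infty})}$ is the inertia subgroup of $G_{\Qp(\mu_{p^\infty})}$. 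This follows from the classical description of supersingular reduction: under our hypothesis $a_p(E) = 0$, the inertia $I_{\Qp}$ acts on $E[p]$ via a fundamental character of level $2$, $\psi: I_{\Qp} \to \mathbb{F}_{p^2}^{\times}$, which has order $p^2-1$ and factors through tame inertia. Because $\Qp(\mu_{p^\infty})/\Qp$ is totally wildly ramified (pro-$p$), the tame quotient of $I_{\Qp(\mu_{p^\infty})}$ surjects onto that of $I_{\Qp}$, so $\psi|_{I_{\Qp(\mu_{p^\infty})}}$ remains non-trivial. This forces $E[p]^{I_{\Qp(\mu_{p^\infty})}} = 0$, whence $E[p^\infty]^{I_{\Qp(\mu_{p^\infty})}} = 0$ by an easy induction on $p$-power order.

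The vanishing for $M = T_{f,\mu}(1)$ then follows by a Nakayama-type argument. Since $T_{f,\mu}(1)$ is torsion-free over $\cO$, if $v = \varpi w$ with $v \in T_{f,\mu}(1)^{G_{\Qp(\mu_{p^\infty})}}$ and $w \in T_{f,\mu}(1)$, then $w$ is automatically $G_{\Qp(\mu_{p^\infty})}$-fixed, which shows that the natural map
\[
T_{f,\mu}(1)^{G_{\Qp(\mu_{p^\infty})}}/\varpi \hookrightarrow T_{f,\mu}(1)/\varpi \cong A_{f,\mu}(1)[\varpi]
\]
is injective with image contained in $A_{f,\mu}(1)[\varpi]^{G_{\Qp(\mu_{p^\infty})}} \subseteq A_{f,\mu}(1)^{G_{\Qp(\mu_{p^\infty})}} = 0$. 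Nakayama's lemma applied to the finitely generated $\cO$-module $T_{f,\mu}(1)^{G_{\Qp(\mu_{p^\infty})}}$ then yields the vanishing, and the injectivity of the restriction maps for both $M$ follows from inflation–restriction.

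The main technical point is verifying that the fundamental character of level $2$ remains non-trivial after restriction to $I_{\Qp(\mu_{p^\infty})}$; this rests on the pro-$p$ nature of the cyclotomic tower, which kills no portion of tame inertia. Everything else in the argument is formal manipulation with inflation–restriction and Nakayama's lemma.
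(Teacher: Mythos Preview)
Your approach is essentially the same as the paper's: reduce via inflation--restriction to the vanishing of $\tupH^0$, deduce the $T$ case from the $A$ case, and use supersingularity to kill the $p$-torsion over the relevant tower. The paper simply cites Kobayashi and Kitajima--Otsuki for the vanishing of $E[p^\infty](K(\mu_{p^n}))$ over unramified $K/\Qp$, whereas you supply a direct argument via the fundamental character of level~$2$; both are fine.

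One small inaccuracy: the extension $\Qp(\mu_{p^\infty})/\Qp$ is \emph{not} totally wildly ramified---it has a tame piece of degree $p-1$ coming from $\Qp(\mu_p)/\Qp$. So the tame quotient of $I_{\Qp(\mu_{p^\infty})}$ sits as an index-$(p-1)$ subgroup of the tame quotient of $I_{\Qp}$, not as all of it. This does not break your argument: the fundamental character $\psi_2$ has order $p^2-1$, and on that index-$(p-1)$ subgroup (the kernel of $\psi_1=\psi_2^{p+1}$) its image still has order $p+1>1$, so $\psi_2$ and $\psi_2^{p}$ remain non-trivial there and $E[p]^{I_{\Qp(\mu_{p^\infty})}}=0$ as you need. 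Just adjust the sentence accordingly.
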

\begin{proof}
Recall that $\theta_\mu$ is an unramified character. Thus, $\tupH^0(\Qp(\mu_{p^n}),A_{f,\mu}(1))=0$ since $E$ is supersingular at $p$, which means that it admits no $p$-torsion over $K(\mu_{p^n})$ for any unramified extension $K/\Qp$ (see \cite[Proposition~8.7] {kobayashi03} and \cite[Proposition~3.1]{KO}). It then follows that $\tupH^0(\Qp(\mu_{p^n}),T_{f,\mu}(1))=0$ as well. The injectivity of the restriction maps is now a consequence of the inflation-restriction exact sequence.
\end{proof}

Via Lemma~\ref{lem:restriction}, we may identify $\tupH^1(\Qp(\mu_{p^m}),T_{f,\mu}(1))$ and $\tupH^1(\Qp(\mu_{p^m}),A_{f,\mu}(1))$ as sub-modules of \\ $ \tupH^1(\Qp(\mu_{p^n}),T_{f,\mu}(1))$ and $\tupH^1(\Qp(\mu_{p^m}),A_{f,\mu}(1))$ respectively for all $m\le n$.

\begin{defn}
Let $S_n^+=[0,n-1]\cap 2\ZZ$ and $S_n^-=[0,n-1]\cap (2\ZZ+1)$. Define $\tupH^1_\pm(\Qp(\mu_{p^n}),T_{f,\mu}(1))$ to be
\[
\left\{x\in \tupH^1_f(\Qp(\mu_{p^n}),T_{f,\mu}(1)):\cor_{n/m+1}(x)\in \tupH^1_f(\Qp(\mu_{p^m}),T_{f,\mu}(1))\ \forall m\in S_n^\pm\right\}.
\]
Here, $\tupH^1_f$ denotes Bloch-Kato's subgroup in $\tupH^1$ defined in \cite[(3.7.2)]{BK}.
\end{defn}
\begin{remark}\label{rk:duality}
In \cite[\S4.4 and \S4.5]{Lei2011}, we have proved that the annihilator of $\ker\col^\pm_{\mu}$ under the  local Tate duality 
\[
\HIw(\Qp,T_{f,\mu}^*)\times \tupH^1(\Qp(\mu_{p^{\infty}}),A_{f,\mu}(1))\rightarrow L/\cO
\]
is given by the image of 
\[
\varinjlim \tupH^1_\pm(\Qp(\mu_{p^n}), T_{f,\mu}(1))\otimes L/\cO
\]
inside $\tupH^1(\Qp(\mu_{p^{\infty}}),A_{f,\mu}(1))$.
\end{remark}

In what follows, we give an alternative description of these plus and minus groups in terms of points on the elliptic curve $E$, following closely the work of Kobayashi \cite{kobayashi03}.

\begin{defn}
Let $K/\Qp$ be a finite unramified extension. We define
\begin{equation}\label{eq:Epm}
   E^\pm(K(\mu_{p^n}))=\left\{P\in \hat E(K(\mu_{p^n})):\Tr_{n/m+1}P\in \hat E(K(\mu_{p^m}))\ \forall m\in S_n^\pm\right\},
\end{equation}
where $\hat{E}$ denotes the formal group of $E$ at $p$, $\hat E(K(\mu_{p^m}))$ denotes the points on $\hat E$ defined over the maximal ideal of $K(\mu_{p^m})$ and $\Tr_{n/m+1}:\hat{E}(K(\mu_{p^n}))\rightarrow\hat{E} (K(\mu_{p^{m+1}}))$ is the trace map on the formal group.
\end{defn}

\begin{conv}\label{conv:pm}
 Note that our choice of signs is the same as the one given in \cite{Lei2011}, which corresponds to the choice of signs for the plus and minus $p$-adic $L$-functions  given in \cite{pollack03}. But it is opposite to the one given in \cite{kobayashi03,Kim,KO}. In particular, we emphasize that our groups $E^\pm(K(\mu_{p^n}))$ correspond to $E^\mp(K(\mu_{p^n}))$ given in \cite[\S3.1.2]{KK} when $K=\Qp$. 
\end{conv}

It has been proved in \cite[Proposition~8.12]{kobayashi03} and \cite[Proposition~3.16]{KO} that there is a short exact sequence
\begin{equation}\label{eq:pmSES}
    0\rightarrow \hat{E}(K)\rightarrow E^+(K(\mu_{p^n}))\oplus E^-(K(\mu_{p^n}))\rightarrow \hat E(K(\mu_{p^n}))\rightarrow 0,
\end{equation}
where the first map is the diagonal embedding and the second map is given by $(x, y)\mapsto x-y$.

\begin{lemma}\label{lem:projectlambda}
Let $\mu\in\{\alpha,\beta\}$ and $K$ be the finite unramified extension of $\Qp$ given by $(\overline{\Qp})^{\ker\theta_\mu }$. Let $\Phi_\mu$ be the composition
\[
\tupH^1(K(\mu_{p^n}),T_f(1))\cong \tupH^1\left(\Qp(\mu_{p^n}),T_f(1)\otimes\Ind_K^{\Qp} \mathbf{1}\right)\rightarrow \tupH^1\left(\Qp(\mu_{p^n}),T_{f,\mu}(1)\right),
\]
where the first isomorphism is given by Shapiro's lemma and the second map is induced by the natural projection $\Ind_K^{\Qp} \mathbf{1}\rightarrow \theta_\mu$. Then,
\[
\Phi_{\mu}\left(E^\pm(K(\mu_{p^n}))\right)=\tupH^1_\pm(\Qp(\mu_{p^n}),T_{f,\mu}(1)),
\]
where $\hat{E}(K(\mu_{p^n}))$ is identified with $\tupH^1_f(K(\mu_{p^n}),T_f(1))$ via the Kummer map.
\end{lemma}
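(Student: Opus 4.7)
The proof plan follows the strategy of \cite[Proposition~8.12]{kobayashi03}, adapted to track the extra twist by the unramified finite-order character $\theta_\mu$ through the argument. Since $\theta_\mu$ is unramified of finite order, $K/\Qp$ is a finite unramified extension and the projection $\Ind_K^{\Qp}\mathbf{1}\twoheadrightarrow \theta_\mu^{\pm 1}$ (onto the appropriate isotypic component after extending scalars to $L$) is a morphism of crystalline $G_{\Qp}$-representations. The two properties of $\Phi_\mu$ that I need are therefore:  $\Phi_\mu$ commutes with all corestriction maps (by functoriality of group cohomology), and $\Phi_\mu$ sends $H^1_f$ into $H^1_f$ (since morphisms of crystalline representations preserve the Bloch--Kato subgroup, via the defining exact sequence of $B_\cris$ and $B_\dR$).

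For the forward inclusion, let $P\in E^\pm(K(\mu_{p^n}))$. The Kummer isomorphism $\hat{E}(K(\mu_{p^m}))\cong H^1_f(K(\mu_{p^m}),T_f(1))$ is well-known to intertwine the formal-group trace $\Tr_{n/m+1}$ with the cohomological corestriction $\cor_{n/m+1}$, so the hypothesis $\Tr_{n/m+1}P\in\hat{E}(K(\mu_{p^m}))$ for all $m\in S_n^\pm$ translates into $\cor_{n/m+1}\kappa(P)\in H^1_f(K(\mu_{p^m}),T_f(1))$ for the same $m$. Applying $\Phi_\mu$ and using the two compatibilities from the previous paragraph yields $\cor_{n/m+1}\Phi_\mu(\kappa(P))\in H^1_f(\Qp(\mu_{p^m}),T_{f,\mu}(1))$ for $m\in S_n^\pm$, which by definition means $\Phi_\mu(\kappa(P))\in H^1_\pm(\Qp(\mu_{p^n}),T_{f,\mu}(1))$.

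For the reverse inclusion, the key observation is that over $L$ the representation $\Ind_K^{\Qp}\mathbf{1}\otimes L$ decomposes as a direct sum of the characters of $\Gal(K/\Qp)$, and $\Phi_\mu\otimes L$ is precisely the projection onto the relevant $\theta_\mu^{\pm1}$-isotypic component. This projection admits a canonical $G_{\Qp}$-equivariant section, so $\Phi_\mu$ becomes surjective on $H^1_f$ after inverting $p$, compatibly with the plus/minus filtrations. To promote this to an equality of $\cO$-submodules, I would invoke a rank comparison: both sides fit into exact sequences derived from \eqref{eq:pmSES} on the source (via Kummer) and its analogue on the target obtained by the argument of \cite[Proposition~8.12]{kobayashi03} applied to $T_{f,\mu}(1)$, and these sequences have matching $\cO$-ranks. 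Combined with the torsion-freeness provided by Lemma~\ref{lem:restriction}, this forces the inclusion to be an equality.

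The main obstacle is establishing the reverse inclusion at the integral level rather than merely after tensoring with $L$: the section of $\Phi_\mu$ only exists on the $L$-side, and so one must carefully verify that the image $\Phi_\mu(E^\pm(K(\mu_{p^n})))$ is saturated inside $H^1_\pm(\Qp(\mu_{p^n}),T_{f,\mu}(1))$. The cleanest way to handle this is to use Lemma~\ref{lem:restriction} together with the explicit structure of $\hat{E}(K(\mu_{p^n}))$ as a free module over a suitable group ring to match the two sides of the rank comparison exactly; verifying that the analogue of \eqref{eq:pmSES} holds integrally on the target side, precisely as in Kobayashi's original argument applied to the crystalline representation $T_{f,\mu}(1)$, is the technical heart of the proof.
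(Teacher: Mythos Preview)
Your forward inclusion is correct and matches the paper.  For the reverse inclusion, however, you are working much harder than necessary, and the ``main obstacle'' you identify is not actually present.

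The paper's argument is essentially a one-liner based on the observation that $\Phi_\mu$ is a \emph{split} surjection already at the integral level, not just after inverting $p$.  The character $\theta_\mu$ has finite order prime to $p$ (its order divides $[K:\Qp]$, which is coprime to $p$ under the running hypotheses on $g$), and $L$ contains its values; hence the group ring $\cO[\Gal(K/\Qp)]\cong \Ind_K^{\Qp}\mathbf{1}$ decomposes over $\cO$ as a direct sum of $\cO$-valued characters, one of which is $\theta_\mu^{-1}$.  Thus $\Phi_\mu$ is the projection onto a direct $\cO[G_{\Qp}]$-summand and admits a $G_{\Qp}$-equivariant $\cO$-linear section $s$.

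The paper then just invokes the additivity $\tupH^1_f(\Qp(\mu_{p^m}),M_1\oplus M_2)=\tupH^1_f(\Qp(\mu_{p^m}),M_1)\oplus \tupH^1_f(\Qp(\mu_{p^m}),M_2)$, which is immediate from the defining exact sequence involving $B_{\mathrm{cris}}$.  Consequently both $\Phi_\mu$ and $s$ carry $\tupH^1_f$ to $\tupH^1_f$ at every level, and both commute with corestriction.  Given $x\in \tupH^1_\pm(\Qp(\mu_{p^n}),T_{f,\mu}(1))$, the class $s(x)$ lies in $\tupH^1_f(K(\mu_{p^n}),T_f(1))$ and satisfies $\cor_{n/m+1}(s(x))=s(\cor_{n/m+1}(x))\in \tupH^1_f$ for every $m\in S_n^\pm$; hence $s(x)\in E^\pm(K(\mu_{p^n}))$ and $\Phi_\mu(s(x))=x$.

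So your rank-comparison and saturation argument via Lemma~\ref{lem:restriction} and the analogue of \eqref{eq:pmSES} is unnecessary: the integral section already exists, and the paper uses nothing beyond ``$\tupH^1_f$ is additive on direct sums'' together with functoriality of corestriction.
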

\begin{proof}
Note that $T_f(1)$ is the $p$-adic Tate module of $E$. We recall \cite[Example 3.10.1]{BK} that $\hat{E}(K(\mu_{p^n}))$ can be identified with $\tupH^1_f(K(\mu_{p^n}),T_f(1))$ under the Kummer map. It follows from the definition of $\tupH^1_f$ that $\tupH^1_f(\Qp,M_1\oplus M_2)=\tupH^1_f(\Qp,M_1)\oplus \tupH^1_f(\Qp,M_2)$ for any $G_{\Qp}$-modules $M_1$ and $M_2$. Thus, the map $\Phi_{\mu}$ sends $\tupH^1_f(K(\mu_{p^n}),T_f(1))$ onto $\tupH^1_f(\Qp(\mu_{p^n}),T_{f,\mu}(1))$. Furthermore, $\Phi_\mu$ commutes with the trace maps. Hence our result follows.
\end{proof}

\begin{defn}
Let $\bullet,\circ\in\{+,-\}$. We define
\[
\tupH^1_{\bullet\circ}(\Qp(\mu_{p^n}),T(1))=\tupH^1_\bullet(\Qp(\mu_{p^n}),T_{f,\alpha}(1))\oplus \tupH^1_{\circ}(\Qp(\mu_{p^n}),T_{f,\beta}(1)),
\]
which can be realized as subgroups of $\tupH^1_f(\Qp(\mu_{p^n}),T(1))$ via \eqref{eq:localdecompowt1}. We define $\tupH^1_{\bullet\circ}(\Qp(\mu_{p^n}),A(1))$ to be the image of $\tupH^1_{\bullet\circ}(\Qp(\mu_{p^n}),T(1))\otimes\Qp/\Zp$ inside $\tupH^1_f(\Qp(\mu_{p^n}),A(1))$.
\end{defn}

\begin{remark}
Suppose $\bullet=\circ$, then  $\tupH^1_{\bullet\bullet}(\Qp(\mu_{p^n}),T(1))$ is given by
\[
\left\{x\in \tupH^1_f(\Qp(\mu_{p^n}),T(1)):\cor_{n/m+1}(x)\in \tupH^1_f(\Qp(\mu_{p^m}),T(1))\ \forall m\in S_n^\bullet\right\}.
\]
\end{remark}

\begin{corollary}\label{cor:pmSES}
Let $\bullet,\circ\in\{+,-\}$ and write $\bullet'$ (resp. $\circ'$) for the unique element of $\{+,-\}\setminus \{\bullet\}$ (resp. $\{+,-\}\setminus \{\circ\}$)  We have a short exact sequence
\[
0\rightarrow \tupH^1_f(\Qp,T(1))\rightarrow \tupH^1_{\bullet\circ}(\Qp(\mu_{p^n}),T(1))\oplus \tupH^1_{\bullet'\circ'}(\Qp(\mu_{p^n}),T(1))\rightarrow \tupH^1_f(\Qp(\mu_{p^n}),T(1))\rightarrow 0.
\]
\end{corollary}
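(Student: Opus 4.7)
The plan is to use the $G_{\Qp}$-decomposition of $T(1)$ coming from \eqref{eq:localdecompowt1} to split the desired sequence into two pieces indexed by $\mu\in\{\alpha,\beta\}$, and then to obtain each piece by transporting Kobayashi's exact sequence \eqref{eq:pmSES} through the projection $\Phi_\mu$ of Lemma \ref{lem:projectlambda}.

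Dualising and twisting \eqref{eq:localdecompowt1} gives $T(1)|_{G_{\Qp}}\cong T_{f,\alpha}(1)\oplus T_{f,\beta}(1)$. By definition, every cohomology group appearing in the statement splits as a direct sum indexed by $\mu$; in particular, the direct sum $\tupH^1_{\bullet\circ}(\Qp(\mu_{p^n}),T(1))\oplus\tupH^1_{\bullet'\circ'}(\Qp(\mu_{p^n}),T(1))$ reassembles as $\bigoplus_\mu\bigl(\tupH^1_+(\Qp(\mu_{p^n}),T_{f,\mu}(1))\oplus\tupH^1_-(\Qp(\mu_{p^n}),T_{f,\mu}(1))\bigr)$, and the diagonal embedding and subtraction map both respect this $\mu$-splitting. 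The claim thus reduces to proving, for each $\mu$ separately, the short exactness of
\[
0\to \tupH^1_f(\Qp,T_{f,\mu}(1))\to \tupH^1_+(\Qp(\mu_{p^n}),T_{f,\mu}(1))\oplus \tupH^1_-(\Qp(\mu_{p^n}),T_{f,\mu}(1))\to \tupH^1_f(\Qp(\mu_{p^n}),T_{f,\mu}(1))\to 0.
\]

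Fix $\mu$ and let $K=K_\mu$. I would apply the Kummer maps to \eqref{eq:pmSES} to obtain a short exact sequence of $\cO$-submodules of $\tupH^1(K(\mu_{p^n}),T_f(1))$, then invoke Shapiro's lemma to view each term as $\tupH^1$ over $\Qp$ (respectively $\Qp(\mu_{p^n})$) of $T_f(1)\otimes \Ind_K^{\Qp}\mathbf{1}$. Because $K/\Qp$ is unramified of order a unit in $\cO$ under our running hypotheses (since $\theta_\mu$ has order a divisor of the order of the unramified part of the local representation attached to the weight-one form $g$, which is coprime to $p$), the group algebra $\cO[\Gal(K/\Qp)]$ is semisimple, so $\Ind_K^{\Qp}\mathbf{1}$ splits as an $\cO$-linear direct sum of its character isotypic components. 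The map $\Phi_\mu$ of Lemma \ref{lem:projectlambda} is then the projection onto the $\theta_\mu$-summand, hence $\cO$-linearly split; applying it termwise therefore preserves short exactness. Lemma \ref{lem:projectlambda} (and its vacuous $n=0$ specialisation) identifies the images of $\hat E(K)$, $E^\pm(K(\mu_{p^n}))$ and $\hat E(K(\mu_{p^n}))$ under $\Phi_\mu$ with $\tupH^1_f(\Qp,T_{f,\mu}(1))$, $\tupH^1_\pm(\Qp(\mu_{p^n}),T_{f,\mu}(1))$ and $\tupH^1_f(\Qp(\mu_{p^n}),T_{f,\mu}(1))$, respectively, so the projected sequence is exactly the one we want.

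The main technical obstacle I anticipate is justifying that the character decomposition of $\Ind_K^{\Qp}\mathbf{1}$ is a splitting over $\cO$ rather than only over $L$, so as to ensure that the projected sequence remains short exact at the level of $\cO$-modules. This reduces to a check that $[K:\Qp]$, i.e.\ the order of $\theta_\mu$, is invertible in $\cO$; should this hypothesis be delicate in some application, the alternative strategy would be to mimic Kobayashi's inductive proof of \eqref{eq:pmSES} directly for the twisted representation $T_{f,\mu}(1)$, exploiting that $T_{f,\mu}(1)$ is $T_f(1)$ twisted by an unramified character and thus inherits the trace-jumping calculus verbatim.
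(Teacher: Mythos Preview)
Your approach is correct and is precisely what the paper does: combine Kobayashi's short exact sequence \eqref{eq:pmSES} with Lemma~\ref{lem:projectlambda}, splitting along $\mu\in\{\alpha,\beta\}$. The semisimplicity concern you raise about $[K_\mu:\Qp]$ being invertible in $\cO$ is a valid technical point that the paper's one-line proof elides; it is satisfied in the intended applications by the hypothesis $p\nmid[F:\QQ]$ (see \S\ref{subsec:ellipticcurvesovernumberfields}).
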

\begin{proof}
This follows from combining \eqref{eq:pmSES} and Lemma~\ref{lem:projectlambda}.
\end{proof}

\begin{defn}Let $\bullet,\circ\in\{+,-\}$.\label{defn:pmSel}
\begin{itemize}
    \item[(i)] Let $\Sigma$ be the set of primes given in Definition~\ref{defn:selmergroupsordinary}. We define the plus and minus Selmer groups $\Sel_{\bullet\circ}(\QQ(\mu_{p^n}),A(1))$ by 
\[
\ker\left(\tupH^1(\QQ_\Sigma/\QQ(\mu_{p^n}),A(1))\rightarrow\prod_{\nu\nmid p}\tupH^1_{/f}(\QQ(\mu_{p^n})_\nu,A(1))\times \frac{\tupH^1(\Qp(\mu_{p^n}),A(1))}{\tupH^1_{\bullet\circ}(\Qp(\mu_{p^n}),A(1))}\right).
\]
and we define $\Sel_{\bullet\circ}(\QQ(\mu_{p^\infty}),A(1))=\varinjlim_n\Sel_{\bullet\circ}(\QQ(\mu_{p^n}),A(1))$.

\item[(ii)] For all $0\le n\le \infty$, we define $\Sel_{\bullet\circ}(K_n,A(1))$ to be  the $\mathbf{1}$-isotypic component of $\Sel_{\bullet\circ}(\QQ(\mu_{p^{n+1}}),A(1))$, where $\mathbf{1}$ is the trivial character on $\Delta$.
\end{itemize}

\end{defn}
The following lemma about the local quotient in the definition of the plus and minus Selmer groups will be used in \S\ref{subsec:nonexistenceoflambdasubmoduleoffiniteindex} below.

\begin{lemma}\label{lem:corankpm}
For $\bullet,\circ\in\{+,-\}$, the Pontryagin dual of $\displaystyle\left(\frac{\tupH^1(\Qp(\mu_{p^\infty}),A(1))}{\tupH^1_{\bullet\circ}(\Qp(\mu_{p^\infty}),A(1))}\right)^\Delta$ is of rank 2 over $\Lambda$.
\end{lemma}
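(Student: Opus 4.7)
The plan is to exploit the direct sum decomposition \eqref{eq:localdecompowt1} of $T^*|_{G_{\Qp}}$ in order to split the quotient as
\[
\frac{\tupH^1(\Qp(\mu_{p^\infty}),A(1))}{\tupH^1_{\bullet\circ}(\Qp(\mu_{p^\infty}),A(1))}\;\cong\;\bigoplus_{\mu\in\{\alpha,\beta\}}Q_\mu,\qquad Q_\mu:=\frac{\tupH^1(\Qp(\mu_{p^\infty}),A_{f,\mu}(1))}{\tupH^1_{\star_\mu}(\Qp(\mu_{p^\infty}),A_{f,\mu}(1))},
\]
where $\star_\alpha=\bullet$ and $\star_\beta=\circ$. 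Since $|\Delta|$ is coprime to $p$, taking $\Delta$-invariants is exact and commutes with Pontryagin duality. Consequently, it suffices to show that $(Q_\mu^\Delta)^\vee$ has $\Lambda$-rank $1$ for each $\mu\in\{\alpha,\beta\}$.

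By the local Tate duality pairing between $\HIw(\Qp(\mu_{p^\infty}),T_{f,\mu}^*)$ and $\tupH^1(\Qp(\mu_{p^\infty}),A_{f,\mu}(1))$, Remark~\ref{rk:duality} identifies the subgroup $\tupH^1_{\star_\mu}(\Qp(\mu_{p^\infty}),A_{f,\mu}(1))$ (that is, the image of the inductive limit of local plus/minus groups) with the orthogonal complement of $\ker\col_{\mu}^{\star_\mu}$. Dualising accordingly yields an isomorphism $Q_\mu^\vee\cong\ker\col_{\mu}^{\star_\mu}$, and passing to $\Delta$-invariants on both sides, the task reduces to showing that $(\ker\col_{\mu}^{\star_\mu})^\Delta$ has $\Lambda$-rank $1$.

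The Iwasawa cohomology $\HIw(\Qp(\mu_{p^\infty}),T_{f,\mu}^*)$ will be shown to have $\Lambda(\Gamma)$-rank $2$: $T_{f,\mu}^*$ is two-dimensional, and by the Frobenius eigenvalues recorded in \eqref{eq:charpoly} together with the unramifiedness of $\theta_\mu$, the Galois invariants of both $T_{f,\mu}^*$ and its Tate dual over $\Qp(\mu_{p^\infty})$ vanish, so the standard Euler characteristic formula gives torsion-freeness and rank $2$. I would then argue that $\col_{\mu}^{\star_\mu}$ is non-zero on the $\mathbf{1}$-isotypic component of $\HIw$: the decomposition \eqref{eq:decompose} expresses $\cL_{\lambda,\mu}$ in terms of $\col_\mu^{\pm}$ and $\log_p^{\pm}$, and the Perrin--Riou map $\cL_{V^*}$ is injective modulo $\Lambda(\Gamma)$-torsion (once more via the vanishing of $\tupH^0$), forcing both $\col_\mu^+$ and $\col_\mu^-$ to be non-trivial on every $\Delta$-isotypic component. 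Hence the image of $\col_{\mu}^{\star_\mu}$ in the $\mathbf{1}$-isotypic component of $\Lambda(\Gamma)$, which equals $\Lambda$, has $\Lambda$-rank $1$, and so does its kernel by the rank-nullity count $2-1=1$.

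Summing the two contributions from $\mu=\alpha$ and $\mu=\beta$ then yields the desired $\Lambda$-rank of $2$. The most delicate step is the non-triviality of $\col_\mu^{\star_\mu}$ on the $\mathbf{1}$-isotypic component, which depends on the injectivity of $\cL_{V^*}$; this in turn relies crucially on the supersingular assumption on $f$ together with the unramifiedness of $\theta_\mu$, which are precisely what ensure the vanishing of $G_{\Qp(\mu_{p^\infty})}$-invariants needed for both the rank-$2$ statement and the non-vanishing of the Coleman maps.
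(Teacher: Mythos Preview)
Your argument is correct and follows the same line as the paper's proof: split into the two summands indexed by $\mu$, identify the Pontryagin dual of each $Q_\mu^\Delta$ with $(\ker\col_\mu^{\star_\mu})^\Delta$ via Remark~\ref{rk:duality}, and conclude by showing the image of $\col_\mu^{\star_\mu}$ in $\Lambda$ has rank one. The only point of divergence is in justifying that this image is non-zero: the paper simply invokes an explicit description of $\image\col_\mu^\pm$ from the literature, whereas you argue it from the injectivity of the Perrin--Riou map together with the decomposition~\eqref{eq:decompose}, observing that if $\col_\mu^{\star_\mu}$ vanished on the $\mathbf{1}$-isotypic component then the image of $(\cL_{\alpha,\mu},\cL_{\beta,\mu})$ would collapse to a rank-one submodule, contradicting injectivity on a rank-two domain. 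Your route is a bit more self-contained; the paper's is shorter but relies on an external reference.
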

\begin{proof}
It is enough to show that the Pontryagin dual of the summand $\displaystyle\left(\frac{\tupH^1(\Qp(\mu_{p^\infty}),A_{f,\mu}(1))}{\tupH^1_\pm(\Qp(\mu_{p^\infty}),A_{f,\mu}(1))}\right)^\Delta$ is  of rank one over $\Lambda$. By Remark~\ref{rk:duality}, it is isomorphic to $(\ker\col_\mu^\pm)^\Delta$. Since $\HIw(\Qp,T_{f,\mu}^*)^\Delta$ is of rank two over $\Lambda$, it is enough to show that 
\[
\frac{\HIw(\Qp,T_{f,\mu}^*)^\Delta}{(\ker\col_\mu^\pm)^\Delta}\cong(\image\col_\mu^\pm)^\Delta
\]
is of rank one over $\Lambda$. As the right-hand side is a non-zero submodule of $\Lambda$  (see for example \cite[Appendix~A]{Harron} for an explicit description). In particular, it is a rank-one $\Lambda$-module.
\end{proof}

\subsubsection{Plus and minus $p$-adic $L$-functions and signed main conjectures}

We define plus and minus $p$-adic $L$-functions in terms of the Coleman maps  and signed Beilinson--Flach elements given below.
\begin{theorem}\label{thm:signedBF}
Let $m$ be an integer as given in Definition~\ref{defn:BF}. For $\lambda,\mu\in\{\alpha,\beta\}$, there exist bounded elements $\BF_{\pm,\mu,m}\in \varpi^{-s}\HIw(\QQ(\mu_{mp^\infty}),T^*)$ such that
 \[
\BF_{\lambda,\mu,m}=\log_{p}^+\BF_{+,\mu, m}+\lambda\log_p^-\BF_{-,\mu,m},
 \]
where $s$ is an integer independent of $m$.
\end{theorem}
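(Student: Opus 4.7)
The plan is to construct $\BF_{\pm,\mu,m}$ by explicit linear combinations of the unsigned Beilinson--Flach classes, effectively inverting the decomposition \eqref{eq:decompose} at the level of Beilinson--Flach elements themselves. Using the hypothesis $\alpha_f=-\beta_f$ of this subsection, I would set
\[
\BF_{+,\mu,m}:=\frac{\BF_{\alpha,\mu,m}+\BF_{\beta,\mu,m}}{2\log_p^+},\qquad \BF_{-,\mu,m}:=\frac{\BF_{\alpha,\mu,m}-\BF_{\beta,\mu,m}}{2\alpha_f\log_p^-}.
\]
A direct computation using $\beta_f=-\alpha_f$ shows that these formal candidates satisfy $\BF_{\lambda,\mu,m}=\log_p^+\BF_{+,\mu,m}+\lambda_f\log_p^-\BF_{-,\mu,m}$, so the substantive content of the theorem reduces to showing that each $\BF_{\pm,\mu,m}$ lies in $\varpi^{-s}\HIw(\QQ(\mu_{mp^\infty}),T^*)$ for some $s\in\ZZ$ independent of $m$.

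For the boundedness I would pass to the local picture at $p$ and invoke the Wach-module formalism. The Beilinson--Flach class $\BF_{\lambda,\mu,m}$ arises by pairing a common ``uprojected'' master class against the $\vp$-eigenvector $v_{\lambda,\mu}^*\in\Dcris(V^*)$, which accounts for its a priori growth of order $\ord_p(\alpha_f\mu_g)=\tfrac{1}{2}+\ord_p(\mu_g)$. Pairing the same master class instead against the integral Wach basis of $\Dcris(T^*|_{G_{\Qp}})$ produces bounded Iwasawa cohomology classes. Under the hypothesis $\alpha_f=-\beta_f$, the change-of-basis matrix between the Wach basis and the $\vp$-eigenbasis diagonalizes on each direct summand $T_{f,\mu}^*$ of \eqref{eq:localdecompowt1}, with diagonal entries equal to $\log_p^\pm$ up to units in $\Lambda$. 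Thus $(\BF_{\alpha,\mu,m}+\BF_{\beta,\mu,m})$ and $(\BF_{\alpha,\mu,m}-\BF_{\beta,\mu,m})$ are, up to bounded scalars, $\log_p^+$ and $\alpha_f\log_p^-$ times Wach-basis projections of the master class, and the latter are bounded.

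As a cross-check I would apply the integral Coleman maps $\col_\mu^\pm$ to the candidates; by \eqref{eq:decompose}, the resulting images lie in $\Lambda(\Gamma)$ up to a fixed denominator, which in view of the injectivity of the combined Coleman maps (modulo bounded torsion) independently confirms boundedness of $\BF_{\pm,\mu,m}$. The main obstacle will be showing that the integer $s$ can be chosen uniformly in the auxiliary conductor $m$. Since the Wach module and its change of basis are purely local at $p$, the denominator coming from the local computation depends only on $T^*|_{G_{\Qp}}$. To transfer this uniformity to the global classes over $\QQ(\mu_{mp^\infty})$, I would exploit the fact that the Euler-system norm relations for $\{\BF_{\lambda,\mu,m}\}_m$ are preserved by the formulas above: division by $\log_p^\pm\in\cH(\Gamma_1)$ commutes with the tame-conductor norm maps, so that the uniform local bound propagates to all $m$. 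In total, the argument amounts to adapting the Kobayashi--Pollack signed decomposition to the Rankin--Selberg convolution, in the spirit of the constructions in \cite{BLLV}.
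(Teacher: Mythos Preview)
Your explicit formulae for $\BF_{\pm,\mu,m}$ are correct, and the overall strategy---invert the $2\times 2$ system using $\beta_f=-\alpha_f$, then verify boundedness---is exactly what the cited reference \cite[Theorem~3.7]{BFSuper} does. The paper itself gives no further argument beyond that citation.

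However, your boundedness argument contains a genuine misconception. The class $\BF_{\lambda,\mu,m}$ is \emph{not} obtained by pairing a ``master class'' against the eigenvector $v_{\lambda,\mu}^*\in\Dcris(V^*)$; that pairing is how one extracts $p$-adic $L$-functions \emph{from} the Beilinson--Flach class, not how the class itself is built. The dependence on $\lambda$ enters through the choice of $p$-stabilisation $f_\lambda$ in the geometric construction on modular curves. Consequently, your Wach-module paragraph---which would at best control the \emph{local} image of $\BF_{\lambda,\mu,m}$ under $\cL_{V^*}$---says nothing about the global Iwasawa cohomology class, and the Coleman-map cross-check suffers from the same defect: injectivity of $\col^\pm_\mu$ on local Iwasawa cohomology does not bound a global class.

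The actual input, which is what \cite{BFSuper} uses, is the finite-level projection formula coming from the construction in \cite{LZ1}: for $n\ge1$ the image of $\BF_{\lambda,\mu,m}$ at cyclotomic level $p^n$ is $(\lambda_f\mu_g)^{-n}$ times an \emph{integral} class in $H^1(\QQ(\mu_{mp^n}),T^*)$ that is independent of $\lambda$. With $\beta_f=-\alpha_f$ this gives $(\alpha_f\mu_g)^{-n}(1+(-1)^n)$ and $(\alpha_f\mu_g)^{-n}(1-(-1)^n)$ for the sum and difference, so $\BF_{\alpha,\mu,m}\pm\BF_{\beta,\mu,m}$ vanishes at the zeros of $\log_p^\pm$ respectively. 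Pollack's growth estimate (order $\tfrac12$ versus the order-$\tfrac12$ zeros of $\log_p^\pm$) then shows the quotients are bounded, and uniformity in $m$ is immediate because the finite-level classes are integral for every $m$.
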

\begin{proof}
This follows from the same proof as \cite[Theorem~3.7]{BFSuper}.
\end{proof}

\begin{defn}
Let  $\bullet,\circ\in\{+,-\}$. We define the plus and minus $p$-adic $L$-functions $L_{p}^{\bullet\circ}(f,g)\in\varpi^{-s}\Lambda(\Gamma)$ to be  $\col^\bullet_{\alpha}(\BF_{\circ,\beta,1})$.
\end{defn}
\begin{remark}
 We may switch $\alpha$ and $\beta$ in the definition. We expect that the resulting $p$-adic $L$-functions would differ by $-1$ (see \cite[hypothesis {\bf(A-Sym)}, on P.926]{BLLV}).
   \end{remark}
   
   \begin{conv}\label{conv:mp}
     Our choice of signs for the $p$-adic $L$-functions correspond to the one given in \cite{Lei2011,pollack03}. But it is opposite to the one in \cite{kobayashi03}. Unlike the plus and minus conditions, our choice of signs for the $p$-adic $L$-functions actually agrees with the one made in \cite{KK} (see in particular Remark~3.1).
   \end{conv}

\begin{conjecture}\label{conj:pmIMC}
Let $\bullet,\circ\in\{+,-\}$. The Pontryagin dual of the Selmer group $\Sel_{\bullet,\circ}(\QQ(\mu_{p^\infty}),A(1))$ is torsion over $\Lambda$. Furthermore, its characteristic ideal is generated by $\pi_\Delta L_p^{\bullet\circ}(f,g)$.
\end{conjecture}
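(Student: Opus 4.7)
The plan is to establish Conjecture~\ref{conj:pmIMC} in two steps, following the strategy used for classical supersingular elliptic curves in \cite{kobayashi03} and adapted to the Rankin--Selberg setting in \cite{BLLV}. First, I would verify that the Pontryagin dual $X^{\bullet\circ}$ of $\Sel_{\bullet\circ}(\QQ(\mu_{p^\infty}), A(1))$ is $\Lambda$-torsion. This reduces to the non-vanishing of $L_p^{\bullet\circ}(f,g)$, which follows from the non-vanishing of $L_p(f_\lambda,g)$ at a suitable finite-order character (an input analogous to the one appearing in Theorem~\ref{maintheorem1}) combined with the decomposition in Theorem~\ref{thm:signedBF} and the fact that $\log_p^+$ and $\log_p^-$ have disjoint sets of zeros. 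The cotorsion itself then follows from a Poitou--Tate global duality computation, using Lemma~\ref{lem:corankpm} to control the generic rank contribution of the local condition at $p$ together with the known generic rank of the global Iwasawa cohomology of $T(1)$.

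For the divisibility
\[
\pi_\Delta L_p^{\bullet\circ}(f,g) \in \Char_\Lambda(X^{\bullet\circ}),
\]
I would apply the Kolyvagin--Rubin Euler system machine to the signed Beilinson--Flach classes $\BF_{\bullet,\beta,m}$ of Theorem~\ref{thm:signedBF}. The Euler system norm-compatibility is inherited from that of $_c\cBF^{\lambda,\beta}_{m,1}$ established in \cite{LZ1}, transported through the decomposition in Theorem~\ref{thm:signedBF} (which is level-independent, so orthogonality of the $\log_p^\pm$ coefficients preserves compatibility). The essential link between the Euler system and the $p$-adic $L$-function, playing the role of the explicit reciprocity law, is the tautological identity $\col^\bullet_\alpha(\BF_{\circ,\beta,1}) = L_p^{\bullet\circ}(f,g)$ from the very definition of $L_p^{\bullet\circ}(f,g)$. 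A Kolyvagin derivative argument as adapted to the signed setting in \cite{Kim} then yields the desired divisibility, provided one has sufficient control on the local images of the signed classes at primes of bad reduction and on the ``big image'' hypotheses for the Galois representation $T$.

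The reverse divisibility is the main obstacle. In the elliptic curve case it is obtained by combining \cite{kobayashi03} with Skinner--Urban style Eisenstein congruence methods, but no analogous input is presently available for Rankin--Selberg convolutions in the supersingular setting: the Eisenstein congruence side would require a careful study of $p$-non-ordinary families of automorphic forms on a higher-rank group. A more promising intermediate goal would be to first establish the one-sided divisibility in detail, and then deduce the full equality by combining it with a main conjecture for the geometric $L$-function proved via a different method (for instance, via a Heegner point approach on an auxiliary Shimura curve). This is why in the main arithmetic applications, notably Theorem~\ref{intro:maintheoremss}, we treat Conjecture~\ref{conj:pmIMC} as a working hypothesis rather than a theorem.
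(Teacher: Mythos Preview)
The statement you are addressing is a \emph{conjecture}, and the paper does not give a proof of it. There is therefore no ``paper's own proof'' to compare against. What the paper does do (see Remark~\ref{conjssknownres}) is note that one inclusion, namely
\[
\pi_\Delta L_p^{\bullet\circ}(f,g)\in\Char_\Lambda\Sel_{\bullet\circ}(K_\infty,A(1))^\vee,
\]
follows (up to powers of $\varpi$) from the signed Euler system machinery of \cite[Theorem~6.2.4]{BLLV}, using the signed Beilinson--Flach classes of Theorem~\ref{thm:signedBF}. Your sketch of the Euler system divisibility is consistent with this, and you are right that the defining identity $\col^\bullet_\alpha(\BF_{\circ,\beta,1})=L_p^{\bullet\circ}(f,g)$ plays the role of the reciprocity law.

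Where your proposal is not a proof is precisely where you say it is not: the reverse divisibility is genuinely open in this setting, and nothing in the paper claims otherwise. Your write-up reads as a reasonable research plan rather than a proof, and your concluding paragraph acknowledges this. Two small points to tighten if you pursue the one-sided inclusion in detail: first, the cotorsion argument you sketch via Poitou--Tate and Lemma~\ref{lem:corankpm} is correct in outline, but note that the paper packages exactly this computation under hypothesis \textbf{(Co-tor)} and Lemma~\ref{lem:corank}, so you can cite that directly. Second, the divisibility coming out of \cite{BLLV} is only up to powers of $\varpi$; removing that ambiguity requires integrality input of the type discussed in Appendix~\ref{arithmeticconstructionappendix}, which you do not mention.
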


\begin{remark}\label{conjssknownres}
It is explained in  the proof of \cite[Theorem~6.2.4]{BLLV} that under certain hypothesis, the existence of signed Euler systems in Theorem~\ref{thm:signedBF} allows us to show that $\Sel_{\bullet\circ}(K_\infty,A(1))$ is $\Lambda$-cotorsion and prove one inclusion of the main conjecture, namely,
\begin{equation}
    \pi_\Delta L_p^{\bullet\circ}(f,g)\in\Char_\Lambda\Sel_{\bullet\circ}(K_\infty,A(1))^\vee,
\label{eq:halfpmIMC}
\end{equation}
up to powers of $\varpi$. 
\end{remark}

 We conclude this section by showing that the Theta elements defined in Section~\ref{sec:Theta} are related to the signed $p$-adic $L$-functions in an explicit manner.

\begin{proposition}\label{prop:pmTheta}
For all $n\ge1$,
\begin{align*}
(-p)^{n+1}(\log_p^+)^2\pi_\Delta L_p^{++}(f,g)&\equiv \Theta_{0,n}^+\mod \omega_n,\\
(-p)^{n+2}(\log_p^-)^2\pi_\Delta L_p^{--}(f,g)&\equiv \Theta_{0,n}^-\mod \omega_n.
\end{align*}
\end{proposition}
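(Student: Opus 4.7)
The strategy is a direct computation: express each of the four $p$-adic $L$-functions entering $\Theta^\pm_{0,n}$ in terms of the signed $p$-adic $L$-functions $L_p^{\bullet\circ}(f,g)=\col^\bullet_\alpha(\BF_{\circ,\beta,1})$, and collapse the resulting combinations using the supersingular identities $\alpha_f+\beta_f=0$, $\alpha_f^2=-p$ and $\alpha_f\beta_f=p$. Since $k_g=-1$ we have $\log_{p,1+k_g}=1$, so unwinding Definitions~\ref{defn:geom} and~\ref{defn:extra} (taking $\mu=\beta$, $\mu'=\alpha$) together with the definition of $\cL_{\lambda,\mu}$ gives
\[
L_p(f_\lambda,g)=-\cL_{\lambda,\alpha}(\BF_{\lambda,\beta,1}),\qquad L_p^?(f_\lambda,g_\beta)=-\cL_{\lambda',\alpha}(\BF_{\lambda,\beta,1})
\]
for $\lambda\in\{\alpha,\beta\}$.

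Next I would combine Theorem~\ref{thm:signedBF}, which produces $\BF_{\lambda,\beta,1}=\log_p^+\BF_{+,\beta,1}+\lambda_f\log_p^-\BF_{-,\beta,1}$ after localization at $p$, with the Coleman factorization $\cL_{a,\alpha}=\log_p^+\col^+_\alpha+a_f\log_p^-\col^-_\alpha$ from \eqref{eq:decompose} (applied after projecting to the $\alpha$-component of $V^*$ at $p$). A bilinear expansion yields
\begin{align*}
\cL_{\lambda,\alpha}(\BF_{\lambda,\beta,1})&=(\log_p^+)^2 L_p^{++}(f,g)+\lambda_f\log_p^+\log_p^-\bigl(L_p^{+-}(f,g)+L_p^{-+}(f,g)\bigr)+\lambda_f^2(\log_p^-)^2 L_p^{--}(f,g),\\
\cL_{\lambda',\alpha}(\BF_{\lambda,\beta,1})&=(\log_p^+)^2 L_p^{++}(f,g)+\lambda_f\log_p^+\log_p^-L_p^{+-}(f,g)+\lambda'_f\log_p^+\log_p^-L_p^{-+}(f,g)+\lambda_f\lambda'_f(\log_p^-)^2 L_p^{--}(f,g).
\end{align*}
Summing each identity over $\lambda\in\{\alpha,\beta\}$, every coefficient of $\log_p^+\log_p^-$ carries a factor $\alpha_f+\beta_f=0$ and so drops out; substituting $\alpha_f^2=-p$ and $\alpha_f\beta_f=p$ in the surviving terms leaves
\[
\sum_\lambda\bigl(L_p(f_\lambda,g)+L_p^?(f_\lambda,g_\beta)\bigr)=\pm 4(\log_p^+)^2 L_p^{++}(f,g),\quad \sum_\lambda\bigl(L_p(f_\lambda,g)-L_p^?(f_\lambda,g_\beta)\bigr)=\mp 4p(\log_p^-)^2 L_p^{--}(f,g).
\]

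Finally, applying $\pi_\Delta\circ\Tw^0=\pi_\Delta$ (since $j=0$), reducing modulo $\omega_n$, and inserting the prefactor $\alpha_f^{2n+2}/4=(-p)^{n+1}/4$ from Definition~\ref{analyticdeftheta} produces the two asserted congruences, the identity $(-p)^{n+2}=-p\cdot(-p)^{n+1}$ accounting for the extra factor of $p$ in the minus case. The main difficulty is bookkeeping the various sign conventions (the choice of $\vp$-eigenvectors $v_{\lambda,\mu}$ in Definition~\ref{def:eigenvectors}, the orientation of $\mu_g'-\mu_g$ in $\cL_{\lambda,\mu}$, and the signs in Theorem~\ref{thm:signedBF} and \eqref{eq:decompose}); once these are pinned down, the algebra is immediate. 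Conceptually, the key point is the cancellation of the $\log_p^+\log_p^-$ cross terms from $\alpha_f+\beta_f=0$, which forces the combinations defining $\Theta^\pm_{0,n}$ to involve only $(\log_p^+)^2 L_p^{++}(f,g)$ or $(\log_p^-)^2 L_p^{--}(f,g)$---and not the ``mixed'' $L_p^{\pm\mp}(f,g)$.
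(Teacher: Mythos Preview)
Your proposal is correct and follows essentially the same approach as the paper. The paper inverts the two factorizations first---writing $\log_p^+\col^+_\alpha=\tfrac{1}{2}(\cL_{\alpha,\alpha}+\cL_{\beta,\alpha})$ and $\log_p^+\BF_{+,\beta,1}=\tfrac{1}{2}(\BF_{\alpha,\beta,1}+\BF_{\beta,\beta,1})$---and then multiplies, whereas you expand each $\cL_{a,\alpha}(\BF_{\lambda,\beta,1})$ bilinearly and then sum; both computations are the same algebra reorganized, and both hinge on the cancellation $\alpha_f+\beta_f=0$ killing the mixed $\log_p^+\log_p^-$ terms.
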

\begin{proof}
Since $\log_p^+\col^+_{\alpha}=\frac{1}{2}\left(\cL_{\alpha,\alpha}+\cL_{\beta,\alpha}\right)$ and $\log_p^+\BF_{+,\beta,1}=\frac{1}{2}\left(\BF_{\alpha,\beta,1}+\BF_{\beta,\beta,1}\right)$ by \eqref{eq:decompose} and Theorem~\ref{thm:signedBF} respectively, we have
\begin{equation}
    (\log_p^+)^2 L_p^{++}(f,g)=\frac{1}{4}\left(L_p(f_\alpha,g)+L_p(f_\beta,g)+L_p^?(f_\alpha,g_\beta)+L_p^?(f_\beta,g_\beta)\right).
\label{eq:Lp++}
\end{equation}
Thus the result follows from the definition of $\Theta_{0,n}^+$ and the fact that $\alpha_f^2=-p$.  The proof for $L_p^{--}(f,g)$ is similar.
\end{proof}

\subsection{Non-existence of proper $\Lambda$-submodules of finite index}\label{subsec:nonexistenceoflambdasubmoduleoffiniteindex}

The aim of this section is to show that the Selmer groups over $K_\infty$ introduced above have no non-trivial $\Lambda$-submodules of finite index. We show this by using results of \cite{greenbergselmergroups}, namely \cite[Proposition 4.1.1]{greenbergselmergroups}. We recall it together with the relevant notation in \S \ref{greenbergres} below. In \S \ref{subsubsec:localcohomologygroups}, we discuss a result on the structure of the local Galois cohomology groups defining the Selmer groups over $K_\infty$, which is employed later in \S \ref{sec:greenbergstructure}.

\subsubsection{Local cohomology groups} \label{subsubsec:localcohomologygroups} Let us recall that we have denoted $A_j:=A(1+j)$, where $k_g +1 \leq j \leq k_f$. Given a Selmer structure $\mathcal{L} \in \{ \textup{Gr}, \bullet\circ \}$, we define, for any prime $\ell \neq p$ of $\QQ$
\[ \mathcal{H}_{\ell,j} := \varinjlim\prod\limits_{w \mid \ell}\tupH^{1}_{/f}(F_{w}, A_j) \]
and
\[ \mathcal{H}_{p,j}^\mathcal{L} := \varinjlim \tupH^{1}_{/\mathcal{L}_\mathfrak{p}}(F_{\mathfrak{p}}, A_j), \]
 where $F$ runs over all finite extension of $\QQ$ contained in $K_{\infty}$ and $\mathfrak{p}$ denotes the only one prime of $F$ above $p$. We now recall the following result that concerns the structure of these Galois cohomology groups. 

\begin{lemma} \label{lemma:localdimensions}
Let $\ell$ be a prime of $\QQ$. Then, we have
\begin{enumerate}
\item If $\ell \neq p$, then
\[\mathcal{H}_{\ell,j} \cong \prod\limits_{w \mid \ell} \tupH^{1}(K_{\infty,w},A_j)\]
as $\Lambda$-modules. Furthermore $\mathcal{H}_{\ell,j}$ is a co-finitely generated, cotorsion $\Lambda$-module.
\item For $\ell = p$ and $\mathcal{L}=\textup{Gr}$,
\[\mathcal{H}_{p,j}^\textup{Gr} \cong \frac{\tupH^{1}(K_{\infty,\mathfrak{p}},A_j)}{\tupH^{1}(K_{\infty,\mathfrak{p}},\mathcal{F}^{+}A_j)}\]
and $\mathcal{H}_{p,j}^\textup{Gr}$ is a co-finitely generated $\Lambda$-module of corank $2$. Here $\mathfrak{p}$ is the unique prime above $p$ in $K_{\infty}$.
\item For $\ell = p$, $\mathcal{L}=\bullet\circ$ where $\bullet,\circ\in\{+,-\}$, and $k_f=k_g+1=0$,
\[\mathcal{H}_{p,0}^{\bullet\circ} \cong \frac{\tupH^{1}(K_{\infty,\mathfrak{p}},A(1))}{\tupH^{1}_{\bullet\circ}(K_{\infty,\mathfrak{p}},A(1))}\]
and $\mathcal{H}_{p,0}^{\bullet\circ}$ is a co-finitely generated $\Lambda$-module of corank $2$.
\end{enumerate}
\end{lemma}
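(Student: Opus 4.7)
The plan is to handle each case by first identifying $\mathcal{H}_{\ell,j}^{\mathcal{L}}$ with the claimed quotient of $\tupH^1(K_{\infty,w}, A_j)$---which amounts to showing that the ``maximal divisible subgroup'' appearing in the local condition is vacuous at the infinite level---and then computing the $\Lambda$-corank via standard local Iwasawa-theoretic means. Throughout I will use that direct limits commute with Galois cohomology of discrete $p$-primary modules and with quotients.

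For (1), since $\gcd(\ell,p)=1$, each completion $K_{\infty,w}/\QQ_\ell$ is unramified, with residue field $\kappa_w$ whose absolute Galois group is $\hat{\ZZ}/\Zp$. As this pro-group has trivial pro-$p$ part, the unramified cohomology $\tupH^1_{\mathrm{ur}}(K_{\infty,w}, A_j) = \tupH^1(\kappa_w, A_j^{I_w})$ vanishes, so $\tupH^1_f(K_{\infty,w}, A_j) = 0$. Passing to the direct limit through the $K_n$ yields the desired isomorphism. The cofinite generation and cotorsion claim then follow from local Tate duality combined with the Iwasawa-theoretic input that $\tupH^1_{\mathrm{Iw}}(K_{\infty,w}, T^*(-j))$ is a torsion $\Lambda$-module for $w \nmid p$.

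For (2), the central point is that $\tupH^1(K_{\infty,\mathfrak{p}}, \cF^+ A_j)$ is itself divisible, so that the divisible-hull operation in the definition of $\tupH^1_{\mathrm{Gr}}$ is vacuous in the limit. This reduces to the statement that the non-divisible part of $\tupH^1(K_{n,\mathfrak{p}}, \cF^+ A_j)$ stays bounded as $n \to \infty$, which I would obtain from the long exact sequence attached to $\cF^+T(1+j) \hookrightarrow \cF^+V(1+j) \twoheadrightarrow \cF^+A_j$ combined with the vanishing of the relevant Iwasawa $\tupH^2$. Granted this, the identification follows. For the corank, Perrin-Riou's local theory gives $\mathrm{corank}_\Lambda \tupH^1(K_{\infty,\mathfrak{p}}, A_j) = \dim_L V = 4$ and $\mathrm{corank}_\Lambda \tupH^1(K_{\infty,\mathfrak{p}}, \cF^+ A_j) = \dim_L \cF^+V = 2$, so the quotient has corank $4-2 = 2$.

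For (3), the identification of $\mathcal{H}_{p,0}^{\bullet\circ}$ with $\tupH^1(K_{\infty,\mathfrak{p}}, A(1))/\tupH^1_{\bullet\circ}(K_{\infty,\mathfrak{p}}, A(1))$ follows analogously, noting that by Lemma~\ref{lem:projectlambda} and \eqref{eq:pmSES} the group $\tupH^1_{\bullet\circ}$ is built from Kummer images of the divisible modules $E^\pm(K(\mu_{p^n})) \otimes \QQ_p/\Zp$, so no divisible-hull truncation is needed at the infinite level. The corank assertion is then precisely Lemma~\ref{lem:corankpm}, using that $K_{\infty,\mathfrak{p}} = \QQ_p(\mu_{p^\infty})^\Delta$ and that taking $\Delta$-invariants of local cohomology over $\QQ_p(\mu_{p^\infty})$ recovers the cohomology over $K_{\infty,\mathfrak{p}}$. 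The main obstacle throughout is the divisibility assertion needed in (2); once it is established, the corank claims reduce to well-known local Iwasawa theory.
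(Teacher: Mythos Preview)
Your proposal is correct and follows essentially the same route as the paper. In each part you identify the local condition at the infinite level (vanishing of $\tupH^1_{\mathrm{ur}}$ for $\ell\ne p$ via the pro-prime-to-$p$ residue Galois group, divisibility of $\tupH^1(K_{\infty,\mathfrak{p}},\cF^+A_j)$ for the Greenberg condition, and the direct appeal to Lemma~\ref{lem:corankpm} for the signed case) and then read off the corank from standard local Iwasawa theory; the paper does the same, citing \cite{greenberg1989iwasawa} for parts (1)--(2) rather than spelling out the duality and Euler-characteristic arguments you sketch.
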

\begin{proof}
For part (1), note that, as $\ell$ is finitely decomposed in $K_\infty$, there exists a sufficiently large finite extension $\QQ \subset F \subset K_{\infty}$ such that the size of the set $s_{F,\ell}$ of places of $F$ above $\ell$ is constant and equal to the size of $s_{K_{\infty},\ell}$. Thus, 
\[ \mathcal{H}_{\ell,j} \cong \prod_{v \in s_{F,\ell}}  \varinjlim\limits\tupH^{1}_{/f}(K_{w}, A_j) =\prod \limits_{w \in  s_{K_{\infty},\ell}} \tupH^{1}_{/f}(K_{\infty,w},A_j),\]
where the limit runs through $F \subset K \subset K_{\infty}$ and $w$ denotes the unique prime of $K$ above $v$. 
Note that $\tupH^{1}_{f}(K_{\infty,w},A_j)=0$ for any $w \in s_{K_{\infty},\ell}$. Indeed, as
$\ell$ does not split completely in $K_\infty$, the quotient $G_{K_{\infty,w}}/I_w$ has pro-order prime to $p$ and thus the restriction map \[\tupH^{1}(K_{\infty,w},A_j) \to \tupH^{1}(I_w,A_j) \] 
is injective. Hence we obtain the desired isomorphism and $\mathcal{H}_{\ell,j}$ is a co-finitely generated, cotorsion $\Lambda$-module because each $\tupH^{1}(K_{\infty,w},A_j)$ is such one (cf. \cite[Proposition 2]{greenberg1989iwasawa}).

Part (2) follows similarly from \cite[Proposition 1, Corollary 1]{greenberg1989iwasawa}. Indeed, note that, by \cite[Proposition 1]{greenberg1989iwasawa} we have that the $\Lambda$-corank of $\tupH^{1}(K_{\infty,\mathfrak{p}},A_j)$ (resp. $\tupH^{1}(K_{\infty,\mathfrak{p}},\mathcal{F}^+ A_j)$) is equal to the $(L/\mathcal{O})$-dimension of the $G_{\QQ_{p}}$-module $A_j$ (resp. $\mathcal{F}^+ A_j$).

Analogously, part (3) is given by Lemma~\ref{lem:corankpm}.
\end{proof}

\subsubsection{A result of Greenberg}\label{greenbergres}
 The proof of the main result of this section (cf. Proposition \ref{nofinitesubmod}) relies on the application of \cite[Proposition 4.1.1]{greenbergselmergroups} to our setting. For the convenience of the reader, we set here the necessary notation from \cite{greenbergselmergroups} and state \cite[Proposition 4.1.1]{greenbergselmergroups}.
 
Let $\TT_j := T(1+j) \otimes \Lambda(\Psi^{-1})$, where $\Psi$ is the canonical character $G_\QQ \twoheadrightarrow \Gamma_1 \hookrightarrow \Lambda^\times$, and let $\aA_j: = \TT_j \otimes_\Lambda {\rm Hom}_{\Zp}( \Lambda, \Qp /\Zp)$, where $G_\QQ$ acts diagonally on $\TT_j$.  Recall that by $\mathcal{L}$ we denote one of the Selmer structures in $\{ \textup{Gr}, \bullet\circ \}$.  In a similar way to what was done in \S \ref{subsec:doublyordinaryselmer} and \S \ref{sec:setupSS}, we may define $\textup{Sel}_{\mathcal{L}}(\QQ, \aA_j)$, which, by Shapiro's lemma, are isomorphic to $\textup{Sel}_{\mathcal{L}}(K_\infty, A_j)$. In other words, if  $Q_{\mathcal{L}}(\QQ, \aA_j)$ denotes the target of the map defining the Selmer group $\textup{Sel}_{\mathcal{L}}(\QQ, \aA_j)$, then
\[  Q_{\mathcal{L}}(\QQ, \aA_j) \simeq \prod\limits_{\ell \neq p} \mathcal{H}_{\ell,j} \times \mathcal{H}_{p,j}^\mathcal{L} \]
and 
\[ \textup{Sel}_{\mathcal{L}}(K_\infty, A_j) \simeq \textup{ker}\Big( \tupH^{1}(\mathbb{Q}_{\Sigma}/\QQ, \aA_j) \rightarrow Q_{\mathcal{L}}(\QQ, \aA_j)\Big). \]

Finally, denote $\widetilde{\TT}_j := \textup{Hom}(\aA_j,\mu_{p^{\infty}})$ and let $\mathfrak{m}$ be the maximal ideal of the Iwasawa algebra $\Lambda$. 
Greenberg introduces the following list of hypotheses.
\begin{itemize}
\item ${\rm RFX}(\aA_j)$: $\TT_j$ is a reflexive $\Lambda$-module;
\item ${\rm LOC}_v^{(1)}(\aA_j)$: $(\widetilde{\TT}_j)^{\mathcal{G}_{\QQ_v}}=0$ for $v \in \Sigma$;
\item ${\rm LOC}_v^{(2)}(\aA_j)$: $(\widetilde{\TT}_j) / (\widetilde{\TT}_j)^{\mathcal{G}_{\QQ_v}}$ is reflexive for $v \in \Sigma$;
\item ${\rm LEO}(\aA_j)$: the $\Lambda$-module \[ \Sha^{2}(\QQ,\Sigma, \aA_j) = {\rm ker} \left ( \tupH^2(\QQ_\Sigma/\QQ, \aA_j) \longrightarrow \prod_{v \in \Sigma} \tupH^2(\QQ_v, \aA_j)  \right )  \] is cotorsion.
\item ${\rm CRK}(\aA_j,\mathcal{L})$: We have an equality of coranks:\\
$\textup{corank}_{\Lambda}(\tupH^{1}(\QQ_{\Sigma}/\QQ, \aA_j)) = \textup{corank}_{\Lambda}(\textup{Sel}_{\mathcal{L}}(K_\infty,A_j)) + \textup{corank}_{\Lambda}(Q_{\mathcal{L}}(\QQ, \aA_j))$.
\end{itemize}

Recall the following (cf. \cite[\S 2]{greenbergstructuredoc}).

\begin{defn} \label{defn:almostdivisible}
A discrete $\Lambda$-module $M$ is said to be almost divisible if for almost all height one prime ideals $P \in \textup{Spec}(\Lambda)$, $PM = M$. 
\end{defn}
\begin{remark}
This definition is equivalent to asking that the discrete $\Lambda$-module has no proper $\Lambda$-sub-modules of finite index. We refer the reader to \cite[\S 2]{greenbergstructuredoc} for further discussion on almost divisible modules and to Proposition 2.4 of  \emph{op. cit.} for a proof of this equivalence.  
\end{remark}

\begin{remark}
We say that a Selmer structure $\mathcal{L}=\{\mathcal{L}_{\nu}\}$ is almost divisible if the corresponding local cohomology groups $\tupH^{1}_{/\mathcal{L}_{\nu}}(\QQ_{\nu},\aA_j)$ is an almost divisible $\Lambda$-module for all places $\nu$. 
\end{remark}

We can now state \cite[Proposition 4.1.1]{greenbergselmergroups}.

\begin{proposition}\label{greenbergprop}
Suppose that ${\rm RFX}(\aA_j)$, ${\rm LEO}(\aA_j)$ are both satisfied, that ${\rm LOC}_v^{(2)}(\aA_j)$ is satisfied for all $v \in \Sigma$, and that there exists a non-archimedean $v \in \Sigma$ such that ${\rm LOC}_v^{(1)}(\aA_j)$ is satisfied. If $\mathcal{L}$ is almost divisible, ${\rm CRK}(\aA_j,\mathcal{L})$ holds, and $\aA_j[\mathfrak{m}]$ has no Galois sub-quotients isomorphic to $\mu_p$, then 
$\textup{Sel}_{\mathcal{L}}(K_\infty, A_j)$ is an almost divisible $\Lambda$-module.
\end{proposition}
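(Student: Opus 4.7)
The plan is to follow the original argument of Greenberg in \cite[Proposition 4.1.1]{greenbergselmergroups}, since the statement here is a verbatim recollection of that result. The central idea is to apply Poitou--Tate global duality to relate the Selmer group to an orthogonal dual Selmer group, and then to use the various hypotheses to propagate almost divisibility through the resulting exact sequence of $\Lambda$-modules.

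First I would set up the nine-term Poitou--Tate exact sequence for the discrete Galois module $\aA_j$, from which one extracts
\[
0 \to \textup{Sel}_{\mathcal{L}}(K_\infty, A_j) \to \tupH^1(\QQ_\Sigma/\QQ, \aA_j) \to Q_{\mathcal{L}}(\QQ, \aA_j) \to \textup{Sel}_{\mathcal{L}^\perp}(\QQ, \widetilde{\TT}_j)^\vee \to \Sha^2(\QQ,\Sigma, \aA_j) \to 0,
\]
where $\mathcal{L}^\perp$ denotes the orthogonal complement Selmer structure under local Tate duality. The hypothesis ${\rm LEO}(\aA_j)$ guarantees that $\Sha^2$ is $\Lambda$-cotorsion, and ${\rm CRK}(\aA_j,\mathcal{L})$ then pins down the corank of $\textup{Sel}_{\mathcal{L}^\perp}(\QQ, \widetilde{\TT}_j)^\vee$. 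The next step is to appeal to the standard equivalence (cf. Proposition~2.4 of Greenberg's structure paper) that, for a cofinitely generated $\Lambda$-module, almost divisibility is equivalent to the Pontryagin dual admitting no non-zero pseudo-null $\Lambda$-submodule; this reduces the problem to an assertion about $\textup{Sel}_{\mathcal{L}}(K_\infty, A_j)^\vee$.

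After this reduction, the strategy is to verify the Pontryagin duals of $\tupH^1(\QQ_\Sigma/\QQ, \aA_j)$ and of $Q_{\mathcal{L}}(\QQ, \aA_j)$ are free of pseudo-null submodules. Here ${\rm RFX}(\aA_j)$ combined with ${\rm LOC}^{(2)}_v(\aA_j)$ forces the relevant Galois cohomology duals to be reflexive $\Lambda$-modules, while almost divisibility of the Selmer structure $\mathcal{L}$ at each place handles the local contributions in $Q_{\mathcal{L}}$. Combined with the corank equality supplied by ${\rm CRK}$, this forces the dual $\textup{Sel}_{\mathcal{L}}(K_\infty, A_j)^\vee$ to admit no non-zero pseudo-null $\Lambda$-submodule, which is the desired almost divisibility.

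The hard part, and the role of the hypothesis that $\aA_j[\mathfrak{m}]$ has no Galois subquotient isomorphic to $\mu_p$, is to allow ${\rm LOC}^{(1)}_v(\aA_j)$ at a single non-archimedean place $v$ to rule out a finite Galois-submodule of $\widetilde{\TT}_j^{G_{\QQ_v}}$ via local Tate duality. Without this exclusion, a pseudo-null $\Lambda$-submodule could sneak into $\textup{Sel}_{\mathcal{L}}(K_\infty, A_j)^\vee$ from local $\tupH^2$ contributions at primes of $\Sigma \setminus \{p\}$, and would obstruct almost divisibility of the Selmer group. Verifying the absence of such $\mu_p$-subquotients is a structural question about the residual Galois representations attached to $f$ and $g$, and is the delicate step one must execute carefully when translating Greenberg's general proposition into the Rankin--Selberg setting at hand.
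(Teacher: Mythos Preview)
The paper does not actually prove this proposition: it is stated there purely as a citation of \cite[Proposition~4.1.1]{greenbergselmergroups}, with no argument given. So there is no ``paper's own proof'' to compare against; the authors treat this as a black box and only verify its hypotheses later in Proposition~\ref{nofinitesubmod}.

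Your sketch is a reasonable high-level outline of how Greenberg's argument proceeds, and correctly identifies Poitou--Tate duality and the reflexivity/corank bookkeeping as the backbone. One point to clean up: your final paragraph drifts from proving the general proposition into \emph{applying} it. The hypothesis on $\aA_j[\mathfrak{m}]$ having no $\mu_p$-subquotient is an input to Greenberg's result, not something one verifies during its proof; the ``structural question about the residual Galois representations attached to $f$ and $g$'' belongs to the verification step (Proposition~\ref{nofinitesubmod} in the paper), not to the proof of the general statement here. If you want to record a proof for this proposition, you should either keep it entirely abstract in Greenberg's framework or simply cite \cite{greenbergselmergroups} as the paper does.
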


\subsubsection{Almost divisibility} \label{sec:greenbergstructure}

We now discuss the validity of the hypotheses of Proposition \ref{greenbergprop} in our setting, which we then apply to deduce that the Selmer groups $\textup{Sel}_{\mathcal{L}}(K_\infty, A_j)$, with $\mathcal{L} \in \{ \textup{Gr}, \bullet\circ \}$, have no non-trivial $\Lambda$-modules of finite index. We will work under the following assumption.

\begin{itemize}
\item[]\textbf{(Co-tor)} $\textup{Sel}_{\mathcal{L}}(K_\infty, A_j)$ is $\Lambda$-cotorsion for $\mathcal{L} \in \{ \textup{Gr}, \bullet\circ \}$ and $k_g+1 \leq j \leq k_f$.
\end{itemize}  

The validity of \textbf{(Co-tor)} for $\mathcal{L} \in \{ \textup{Gr}, \bullet\circ \}$ has been discussed in Remarks \ref{cnjordoneineq} and \ref{conjssknownres}.
\begin{lemma} \label{lem:corank}
Assume \textbf{(Co-tor)}. \begin{enumerate}
    \item For any $k_g+1 \leq j \leq k_f$, ${\rm CRK}(\aA_j,\textup{Gr})$ and ${\rm LEO}(\aA_j)$ are satisfied.
    \item If $k_f=j=k_g+1=0$, ${\rm CRK}(\aA_0, \bullet\circ )$ and ${\rm LEO}(\aA_0)$ hold.
\end{enumerate}
\end{lemma}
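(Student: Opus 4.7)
The plan is to combine the global Euler--Poincaré formula over $\Lambda$ with the Poitou--Tate exact sequence associated to the Selmer structure $\mathcal{L} \in \{\textup{Gr}, \bullet\circ\}$, invoking \textbf{(Co-tor)} together with a weak Leopoldt-type input for the Tate dual. First, I would compute $\textup{corank}_\Lambda Q_\mathcal{L}(\QQ, \aA_j) = 2$ directly from Lemma~\ref{lemma:localdimensions}: the $\ell \ne p$ factors contribute $0$ (they are cotorsion), while the factor at $v = p$ contributes $2$ both in the Greenberg case (part (2)) and in the plus-minus case (part (3)).

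Next, I would apply the global Euler--Poincaré formula for $\Lambda$-modules to $\aA_j$. Since $V = V_f \otimes V_g$ is a tensor product of two odd two-dimensional Galois representations, the $(-1)$-eigenspace of complex conjugation on $V$ has dimension $d_-(V) = 2$, and the formula reads
\[
\textup{corank}_\Lambda \tupH^1(\QQ_\Sigma/\QQ, \aA_j) - \textup{corank}_\Lambda \tupH^2(\QQ_\Sigma/\QQ, \aA_j) - \textup{corank}_\Lambda \tupH^0(\QQ_\Sigma/\QQ, \aA_j) = 2.
\]
Irreducibility of the residual representations of $f$ and $g$ passes to $V$, forcing $\tupH^0 = 0$. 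A place-by-place application of local Tate duality then kills each local $\tupH^2(K_{\infty, w}, A_j)$ (at $w \nmid p$, Frobenius has no fixed vectors on $V^*(-j)$; at $w \mid p$, the local decomposition of $V$ combined with irreducibility of each summand makes $\tupH^0$ of the local Tate dual vanish), so that $\tupH^2(\QQ_\Sigma/\QQ, \aA_j) = \Sha^2(\QQ, \Sigma, \aA_j)$, and hence
\[
\textup{corank}_\Lambda \tupH^1(\QQ_\Sigma/\QQ, \aA_j) = 2 + \textup{corank}_\Lambda \Sha^2(\QQ, \Sigma, \aA_j).
\]

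For the third step I would use the Poitou--Tate exact sequence
\[
0 \to \textup{Sel}_\mathcal{L}(K_\infty, A_j) \to \tupH^1(\QQ_\Sigma/\QQ, \aA_j) \to Q_\mathcal{L}(\QQ, \aA_j) \to \textup{Sel}_{\mathcal{L}^\perp}(K_\infty, T^*(-j))^\vee \to \Sha^2 \to 0.
\]
Taking $\Lambda$-coranks and invoking \textbf{(Co-tor)} together with the identity just derived forces $\textup{corank}_\Lambda \textup{Sel}_{\mathcal{L}^\perp}^\vee = 0$, so the dual Selmer is also cotorsion.

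The main obstacle will be the final step: promoting cotorsionness of the dual Selmer to cotorsionness of $\Sha^2$ itself, i.e., establishing LEO. My plan is to invoke the weak Leopoldt conjecture for $T^*(-j)$ over $K_\infty$, which by Jannsen's formula is equivalent to $\tupH^2_\textup{Iw}(K_\infty, T^*(-j))$ being $\Lambda$-torsion, and hence, by Poitou--Tate, to $\Sha^2$ being cotorsion. In the ordinary range this is classical (Perrin-Riou, Kato). In the supersingular setting it will follow from the one-sided signed main conjecture recalled in Remark~\ref{conjssknownres}, applied to the dual Selmer together with the cotorsionness derived above. Once LEO is in hand, the Euler--Poincaré identity yields $\textup{corank}_\Lambda \tupH^1 = 2 = \textup{corank}_\Lambda Q_\mathcal{L}$, which is exactly CRK.
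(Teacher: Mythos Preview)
Your approach is correct in principle but substantially more elaborate than needed, and what you flag as the ``main obstacle'' evaporates in the paper's argument. The paper does not invoke weak Leopoldt as an external input at all: it gets both CRK and LEO simultaneously from a simple squeeze, using only \textbf{(Co-tor)}, the corank computation of $Q_{\mathcal{L}}$, and the Euler--Poincar\'e formula.

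Concretely, the exact sequence
\[
0 \longrightarrow \textup{Sel}_{\mathcal{L}}(K_\infty,A_j) \longrightarrow \tupH^{1}(\QQ_{\Sigma}/\QQ, \aA_j) \stackrel{\phi_{\mathcal L}}{\longrightarrow} Q_{\mathcal L}(\QQ,\aA_j)
\]
together with \textbf{(Co-tor)} and $\textup{corank}_\Lambda Q_{\mathcal L}=2$ yields
\[
\textup{corank}_\Lambda \tupH^{1}(\QQ_{\Sigma}/\QQ, \aA_j) + \textup{corank}_\Lambda \coker(\phi_{\mathcal L}) = 2,
\]
so in particular $\textup{corank}_\Lambda \tupH^{1} \le 2$. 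On the other hand the Euler--Poincar\'e formula gives $\textup{corank}_\Lambda \tupH^{1} - \textup{corank}_\Lambda \tupH^{2} = 2$, hence $\textup{corank}_\Lambda \tupH^{1} \ge 2$. The two inequalities force $\textup{corank}_\Lambda \tupH^{1} = 2$ (which is CRK) and $\textup{corank}_\Lambda \tupH^{2} = 0$; since $\Sha^2 \subset \tupH^{2}$, LEO follows immediately.

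So your Poitou--Tate five-term sequence, the vanishing of local $\tupH^2$, and the separate appeal to weak Leopoldt for $T^*(-j)$ are all unnecessary detours. The only ingredients actually used are the ones you already identified in your first two steps.
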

\begin{proof}
The proof is based on an application of the Euler-Poincar\'e characteristic formula. Before going into details, in the case of $\mathcal{L}= \bullet\circ$, we set $k_g+1=j=k_f=0$. 

Note that by Hypothesis \textbf{(Co-tor)}, we have $\textup{corank}_{\Lambda}(\textup{Sel}_{\mathcal{L}}(K_\infty,A_j)) = 0$ for $\mathcal{L} \in \{ \textup{Gr}, \bullet\circ \}$. By Lemma~\ref{lemma:localdimensions}, we know that $\textup{corank}_{\Lambda}(Q_\mathcal{L}(\QQ,\aA_j)) = 2$. Hence to verify that ${\rm CRK}(\aA_j,\mathcal{L})$ holds, it suffices to show that \[ \textup{corank}_{\Lambda}(\tupH^{1}(\QQ_{\Sigma}/\QQ, \aA_j)) = 2. \]
Notice that from the exact sequence 
\[ 0 \to \textup{Sel}_{\mathcal{L}}(K_\infty,A_j) \to \tupH^{1}(\QQ_{\Sigma}/\QQ, \aA_j) \xrightarrow{\phi_\mathcal{L}}  Q_\mathcal{L}(\QQ,\aA_j), \]
we obtain the equality \[ 0 = \textup{corank}_{\Lambda}(\tupH^{1}(\QQ_{\Sigma}/\QQ, \aA_j)) - 2 + \textup{corank}_{\Lambda}(\textup{coker}(\phi_\mathcal{L})).\] 

However, from the Euler-Poincar\'e characteristic formula (c.f. \cite[Proposition 3]{greenberg1989iwasawa}) we have that \[ \textup{corank}_{\Lambda}(\tupH^{1}(\QQ_{\Sigma}/\QQ, \aA_j)) - \textup{corank}_{\Lambda}(\tupH^{2}(\QQ_{\Sigma}/\QQ, \aA_j)) = 2.\] The two equalities  give our claim. The same argument shows ${\rm LEO}(\aA_j)$.
\end{proof}

\begin{proposition}\label{nofinitesubmod}
Assume \textbf{(Co-tor)} and $j$ is an integer such that $k_g+1 \leq j \leq k_f$ and that the residual representation $\overline{T}(1+j)$ has no Galois sub-quotient isomorphic to $\mu_p$. Then, the Selmer groups $\textup{Sel}_{\textup{Gr}}(K_\infty, A_j)$ and $\textup{Sel}_{\bullet\circ}(K_\infty, A(1))$ (with $k_f=j=k_g+1=0$) have no proper $\Lambda$-sub-module of finite index.
\end{proposition}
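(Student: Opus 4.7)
The plan is to invoke Proposition \ref{greenbergprop} for each Selmer structure $\mathcal{L}\in\{\textup{Gr},\bullet\circ\}$ separately, so the task reduces to verifying its seven hypotheses in the current setting. Two of them are immediate from what we already have: $\textrm{CRK}(\aA_j,\mathcal{L})$ and $\textrm{LEO}(\aA_j)$ are supplied by Lemma~\ref{lem:corank} under \textbf{(Co-tor)}, and the assumption that $\overline{T}(1+j)$ contains no Galois sub-quotient isomorphic to $\mu_p$ is precisely the condition imposed on $\aA_j[\mathfrak{m}]\cong \overline{T}(1+j)$. The reflexivity hypothesis $\textrm{RFX}(\aA_j)$ is also immediate, because $\TT_j=T(1+j)\otimes_\cO\Lambda(\Psi^{-1})$ is free of finite rank over $\Lambda$.

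The next step is to verify the local hypotheses $\textrm{LOC}_v^{(1)}(\aA_j)$ for some non-archimedean $v\in\Sigma$ and $\textrm{LOC}_v^{(2)}(\aA_j)$ for every $v\in\Sigma$. After identifying $\widetilde{\TT}_j$ explicitly as a twist of $T^*$ by $-j$ tensored with $\Lambda(\Psi)$, the presence of the universal cyclotomic twist $\Psi$ means that for any open $H\subset G_{\QQ_v}$ the invariants $\widetilde{\TT}_j^H$ vanish as soon as $H$ acts non-trivially through $\Gamma_1$; this is automatic at $v=p$ and yields $\textrm{LOC}_p^{(1)}$. The same vanishing makes $\widetilde{\TT}_j/\widetilde{\TT}_j^{G_{\QQ_v}}$ coincide (after possibly factoring out a free summand) with $\widetilde{\TT}_j$ itself, whence reflexive, establishing $\textrm{LOC}_v^{(2)}$ at each $v\in\Sigma$.

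It remains to check that the Selmer structure $\mathcal{L}$ is almost divisible, i.e.\ that each local quotient $\tupH^1_{/\mathcal{L}_v}(\QQ_v,\aA_j)$ has no proper $\Lambda$-submodule of finite index. At primes $\ell\neq p$, by Lemma~\ref{lemma:localdimensions}(1) the quotient equals $\prod_{w\mid\ell}\tupH^1(K_{\infty,w},A_j)$; these local cohomology groups are known to be almost divisible by the standard arguments of \cite[\S2]{greenbergstructuredoc}, which apply because our residual representation avoids the $\mu_p$ sub-quotient obstruction. At $v=p$ with $\mathcal{L}=\textup{Gr}$, the quotient is $\tupH^1(K_{\infty,\mathfrak{p}},A_j/\cF^+A_j)$ (modulo a divisible piece), whose almost divisibility reduces to the unramified/ordinary part where it is classical. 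For $\mathcal{L}=\bullet\circ$, one uses the Coleman-map realization recorded in Remark~\ref{rk:duality}: the Pontryagin dual of the local quotient is (up to a pseudo-null error) a non-zero submodule of $\Lambda$ via $\col_\mu^\pm$, so it is torsion-free and hence the quotient itself is almost divisible.

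The main obstacle I expect is the almost divisibility of the local condition at $p$ in the signed case: one must control the finer structure of $\image\col_\mu^\pm$ and check that no finite $\Lambda$-submodule hides in the kernel of the signed local map, which is where the explicit Kobayashi-type description of $\tupH^1_{\bullet\circ}$ from Lemma~\ref{lem:projectlambda} and Corollary~\ref{cor:pmSES} has to be invoked. The ordinary case is comparatively routine because the Greenberg condition is defined as the image of a $G_{\QQ_p}$-submodule rather than through an Iwasawa-theoretic Coleman map.
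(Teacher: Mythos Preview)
Your overall strategy is correct and matches the paper: invoke Proposition~\ref{greenbergprop} and verify each hypothesis. However, your verification of the almost divisibility of the local conditions at $p$ is considerably more complicated than what is actually needed, and the ``main obstacle'' you flag is not an obstacle at all.

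For either $\mathcal{L}\in\{\textup{Gr},\bullet\circ\}$, the local quotient $\cH_{p,j}^{\mathcal{L}}$ is by construction a quotient of $\tupH^1(K_{\infty,\mathfrak{p}},A_j)$ (Lemma~\ref{lemma:localdimensions}(2)--(3)). Taking Pontryagin duals, $(\cH_{p,j}^{\mathcal{L}})^\vee$ is therefore a \emph{submodule} of $\tupH^1(K_{\infty,\mathfrak{p}},A_j)^\vee$, which by local Tate duality is an Iwasawa cohomology group free of finite rank over $\Lambda$. A submodule of a free $\Lambda$-module has no nonzero finite submodules, so $\cH_{p,j}^{\mathcal{L}}$ is almost divisible. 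This handles the Greenberg and signed cases uniformly, with no need to invoke the explicit Kobayashi description or to analyze $\image\col_\mu^\pm$ beyond what is already in Lemma~\ref{lem:corankpm}.

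Two smaller points. First, for $\mathrm{LOC}_v^{(1)}$ and $\mathrm{LOC}_v^{(2)}$: the paper cites \cite[Lemma~5.2.2]{greenberg2010surjectivity} to obtain $(\widetilde{\TT}_j)^{G_{\QQ_v}}=0$ for \emph{every} $v\in\Sigma$, not only $v=p$; this immediately gives both conditions at all places. Your universal-twist heuristic is in the right direction, but as written you only conclude it at $p$, whereas $\mathrm{LOC}_v^{(2)}$ is needed everywhere. Second, for the almost divisibility at $\ell\ne p$: the relevant input is \cite[Proposition~5.4]{greenbergstructuredoc}, whose hypothesis is precisely $\mathrm{LOC}_v^{(2)}$, not the $\mu_p$ sub-quotient condition (the latter enters only in the global conclusion of Proposition~\ref{greenbergprop}).
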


\begin{proof}
As it was mentioned above, the result follows from applying Proposition \ref{greenbergprop}. Thus, we have to verify that all its hypotheses hold. Some of them have been shown in Lemma \ref{lem:corank}. We now discuss the others, namely ${\rm RFX}(\aA_j)$, ${\rm LOC}_v^{(1)}(\aA_j)$, ${\rm LOC}_v^{(2)}(\aA_j)$, and the almost divisibility of $\mathcal{L} \in \{ \textup{Gr}, \bullet\circ \}$. 

Notice that ${\rm RFX}(\aA_j)$ holds as $\TT_j$ is free of rank 4. Moreover, by \cite[Lemma 5.2.2]{greenberg2010surjectivity}, we have that  $(\widetilde{\TT}_j)^{\mathcal{G}_{\QQ_v}}=0$ for every $v \in \Sigma$, thus ${\rm LOC}_v^{(1)}(\aA_j)$ holds and implies ${\rm LOC}_v^{(2)}(\aA_j)$ for every $v \in \Sigma$. We are left with proving that the Selmer structures $\textup{Gr}$ and $\bullet\circ$ are almost divisible. The statement follows from showing that the modules $\cH_{\ell,j}, \cH^{\textup{Gr}}_{p,j}$ and $\cH^{\bullet\circ}_{p,0},$ introduced in \S \ref{subsubsec:localcohomologygroups}, are almost divisible $\Lambda$-modules. From \cite[Proposition 5.4]{greenbergstructuredoc} (which we can apply as ${\rm LOC}_v^{(2)}(\aA_j)$ is valid), we have that $H^1(K_{\infty,w},A_j)$ is almost divisible for any  place $w$, which implies the almost divisibility of $\cH_{\ell,j}$ because of Lemma \ref{lemma:localdimensions}(1). Finally, from Lemma \ref{lemma:localdimensions}(2)-(3), the Pontryagin duals of $\cH^{\textup{Gr}}_{p,j}$ and $\cH^{\bullet\circ}_{p,0}$ are submodules of $\Lambda^{\oplus 2}$. Thus, $\cH^{\textup{Gr}}_{p,j}$ and $\cH^{\bullet\circ}_{p,0}$ are almost divisible. 
\end{proof}

\begin{remark}
We make a couple of comments on the hypothesis that $\overline{T}(1+j)$  has no Galois sub-quotient isomorphic to $\mu_p$.
\begin{itemize}
    \item[(i)] If $\bar{T}(1+j)$ is irreducible, then the module $\aA_j[\mathfrak{m}]$ has no sub-quotient isomorphic to $\mu_{p}$ as a $G_{\QQ}$-module (cf. \cite[\S 4.3.3]{greenbergselmergroups}). 
    \item[(ii)] Suppose that $f$ corresponds to an elliptic curve $E$ with supersingular reduction at $p$. Then, $E[p]|_I$ is the direct sum of two fundamental characters of level 2 (see \cite[Theorem~2.6]{ed}), where $I$ is the inertial group of $G_{\Qp}$. In particular, there is no Galois sub-quotient isomorphic to $\mu_p$. Since $g$ is of weight one and $T_g$ decomposes into the direct sum of two unramified characters, the same is true for $\bar{T}(1)$.
\end{itemize}
\end{remark}

Recall that, given a ring $R$ and a finitely presented $R$-module $M$, the Fitting ideal $\textup{Fitt}_{R}(M)$ is defined as follows. Take a presentation of $M$ \[ \xymatrix{R^r \ar[r]^g & R^t \ar[r] & M \ar[r] & 0;} \]
$ \textup{Fitt}_{R}(M)$ is defined to be the ideal generated by the determinants of the $t\times t$-minors of $g$.
Proposition \ref{nofinitesubmod} has the following  consequence.

\begin{corollary}\label{cor:Fittchar}
Keep the hypotheses of Proposition \ref{nofinitesubmod}. For any $k_g+1 \leq j \leq k_f$, we have \[ {\rm char}_\Lambda (\textup{Sel}_{\textup{Gr}}(K_\infty, A_j)^\vee) = {\rm Fitt}_\Lambda (\textup{Sel}_{\textup{Gr}}(K_\infty, A_j)^\vee); \]
Similarly, if $k_g+1 =j = k_f =0$, we have \[ {\rm char}_\Lambda (\textup{Sel}_{\bullet\circ}(K_\infty, A(1))^\vee) = {\rm Fitt}_\Lambda (\textup{Sel}_{\bullet\circ}(K_\infty, A(1))^\vee). \]
\end{corollary}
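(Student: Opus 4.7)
The corollary is a standard consequence of Proposition~\ref{nofinitesubmod} and a classical homological dichotomy for finitely generated torsion modules over the two-dimensional regular local ring $\Lambda = \mathcal{O}[[T]]$. My plan is to translate the absence of finite quotients of the Selmer groups into a statement about the Pontryagin dual, then show this forces a ``square'' presentation of the dual, from which both ideals are visibly the determinant of a single presentation matrix.

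\textbf{Step 1 (Dualize the hypothesis).} Write $X$ for the Pontryagin dual of $\textup{Sel}_\mathcal{L}(K_\infty,\cdot)$ where $\mathcal{L}\in\{\textup{Gr},\bullet\circ\}$. Under \textbf{(Co-tor)} (which is part of the hypotheses of Proposition~\ref{nofinitesubmod}), $X$ is a finitely generated torsion $\Lambda$-module. Pontryagin duality is exact and contravariant, sending finite quotients of $\textup{Sel}_\mathcal{L}(K_\infty,\cdot)$ to finite submodules of $X$. Proposition~\ref{nofinitesubmod} therefore tells us that $X$ admits no non-trivial finite $\Lambda$-submodule.

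\textbf{Step 2 (Homological interpretation).} A non-zero finite submodule of $X$ is the same thing as a non-zero element annihilated by a power of the maximal ideal $\mathfrak{m}$ of $\Lambda$, i.e.\ is equivalent to $\mathfrak{m}\in\operatorname{Ass}_\Lambda(X)$, which in turn is equivalent to $\operatorname{depth}_\Lambda(X)=0$. Hence absence of finite submodules gives $\operatorname{depth}_\Lambda(X)\ge 1$. Since $\Lambda$ is a regular local ring of Krull dimension $2$, the Auslander--Buchsbaum formula yields $\operatorname{pd}_\Lambda(X)\le 1$. Thus $X$ admits a length-one free resolution
\[
0\longrightarrow \Lambda^m \xrightarrow{\,M\,} \Lambda^n \longrightarrow X \longrightarrow 0.
\]
Because $X$ is $\Lambda$-torsion, comparing generic ranks forces $m=n$, so $M$ is a square matrix.

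\textbf{Step 3 (Compute both ideals).} By definition, $\textup{Fitt}_\Lambda(X)=(\det M)$. On the other hand, localising the exact sequence at any height-one prime $\mathfrak{p}$ of $\Lambda$ (at which $\Lambda_\mathfrak{p}$ is a discrete valuation ring) gives a presentation of $X_\mathfrak{p}$ by a square matrix, so $\operatorname{length}_{\Lambda_\mathfrak{p}}(X_\mathfrak{p}) = v_\mathfrak{p}(\det M)$. The product over height-one primes reassembles $\textup{char}_\Lambda(X) = (\det M)$, giving the desired equality $\textup{Fitt}_\Lambda(X) = \textup{char}_\Lambda(X)$ in both the ordinary and the supersingular cases.

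\textbf{Main difficulty.} All of the substantive arithmetic work has already been done in Proposition~\ref{nofinitesubmod}; the remaining task is purely commutative-algebraic and proceeds along well-known lines. Thus the only point requiring any real care is checking that the ``no proper submodule of finite index'' conclusion of Proposition~\ref{nofinitesubmod} correctly dualises to ``no non-trivial finite $\Lambda$-submodule'' of the Pontryagin dual, which is standard.
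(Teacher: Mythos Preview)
Your argument is correct and is exactly the standard commutative-algebra mechanism underlying the result; the paper simply cites this as a black box, writing ``It directly follows from \cite[Lemma~A.7]{KK}.'' That lemma is precisely the statement that a finitely generated torsion $\Lambda$-module with no non-trivial finite submodule has equal Fitting and characteristic ideals, and your Auslander--Buchsbaum argument is the usual proof of it.
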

\begin{proof}
It directly follows from \cite[Lemma A.7]{KK}.
\end{proof}

\begin{remark}
If $f$ and $g$ are both non-ordinary at $p$, we have defined in \cite[Definition~6.1.2]{BLLV} six Selmer groups over $\QQ(\mu_{p^\infty})$ using pairs Coleman maps on $\HIw(\Qp,T_{f,g}^*)$. The images of these Coleman maps are described in the appendix of \emph{op. cit.} In particular, we may verify the hypothesis CRK as in the proof of Lemma~\ref{lem:corank} and the almost divisiblity of the local conditions at $p$, allowing us to obtain a generalization of Proposition~\ref{nofinitesubmod} and Corollary~\ref{cor:Fittchar}.
\end{remark}

\section{Control theorems}

We prove control theorems for the Selmer groups studied in the previous section.  This is one of the key ingredients in our quest to generalize results in \cite{KK}. Once again, we consider the cases where $f$ is $p$-ordinary and $p$-non-ordinary separately.

\subsection{The ordinary case}\label{sec:ordinarycontroltheorem}
We study a control theorem for the Greenberg Selmer groups of Rankin--Selberg convolutions defined in Definition~\ref{defn:selmergroupsordinary} using results of \cite{ochiaiBK} and \cite{ochiai}. We assume that both $f$ and $g$ are $p$-ordinary; we let $j$ be an integer satisfying $k_g+1 \leq j \leq k_f$ as before.
To ease the notation, recall that we denote by $A_j$ the module $A(1 + j)$.
Finally, recall that, for $n \ge 0$, we have denoted by $\omega_{n}(X)$ the polynomial $(1+X)^{p^{n}}-1$.

\begin{defn}
For any $0 \leq n\leq \infty$ denote by $\cX_{j,n}^\textup{Gr}$ to be the Pontryagin dual
\[
\Sel_\textup{Gr}(K_n, A_j)^\vee.
\]
Moreover, denote by
    $ r_n: \cX_{j,\infty}^\textup{Gr}/\omega_n \rightarrow \cX_{j,n}^\textup{Gr}$  the maps induced by the restriction maps
\[ \Sel_\textup{Gr}(K_n,A_j)\rightarrow\Sel_\textup{Gr}(K_\infty,A_j)^{\Gamma_1^{p^n}}. \]
\end{defn}

We can now state the main theorem of the section.

\begin{theorem}\label{controlordinary} Let us suppose that, for all $n$, $\tupH^1(\Gamma_1^{p^n}, A_j^{\textup{Gal}(\QQ_\Sigma/K_\infty)})=0$.
Then, for all $n$, the maps $r_n: \cX_{j,\infty}^\textup{Gr}/\omega_n\rightarrow \cX_{j,n}^\textup{Gr}
$ are surjective with finite kernel of bounded orders as $n$ varies.
\end{theorem}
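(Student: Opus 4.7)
The plan is to follow the standard Mazur-style control argument: compare the defining sequences of $\Sel_{\Gr}(K_n, A_j)$ and $\Sel_{\Gr}(K_\infty, A_j)$ via a snake-lemma analysis, and bound the resulting error terms by local considerations. By Pontryagin duality (using that $\omega_n$ annihilates the $\Gamma_n$-invariants of a discrete $\Lambda$-module, where $\Gamma_n := \Gamma_1^{p^n}$), the stated conclusion is equivalent to showing that the restriction map
\[
s_n : \Sel_{\Gr}(K_n, A_j) \longrightarrow \Sel_{\Gr}(K_\infty, A_j)^{\Gamma_n}
\]
is injective with cokernel of bounded order as $n$ varies.

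I would then consider the commutative diagram whose rows are the defining sequences of the two Selmer groups (the bottom one taken after applying $\Gamma_n$-invariants), with vertical maps $s_n$ on the left, the global restriction map $h_n$ on $\tupH^1(\QQ_\Sigma/-, A_j)$ in the middle, and the product $g_n = \prod_v g_{n,v}$ of local restriction maps on the quotient groups $\tupH^1_{/\cL_v}$ on the right. The Hochschild--Serre spectral sequence for $G_{K_\infty,\Sigma} \triangleleft G_{K_n,\Sigma}$ with quotient $\Gamma_n$ identifies $\ker h_n = \tupH^1(\Gamma_n, A_j^{G_{K_\infty,\Sigma}}) = 0$ (by hypothesis) and embeds $\coker h_n \hookrightarrow \tupH^2(\Gamma_n, A_j^{G_{K_\infty,\Sigma}}) = 0$, the latter vanishing since $\Gamma_n \cong \Zp$ has $p$-cohomological dimension $1$. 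Hence $h_n$ is an isomorphism, and the snake lemma yields $\ker s_n = 0$ together with an isomorphism $\coker s_n \cong \ker g_n$.

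It remains to bound $\ker g_n = \prod_v \ker g_{n,v}$ uniformly in $n$. For primes $v \neq p$ in $\Sigma$: since $v$ is unramified and finitely decomposed in $K_\infty/\QQ$, the tower of completions $K_{n,v} \subset K_{\infty,w}$ stabilises at a finite layer, and a standard local inflation-restriction argument controls $\ker g_{n,v}$ by a subquotient of the finite group $\tupH^0(K_{\infty,w}, A_j)$. For $v = p$: one compares the Greenberg-style defining sequences at levels $n$ and $\infty$ via a local snake lemma, reducing $\ker g_{n,p}$ to $\tupH^0$-terms attached to $\cF^+A_j$ and $A_j/\cF^+A_j$ over $K_{\infty,p}$, both finite thanks to the ordinarity hypothesis combined with the weight condition $k_g+1 \le j \le k_f$. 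The main obstacle will be obtaining \emph{uniform} bounds at $v = p$, primarily because of the divisible subgroup built into the Greenberg local condition $\tupH^1_\Gr(K_{n,p}, A_j) = \tupH^1(K_{n,p}, \cF^+A_j)_{\mathrm{div}}$; this is precisely the technical point addressed in \cite{ochiaiBK,ochiai}, whose methods I would adapt to the present Rankin--Selberg setting.
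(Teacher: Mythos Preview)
Your proposal is correct and follows essentially the same approach as the paper's proof: the same snake-lemma diagram, the same use of the hypothesis and of $\mathrm{cd}_p(\Gamma_n)=1$ to make the global restriction an isomorphism, and the same appeal to \cite{ochiaiBK} for $v\nmid p$ and \cite{ochiai} for $v=p$ to obtain uniform bounds. One small correction: for $v\nmid p$ the tower of completions $K_{n,v}\subset K_{\infty,w}$ does \emph{not} stabilise (the decomposition group is open in $\Gamma_1\cong\Zp$, so $K_{\infty,w}/\QQ_v$ is an infinite unramified $\Zp$-extension); what stabilises is only the number of primes above $v$, but your inflation--restriction bound via $\tupH^0(K_{\infty,w},A_j)$ is nonetheless the right one and is exactly \cite[Lemma~2.8]{ochiaiBK}. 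Also note that strictly speaking $\coker s_n$ only \emph{injects} into $\ker g_n$ (the top row need not surject), though this does not affect the conclusion.
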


\begin{proof}

Consider the following commutative diagram:
\[ \xymatrix{ 0 \ar[r] & \Sel_\textup{Gr}(K_n,A_j) \ar[r] \ar[d]^{\alpha_n} & \tupH^{1}(\QQ_{\Sigma}/K_n, A_j) \ar[r] \ar[d]^{\beta_n} & \prod\limits_{\mathfrak{p} \neq v \in \Sigma_n} \tupH^{1}_{/f}(K_{n,v}, A_j)  \times \tupH^{1}_{/\textup{Gr}}(K_{n,\mathfrak{p}}, A_j) \ar[d]^{\gamma_n=\prod \gamma_{n,v}} \\
0 \ar[r] & \Sel_\textup{Gr}(K_\infty,A_j)^{\Gamma_1^{p^n}} \ar[r] & \tupH^{1}(\QQ_{\Sigma}/K_\infty, A_j)^{\Gamma_1^{p^n}} \ar[r] & \prod\limits_{\mathfrak{p} \neq w  \in \Sigma_\infty} \tupH^{1}_{/f}(K_{\infty,v},A_j)^{\Gamma_{1,v}^{p^n}} \times (\mathcal{H}_{p,j}^{\textup{Gr}})^{\Gamma_{1,\mathfrak{p}}^{p^n}} ,}\]
where $\Sigma_n$ denotes the primes of $K_n$ above primes in $\Sigma$. In order to bound kernel and cokernel of $\alpha_n$, we reduce to study the maps $\beta_n$ and $\gamma_n$.

From the diagram above, we have that  \[ \ker(\alpha_n)  \hookrightarrow \ker(\beta_n) = \tupH^1(\Gamma_1^{p^n}, A_j^{\textup{Gal}(\QQ_\Sigma/K_\infty)}).
\]
By assumption, the latter is trivial, thus $\ker(\alpha_n)=0$.

 We now turn our attention to the study of $\textup{Coker}(\alpha_n)$. We start by noticing that, by the inflation-restriction exact sequence, we have \[ \textup{Coker}(\beta_n) \hookrightarrow H^2(\Gamma_1^{p^n},A_j^{\textup{Gal}(\QQ_\Sigma/K_\infty)}). \] However, the latter group is $0$, as $\Gamma_1^{p^n}$ is a free pro-p group of cohomological dimension 1. Thus, by the Snake Lemma,  $\textup{Coker}(\alpha_n)$ is a sub-quotient of $\ker(\gamma_n)$.

 Let $v \in \Sigma_n$ be a prime which does not lie above $p$; then, by \cite[Lemma 2.8]{ochiaiBK}, the order of $\ker(\gamma_{n,v})$ is bounded independently of $n$. Finally, this leaves us with studying the kernel of \[\gamma_{n,\mathfrak{p}}: \tupH^{1}_{/\textup{Gr}}(K_{n,\mathfrak{p}}, A_j) \to (\mathcal{H}_{p,j}^{\textup{Gr}})^{\Gamma^{p^n}_{1,\mathfrak{p}}}. \] 
In order to prove the assertion that the order of $\ker(\gamma_{n,\mathfrak{p}})$ is bounded independently by $n$, we need to check the hypotheses of \cite[Theorem 3.5(2)]{ochiai} for $V(1+j)$. These in turn follow from the $p$-ordinarity of $f$ and $g$ and the fact that the eigenvalues of $\textup{Frob}_p$ on $V$ are not roots of unity, as they all have absolute value $p^{(k_f+k_g)/2+1}$ and $k_f > k_g \geq -1$.
\end{proof}

We now discuss cases where the hypothesis of Theorem \ref{controlordinary} holds.

\begin{proposition}\label{invariantsforalln}
Suppose that $f$ and $g$ have weights $\geq 2$ and that $f$ is non-CM. Then, for all but finitely many $p$ which do not divide $N_fN_g$ and for all $n$, we have
\[ \tupH^1(\Gamma_1^{p^n}, A_j^{\textup{Gal}(\QQ_\Sigma/K_\infty)})=0.\] 
\end{proposition}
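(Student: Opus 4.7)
The plan is to reduce the vanishing of $H^1(\Gamma_1^{p^n}, A_j^{\Gal(\QQ_\Sigma/K_\infty)})$ to the outright vanishing of $A_j^{\Gal(\QQ_\Sigma/K_\infty)}$ itself, and then to establish the latter by a big-image argument for the residual Galois representation.

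Since $\Gamma_1^{p^n}\cong\Zp$ has $p$-cohomological dimension at most one on discrete $p$-primary modules, one has
\[
H^1(\Gamma_1^{p^n},M)\cong M/(\gamma^{p^n}-1)M
\]
for $M:=A_j^{\Gal(\QQ_\Sigma/K_\infty)}$, so it suffices to prove the stronger claim that $M=0$ for almost all $p$. Because $A_j$ is divisible and $p$-primary, any nonzero submodule meets $A_j[\varpi]\cong \bar{V}_j:=T_j/\varpi T_j$, which reduces the task to showing $\bar{V}_j^{\Gal(\QQ_\Sigma/K_\infty)}=0$. Moreover, $\chi_\cyc^{1+j}|_{G_{K_\infty}}$ factors through the finite quotient $\Delta=\Gal(\QQ(\mu_p)/\QQ)$, so this vanishing amounts to the absence of the character $\chi_\cyc^{-(1+j)}|_{G_{K_\infty}}$ as a $G_{K_\infty,\Sigma}$-subrepresentation of $\bar{V}:=\bar{V}_f\otimes\bar{V}_g$.

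The main step is to rule out the occurrence of any one-dimensional $G_{K_\infty,\Sigma}$-subrepresentation of $\bar{V}$ for almost all admissible $p$. By Ribet's big-image theorem applied to $f$ (non-CM of weight $\ge 2$), for almost all $p$ the image of $G_\QQ$ on $\bar{V}_f$ contains $\mathrm{SL}_2(\mathbb{F}_p)$. Since $G_{K_\infty}\trianglelefteq G_\QQ$ has abelian pro-$p$ quotient, the image of $G_{K_\infty}$ on $\bar{V}_f$ contains the commutator of the $G_\QQ$-image, and hence still contains $\mathrm{SL}_2(\mathbb{F}_p)$ for $p\ge 5$ (as $\mathrm{SL}_2(\mathbb{F}_p)$ is perfect). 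When $g$ is also non-CM of weight $\ge 2$, the same argument applies symmetrically to $\bar{V}_g$, and $\bar{V}|_{G_{K_\infty}}$ decomposes into irreducible pieces of dimension $\ge 2$. When $g$ is CM by an imaginary quadratic field $K_g$, for almost all $p$ one has $K_g\cap K_\infty=\QQ$; Mackey's formula together with the projection formula and the irreducibility of $\bar{V}_f|_{G_{K_gK_\infty}}$ (which again follows from Ribet, since $\mathrm{SL}_2(\mathbb{F}_p)$ admits no proper subgroup of index two for $p\ge 5$) expresses $\bar{V}|_{G_{K_\infty}}$ as the induction of an irreducible two-dimensional representation from $G_{K_gK_\infty}$ to $G_{K_\infty}$, which splits as a sum of pieces of dimension $\ge 2$ and again admits no one-dimensional subrepresentation.

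The main obstacle is to ensure that the exceptional set of primes $p$ is genuinely finite and independent of both $j$ and $n$. This is guaranteed by the uniformity in Ribet's big-image theorem, which supplies a single finite exceptional set depending only on $f$ and $g$; the character $\chi_\cyc^{-(1+j)}|_{G_{K_\infty}}$ ranges over only the $p-1$ characters of $\Delta$, so handling all $j$ simultaneously introduces no further constraint on $p$.
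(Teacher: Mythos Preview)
Your reduction to showing $A_j^{G_{K_\infty}}=0$, and from there to $(\bar V_f\otimes\bar V_g)^{G_{K_\infty}}$ admitting no one-dimensional subrepresentation, is clean and in fact more direct than the paper's route (which first argues with the characteristic-zero representation $V(1+j)$ to get finiteness, and only then passes to $\varpi$-torsion). Your treatment of the CM case via Mackey's formula and the absence of index-two subgroups in $\mathrm{SL}_2(\mathbb{F}_p)$ for $p\ge 5$ is also correct.

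There is, however, a genuine gap in the non-CM case. Knowing that the image of $G_{K_\infty}$ on $\bar V_f$ and on $\bar V_g$ each contains $\mathrm{SL}_2(\mathbb{F}_p)$ does \emph{not} imply that $\bar V_f\otimes\bar V_g$ has no one-dimensional $G_{K_\infty}$-constituent. Indeed, if the image of $G_{K_\infty}$ in $\mathrm{GL}(\bar V_f)\times\mathrm{GL}(\bar V_g)$ happens to be the graph of an isomorphism between the two copies of $\mathrm{SL}_2(\mathbb{F}_p)$ (equivalently, if $\bar V_g|_{G_{K_\infty}}\cong\bar V_f|_{G_{K_\infty}}$ up to twist), then as an $\mathrm{SL}_2(\mathbb{F}_p)$-representation one has $\mathrm{std}\otimes\mathrm{std}\cong\mathrm{Sym}^2\oplus\det$, and the one-dimensional summand $\det$ gives exactly the kind of invariant line you are trying to exclude. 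Your sentence ``$\bar V|_{G_{K_\infty}}$ decomposes into irreducible pieces of dimension $\ge 2$'' therefore does not follow from the hypotheses you have established.

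What is missing is a \emph{joint} big-image statement: for all but finitely many $p$, the image of $(\rho_{f,p}\times\rho_{g,p})|_{G_{K_\infty}}$ is the full fibre product $\{(x,y):\det x=\det y\}$ (after restricting to the subgroup cut out by inner twists), not merely large in each factor separately. This is a Goursat-lemma argument that requires knowing the residual representations of $f$ and $g$ are not twists of one another; in the paper's setting this follows because $f$ and $g$ have different weights. The paper obtains this input by citing \cite[Proposition~4.2.1]{loeffler2017}, and then exhibits an explicit element $\left(\left(\begin{smallmatrix}x&\\&x^{-1}\end{smallmatrix}\right),\left(\begin{smallmatrix}y&\\&y^{-1}\end{smallmatrix}\right)\right)$ in the image whose action on the tensor product has eigenvalues $\{xy,xy^{-1},x^{-1}y,x^{-1}y^{-1}\}$, none equal to $1$ for suitable $x,y$. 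Once you insert this joint-image step, your argument goes through.
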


\begin{proof}
The strategy for proving the statement uses ideas employed in \cite[Proposition~7.2.18]{LLZ14} and \cite[Proposition 4.2.1]{loeffler2017}; we now sketch it and refer to \cite{loeffler2017} for further details.
 Let $H$ denote the subgroup of $G_\QQ$ cut out by the Dirichlet characters corresponding to inner twists of $f$ and $g$; we denote by $\rho_{h,p}$ the Galois representation of $h \in \{f,g\}$ taking values in $\mathbf{GL}_2(\mathcal{O}_h)$, for $\mathcal{O}_h$ the ring of integral elements of a finite extension  $\Qp \subset F_h \subset L$.
 
 Suppose that $g$ is non-CM; then, for all but finitely many $p$, the image of the representation $(\rho_{h,p})_{|_H}$, where $h \in \{f,g\}$, equals to \[\{ x \in \mathbf{GL}_2(\mathcal{O}_h) \; : \; {\rm det}(x) \in \Zp^\times \}. \]

By our running hypotheses on their weights, $f$ and $g$ are not  twists of each other. Thus, by \cite[Proposition 4.2.1 and Remark 4.2.2]{loeffler2017}, for all but finitely many primes $p$ coprime with $N_fN_g$, the image of  $(\rho_{f,p}\times \rho_{g,p})_{|_H}$ equals \[\{ (x,y) \in \mathbf{GL}_2(\mathcal{O}_f) \times \mathbf{GL}_2(\mathcal{O}_g) \; : \; {\rm det}(x) = {\rm det}(y) \in \Zp^\times \}. \]
Let us now fix such a prime $p$.  The image  $(\rho_{f,p}\times \rho_{g,p})(H \cap G_{\QQ(\mu_{p^\infty})})$ contains the element 
\[ M:= \left ( \left( \begin{smallmatrix} x & \\ & x^{-1} \end{smallmatrix} \right) , \left ( \begin{smallmatrix} y & \\ & y^{-1}  \end{smallmatrix}\right) \right), \]
where $x,y \in \Zp^\times$. The image of $M$ under the tensor product homomorphism $\mathbf{GL}_2 \times \mathbf{GL}_2  \to \mathbf{GL}_4 $ is the diagonal matrix with eigenvalues $\{xy, x^{-1}y, xy^{-1},x^{-1}y^{-1} \}$. We can easily choose $x,y$ so that none of the eigenvalues equals to $1$, thus constructing an element in $G_{\QQ(\mu_{p^\infty})}$ which acts without fixed points on $V(1+j)$. Thus,  $\tupH^0(K_n,V(1+j))=0$ for all $n$.
 
As explained in the proof of \cite[Theorem 2.4(i)]{ochiaiBK}, this shows that $\tupH^1(\Gamma_1^{p^n}, A_j^{\textup{Gal}(\QQ_\Sigma/K_\infty)})$ is finite. We sketch the argument here and refer to \cite{ochiaiBK} for further details.
The map given by evaluation of cocycles at the topological generator $1+\omega_n$ of $\Gamma_1^{p^n}$ gives an isomorphism \[ \tupH^1(\Gamma_1^{p^n}, A_j^{\textup{Gal}(\QQ_\Sigma/K_\infty)}) \stackrel{\sim}{\longrightarrow} (A_j^{\textup{Gal}(\QQ_\Sigma/K_\infty)})_{\Gamma_1^{p^n}}:= A_j^{\textup{Gal}(\QQ_\Sigma/K_\infty)}/  \omega_n A_j^{\textup{Gal}(\QQ_\Sigma/K_\infty)}.
\]
As $A_j^{\textup{Gal}(\QQ_\Sigma/K_\infty)}$ is of finite type, it sits in the exact sequence \[ 0 \to D \to A_j^{\textup{Gal}(\QQ_\Sigma/K_\infty)} \to E \to 0, \]
where $D$ is the maximal divisible subgroup of $A_j^{\textup{Gal}(\QQ_\Sigma/K_\infty)}$ and $E$ is finite.
As $E$ is finite, we are left to show that $D_{\Gamma_1^{p^n}}$ is bounded. Notice that $D_{\Gamma_1^{p^n}}$ is either zero or infinite. If it is infinite, then ${\rm corank}_{\Zp}D_{\Gamma_1^{p^n}} ={\rm corank}_{\Zp}D^{\Gamma_1^{p^n}} \ne 0$, which implies that $A_j^{\Gamma_1^{p^n}}$ is infinite. However, this is impossible as we have shown that  $\tupH^0(K_n,V(1+j))=0$ for all $n$. 

As $\tupH^1(\Gamma_1^{p^n}, A_j^{\textup{Gal}(\QQ_\Sigma/K_\infty)})$ is finite, we are left to show that \[\tupH^1(\Gamma_1^{p^n}, A_j^{\textup{Gal}(\QQ_\Sigma/K_\infty)})[\varpi]=0. \]
 To do so, notice that we have a surjection \[\tupH^1(\Gamma_1^{p^n}, (A_j[\varpi])^{\textup{Gal}(\QQ_\Sigma/K_\infty)}) \twoheadrightarrow \tupH^1(\Gamma_1^{p^n}, A_j^{\textup{Gal}(\QQ_\Sigma/K_\infty)})[\varpi], \]
and that $A_j[\varpi] \simeq \overline{T}(1+j)$. 
By \cite[Proposition 4.2.1]{loeffler2017}, $\overline{T}$ is irreducible as a $G_{K_\infty}$-module, hence \[(\overline{T}(1+j))^{G_{K_\infty}}=0,\]
which implies that \[ \tupH^1(\Gamma_1^{p^n}, (A_j[\varpi])^{\textup{Gal}(\QQ_\Sigma/K_\infty)}) =0.\]

When $g$ is CM, for all but finitely many primes $p$ coprime to $N_fN_g$, the image of $H \cap G_{\QQ(\mu_{p^{\infty}})}$ of $\rho_{f,p} \times \rho_{g,p}$ contains the element $M$ considered above (cf. \cite[Proposition 4.3.1]{loeffler2017}). Thus,  we can construct an element $\tau \in G_{\QQ(\mu_{p^{\infty}})}$ which acts without fixed points on $\overline{T}(1+j)$ and on $V(1+j)$. This implies the result. 
\end{proof}

Of particular interest for us is the case when the weight of $g$ is one, which we now discuss. 

\begin{proposition}\label{invariantsforallncaseweight1}
Suppose that $g$ has weight $1$, $f$ has weight $\geq 2$ and  is non-CM, and that $N_f$ and $N_g$ are coprime. Then, for all but finitely many $p$ which do not divide $N_fN_g$ and for all $n$, we have
\[ \tupH^1(\Gamma_1^{p^n}, A_j^{\textup{Gal}(\QQ_\Sigma/K_\infty)})=0.\] 
\end{proposition}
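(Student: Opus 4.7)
The plan is to adapt the strategy used in the proof of Proposition~\ref{invariantsforalln}, exploiting the fact that when $g$ has weight one, its Galois representation $\rho_g$ factors through a finite quotient. Let $F$ denote the splitting field of $\rho_g$, which is a finite extension of $\QQ$ ramified only at primes dividing $N_g$. Set $F_\infty = F(\mu_{p^\infty})$; since $\rho_g$ is trivial on $G_{F_\infty}$ and $\chi_\cyc$ is trivial on $G_{\QQ(\mu_{p^\infty})}$, for any $\sigma\in G_{F_\infty}$ the action on $V(1+j)=V_f\otimes V_g(1+j)$ is, up to a choice of basis of $V_g$, two copies of $\rho_f(\sigma)$. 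Consequently, to show that $V(1+j)^{G_{K_\infty}}=0$, and dually that $\overline{T}(1+j)^{G_{K_\infty}}=0$, it is enough to produce a single $\sigma\in G_{F_\infty}\subset G_{K_\infty}$ whose image $\rho_f(\sigma)$ has no eigenvalue equal to $1$ (respectively, no eigenvalue $\equiv 1\pmod \varpi$).

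The first step is to establish a largeness statement for $\rho_f|_{G_{F_\infty}}$. Since $f$ has weight $\ge2$ and is non-CM, by the work of Momose--Ribet the image of $\rho_f$ restricted to the subgroup $H\subseteq G_{\QQ}$ cut out by the inner twists of $f$ contains, for all but finitely many $p$, the group $\{x\in\GL_2(\cO_f):\det(x)\in\Zp^\times\}$; as $(N_f,N_g)=1$ and $F/\QQ$ is ramified only at primes dividing $N_g$, enlarging the excluded set of primes if necessary, $F$ is linearly disjoint from the $p$-part of the fixed field of the kernel of $\rho_f$ restricted to $H$, and from $\QQ(\mu_{p^\infty})$. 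Thus for such $p$, $\rho_f(H\cap G_{F_\infty})$ still contains the image of $\mathrm{SL}_2(\Zp)$ (or an open subgroup thereof). In particular one can choose $\sigma\in G_{F_\infty}$ with $\rho_f(\sigma)=\mathrm{diag}(x,x^{-1})$ and $x\in\Zp^\times$ arbitrary; taking $x\not\equiv 1\pmod \varpi$ gives an element that acts without fixed points on $V(1+j)$ and on $\overline{T}(1+j)$.

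With this in hand, the argument of Proposition~\ref{invariantsforalln} carries over verbatim. The existence of $\sigma$ first forces $V(1+j)^{G_{K_\infty}}=0$, which combined with the structure of $A_j^{\mathrm{Gal}(\QQ_\Sigma/K_\infty)}$ as an extension of a finite module by a divisible one, and the evaluation-at-generator isomorphism
\[
\tupH^1(\Gamma_1^{p^n}, A_j^{\mathrm{Gal}(\QQ_\Sigma/K_\infty)})\cong (A_j^{\mathrm{Gal}(\QQ_\Sigma/K_\infty)})_{\Gamma_1^{p^n}},
\]
shows this cohomology group is finite. Then $\overline{T}(1+j)^{G_{K_\infty}}=0$, together with the surjection
\[
\tupH^1(\Gamma_1^{p^n}, (A_j[\varpi])^{\mathrm{Gal}(\QQ_\Sigma/K_\infty)})\twoheadrightarrow \tupH^1(\Gamma_1^{p^n}, A_j^{\mathrm{Gal}(\QQ_\Sigma/K_\infty)})[\varpi],
\]
kills the $\varpi$-torsion and yields the vanishing claimed.

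The main obstacle is the first step, namely showing that the image of $\rho_f$ restricted to $G_{F_\infty}$ is still large enough to supply the required element $\sigma$. One needs to check that excluding finitely many primes $p$ suffices to ensure that the fixed field of $\rho_f \bmod p$ on $H$ is linearly disjoint from $F(\mu_p)$; this uses in an essential way both the coprimality $(N_f,N_g)=1$ (so that $F$ is unramified at primes dividing $N_f$, and in particular unrelated to the $\bmod p$ image of $\rho_f$ for large $p$) and the non-CM hypothesis on $f$ (providing the open-image theorem). Once this largeness is secured, the remainder of the argument is purely formal.
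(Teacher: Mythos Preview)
Your proposal is correct and follows essentially the same plan as the paper's proof: reduce to producing an element of $G_{K_\infty}$ acting without fixed points on both $V(1+j)$ and $\overline{T}(1+j)$, and deduce vanishing exactly as in Proposition~\ref{invariantsforalln}. The one genuine point of difference is how this element is manufactured. You restrict all the way to $G_{F_\infty}$, where $\rho_g$ is trivial, and then only need $\rho_f(\sigma)=\mathrm{diag}(x,x^{-1})$ with $x\not\equiv 1$; the paper instead stays on $G_{\QQ(\mu_{p^\infty})}$, invokes the \emph{oddness} of $\rho_g$ to pick $\gamma$ conjugate to $\mathrm{diag}(-1,1)$, and uses the linear disjointness of $F_g$, the splitting field of $\rho_{f,p}$, and $\QQ(\mu_{p^\infty})$ (citing \cite[Theorem~4.4.1]{loeffler2017}) to realize the pair $(\delta_x,\gamma)$ simultaneously. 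Your route avoids any appeal to oddness and is marginally more direct, but it requires the slightly sharper statement that $\rho_f(G_{F_\infty})$ still contains (the image of) $\mathrm{SL}_2(\Zp)$; your justification via coprimality of conductors and the non-CM big-image theorem is exactly what underlies the cited result in \cite{loeffler2017}, so the two arguments are equivalent at the level of input.
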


\begin{proof}
The proof of the statement is similar to the one of Proposition \ref{invariantsforalln} and it heavily relies on \cite[Theorem 4.4.1]{loeffler2017}.

As $g$ has weight one, $\rho_{g,p}$ is an Artin representation. Since $N_g$ is coprime to $pN_f$, the splitting field $F_g$ of $\rho_{g,p}$, the one of $\rho_{f,p}$, and $\QQ(\mu_{p^\infty})$ are linearly disjoint over $\QQ$. Thus, for every $\gamma \in \rho_{g,p}(G_\QQ)$ and $\delta \in \rho_{f,p}(G_{\QQ(\mu_{p^\infty})})$, there exists $\tau \in G_{\QQ(\mu_{p^\infty})}$  such that $\rho_{g,p}(\tau)=\gamma$ and $\rho_{f,p}(\tau)=\delta$.

For all but finitely many $p$,  $\rho_{f,p}(G_{\QQ(\mu_{p^\infty})})$ contains the elements $\delta_x = \left( \begin{smallmatrix} x & \\ & x^{-1} \end{smallmatrix} \right)$, for $x \in \Zp^\times$. Moreover,  since $\rho_{g,p}$ is odd, there is $\gamma \in \rho_{g,p}(G_\QQ)$ conjugate to $\left( \begin{smallmatrix} -1 & \\ & 1 \end{smallmatrix} \right)$. One can choose $x$ so that the image of $\delta_x \otimes \gamma$ in $\mathbf{GL}_4$ has all eigenvalues different from 1; thus there exists $\tau \in G_{\QQ(\mu_{p^\infty})}$ which acts without fixed points on $V(1+j).$ Similarly, this shows that $ (\overline{T}(1+j))^{G_{K_\infty}} = 0$.
As in Proposition \ref{invariantsforalln}, these facts imply that 
\[ \tupH^1(\Gamma_1^{p^n}, A_j^{\textup{Gal}(\QQ_\Sigma/K_\infty)})=0.\] 
\end{proof}

\subsection{The supersingular case}\label{sec:controlpm}
We now turn our attention to the case where $f$ corresponds to an elliptic curve $E/\QQ$  with $a_p(E)=0$ and $g$ is a weight-one form, as in \S\ref{sec:setupSS}. We do not know how to prove control theorems when $f$ is a more general $p$-non-ordinary form, which seems to be  the biggest obstacle of generalizing results of \cite{KK} beyond our current setting. Let $K_g/\Qp$ denote the smallest extension such that the action of $G_{K_g}$ on $V_g$ is trivial.  Throughout, we assume that the following hypothesis holds, which allows us to apply results from \cite{Kim}.
\begin{itemize}
    \item[\textbf{(H.Kim)}] $4\nmid [K_g:\Qp]$.
\end{itemize}

\begin{defn}
For an integer $n\ge0$, we define $\omega_n^\pm$ to be the following polynomials:
\[
    \omega_n^\pm(X)=X\prod_{m\in S_{n+1}^\pm\setminus\{0\}}\Phi_m(X).
\]
Given a $\Lambda$-module $M$, we write 
\[
M/\omega_n^\pm=M/\omega_n^\pm(\gamma-1)M.
\]

For $0\le n\le \infty$ and $\bullet\in\{+,-\}$, we define $\cX_n^\bullet$ to be the Pontryagin dual
\[
\Sel_{\bullet\bullet}(K_n,A(1))^\vee.
\]
\end{defn}

\begin{theorem}\label{thm:controlpm}
Let $\bullet\ne\circ$ be a choice of the two elements in $\{+,-\}$. The natural maps
\[
\cX_\infty^\bullet/\omega_n^\circ\rightarrow \cX_n^\bullet/\omega_n^\circ
\]
are surjective and have finite kernel of bounded orders as $n$ varies. 
\end{theorem}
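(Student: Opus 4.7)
The strategy follows Kobayashi's original control theorem for plus/minus Selmer groups (\cite[Theorem 9.3]{kobayashi03}), adapted to our Rankin--Selberg setting under hypothesis \textbf{(H.Kim)}. First I would set up the standard commutative diagram comparing finite and infinite levels:
\[
\begin{array}{ccccccc}
0 & \to & \Sel_{\bullet\bullet}(K_n, A(1)) & \to & \tupH^1(\QQ_\Sigma/K_n, A(1)) & \xrightarrow{\phi_n} & P_n \\
& & \downarrow s_n & & \downarrow h_n & & \downarrow \ell_n \\
0 & \to & \Sel_{\bullet\bullet}(K_\infty, A(1))^{\Gamma_1^{p^n}} & \to & \tupH^1(\QQ_\Sigma/K_\infty, A(1))^{\Gamma_1^{p^n}} & \to & P_\infty^{\Gamma_1^{p^n}}
\end{array}
\]
where $P_n$ (resp.\ $P_\infty$) is the product of local quotients $\prod_v \tupH^1_{/\mathcal{L}_v}$ with $\mathcal{L}_v=f$ for $v\nmid p$ and $\mathcal{L}_\mathfrak{p}=\bullet\bullet$. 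The snake lemma then reduces the problem to bounding $\ker h_n$ and $\ker \ell_n$; dualizing and then quotienting by $\omega_n^\circ$ produces the map of the statement.

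For the global kernel, $\ker h_n \subseteq \tupH^1(\Gamma_1^{p^n}, A(1)^{G_{K_\infty,\Sigma}})$. Since $E$ is supersingular at $p$, the Frobenius eigenvalues on $V_f(1)$ have absolute value $\sqrt{p}$ and are therefore not roots of unity; combined with $g$ being a weight-one Artin form whose splitting field is linearly disjoint from $\QQ(\mu_{p^\infty})$ (as $p\nmid N_g$), an argument parallel to Propositions \ref{invariantsforalln}--\ref{invariantsforallncaseweight1} produces an element of $G_{K_\infty}$ acting without fixed vectors on $V(1)$, so that $A(1)^{G_{K_\infty}}$ is finite and $\ker h_n$ is uniformly bounded. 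For $v\nmid p$, $\ker\ell_{n,v}$ is finite and bounded independently of $n$ by \cite[Lemma 2.8]{ochiaiBK}, exactly as in the ordinary case.

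The heart of the proof is the local analysis at $p$. Using the decomposition \eqref{eq:localdecompowt1}, the local condition $\tupH^1_{\bullet\bullet}(\Qp(\mu_{p^{n+1}}),A(1))$ splits as a direct sum indexed by $\mu\in\{\alpha,\beta\}$, each summand corresponding via Lemma \ref{lem:projectlambda} and the short exact sequence \eqref{eq:pmSES} to formal-group points $E^\bullet(K_g(\mu_{p^{n+1}}))$, where $K_g/\Qp$ is unramified (since $p\nmid N_g$). Under hypothesis \textbf{(H.Kim)}, Kim's extension of Kobayashi's trace-jumping computations (\cite{Kim}) describes $\varinjlim_n E^\bullet(K_g(\mu_{p^{n+1}}))\otimes\Qp/\Zp$ and shows that this limit, modulo the image of $\hat E(K_g)$, is annihilated by $\omega_n^\circ$ at level $n$. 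Reducing the snake-lemma diagram modulo $\omega_n^\circ$ then collapses the local obstruction at $p$ to a finite bounded piece, yielding the desired surjectivity and uniform finite kernel for $\cX_\infty^\bullet/\omega_n^\circ\to\cX_n^\bullet/\omega_n^\circ$.

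The main obstacle is this last local computation: although the decomposition \eqref{eq:localdecompowt1}, the short exact sequence \eqref{eq:pmSES}, and Lemma \ref{lem:projectlambda} are already in place, one must carefully track the sign conventions (cf.\ Conventions \ref{conv:pm} and \ref{conv:mp}) to verify that $\omega_n^\circ$, rather than $\omega_n^\bullet$, is the correct annihilator appearing on the Selmer side, and confirm that Kim's results (which are stated for $T_f$ itself) transfer correctly to the unramified twists $T_{f,\mu}$ arising in \eqref{eq:localdecompowt1}. Once this bookkeeping is complete, assembling the global and local bounds via the snake lemma and reducing modulo $\omega_n^\circ$ is a routine exercise.
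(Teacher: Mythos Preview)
Your proposal follows essentially the same strategy as the paper's proof (snake lemma on a finite/infinite comparison diagram, Ochiai's bound away from $p$, Kim's plus/minus machinery at $p$), but the paper organizes the argument more efficiently in two respects worth noting.

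First, rather than taking $\Gamma_1^{p^n}$-invariants and then reducing modulo $\omega_n^\circ$ at the end, the paper applies the functor $(-)^{\omega_n^\circ}=\{x:\omega_n^\circ(\gamma-1)x=0\}$ to every term of the diagram from the start. Since $M^{\omega_n^\circ}\subseteq M^{\Gamma_1^{p^n}}$, this yields a subdiagram of yours. The payoff is at $p$: the map
\[
\frac{\tupH^1(K_{n,\p},A(1))}{\tupH^1_{\bullet\bullet}(K_{n,\p},A(1))}\longrightarrow \frac{\tupH^1(K_{\infty,\p},A(1))}{\tupH^1_{\bullet\bullet}(K_{\infty,\p},A(1))^{\omega_n^\circ}}
\]
is shown to be \emph{injective} (kernel zero, not merely bounded) by a direct appeal to \cite[Proposition~2.18]{Kim}, applied after passing to each summand via Lemma~\ref{lem:projectlambda}. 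This sidesteps the bookkeeping you flag as the ``main obstacle.''

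Second, for the global kernel you invoke a big-image argument parallel to Propositions~\ref{invariantsforalln}--\ref{invariantsforallncaseweight1} to get $A(1)^{G_{K_\infty}}$ finite. The paper instead observes that since $E$ is supersingular at $p$ and $\theta_\mu$ is unramified, $E[p^\infty](\theta_\mu^{-1})$ has no $G_{\Qp(\mu_{p^n})}$-invariants (Lemma~\ref{lem:restriction}), so $A(1)^{G_{K_\infty}}=0$ outright and $\beta_n$ is an isomorphism. This is both simpler and stronger, giving $\alpha_n$ injective on the nose.
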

\begin{proof}
Given a $\Lambda$-module $M$, we define
$$M^{\omega_n^\circ}=\{x\in M:\omega_n^\circ (\gamma-1)x=0 \}.$$ 
Note that $M^{\omega_n^\circ}$ is a submodule of $M^{\Gamma_1^{p^n}}$.

Consider the commutative diagram
\[ \xymatrix{ 0 \ar[r] & \Sel_{\bullet\bullet}(K_n,A(1))^{\omega_n^\circ} \ar[r] \ar[d]^{\alpha_n} & \tupH^{1}(\QQ_{\Sigma}/K_n, A(1))^{\omega_n^\circ} \ar[r] \ar[d]^{\beta_n} & \prod\limits_{\mathfrak{p} \neq v \in \Sigma_n} \frac{\tupH^1(K_{n,v},A(1))^{\omega_n^\circ} }{\tupH^1_{f}(K_{n,v},A(1))^{\omega_n^\circ} }\times \frac{\tupH^1(K_{n,\p},A(1))^{\omega_n^\circ} }{\tupH^1_{\bullet\bullet}(K_{n,\p},A(1))^{\omega_n^\circ} }\ar[d]^{\gamma_n=\prod \gamma_{n,v}} \\
0 \ar[r] & \Sel_{\bullet\bullet}(K_\infty,A(1))^{\omega_n^\circ} \ar[r] & \tupH^{1}(\QQ_{\Sigma}/K_\infty, A(1))^{\omega_n^\circ} \ar[r] & \prod\limits_{\mathfrak{p} \neq v \in \Sigma_n} \frac{\tupH^1(K_{\infty,v},A(1))^{\omega_n^\circ} }{\tupH^1_{f}(K_{\infty,v},A(1))^{\omega_n^\circ} }\times \frac{H^1(K_{\infty,\p},A(1))^{\omega_n^\circ} }{H^1_{\bullet\bullet}(K_{\infty,\p},A(1))^{\omega_n^\circ} }.}\]

It follows from the proof of Lemma~\ref{lem:restriction} that $A(1)^{G_{K_\infty}}=0$. Thus, the inflation-restriction exact sequence gives an isomorphism
\[
\tupH^1(\QQ_\Sigma/K_n,A(1))\stackrel{\sim}{\longrightarrow}\tupH^1(\QQ_\Sigma/K_\infty,A(1))^{\Gamma_1^{p^n}}.
\]
Since $M^{\Gamma_1^{p^n}}=M^\omega_n$ for any $\Gamma$-module $M$ and $\omega_n^\circ$ divides $\omega_n$, we deduce that $\beta_n$ is an isomorphism. In particular,  $\alpha_n$ is injective and  $\coker\alpha_n$ is bounded by $ \ker\gamma_n$. As in the proof of Theorem~\ref{controlordinary}, when $v\ne \mathfrak{p}$, $\ker\gamma_{n,v}$ is finite and bounded independently of $n$ thanks to \cite[Lemma 2.8]{ochiaiBK} (in fact our kernel is even smaller after taking $(-)^{\omega_n^\circ}$). It remains to study $\ker\gamma_{n,\mathfrak{p}}$, which we will show is in fact zero.

Fix $\lambda\in\{\alpha,\beta\}$. Let $K_{\lambda,n}$ (resp. $K_{\lambda,\infty}$) be the compositum of $K_n$ (resp. $K_\infty$) and $(\overline{\Qp})^{\ker\theta_\lambda}\subset K_g$, as in Lemma~\ref{lem:projectlambda}. We define $E^\bullet(K_{\lambda,n})$ and $E^\bullet(K_{\lambda,\infty})$ to be the $\mathbf 1$-isotypic components of $E^\bullet(K_{\lambda}(\mu_{p^{n+1}}))$ and $E^\bullet(K_{\lambda}(\mu_{p^\infty}))$ respectively. It can be checked using the description in \eqref{eq:Epm} that 
\[
E^\bullet(K_{\lambda,n})=\left\{P\in \hat E(K_{\lambda,n}):\Tr_{n/m+1}P\in \hat E(K_{\lambda,m})\ \forall m\in S_n^\circ\right\}.
\]

Consider the map
\[
\frac{\tupH^1(K_{\lambda,n},E[{p^\infty}])}{E^\bullet(K_{\lambda,n})\otimes\Qp/\Zp}\rightarrow \frac{\tupH^1(K_{\lambda,\infty},E[{p^\infty}])}{\left(E^\bullet(K_{\lambda,\infty})\otimes\Qp/\Zp\right)^{\omega_n^\circ}},
\]
which is injective by \cite[Proposition~2.18]{Kim} (bearing in mind our opposite choice of signs as explained in Convention~\ref{conv:pm}). Via the projection given in Lemma~\ref{lem:projectlambda}, we deduce that
\[
\frac{\tupH^1(K_{n,\p},A(1))}{\tupH^1_{\bullet\bullet}(K_{n,\p},A(1))}\rightarrow \frac{\tupH^1(K_{\infty,\p},A(1))}{\tupH^1_{\bullet\bullet}(K_{\infty,\p},A(1))^{\omega_n^\circ}}
\]
is injective. Hence, $\gamma_{n,p}$ is injective as required.
\end{proof}

\section{Results on Fitting ideals}

The goal of this section is to discuss and prove the main theorems discussed in the introduction. 

\subsection{The ordinary case}\label{ordinarysectionmaintheorems}
We suppose throughout this subsection that our fixed modular forms $f$ and $g$ are ordinary at $p$, that their residual representations $\overline{T}_f,\overline{T}_g$ are irreducible, and that
$f$ is non-CM. Moreover, we suppose that $\epsilon_f \epsilon_g$ is non-trivial,
that $(N_f,N_g)=1$, and one of the following conditions: \begin{itemize}
    \item $g$ is non-CM and has weight $\geq 2$;
    \item $g$ is of CM-type and $\epsilon_g$ is not 1 nor the quadratic character attached to the corresponding CM field of $g$;
    \item $g$ has weight 1.
\end{itemize} 
These hypotheses ensure that, for $p$ big enough and coprime with $N_fN_g$, the conclusions of Propositions~\ref{invariantsforalln} and  \ref{invariantsforallncaseweight1} hold as well as the big image assumption \textbf{Hyp(BI)} of \cite{KLZ2} (cf. \cite[Hypothesis 11.1.2, Remark 11.1.3]{KLZ2} and \cite[Proposition 4.2.1]{loeffler2017}). Moreover, they imply that the residual representation $\overline{T}(1+j)$ has no $G_\QQ$-stable sub-quotient isomorphic to $\mu_p$.
 From now on, we fix  a big enough prime $p$ for which these assumptions are satisfied.

We fix unit roots $\alpha_f$ and $\alpha_g$ for $f$ and $g$ respectively. By \cite[Corollary~2.2.4]{BL20}, for $h\in\{f,g\}$, $\{\omega_h,v_{h,\alpha}\}$ is an $\cO$-basis of $\Dcris(T_h)$. Consequently, $v_{\alpha,\alpha}=v_{f,\alpha}\otimes v_{g,\alpha}$ gives rise to an integral generator $\xi$ as in \cite[Definition~11.6.3]{KLZ2}. In particular, our geometric  $p$-adic $L$-function $L_p(f_\alpha,g)$ agrees with the $L_p(f,g,1+\mathbf{j})/\Omega$ considered in \emph{loc. cit.}
By Theorem 11.6.4 of \emph{op. cit.}, if $L_p(f_\alpha,g)(\theta\chi_{\rm cyc}^j) \ne 0$ for $\theta$ a Dirichlet character factoring through $\Gamma_1$, then  $\textup{Sel}_{\Gr}(K_\infty,A(1+j))$ is cotorsion and we have that 
\[ \pi_\Delta \circ \Tw^{j}L_p(f_\alpha,g) \in \textup{char}_\Lambda \textup{Sel}_{\Gr}(K_\infty,A(1+j))^\vee= \textup{char}_\Lambda(  \cX_{j,\infty}^\textup{Gr} ). \]
Note in particular that
\begin{equation}
    L_p(\alpha,j,n)\in\Lambda_n.\label{eq:integralord}
\end{equation}

Recall from Remark~\ref{rk:geo} that, for $\lambda_f \in \{ \alpha_f, \beta_f\}$,  the interpolation factor $\cE(\lambda_f,j)$ of the geometric $p$-adic $L$-function $L_p(f_\lambda,g)$  is given by
\[
\frac{\left(1-\frac{p^j}{\lambda_f\alpha_g}\right)\left(1-\frac{p^j}{\lambda_f\beta_g}\right)\left(1-\frac{\lambda_f'\alpha_g}{p^{1+j}}\right)\left(1-\frac{\lambda_f'\beta_g}{p^{1+j}}\right)}{\left(1-\frac{\lambda_f'}{p\lambda_f}\right)\left(1-\frac{\lambda_f'}{\lambda_f}\right)},
\]
where $\lambda_f' \in \{\alpha_f, \beta_f \} \setminus \{\lambda_f \}$.
\begin{defn}\label{def:nonanomalous}
We say that the triple $(f,g,j)$, with $k_g +1 \leq j \leq k_f$, is non-anomalous ordinary at $p$ if $\cE( \alpha_f, j )\in \cO^\times$.
\end{defn}

\begin{remark}\label{rmk:non-anomalous}
Notice that $(f,g,j)$ is  non-anomalous ordinary at $p$ whenever  $k_g+1 < j < k_f$. In the remaining cases, $\cE( \alpha_f, j )\in\cO^\times$ if the following inequalities hold: \begin{itemize}
    \item If $k_g+1 < j=k_f$, $\alpha_f \beta_gp^{-k_g-1} \not \equiv 1\mod\varpi$;
    \item If $0 < k_g+1 = j < k_f$, $\alpha_g \beta_f p^{-k_f-1} \not \equiv 1\mod\varpi$;
    \item If $0 < k_g+1 = j = k_f$, $\alpha_f \beta_g p^{-k_g-1} \not \equiv 1\mod\varpi$ and $\alpha_g \beta_f p^{-k_f-1} \not \equiv 1\mod\varpi$;
    \item If $k_g+1 = j = k_f=0$, 
    \[{\left\{ 
	\begin{array} {l l} 
	\alpha_f \alpha_g \not \equiv 1 \;\;\;\;\;\;\,\, \mod\varpi \\
    \alpha_f \beta_g \not \equiv 1 \;\;\;\;\;\;\;\, \mod\varpi\\
    \alpha_g \beta_f  p^{-1} \not \equiv 1 \;\; \mod\varpi \\
    \beta_g \beta_f p^{-1} \not \equiv 1 \;\,\, \mod\varpi
	\end{array}
	\right. } \]
	(Recall that  $\varpi$ is a fixed uniformizer of $\cO$.)
\end{itemize}  
\end{remark}

Before stating the main result of the section, we have the following crucial lemma. Its proof is based on the calculations for proving a similar statement on the Mazur--Tate elements for elliptic curves in \cite[\S 2.2]{KK}. We are very grateful to Chan-Ho Kim for having explained to us their strategy.

\begin{lemma}\label{auxiliarylemma}
Suppose that there exists a Dirichlet character $\theta$ of conductor a power of $p$ such that $L_p(f_\alpha,g)(\theta\chi_{\rm cyc}^j) \ne 0$, with $k_g+1 \leq j \leq k_f$. Let $(f,g,j)$ be non-anomalous ordinary at $p$, then there exists $C_n \in \varpi \Lambda_n$ such that \[ \Theta_{j,n}=(\alpha_f^{2n+2} + C_n)L_p(\alpha,j,n) \in \Lambda_n. \] 
\end{lemma}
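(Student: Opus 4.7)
The plan is to proceed by induction on $n$, using the recursion in Proposition \ref{prop:Theta-L} for the inductive step and a direct computation from the interpolation formulae of Remark \ref{rk:geo}(ii) for the base case $n=0$.

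\emph{Inductive step ($n \ge 1$).} Applying Proposition \ref{prop:Theta-L} with $\lambda = \alpha$ and rearranging gives
\[
\Theta_{j,n} = \alpha_f^{2n+2}L_p(\alpha,j,n) + \frac{\beta_f^{2}}{p}\nu_n(\Theta_{j,n-1}).
\]
Suppose inductively that $\Theta_{j,n-1} = (\alpha_f^{2n} + C_{n-1})L_p(\alpha,j,n-1)$ with $C_{n-1} \in \varpi\Lambda_{n-1}$, and lift $C_{n-1}$ to an element $\tilde C_{n-1} \in \varpi\Lambda_n$. Since $L_p(\alpha,j,n) \in \Lambda_n$ (by \eqref{eq:integralord}) lifts $L_p(\alpha,j,n-1)$ under $\pi_n$, the product $(\alpha_f^{2n}+\tilde C_{n-1})L_p(\alpha,j,n) \in \Lambda_n$ is a lift of $\Theta_{j,n-1}$, and because $\nu_n$ acts as multiplication by $\Phi_n$ after any such lift we obtain
\[
\nu_n(\Theta_{j,n-1}) = \Phi_n\bigl(\alpha_f^{2n} + \tilde C_{n-1}\bigr)L_p(\alpha,j,n) \in \Lambda_n.
\]
Substituting back yields $\Theta_{j,n} = (\alpha_f^{2n+2} + C_n)L_p(\alpha,j,n)$ with $C_n = \beta_f^{2}\Phi_n(\alpha_f^{2n} + \tilde C_{n-1})/p$. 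Since $\ord_p(\beta_f^{2}/p) = 2k_f + 1 \ge 1$, we have $\beta_f^{2}/p \in \varpi\cO$, so $C_n \in \varpi\Lambda_n$ and $\Theta_{j,n} \in \Lambda_n$.

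\emph{Base case ($n=0$).} By Remark \ref{rk:geo}(ii), $L_p(\lambda_f,j,0) = c \cdot \cE(\lambda_f,j)$ with common factor $c = c_{j,0,1}L(f,g,j+1)$. If $L_p(\alpha,j,0) = 0$, then $\cE(\alpha_f,j) \in \cO^\times$ (non-anomalous hypothesis) forces $c = 0$, so $L_p(\beta,j,0) = 0$ and $\Theta_{j,0} = 0$; the claim holds trivially with $C_0 = 0$. Otherwise, starting from the definition of $\Theta_{j,0}$ and substituting $L_p(\beta,j,0) = (\cE(\beta_f,j)/\cE(\alpha_f,j))L_p(\alpha,j,0)$, a short algebraic manipulation gives
\[
\Theta_{j,0} - \alpha_f^{2}L_p(\alpha,j,0) = \frac{\beta_f^{2}\bigl(\beta_f^{2}\cE(\beta_f,j) - \alpha_f^{2}\cE(\alpha_f,j)\bigr)}{(\beta_f^{2} - \alpha_f^{2})\cE(\alpha_f,j)}\cdot L_p(\alpha,j,0).
\]
To conclude that $C_0$ (the displayed scalar coefficient) lies in $\varpi\cO$, a valuation count on the Euler factor shows $\ord_p \cE(\beta_f,j) = -1$, so that $\beta_f^{2}\cE(\beta_f,j)$ has positive valuation and the bracketed difference is a unit; with $\beta_f^{2} - \alpha_f^{2}$ and $\cE(\alpha_f,j)$ both units, the overall factor $\beta_f^{2}$ forces $C_0 \in \varpi^{2(k_f+1)}\cO \subset \varpi\cO$.

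The main technical obstacle lies in the base case, because none of the values $L_p(\lambda_f,j,0)$ is a priori integral, and one has to balance the non-integrality of $\cE(\beta_f,j)$ against the divisibility supplied by the factor $\beta_f^{2}$. The non-anomalous hypothesis on $(f,g,j)$ is exactly what guarantees that $\cE(\alpha_f,j)$ is a unit in $\cO$, enabling the ratios above to be manipulated integrally and making the valuation count go through.
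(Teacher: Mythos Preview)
Your proof is correct and follows essentially the same approach as the paper's: an induction on $n$ based on the recursion of Proposition~\ref{prop:Theta-L}, with the base case handled via the interpolation formula and the non-anomalous hypothesis, leading to the identical expression $C_0=\beta_f^{2}\dfrac{\beta_f^{2}\cE(\beta_f,j)-\alpha_f^{2}\cE(\alpha_f,j)}{(\beta_f^{2}-\alpha_f^{2})\cE(\alpha_f,j)}$. The only cosmetic differences are that you present the inductive step first and treat the degenerate case $L_p(\alpha,j,0)=0$ explicitly, and that you spell out the valuation $\ord_p\cE(\beta_f,j)=-1$ where the paper simply asserts the relevant quantities are units.
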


\begin{proof}
The strategy of the proof, which consists of an induction argument on $n$, follows from the interpolation formulas and norm relations that these elements satisfy, as we now explain. 

Recall that in \S \ref{sec:Iwalg} we have introduced the natural projection $ \pi_n: \Lambda_n \to \Lambda_{n-1}$ and the trace map $ \nu_n: \Lambda_{n-1} \to \Lambda_n$  given by multiplication by $\Phi_n$.  We notice that, from the interpolation formulas that $\Theta_{j,0}$ satisfies, we have that \[ L_p(\alpha, j , 0) = C(f,j) \cdot \Theta_{j,0},\quad \text{where}\quad C(f,j): = \frac{(\beta_f^2 - \alpha_f^2) \cE( \alpha_f, j ) }{\beta_f^4\cE( \beta_f, j ) - \alpha_f^4 \cE( \alpha_f, j )}. \] 
It follows from our hypothesis that $(f,g,j)$ is non-anomalous ordinary at $p$ that  $\beta_f^4\cE(\beta_f,j)-\alpha_f^4\cE(\alpha_f,j)$ and $\cE(\alpha_f,j)$ are units in $\cO$. Thus, $C(f,j) \in \mathcal{O}^\times$. By \eqref{eq:integralord}, $L_p(\alpha, j , 0) \in \cO$, which tells us that \[\Theta_{j,0} = C(f,j)^{-1} L_p(\alpha, j , 0) \in \cO.\] 
This relation can be written as 
\[\Theta_{j,0} = (\alpha_f^2 + C_0) L_p(\alpha, j , 0) \in \cO,\]
where \[C_0 = C(f,j)^{-1} - \alpha_f^2 = \beta_f^2 \frac{\beta_f^2\cE(\beta_f,j)-\alpha_f^2\cE(\alpha_f,j)}{(\beta_f^2 - \alpha_f^2)\cE(\alpha_f,j)}\in \varpi \cO.\] 
On combining this with Proposition \ref{prop:Theta-L}, we deduce that
\begin{align*} L_p(\alpha ,j, 1 ) &= \alpha_f^{-4}\left(\Theta_{j,1}-\frac{\beta_f^2}{p}\nu_1(\Theta_{j,0})\right) \\ &= \alpha_f^{-4}\left(\Theta_{j,1}-\frac{\beta_f^2 }{p C(f,j)}\nu_1(L_p(\alpha ,j, 0 ))\right) \\
&= \alpha_f^{-4}\left(\Theta_{j,1}-C_1 L_p(\alpha ,j, 1 )\right)
\end{align*}
with $C_1 = \frac{\beta_f^2 }{p C(f,j)} \Phi_1 \in \varpi \Lambda_1$. It gives \[\Theta_{j,1} =( \alpha_f^{4}+C_1) L_p(\alpha ,j, 1)\in \Lambda_1. \]

Suppose that  \[\Theta_{j,n-1} =B_{n-1} L_p(\alpha ,j, n-1), \]
with $B_{n-1} \in \Lambda_{n-1}$. Then, on applying Proposition \ref{prop:Theta-L} again,  we deduce that
\begin{align*} L_p(\alpha ,j, n ) &= \alpha_f^{-2n-2}\left(\Theta_{j,n}-\frac{\beta_f^2}{p}\nu_n(\Theta_{j,n-1})\right) \\ &= \alpha_f^{-2n-2}\left(\Theta_{j,n}-\frac{\beta_f^2 }{p}\nu_n(B_{n-1} L_p(\alpha ,j, n-1))\right)
\\ &= \alpha_f^{-2n-2}\left(\Theta_{j,n}-\frac{\beta_f^2 }{p}\nu_n(B_{n-1} \pi_n(L_p(\alpha ,j, n)))\right) \\
 &= \alpha_f^{-2n-2}\left(\Theta_{j,n}-C_n L_p(\alpha ,j, n))\right),
\end{align*}
with $C_n= \frac{\beta_f^2}{p}\Phi_nB_{n-1}\in\varpi\Lambda_n$. This implies that \[\Theta_{j,n} =(\alpha_f^{2n+2} + C_n) L_p(\alpha ,j, n), \]
which proves our claim.
\end{proof}

We are now ready to prove our generalization of Theorem~\ref{mainthmkk} in the ordinary case (Theorem~\ref{maintheorem1} in the introduction).

\begin{theorem}\label{mainordinary}
Suppose that there exists a Dirichlet character $\theta$ of conductor a power of $p$ such that $L_p(f_\alpha,g)(\theta\chi_{\rm cyc}^j) \ne 0$, with $k_g+1 \leq j \leq k_f$. If $(f,g,j)$ is non-anomalous ordinary at $p$, then
\[ \left(\Theta_{j,n} \right) =  \left(\Theta_{j,n}, \nu_n(\Theta_{j,n-1}) \right)    \subseteq \Fitt_{\Lambda_n}\cX_{j,n}^\textup{Gr}. \]
\end{theorem}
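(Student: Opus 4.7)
The plan is to combine three ingredients already at our disposal: the integrality relation $\Theta_{j,n} = (\alpha_f^{2n+2} + C_n)L_p(\alpha,j,n)$ from Lemma~\ref{auxiliarylemma}, which converts statements about the Theta element into statements about the projected geometric $p$-adic $L$-function; the one-sided divisibility in the Iwasawa main conjecture from \cite[Theorem~11.6.4]{KLZ2}; and the control theorem (Theorem~\ref{controlordinary}) together with the absence of proper finite $\Lambda$-submodules (Proposition~\ref{nofinitesubmod}) so that characteristic ideals can be replaced by Fitting ideals (Corollary~\ref{cor:Fittchar}), which in turn behave well under surjections and base change.

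First I would verify the equality $(\Theta_{j,n}) = (\Theta_{j,n}, \nu_n(\Theta_{j,n-1}))$. Since $f$ is $p$-ordinary, $\alpha_f \in \cO^\times$ and the element $\alpha_f^{2n+2} + C_n \in \Lambda_n$ is a unit because $C_n \in \varpi\Lambda_n$. Hence Lemma~\ref{auxiliarylemma} gives $(\Theta_{j,n}) = (L_p(\alpha,j,n))$ inside $\Lambda_n$. Applying the same lemma at level $n-1$ and noting that $\pi_n(L_p(\alpha,j,n)) = L_p(\alpha,j,n-1)$ by construction, one computes $\nu_n(\Theta_{j,n-1}) = \nu_n(B_{n-1})\cdot L_p(\alpha,j,n)$, which lies in $(L_p(\alpha,j,n)) = (\Theta_{j,n})$.

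Next I would prove the inclusion $(L_p(\alpha,j,n)) \subseteq \Fitt_{\Lambda_n}\cX_{j,n}^\textup{Gr}$, which, by the above, is equivalent to the claimed containment. Under our running hypotheses, Remark~\ref{cnjordoneineq} applies and yields
\[
\pi_\Delta\circ \Tw^j L_p(f_\alpha,g) \in \Char_\Lambda(\cX_{j,\infty}^\textup{Gr}).
\]
Since Proposition~\ref{nofinitesubmod} guarantees that $\cX_{j,\infty}^\textup{Gr}$ has no proper $\Lambda$-submodule of finite index, Corollary~\ref{cor:Fittchar} promotes this to
\[
\pi_\Delta\circ \Tw^j L_p(f_\alpha,g) \in \Fitt_\Lambda(\cX_{j,\infty}^\textup{Gr}).
\]
Reducing modulo $\omega_n$ and using the base-change property of Fitting ideals gives $L_p(\alpha,j,n) \in \Fitt_{\Lambda_n}(\cX_{j,\infty}^\textup{Gr}/\omega_n)$. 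Finally, the control theorem (Theorem~\ref{controlordinary}, whose hypothesis is verified by Propositions~\ref{invariantsforalln} and~\ref{invariantsforallncaseweight1}) provides a surjection $\cX_{j,\infty}^\textup{Gr}/\omega_n \twoheadrightarrow \cX_{j,n}^\textup{Gr}$, and since surjections enlarge Fitting ideals,
\[
L_p(\alpha,j,n) \in \Fitt_{\Lambda_n}(\cX_{j,\infty}^\textup{Gr}/\omega_n) \subseteq \Fitt_{\Lambda_n}\cX_{j,n}^\textup{Gr},
\]
which completes the proof.

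The argument is essentially a formal dévissage once the preparatory work is in place, so there is no genuinely hard step remaining. The subtlest bookkeeping point is the identification $(\Theta_{j,n}) = (L_p(\alpha,j,n))$, which hinges on the non-anomalous ordinary hypothesis through Lemma~\ref{auxiliarylemma}; this is where the hypothesis $\cE(\alpha_f,j) \in \cO^\times$ is genuinely needed, as it ensures that the constants $C_n$ stay in $\varpi\Lambda_n$ and hence that $\alpha_f^{2n+2}+C_n$ remains a unit at every finite layer.
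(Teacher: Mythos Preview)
Your proof is correct and follows essentially the same route as the paper's: both combine Lemma~\ref{auxiliarylemma} (to identify $(\Theta_{j,n})$ with $(L_p(\alpha,j,n))$ and to show $\nu_n(\Theta_{j,n-1})$ lies in this ideal), Corollary~\ref{cor:Fittchar} (to pass from characteristic to Fitting ideals at the infinite level), and the control theorem (Theorem~\ref{controlordinary}) plus base-change for Fitting ideals to descend to level $n$. The only cosmetic difference is that the paper establishes the inclusion first and the equality of ideals afterwards, whereas you reverse the order; your use of the projection formula $\nu_n(\pi_n(x)\cdot y)=x\cdot\nu_n(y)$ to write $\nu_n(\Theta_{j,n-1})=\nu_n(B_{n-1})\cdot L_p(\alpha,j,n)$ is a slightly slicker packaging of the paper's explicit computation with $\Phi_n$.
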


\begin{proof}
By Corollary \ref{cor:Fittchar} and the inclusion of the Iwasawa main conjecture, we have that 
\[ (\pi_\Delta \circ \Tw^{j} L_p(f_\alpha,g)) \subseteq \textup{char}_\Lambda(\cX_{j,\infty}^\textup{Gr}) = \textup{Fitt}_\Lambda(\cX_{j,\infty}^\textup{Gr}).  \]
Thanks to our assumptions Theorem \ref{controlordinary} asserts that the natural maps $ r_n: \cX_{j,\infty}^\textup{Gr}/\omega_n \rightarrow \cX_{j,n}^\textup{Gr}$ are surjective. Thus, by \cite[Lemma~A.1]{KK}, we obtain the inclusion
\[
(L_p(\alpha ,j, n ))=(\pi_\Delta \circ \Tw^{j} L_p(f_\alpha,g)\mod \omega_n) \subseteq \Fitt_{\Lambda_n}(\cX_{j,\infty}^\textup{Gr})/\omega_n \subseteq \Fitt_{\Lambda_n}(\cX_{j,n}^\textup{Gr}).
\]
By Proposition \ref{prop:Theta-L},
 we have
\[
(L_p(\alpha ,j, n )) = \left( \alpha_f^{-2n-2}\left(\Theta_{j,n}-\frac{(\beta_f)^2}{p}\nu_n(\Theta_{j,n-1})\right)  \right) \subseteq \Fitt_{\Lambda_n}\cX_{j,n}^\textup{Gr},
\]
where $\nu_n : \Lambda_{n-1} \to \Lambda_n$ denotes the trace map. 
Under the non-anomalous ordinary condition at $p$, Lemma~\ref{auxiliarylemma} says that  $\Theta_{j,n}$ is a multiple of $L_p(\alpha,j, n )$ by an invertible element in $\Lambda_n$. Similarly, by Lemma \ref{auxiliarylemma}, we also deduce that $\nu_n(\Theta_{j,n-1})$ is a multiple of $L_p(\alpha,j,n)$ by an element in $\Lambda_n$. Indeed, we have \begin{align*}
\Theta_{j,n-1} &=(\alpha_f^{2n} + C_{n-1}) L_p(\alpha ,j, n-1)    \\
&=(\alpha_f^{2n} + C_{n-1}) \pi_n( L_p(\alpha ,j, n))
\end{align*}
Applying $\nu_n$, we get \[ \nu_n(\Theta_{j,n-1}) =(\alpha_f^{2n} + C_{n-1}) \Phi_n L_p(\alpha ,j, n). \]
This implies that the ideal $ \left(\Theta_{j,n}, \nu_n(\Theta_{j,n-1}) \right)$ is the principal ideal generated by $\Theta_{j,n}$ and that we get the inclusion
\[ (\Theta_{j,n}) = \left(\Theta_{j,n}, \nu_n(\Theta_{j,n-1}) \right) \subseteq \Fitt_{\Lambda_n}\cX_{j,n}^\textup{Gr}.\]
\end{proof}

\subsection{The supersingular case}\label{sec:ssFitting}

We now generalize  the calculations in \cite[\S4]{KK} to the setting of \S\ref{sec:controlpm}. In particular, \textbf{(H.Kim)} is in effect throughout. We shall link our Theta elements to the Bloch--Kato Selmer groups of $A(1)$, whose definition is recalled below.

\begin{defn}
For an integer $ n \ge 0$, we write $\Sel_\BK(K_n,A(1))$ for the Bloch--Kato Selmer group of $A(1)$ over $K_n$, that is
\[
\Sel_\BK(K_n,A(1))=\ker\Bigg( \tupH^{1}(\QQ_{\Sigma}/K_n, A(1)) \rightarrow \prod_{\nu|\Sigma} \tupH^{1}_{/ f}(K_{n,\nu}, A(1))    \Bigg),
\]
where $\Sigma$ is as defined in Definition~\ref{defn:selmergroupsordinary} and  $\tupH^{1}_{/ f}(K_{n,\nu}, A(1))=\tupH^{1}(K_{n,\nu}, A(1)) /\tupH^{1}_{ f}(K_{n,\nu}, T(1))\otimes\Qp/\Zp $  for $\nu|p$, with $\tupH^{1}_f(K_{n,\nu}, T(1)) $  as defined in \cite[(3.7.3)]{BK}.
We let $\cX_n^\BK$ denote the Pontryagin dual of $\Sel_\BK(K_n,A(1))$.
\end{defn}

We will study  $\cX_n^\BK$ via the plus and minus Selmer groups over $K_n$ introduced in Definition~\ref{defn:pmSel}, making use of Theorem~\ref{thm:controlpm}. The strategy is:
\begin{itemize}
    \item[(I)] Link the Theta elements $\Theta_n^\pm$ to $\cX_n^\pm$ (where  we write $\Theta^\pm_n$ in place of  $\Theta^\pm_{0,n}$ for simplicity);
    \item[(II)]Study relations between $\cX_n^\BK$ and $\cX_n^\pm$;
    \item[(III)]Combine (I) and (II) to study the growth of $\cX_n^\BK$ in terms of $\Theta_n^\pm$.
\end{itemize}

We introduce the following polynomials which allow us to carry out step (I).
\begin{defn}
For an integer $n\ge 0$, we define $\tilde\omega_n^\pm(X)$ to be the polynomials
\[
\frac{\omega_n(X)}{\omega_n^\mp(X)}=\frac{\omega_n^\pm(X)}{X}=\prod_{m\in S_{n+1}^\pm\setminus\{0\}}\Phi_m(X).
\]
\end{defn}

\begin{lemma}\label{cor:generalizing-pollack}
If $n$ is even, then
\begin{align*}
    -p\Theta_n^+&\equiv (\tilde\omega_n^+)^2\pi_\Delta L_p^{++}(f,g)\mod \omega_n;\\
    \Theta_n^-&\equiv (\tilde\omega_n^-)^2 \pi_\Delta L_p^{--}(f,g)\mod \omega_n.
\end{align*}
If $n$ is odd, then
\begin{align*}
    \Theta_n^+&\equiv (\tilde\omega_n^+)^2 \pi_\Delta L_p^{++}(f,g)\mod \omega_n;\\
    -p\Theta_n^-&\equiv (\tilde\omega_n^-)^2\pi_\Delta L_p^{--}(f,g)\mod \omega_n.
\end{align*}
\end{lemma}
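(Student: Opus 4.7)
The statement is a direct corollary of Proposition~\ref{prop:pmTheta} once we unwind Pollack's logarithms $\log_p^\pm$ modulo $\omega_n$. Recall from \cite{pollack03} that
\[
\log_p^+=\frac{1}{p}\prod_{k\ge1}\frac{\Phi_{2k}(1+X)}{p},\qquad \log_p^-=\frac{1}{p}\prod_{k\ge1}\frac{\Phi_{2k-1}(1+X)}{p},
\]
so the plan is to reduce each infinite product modulo $\omega_n=(1+X)^{p^n}-1$ and plug the result into the congruences of Proposition~\ref{prop:pmTheta}.

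The key observation is that for any integer $m>n$, we have $\Phi_m(1+X)\equiv p \pmod{\omega_n}$. Indeed $\Phi_m(Y)=\sum_{i=0}^{p-1}Y^{ip^{m-1}}$ and $(1+X)^{ip^{m-1}}\equiv 1\pmod{\omega_n}$ for all $0\le i\le p-1$ when $m-1\ge n$. In particular every factor $\Phi_{2k}(1+X)/p$ (resp. $\Phi_{2k-1}(1+X)/p$) with $2k>n$ (resp. $2k-1>n$) is congruent to $1$ modulo $\omega_n$, so only finitely many factors survive.

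The surviving factors in $\log_p^+$ are indexed by $\{m:1\le m\le n,\ m\in 2\ZZ\}$, whose product equals $\tilde\omega_n^+$, and there are $\lfloor n/2\rfloor$ of them. Counting the $1/p$ prefactors, this gives
\[
\log_p^+\equiv \frac{\tilde\omega_n^+}{p^{\lfloor n/2\rfloor+1}}\pmod{\omega_n}.
\]
Similarly, the surviving factors in $\log_p^-$ are indexed by $\{m:1\le m\le n,\ m\in 2\ZZ+1\}$, whose product equals $\tilde\omega_n^-$ and has $\lceil n/2\rceil$ elements, yielding
\[
\log_p^-\equiv \frac{\tilde\omega_n^-}{p^{\lceil n/2\rceil+1}}\pmod{\omega_n}.
\]

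To conclude I would simply substitute these congruences into Proposition~\ref{prop:pmTheta} and split by the parity of $n$. For instance, if $n$ is even then $\lfloor n/2\rfloor=n/2$, so $(\log_p^+)^2\equiv (\tilde\omega_n^+)^2/p^{n+2}$ modulo $\omega_n$ and $(-p)^{n+1}(\log_p^+)^2\equiv -(\tilde\omega_n^+)^2/p$, giving $-p\,\Theta_{0,n}^+\equiv(\tilde\omega_n^+)^2\pi_\Delta L_p^{++}(f,g)$; the three other cases are checked identically by keeping track of the sign $(-1)^{n+1}$ or $(-1)^{n+2}$ and of the compensating powers of $p$ coming from $\lceil n/2\rceil$ versus $\lfloor n/2\rfloor$. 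There is no real obstacle here: the proof is essentially bookkeeping, the only subtle point being the correct placement of the leading factor $1/p$ in the definition of $\log_p^\pm$, which is what produces the seemingly asymmetric prefactors $-p$ on one side and $1$ on the other.
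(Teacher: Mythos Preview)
Your proposal is correct and follows exactly the paper's approach: the paper simply records the congruences $\log_p^+\equiv p^{-\lfloor n/2\rfloor-1}\tilde\omega_n^+$ and $\log_p^-\equiv p^{-\lfloor (n+1)/2\rfloor-1}\tilde\omega_n^-$ modulo $\omega_n$ (your $\lceil n/2\rceil$ equals $\lfloor(n+1)/2\rfloor$) as ``a direct calculation'' and then invokes Proposition~\ref{prop:pmTheta}, whereas you spell out that calculation. One cosmetic point: in the paper's conventions $\Phi_m(X)=\omega_m(X)/\omega_{m-1}(X)$ is already a polynomial in $X$, so what you write as $\Phi_m(1+X)$ is just the paper's $\Phi_m(X)$; adjust the notation accordingly.
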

\begin{proof}
A direct calculation gives
\begin{align*}
    \log_p^+&\equiv p^{-\lfloor \frac{n}{2}\rfloor-1}\tilde\omega_n^+\mod\omega_n;\\
    \log_p^-&\equiv p^{-\lfloor \frac{n+1}{2}\rfloor-1}\tilde\omega_n^-\mod\omega_n.
\end{align*}
On  combining this with Proposition~\ref{prop:pmTheta}, the lemma follows.
\end{proof}

\begin{proposition}\label{prop:inclueFittpm}
Suppose that the inclusion \eqref{eq:halfpmIMC} of Conjecture~\ref{conj:pmIMC} holds for $\bullet=\circ\in\{+,-\}$ (in particular, we have $\pi_\Delta L_p^{\bullet\bullet}(f,g)\in \Lambda$). Then we have the inclusion
\[
\left(( \tilde\omega_n^\bullet)^2\pi_\Delta L_p^{\bullet\bullet}(f,g)\mod\omega_n\right)\subset \left(\tilde\omega_n^{\bullet}\right)^2\Fitt_{\Lambda_n}\left(\cX_n^\bullet\right).
\]
\end{proposition}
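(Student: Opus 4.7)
Throughout, let $\bullet' \in \{+,-\} \setminus \{\bullet\}$ denote the sign opposite to $\bullet$, and set $F := \pi_\Delta L_p^{\bullet\bullet}(f,g) \in \Lambda$. The plan is to exploit the factorization
\[
\omega_n = \tilde\omega_n^\bullet \cdot \omega_n^{\bullet'}
\]
(immediate from the partition $S_{n+1}^+ \sqcup S_{n+1}^- = \{0, 1, \ldots, n\}$ and the product expressions of the three polynomials in terms of the $\Phi_m$), together with the coprimality of $\tilde\omega_n^\bullet$ and $\omega_n^{\bullet'}$ in the UFD $\Lambda$. This is what will explain why the factor $(\tilde\omega_n^\bullet)^2$ naturally appears when bridging the control theorem (available modulo $\omega_n^{\bullet'}$) to a statement over $\Lambda_n = \Lambda / \omega_n$.

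The first step will be to upgrade the hypothesis $F \in \Char_\Lambda(\cX_\infty^\bullet)$ to $F \in \Fitt_\Lambda(\cX_\infty^\bullet)$. The hypothesis forces $\cX_\infty^\bullet$ to be $\Lambda$-cotorsion (so that \textbf{(Co-tor)} holds for the relevant sign), and in the supersingular setting $\overline{T}(1)$ has no $G_\QQ$-subquotient isomorphic to $\mu_p$ (cf.\ the remark following Proposition~\ref{nofinitesubmod}); Proposition~\ref{nofinitesubmod} therefore applies, and Corollary~\ref{cor:Fittchar} identifies $\Char_\Lambda(\cX_\infty^\bullet) = \Fitt_\Lambda(\cX_\infty^\bullet)$.

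Next, I would invoke Theorem~\ref{thm:controlpm} (with $\circ = \bullet'$): the natural map $\cX_\infty^\bullet / \omega_n^{\bullet'} \twoheadrightarrow \cX_n^\bullet / \omega_n^{\bullet'}$ is surjective, the bounded finite kernel being irrelevant since only surjectivity is needed for a Fitting-ideal inclusion. Combining this with the base-change property of Fitting ideals and reducing $F \in \Fitt_\Lambda(\cX_\infty^\bullet)$ modulo $\omega_n^{\bullet'}$, one obtains
\[
F \bmod \omega_n^{\bullet'} \;\in\; \Fitt_{\Lambda/\omega_n^{\bullet'}}\!\left(\cX_n^\bullet / \omega_n^{\bullet'}\right) \;=\; \Fitt_{\Lambda_n}(\cX_n^\bullet) \cdot \Lambda/\omega_n^{\bullet'},
\]
where the last equality is a further instance of base change, now along the well-defined surjection $\Lambda_n \twoheadrightarrow \Lambda/\omega_n^{\bullet'}$ (well-defined because $\omega_n^{\bullet'}$ divides $\omega_n$).

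To conclude, I would lift: choose $\alpha \in \Lambda$ whose image in $\Lambda_n$ lies in $\Fitt_{\Lambda_n}(\cX_n^\bullet)$ and which satisfies $\alpha \equiv F \pmod{\omega_n^{\bullet'}}$ in $\Lambda$. Then $\tilde\omega_n^\bullet(F-\alpha) \in \tilde\omega_n^\bullet \omega_n^{\bullet'} \Lambda = \omega_n \Lambda$, hence \emph{a fortiori} $(\tilde\omega_n^\bullet)^2(F-\alpha) \in \omega_n \Lambda$, and so
\[
(\tilde\omega_n^\bullet)^2 F \;\equiv\; (\tilde\omega_n^\bullet)^2 \alpha \pmod{\omega_n}.
\]
The right-hand side, viewed in $\Lambda_n$, belongs to $(\tilde\omega_n^\bullet)^2 \Fitt_{\Lambda_n}(\cX_n^\bullet)$, giving the claimed inclusion. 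The hard part will really be the preliminary bookkeeping rather than any substantial argument: verifying the hypotheses of Proposition~\ref{nofinitesubmod} to pass from characteristic to Fitting ideals, and correctly identifying the Fitting-ideal base changes across the successive quotients $\Lambda \twoheadrightarrow \Lambda_n \twoheadrightarrow \Lambda/\omega_n^{\bullet'}$; the rest is forced by the factorization $\omega_n = \tilde\omega_n^\bullet \cdot \omega_n^{\bullet'}$.
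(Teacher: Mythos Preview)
Your proof is correct and follows essentially the same approach as the paper: pass from characteristic to Fitting ideals via Corollary~\ref{cor:Fittchar}, descend modulo $\omega_n^{\bullet'}$ using the control theorem (Theorem~\ref{thm:controlpm}) and base change of Fitting ideals, then multiply by $(\tilde\omega_n^\bullet)^2$ and use $\tilde\omega_n^\bullet\omega_n^{\bullet'}=\omega_n$ to kill the $\omega_n^{\bullet'}$-ambiguity. The paper phrases the last step as an inclusion of ideals in $\Lambda_n$ rather than your explicit lifting of $\alpha$, but the content is identical.
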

\begin{proof}
By Corollary~\ref{cor:Fittchar}, the inclusion \eqref{eq:halfpmIMC} implies that
\[
(\pi_\Delta  L_p^{\bullet\bullet}(f,g))\subset \Fitt_\Lambda\cX_\infty^\bullet.
\]
 Let $\circ$ be the unique element of $\{+,-\}\setminus\{\bullet\}$. We deduce from Theorem~\ref{thm:controlpm} and   \cite[Appendix, properties 1 and 4 on pages 324-325]{MW}) (see also \cite[Lemmas~A.1 and A.6]{KK}) the inclusion
\[
(\pi_\Delta  L_p^{\bullet\bullet}(f,g)\mod\omega_n^{\circ})\subset \Fitt_{\Lambda_n/\omega_n^\circ}\cX_n^\bullet/\omega_n^\circ.
\]
As in \cite[proof of Corollary~3.10]{KK}, we have
\[
 \Fitt_{\Lambda_n/\omega_n^\circ}\cX_n^\bullet/\omega_n^\circ= \frac{\Fitt_{\Lambda_n}\cX_n^\bullet+(\omega_n^\circ)}{(\omega_n^\circ)},\]
 which in turn implies
 \[
 (\pi_\Delta  L_p^{\bullet\bullet}(f,g)\mod\omega_n)+(\omega_n^\circ)\subset \Fitt_{\Lambda_n}\cX_n^\bullet+(\omega_n^\circ).
 \]
 Thus, the result follows on multiplying by $(\tilde\omega_n^{\bullet})^2$ and the fact that $\tilde\omega_n^\bullet\omega_n^{\circ}=\omega_n$.
\end{proof}

On combining Corollary \ref{cor:generalizing-pollack} and Proposition~\ref{prop:inclueFittpm}, step (I) is established. For step (II), we begin with the following lemma due to Kim and Kurihara.

\begin{lemma}\label{lem:Fittlocal}
Let $\bullet,\circ\in\{+,-\}$. Then
\[
\Fitt_{\Lambda_n}\left(\frac{\tupH^1_f(K_{n,\p},A(1))}{\tupH^1_{\bullet\circ}(K_{n,\p},A(1))}\right)^\vee=\left(\tilde\omega_n^{\bullet}\tilde\omega_n^{\circ}\right)\Lambda_n,
\]
where $\p$ is the unique prime of $K_n$ lying above $p$.
\end{lemma}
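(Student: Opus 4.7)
The plan is to reduce the computation to Kobayashi's elliptic curve setting by exploiting the local decomposition in \eqref{eq:localdecompowt1}. Dualizing and twisting, we obtain a $G_{\Qp}$-equivariant splitting
\[
A(1)|_{G_{\Qp}}\cong A_{f,\alpha}(1)\oplus A_{f,\beta}(1),
\]
which by definition is compatible with the decomposition of $\tupH^1_{\bullet\circ}$ as a direct sum of plus/minus pieces. Taking $\mathbf 1$-isotypic components for $\Delta$, so as to pass from $\Qp(\mu_{p^{n+1}})$ to $K_{n,\p}$, the quotient in question splits as
\[
\frac{\tupH^1_f(K_{n,\p},A(1))}{\tupH^1_{\bullet\circ}(K_{n,\p},A(1))}\;\cong\;\frac{\tupH^1_f(K_{n,\p},A_{f,\alpha}(1))}{\tupH^1_\bullet(K_{n,\p},A_{f,\alpha}(1))}\oplus \frac{\tupH^1_f(K_{n,\p},A_{f,\beta}(1))}{\tupH^1_\circ(K_{n,\p},A_{f,\beta}(1))}.
\]
Since Fitting ideals are multiplicative over direct sums, it suffices to compute each factor separately.

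For each $\mu\in\{\alpha,\beta\}$, I would next invoke Lemma~\ref{lem:projectlambda}, which identifies $\tupH^1_\pm(\Qp(\mu_{p^{n+1}}),T_{f,\mu}(1))$, via the Shapiro-plus-projection map $\Phi_\mu$, with the image of $E^\pm(K_\mu(\mu_{p^{n+1}}))$ under the Kummer map, where $K_\mu$ is the unramified extension cutting out $\theta_\mu$. After taking the $\mathbf 1$-isotypic component of $\Delta$, the corresponding quotient becomes isomorphic to $\hat E(K_{\mu,n,\p})/E^\pm(K_{\mu,n,\p})$ modulo finite $\Delta$-cohomology contributions that vanish since $\#\Delta = p-1$ is prime to $p$ while the modules are pro-$p$. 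Dualizing, the Pontryagin dual of each factor is then identified with a module of the shape computed by Kobayashi in \cite[\S 8--9]{kobayashi03} (cf. also the proof of \cite[Proposition~3.8]{KK}), whose Fitting ideal over $\Lambda_n$ equals $(\tilde\omega_n^\pm)\Lambda_n$; here the relevant polynomial $\tilde\omega_n^\pm=\omega_n/\omega_n^\mp$ arises as the characteristic element of the cokernel of the signed Coleman map on the $\mathbf 1$-part over the unramified base, as made explicit in \cite[Appendix~A]{Harron} and \cite[\S 4.4]{Lei2011}.

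Combining the two factors, the Fitting ideal of the Pontryagin dual of the full quotient is the product $(\tilde\omega_n^\bullet)(\tilde\omega_n^\circ)$, as required. The main obstacle I expect is bookkeeping: carefully tracking the Pontryagin dualization, the passage between $\Qp(\mu_{p^{n+1}})$ and $K_{n,\p}$ via the $\Delta$-invariants, and the compatibility of Lemma~\ref{lem:projectlambda} with Bloch--Kato local conditions (which is already handled there). Once these identifications are made, the core Fitting ideal calculation is purely Kobayashi's, applied twice.
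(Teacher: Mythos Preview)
Your plan is correct and follows essentially the same route as the paper: decompose via \eqref{eq:localdecompowt1} into the two $\mu$-summands, compute the Fitting ideal of each factor separately, and multiply. The only real difference is in the bookkeeping of the per-factor computation. You intend to push each summand, via Lemma~\ref{lem:projectlambda}, back to the elliptic-curve picture $\hat E(K_{\mu,n})/E^\pm(K_{\mu,n})$ over the unramified extension and then invoke Kobayashi's structure results directly. The paper instead stays with the twisted representation $T_{f,\mu}$ over $\Qp$, uses \cite[Propositions~4.11 and 5.8]{IP} to identify $\tupH^1_f(\Qp,T_{f,\mu}(1))$, $\tupH^1_\pm(K_{n,\p},T_{f,\mu}(1))$, and $\tupH^1_f(K_{n,\p},T_{f,\mu}(1))$ explicitly as the $\Lambda_n$-modules $\tilde\omega_n^+\tilde\omega_n^-\Lambda_n$, $\tilde\omega_n^\pm\Lambda_n$, and $(\tilde\omega_n^+,\tilde\omega_n^-)\Lambda_n$ respectively, and then reads off the quotient as $\left((\tilde\omega_n^+,\tilde\omega_n^-)\Lambda_n/\tilde\omega_n^\pm\right)\otimes\Qp/\Zp$ before citing \cite[Proposition~4.1]{KK} for its Fitting ideal. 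The advantage of the paper's route is that it avoids checking that Kobayashi's computations over $\Qp$ extend verbatim to the unramified base $K_\mu$; the Iovita--Pollack identifications already work at the required level of generality, so no extra descent argument is needed.
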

\begin{proof}
Let $\mu\in\{\alpha_f,\beta_f\}$. On combing the short exact sequence of Corollary~\ref{cor:pmSES} and \cite[Propositions~4.11 and 5.8]{IP}, we have the following commutative diagram with exact rows:
\[
\xymatrix{
0\ar[r]&\tupH^1_f(\Qp,T_{f,\mu}(1))\ar[d]^{\cong}\ar[r]&\tupH^1_+(K_{n,\p},T_{f,\mu}(1))\oplus \tupH^1_-(K_{n,\p},T_{f,\mu}(1))\ar[r]\ar[d]^{\cong}&\tupH^1_f(K_{n,\p},T_{f,\mu}(1))\ar[r]\ar[d]^{\cong}&0\\
0\ar[r]&\tilde{\omega}^+_n\tilde{\omega}^-_n\Lambda_n\ar[r]&\tilde\omega^+_n\Lambda_n\oplus\tilde\omega^-_n\Lambda_n\ar[r]&(\tilde\omega_n^+,\tilde\omega_n^-)\Lambda_n\ar[r]&0.
}
\]
Thus,
\[
\frac{\tupH^1_f(K_{n,\p},A_{f,\mu}(1))}{\tupH^1_{\pm}(K_{n,\p},A_{f,\mu}(1))}\cong \left((\tilde\omega_n^+,\tilde\omega_n^-)\Lambda_n/\tilde{\omega}_n^\pm\right)\otimes\Qp/\Zp.
\]
The result now follows from \cite[Proposition~4.1]{KK} (again bearing in mind the choice of sign as explained in Convention~\ref{conv:mp}).
\end{proof}

\begin{corollary}\label{cor:includeFitt}
Let $\bullet\in\{+,-\}$. We have the inclusion
\[
\left(\tilde\omega_n^{\bullet}\right)^2\Fitt_{\Lambda_n}\left(\cX_n^\bullet\right)\subset \Fitt_{\Lambda_n}\left(\cX_n^\BK \right).
\]
\end{corollary}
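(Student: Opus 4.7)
My plan is to compare the signed Selmer group $\Sel_{\bullet\bullet}(K_n,A(1))$ with the Bloch--Kato Selmer group $\Sel_\BK(K_n,A(1))$ via their local conditions. Away from $p$, both Selmer structures impose the Bloch--Kato (unramified) local condition, so they coincide at those primes. At the unique prime $\mathfrak{p}$ of $K_n$ above $p$, the inclusion $\tupH^1_{\bullet\bullet}(K_{n,\p},A(1)) \subseteq \tupH^1_f(K_{n,\p},A(1))$ (built into the definition of $\tupH^1_{\bullet\bullet}$) tells us that $\Sel_{\bullet\bullet}(K_n,A(1))\subseteq \Sel_\BK(K_n,A(1))$, and the quotient injects into the local quotient at $\mathfrak{p}$. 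Hence I would write down the short exact sequence
\[
0\longrightarrow \Sel_{\bullet\bullet}(K_n,A(1))\longrightarrow \Sel_\BK(K_n,A(1))\longrightarrow Q\longrightarrow 0,
\]
where $Q\hookrightarrow \tupH^1_f(K_{n,\p},A(1))/\tupH^1_{\bullet\bullet}(K_{n,\p},A(1))$.

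Dualizing (Pontryagin duality is exact on discrete modules) yields
\[
0\longrightarrow Q^\vee\longrightarrow \cX_n^\BK\longrightarrow \cX_n^\bullet\longrightarrow 0.
\]
Now I would apply the standard Fitting-ideal inequality for short exact sequences (as used elsewhere in the paper, cf.\ \cite[Appendix]{MW}), which gives
\[
\Fitt_{\Lambda_n}(Q^\vee)\cdot \Fitt_{\Lambda_n}(\cX_n^\bullet)\subseteq \Fitt_{\Lambda_n}(\cX_n^\BK).
\]
Since $Q$ is a submodule of $\tupH^1_f(K_{n,\p},A(1))/\tupH^1_{\bullet\bullet}(K_{n,\p},A(1))$, its dual $Q^\vee$ is a quotient of the dual of this local quotient; taking Fitting ideals of quotients yields
\[
\Fitt_{\Lambda_n}\!\left(\left(\tupH^1_f(K_{n,\p},A(1))/\tupH^1_{\bullet\bullet}(K_{n,\p},A(1))\right)^\vee\right)\subseteq \Fitt_{\Lambda_n}(Q^\vee).
\]
By Lemma~\ref{lem:Fittlocal} applied with $\bullet=\circ$, the left-hand side is exactly $(\tilde\omega_n^\bullet)^2\Lambda_n$, and multiplying through gives the claimed inclusion.

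The routine verifications are that the local conditions for the two Selmer groups genuinely coincide at primes away from $p$, and that the definition of $\Sel_{\bullet\bullet}(K_n,A(1))$ via the $\mathbf{1}$-isotypic component in Definition~\ref{defn:pmSel} is compatible with viewing it as a subgroup of $\Sel_\BK(K_n,A(1))$ so that the short exact sequence is well-defined. The main content of the argument is genuinely algebraic rather than deep: it is the combination of the structural Lemma~\ref{lem:Fittlocal} (which gave the exact $(\tilde\omega_n^\bullet)^2$ factor) with the basic Fitting-ideal inequality. I do not anticipate a serious obstacle.
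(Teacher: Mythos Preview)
Your proposal is correct and is essentially the same as the paper's argument: the paper writes the tautological exact sequence $\left(\tupH^1_f(K_{n,\p},A(1))/\tupH^1_{\bullet\bullet}(K_{n,\p},A(1))\right)^\vee\rightarrow \cX_n^\BK \rightarrow \cX_n^\bullet\rightarrow 0$ and invokes Lemma~\ref{lem:Fittlocal}, which is exactly your argument with the intermediate module $Q^\vee$ suppressed. Your version is slightly more explicit in isolating $Q$ and noting that $Q^\vee$ is a quotient of the local dual, but the content is identical.
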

\begin{proof}
This follows from Lemma~\ref{lem:Fittlocal} and the tautological exact sequence
\[
\left(\frac{\tupH^1_f(K_{n,\p},A(1))}{\tupH^1_{\bullet\bullet}(K_{n,\p},A(1))}\right)^\vee\rightarrow \cX_n^\BK \rightarrow \cX_n^\bullet\rightarrow 0.
\]
\end{proof}

This establishes step (II). We are now ready to carry out the final step (III), which proves Theorem~\ref{intro:maintheoremss} in the introduction.

\begin{theorem}\label{thm:final}
Let $E/\QQ$ be an elliptic curve with good supersingular reduction at $p$ and $a_p(E)=0$. Suppose that  \textbf{(H.Kim)} holds and that the inclusion  \eqref{eq:halfpmIMC} of Conjecture~\ref{conj:pmIMC} holds for $\bullet=\circ\in\{+,-\}$ (in particular, we have $\pi_\Delta L_p^{\bullet\bullet}(f,g)\in \Lambda$). {If $n \geq 2$ is even, then \[
\left(p\Theta_{n}^+,\Theta_{n}^-\right)\subset \Fitt_{\Lambda_n}\cX_n^\BK.
\]}
If $n \geq 1$ is odd, then \[
\left(\Theta_{n}^+,p\Theta_{n}^-\right)\subset \Fitt_{\Lambda_n}\cX_n^\BK.
\]
\end{theorem}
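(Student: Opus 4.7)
The plan is to deduce the theorem by chaining together the three ingredients set up earlier: step (I), the congruences from Lemma \ref{cor:generalizing-pollack} between $\Theta_n^\pm$ and $(\tilde\omega_n^\pm)^2\,\pi_\Delta L_p^{\pm\pm}(f,g)$ modulo $\omega_n$; step (II), Proposition \ref{prop:inclueFittpm}, which places $(\tilde\omega_n^\bullet)^2\,\pi_\Delta L_p^{\bullet\bullet}(f,g) \bmod \omega_n$ inside $(\tilde\omega_n^\bullet)^2\Fitt_{\Lambda_n}\cX_n^\bullet$; and step (III), Corollary \ref{cor:includeFitt}, which shows $(\tilde\omega_n^\bullet)^2\Fitt_{\Lambda_n}\cX_n^\bullet\subset \Fitt_{\Lambda_n}\cX_n^\BK$.

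First I would concatenate (II) and (III) to obtain, for each $\bullet\in\{+,-\}$, the intermediate containment
\[
\bigl((\tilde\omega_n^\bullet)^2\,\pi_\Delta L_p^{\bullet\bullet}(f,g) \bmod \omega_n\bigr)\;\subset\; \Fitt_{\Lambda_n}\cX_n^\BK.
\]
Next I would apply step (I) to transfer this containment to the Theta elements. When $n$ is even, Lemma \ref{cor:generalizing-pollack} gives $-p\,\Theta_n^+\equiv (\tilde\omega_n^+)^2\,\pi_\Delta L_p^{++}(f,g)$ and $\Theta_n^-\equiv (\tilde\omega_n^-)^2\,\pi_\Delta L_p^{--}(f,g)$ modulo $\omega_n$, so the displayed inclusion yields both $(p\,\Theta_n^+)\subset \Fitt_{\Lambda_n}\cX_n^\BK$ and $(\Theta_n^-)\subset\Fitt_{\Lambda_n}\cX_n^\BK$; summing these two ideals produces the desired $(p\,\Theta_n^+,\Theta_n^-)\subset\Fitt_{\Lambda_n}\cX_n^\BK$. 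The odd case is entirely analogous, the factor of $p$ now being attached to $\Theta_n^-$ instead of $\Theta_n^+$ because the roles of the two signs in Lemma \ref{cor:generalizing-pollack} swap with the parity of $n$.

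Since the key analytic input (the signed main conjecture inclusion), the control theorem for the plus/minus Selmer groups, and the Fitting-ideal computation of the local quotient are all assumed or already proved, there is no serious obstacle remaining; the argument is purely a bookkeeping assembly. The only point demanding care is tracking the power of $\alpha_f^2=-p$ that Pollack's logarithms $\log_p^\pm$ contribute modulo $\omega_n$ after the normalization in Definition \ref{analyticdeftheta}: this accounts precisely for the asymmetric factor of $p$ that appears in front of exactly one of the two generators in each parity, matching the statement of the theorem.
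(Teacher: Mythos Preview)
Your proposal is correct and follows essentially the same route as the paper: combine Proposition~\ref{prop:inclueFittpm} with Corollary~\ref{cor:includeFitt} to get $\bigl((\tilde\omega_n^\bullet)^2\pi_\Delta L_p^{\bullet\bullet}(f,g)\bmod\omega_n\bigr)\subset \Fitt_{\Lambda_n}\cX_n^\BK$, and then invoke Lemma~\ref{cor:generalizing-pollack} to replace the left-hand side by the appropriate Theta elements, with the extra factor of $p$ landing on $\Theta_n^+$ or $\Theta_n^-$ according to the parity of $n$.
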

\begin{proof}
We consider the case $n$ is even. On combining Proposition~\ref{prop:inclueFittpm} and Corollary~\ref{cor:includeFitt}, we  have
\[
\left((\tilde\omega_n^\bullet)^2\pi_\Delta L_p^{\bullet\bullet}(f,g) \mod\omega_n\right)\subset \Fitt_{\Lambda_n}\cX_n^\BK
\]
for $\bullet\in\{+,-\}$.
The theorem now follows from Corollary~\ref{cor:generalizing-pollack}.
\end{proof}

The following lemma allows us to rewrite Theorem~\ref{thm:final} in a similar form to the (ss) case of \cite[Theorem 1.14]{KK}.

\begin{lemma}\label{lem:trace}
Suppose that $n\ge2$ is even, then 
\[
\Theta_{n}^+=-\nu_n\left(\Theta_{n-1}^+\right).
\]
Otherwise, if $n\ge 1$ is odd, we have
\[
\Theta_{n}^-=-\nu_n\left(\Theta_{n-1}^-\right).
\]
\end{lemma}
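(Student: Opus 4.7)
The plan is to prove both identities by evaluating at every character $\chi : G_n \to \overline{\QQ}_p^\times$, since elements of $\Lambda_n \otimes_{\cO} \overline{\QQ}_p$ are determined by their character values. The key observation is that $\nu_n$ corresponds to multiplication by $\Phi_n(X) = 1 + (1+X)^{p^{n-1}} + \cdots + (1+X)^{(p-1)p^{n-1}}$, so for any $y \in \Lambda_{n-1}$ and any character $\chi$ of $G_n$,
\[
\chi(\nu_n(y)) = \begin{cases} p\cdot \chi(y) & \text{if $\chi$ factors through $G_{n-1}$,} \\ 0 & \text{if $\chi$ is primitive on $G_n$,} \end{cases}
\]
because $\chi(\gamma)^{p^{n-1}}$ equals $1$ or a primitive $p$-th root of unity in these two cases respectively.

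I focus on the identity $\Theta_n^+ = -\nu_n(\Theta_{n-1}^+)$ for $n\ge 2$ even; the odd case for $\Theta_n^-$ is identical up to a parity-swap. For $\chi$ primitive on $G_n$ (so the conductor is $p^{n+1}$), since $n+1$ is odd, the factor $1+(-1)^{n+1}$ in Lemma~\ref{lem:evaluateTheta} vanishes, giving $\chi(\Theta_n^+) = 0 = \chi(\nu_n(\Theta_{n-1}^+))$. For a non-trivial $\chi$ of conductor $p^m$ with $m \le n$, Lemma~\ref{lem:evaluateTheta} yields
\[
\chi(\Theta_n^+) = \frac{\alpha_f^{2n-2m+2}\bigl(1+(-1)^m\bigr)}{2}\,c_{0,m,\chi}\,L(f,g,\chi^{-1},1),
\]
and the analogous expression for $\chi(\Theta_{n-1}^+)$ is the same but with $\alpha_f^{2n-2m}$ in place of $\alpha_f^{2n-2m+2}$. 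Their ratio is $\alpha_f^2 = -p$, whence $\chi(\Theta_n^+) = -p\,\chi(\Theta_{n-1}^+) = -\chi(\nu_n(\Theta_{n-1}^+))$.

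The trivial character $\chi = \mathbf{1}$ is the one case not directly covered by Lemma~\ref{lem:evaluateTheta}, and for it I would argue from the definition of $\Theta_n^+$. The interpolation formulas of Remarks~\ref{rk:geo}(ii) and~\ref{rk:?}(iii) (taken at $n = 0$) show that the scalars $L_p(f_\lambda,g)(\mathbf{1})$ and $L_p^?(f_\lambda,g_\mu)(\mathbf{1})$ depend only on $\alpha_f,\beta_f,\alpha_g,\beta_g,p$ and $L(f,g,1)$, and in particular are independent of $n$. Plugging them into the definition of $\Theta_n^+$ gives $\Theta_n^+(\mathbf{1}) = \frac{\alpha_f^{2n+2}}{4}\,C$ for a constant $C$ independent of $n$, so $\Theta_n^+(\mathbf{1})/\Theta_{n-1}^+(\mathbf{1}) = \alpha_f^2 = -p$, closing the verification. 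The only (mild) obstacle is precisely this trivial-character case, which dissolves as soon as one notices that all the $n$-dependence of $\Theta_n^+$ is concentrated in the prefactor $\alpha_f^{2n+2}$.
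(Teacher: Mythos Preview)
Your proposal is correct and follows essentially the same approach as the paper: evaluate both sides at every character of $G_n$, using that $\Phi_n$ vanishes at primitive characters and equals $p$ at the rest. The paper's only simplification is that for \emph{all} characters of conductor $p^m$ with $m\le n$ (including the trivial one) it argues directly from the definition $\Theta_n^+ = \frac{\alpha_f^{2n+2}}{4}\bigl(L_p(\alpha,0,n)+L_p(\beta,0,n)+L_p^?(\beta,\beta,0,n)+L_p^?(\alpha,\beta,0,n)\bigr)$, observing that each summand takes the same value at $\theta$ whether computed at level $n$ or $n-1$, so the ratio is $\alpha_f^2=-p$; this absorbs your separate treatment of the trivial character into the general case.
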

\begin{proof}
We only prove the even case since the odd case can be dealt with in the same way. Let $\theta$ be a Dirichlet character on $G_n$. If its conductor is $p^{n+1}$, then  Lemma \ref{lem:evaluateTheta} tells us that
\[
\Theta_{n}^+(\theta)=0.
\]
 Since $\nu_n\left(\Theta_{n-1}^+\right)=\Phi_n\Theta_{n-1}^+$ is divisible by $\Phi_n$, it vanishes at $\theta$ as well.
 
 Suppose that $\theta$ is a of conductor $p^m$, with $m\le n$. Then, our interpolation formulae give
 \begin{align*}
     \Theta_{n}^+(\theta)&=\frac{\alpha_f^{2n+2}}{4}\pi_\Delta\left(L_p(f_\alpha,g)+L_p(f_\beta,g)+L_p^?(f_\beta,g_\beta)+L_p^?(f_\alpha,g_\beta)\right)(\theta),\\
     \Theta_{n-1}^+(\theta)&=\frac{\alpha_f^{2n}}{4}\pi_\Delta\left(L_p(f_\alpha,g)+L_p(f_\beta,g)+L_p^?(f_\beta,g_\beta)+L_p^?(f_\alpha,g_\beta)\right)(\theta).
      \end{align*}
      Recall that $\alpha_f^2=-p$. This allows us to combine the two equations above to obtain
      \[
         \Theta_{n}^+(\theta)=-p\Theta_{n-1}^+(\theta)=-\nu_n\left(\Theta_{n-1}^+\right)(\theta)
 \]
as $\Phi_n(\theta)=p$. Thus, we deduce the equality
 \[
\Theta_{n}^+=-\nu_n\left(\Theta_{n-1}^+\right)
 \]
 as required.
\end{proof}
Thus, we may now rewrite Theorem~\ref{thm:final} as follows:
\begin{corollary}
 Suppose that the inclusion \eqref{eq:halfpmIMC} of Conjecture~\ref{conj:pmIMC} holds for $\bullet\in\{+,-\}$. If $n\ge 2$ is even, then {
\[
\left(p\nu_n\left(\Theta_{n-1}^+\right),\Theta_{n}^-\right)\subset \Fitt_{\Lambda_n}\cX_n^\BK.
\]
If $n\ge 1$ is odd, then
\[
\left(\Theta_{n}^+,p\nu_n\left(\Theta_{n-1}^-\right)\right)\subset \Fitt_{\Lambda_n}\cX_n^\BK.
\]}
\end{corollary}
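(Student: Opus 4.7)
The plan is to observe that this corollary is an entirely formal deduction from Theorem~\ref{thm:final} combined with the trace identities established in Lemma~\ref{lem:trace}. Both ingredients are already in place, so the argument amounts to substitution inside an ideal.

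First I would invoke Theorem~\ref{thm:final}, which gives
\[
(p\Theta_n^+,\Theta_n^-)\subset\Fitt_{\Lambda_n}\cX_n^{\BK}\quad\text{for $n\ge 2$ even,}
\]
and
\[
(\Theta_n^+,p\Theta_n^-)\subset\Fitt_{\Lambda_n}\cX_n^{\BK}\quad\text{for $n\ge 1$ odd.}
\]
Next I would apply Lemma~\ref{lem:trace}: in the even case it asserts $\Theta_n^+=-\nu_n(\Theta_{n-1}^+)$, and in the odd case $\Theta_n^-=-\nu_n(\Theta_{n-1}^-)$. Since $-1\in\Lambda_n^\times$, multiplying a generator of an ideal by $-1$ does not alter the ideal, so
\[
(p\Theta_n^+,\Theta_n^-)=\bigl(p\nu_n(\Theta_{n-1}^+),\Theta_n^-\bigr)\quad\text{($n$ even),}
\]
and symmetrically $(\Theta_n^+,p\Theta_n^-)=(\Theta_n^+,p\nu_n(\Theta_{n-1}^-))$ when $n$ is odd. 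Combining these equalities with the inclusions from Theorem~\ref{thm:final} yields the corollary.

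There is no genuine obstacle here: the corollary is a rephrasing of Theorem~\ref{thm:final} that exposes the Mazur--Tate style shape $(\Theta_n,\nu_n\Theta_{n-1})$ familiar from \cite{KK} and Conjecture~\ref{kuriconj}. The only real content was already dispatched in the proofs of Theorem~\ref{thm:final} (the Iwasawa-theoretic input through the signed main conjecture and the local Fitting ideal computation of Lemma~\ref{lem:Fittlocal}) and Lemma~\ref{lem:trace} (the interpolation-based identity $\alpha_f^2=-p$ together with $\Phi_n(\theta)=p$ for $\theta$ of conductor $\le p^n$).
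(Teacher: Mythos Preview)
Your proposal is correct and matches the paper's own treatment: the corollary is introduced with ``Thus, we may now rewrite Theorem~\ref{thm:final} as follows'' and carries no separate proof, so the intended argument is exactly the substitution via Lemma~\ref{lem:trace} that you describe.
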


\subsection{Study of elliptic curves over number fields}\label{subsec:ellipticcurvesovernumberfields}

Let $E$ be an elliptic curve over $\QQ$ without CM, of conductor $N_E$. Let $\rho$ be a two dimensional odd irreducible Artin representation of $G_\QQ$ and denote by $F$ its splitting field and by $N_\rho$ its conductor.  The representation $\rho$ takes values in a finite extension $L_\rho$ of $\QQ$. Fix $\mathfrak{P}$ a prime of $L_\rho$ above $p$. We let $L$ be a finite extension of $L_{\rho,\mathfrak{P}}$ and denote by $\cO$ its ring of integers. We assume for the rest of the section that the following list of hypotheses hold.  

\begin{enumerate}
    \item $N_E$ and $N_\rho$ are coprime;
    \item The prime $p\geq 5$ and $p\nmid N_EN_\rho[F:\QQ]$; 
    \item The representation $G_\QQ \to \operatorname{Aut}_{\Zp}(T_pE)$ is surjective;
    \item $\rho(\operatorname{Frob}_p)$ has distinct eigenvalues modulo $\mathfrak{P}$;
    \item If $E$ has ordinary reduction at $p$, the triple $(E,\rho,0)$ is non-anomalous, i.e. $(f,g,0)$ is (cf. Remark \ref{rmk:non-anomalous});
    \item If $E$ has supersingular reduction at $p$, we assume that $a_p(E)=0$ and that \textbf{(H.Kim)} holds, that is, $4\nmid [F:\QQ]$.
\end{enumerate}

Let $F_{\infty}=F K_\infty$ be the cyclotomic $\Zp$-extension of $F$, with finite layers $F_{n}$ so that $[F_{n}:F]=p^n$ and $F_{n}$ is the compositum of $K_n$ and $F$. 

\begin{defn}\label{def:selmerellipticc}
Let $\textup{Sel}_{n}( E/F)$ denote the Selmer group of $E$ over $F_n$:
\[  \ker\left( \tupH^1(\operatorname{Gal}(\QQ_\Sigma/F_{n}), E[p^\infty]) \to \prod_{v \in \Sigma}\frac{\tupH^1(F_{n,v}, E[p^\infty])}{E(F_{n,v}) \otimes \Qp/\Zp} \right),\] 
where $\Sigma$ is a finite set of primes containing primes above $p$, $\infty$, and the primes dividing $N_E N_\rho$. 
\end{defn}

We are interested in studying the $\rho$-isotypic component of these Selmer groups, whose definitions we review below.
\begin{defn}\label{def:selmerrhoisot}
For all $n \geq 0$, define the $\rho$-isotypic component \[ \textup{Sel}_{n}( E/F)_{(\rho)} := \operatorname{Hom}_{\operatorname{Gal}(F_{n}/K_n)}(\rho, \textup{Sel}_{n}( E/F)), \]
and its Pontryagin dual $\cX_n(E)_{(\rho)}:=\textup{Sel}_{n}( E/F)_{(\rho)}^\vee$.
\end{defn}

Notice that $E$ and $\rho^*$ (the contragradient representation of $\rho$) are modular, with $E$ corresponding to a weight 2 modular form $f$, while  $\rho^*$ to a weight 1 modular form $g$, such that there is an isomorphism of $G_\QQ$-representations $E[p^\infty] \otimes \rho^* \simeq A_{f,g}(1)$.

\begin{notation}
Denote by $\textup{Sel}(\star, E[p^\infty]\otimes \rho^*)$ the Selmer group $\textup{Sel}_\Gr(\star, A_{f,g}(1))$ (resp. $\textup{Sel}_\BK(\star, A_{f,g}(1))$)
if $E$ is ordinary (resp. supersingular) at $p$.
\end{notation}

\begin{remark}\label{rem:comparisonselmer}
The $\rho$-isotypic component $\textup{Sel}_{n}( E/F)_{(\rho)}$ can be identified with the $\operatorname{Gal}(F_{n}/K_n)$-invariant classes of $\textup{Sel}(F_{n}, E[p^\infty]\otimes \rho^*)$.
\end{remark}

In the following lemma, we study the relation between $\textup{Sel}_{n}( E/F)_{(\rho)}$ and $\textup{Sel}(K_{n}, E[p^\infty]\otimes \rho^*)$.
\begin{lemma}\label{comparisonrhoisotypiccompandGr}
For every $0 \leq n \leq \infty$, the natural restriction map
\[ \textup{Sel}(K_n, E[p^\infty] \otimes \rho^*) \longrightarrow \textup{Sel}_{n}( E/F)_{(\rho)}\]
is an isomorphism.
\end{lemma}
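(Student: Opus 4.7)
The strategy is to combine Remark~\ref{rem:comparisonselmer} with a standard inflation--restriction argument based on the fact that $\operatorname{Gal}(F/\QQ)$ has order prime to $p$. Write $W := E[p^\infty] \otimes \rho^*$ and $G := \operatorname{Gal}(F_n/K_n)$. Hypothesis (2) gives $p \nmid [F:\QQ]$, which forces $F \cap K_\infty = \QQ$ (any nontrivial intersection would contribute a $p$-power to $[F:\QQ]$), and hence $G \cong \operatorname{Gal}(F/\QQ)$ has order prime to $p$. Since $W$ is $p$-primary, $\tupH^i(G,M)=0$ for all $i\ge 1$ and any $p$-primary $G$-module $M$ constructed from $W$.

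By Remark~\ref{rem:comparisonselmer} it suffices to show that the restriction map $\textup{Sel}(K_n, W)\to\textup{Sel}(F_n,W)^{G}$ is an isomorphism. Inflation--restriction yields an isomorphism on global cohomology
\[
\tupH^1(\QQ_\Sigma/K_n, W) \xrightarrow{\sim} \tupH^1(\QQ_\Sigma/F_n, W)^{G}.
\]
Analogously, at each $v\in\Sigma$ fix a place $w$ of $F_n$ above $v$. The decomposition group $D_w:=\operatorname{Gal}(F_{n,w}/K_{n,v})\subseteq G$ has order prime to $p$, so
\[
\tupH^1(K_{n,v}, W)\xrightarrow{\sim}\tupH^1(F_{n,w}, W)^{D_w}\cong\Bigl(\prod_{w'\mid v}\tupH^1(F_{n,w'}, W)\Bigr)^{G},
\]
the last identification using that $G$ acts transitively on $\{w'\mid v\}$ with stabilizer $D_w$. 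The same vanishing identifies the local condition $\tupH^1_f(K_{n,v}, W)$ with the $G$-invariants of the corresponding local condition at the places of $F_n$ above $v$: for $v\nmid p$ this is immediate from applying inflation--restriction to the inertia cohomology, and for $v\mid p$ it follows from the triviality of $\rho^*|_{G_F}$ together with the classical identification of $E(F_{n,w})\otimes\Qp/\Zp$ inside $\tupH^1(F_{n,w}, E[p^\infty])$ with the Greenberg condition in the ordinary case and the Bloch--Kato condition in the supersingular case.

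The defining exact sequences of $\textup{Sel}(K_n, W)$ and $\textup{Sel}(F_n, W)^{G}$ (the latter being obtained by taking $G$-invariants, which remains left exact) fit into a commutative diagram whose relevant vertical arrows are the three isomorphisms above, so the five lemma gives the isomorphism for finite $n$; the case $n=\infty$ follows by passing to the direct limit. The main technical point is the matching of local conditions at primes above $p$, since the Selmer group for $E$ over $F_n$ is defined via Kummer images whereas $\textup{Sel}(F_n, W)$ uses the Greenberg/Bloch--Kato subgroup of the twisted representation; this identification is classical under our good-reduction hypotheses, reduced via the triviality of $\rho^*|_{G_F}$ to the well-known case of $E[p^\infty]$ alone.
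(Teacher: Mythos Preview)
Your overall strategy coincides with the paper's: both arguments set up the inflation--restriction diagram, use $p\nmid |G|$ to make the map on global $\tupH^1$ an isomorphism, and then reduce to showing that the local conditions match (equivalently, that the map $t_{n,v}$ on quotients is injective). For $v\nmid p$ your treatment is the same as the paper's.

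The genuine difference is at the prime above $p$. The paper argues via the decomposition $\rho^*|_{G_{\Qp}}\cong\theta_\alpha\oplus\theta_\beta$ (which uses hypothesis~(4)), applies Shapiro's lemma to identify $\tupH^1_{\mathcal{L}_p}(B,E[p^\infty](\theta_\mu))$ with $E(J_\mu B)\otimes\Qp/\Zp$ for $B\in\{K_{n,\mathfrak p},F_{n,\mathcal P}\}$, and then reads off that $t_{n,\mathfrak p}''$ is surjective. You instead use that $\rho^*|_{G_{F_{n,\mathcal P}}}$ is trivial, so that over $F_{n,\mathcal P}$ the Greenberg or Bloch--Kato subgroup for $W$ is literally that of $E[p^\infty]^{\oplus 2}$, i.e.\ the Kummer image; then you invoke that the local condition over $K_{n,\mathfrak p}$ is the $D_w$-invariants of the one over $F_{n,\mathcal P}$. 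This is correct: since $p\nmid N_\rho$ the extension $F_{n,\mathcal P}/K_{n,\mathfrak p}$ is unramified of prime-to-$p$ degree, so in the Greenberg case the claim is immediate from applying inflation--restriction to $\mathcal F^+A_j$, and in the Bloch--Kato case it follows from the standard compatibility of $\tupH^1_f$ for crystalline representations under unramified base change together with the exactness of taking $D_w$-invariants on $p$-primary modules. Your route is slightly more conceptual and does not use hypothesis~(4); the paper's route is more explicit and already packages the local condition in the character-by-character form used elsewhere in the paper. Your sentence at $p$ is terse---it would be worth stating explicitly that $F_{n,\mathcal P}/K_{n,\mathfrak p}$ is unramified (because $p\nmid N_\rho$) and that this is what makes the Bloch--Kato condition descend.
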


\begin{proof}
For every $0 \leq n \leq \infty$, by the inflation-restriction sequence, we have a map
\begin{equation}\label{restrictionmap}
\textup{Sel}(K_n, E[p^\infty] \otimes \rho^*) \longrightarrow \textup{Sel}(F_{n}, E[p^\infty] \otimes \rho^*)^{H_n}=\textup{Sel}_{n}( E/F)_{(\rho)}, \end{equation}
where $H_n:= \operatorname{Gal}(F_{n}/K_n)$, which sits in the following diagram: 

\[ \xymatrix{ \textup{Sel}(K_n, E[p^\infty] \otimes \rho^*) \ar@{^{(}->}[r] \ar[d]^{r_n} & \tupH^{1}(K_n, E[p^\infty] \otimes \rho^*) \ar[r] \ar[d]^{s_n} & \mathcal{Q}(K_n,E[p^\infty] \otimes \rho^*) \ar[d]^{t_n=\prod t_{n,v}} \\
 \textup{Sel}_{n}( E/F)_{(\rho)} \ar@{^{(}->}[r] & \tupH^{1}(F_{n},E[p^\infty] \otimes \rho^*)^{H_n} \ar[r] & \mathcal{Q}(F_{n},E[p^\infty] \otimes \rho^*)^{H_n},}\]
where, in view of Remark \ref{rem:comparisonselmer}, we have \begin{align*}
    \mathcal{Q}(K_n,E[p^\infty] \otimes \rho^*)&=\prod\limits_{\mathfrak{p} \neq v } \tupH^{1}_{/f}(K_{n,v}, A_{f,g}(1))  \times \tupH^{1}_{/\mathcal{L}_p}(K_{n,\mathfrak{p}}, A_{f,g}(1))\\ \mathcal{Q}(F_{n},E[p^\infty] \otimes \rho^*)^{H_n}&=\prod\limits_{\mathfrak{p} \neq v} \tupH^{1}_{/f}(F_{n,v},A_{f,g}(1))^{H_{n,v}} \times \prod\limits_{\mathcal{P}|\mathfrak{p}}\tupH^{1}_{/\mathcal{L}_p}(F_{n,\mathcal{P}}, A_{f,g}(1))^{H_{n,\mathcal{P}}},
\end{align*}
where $\mathcal{L}_p \in \{f, \Gr \}$.

By the inflation-restriction sequence the kernel of $s_n$ is equal to $\tupH^{1}({\rm Gal}(F_{n}/K_n),(A_{f,g}(1))^{G_{F_{n}}})$, while its cokernel is equal to $\tupH^{2}({\rm Gal}(F_{n}/K_n),(A_{f,g}(1))^{G_{F_{n}}})$. Since $p\nmid [F:\QQ]$, these two spaces are zero. By the snake lemma, $r_n$ is injective and it has cokernel bounded by the kernel of $t_n$. 

Let $v \ne \mathfrak{p}$ be a place of $K_n$ above the rational prime $\ell \ne p$. We have 
\[ t_{n,v}:\tupH^{1}_{/f}(K_{n,v},A_{f,g}(1)) \to \prod\limits_{w | v} \tupH^{1}_{/f}(F_{n,w}, A_{f,g}(1))^{H_{n,w}},\]
where $H_{n,w}= {\rm Gal}(F_{n,w}/ K_{n,v})$. Note that
\[ \tupH^{1}_{f}(B,A_{f,g}(1))=0,\]
where $B \in \{K_{n,v},F_{n,w}\}$, as $E(C)\otimes\Qp/\Zp=0$ for any $\ell$-adic field $C$ with $\ell \ne p$ (e.g. \cite[Proposition 2.1]{greenberg1999iwasawaell}). Thus, by the inflation-restriction sequence, we have  \[ \ker(t_{n,v}) = \tupH^{1}({\rm Gal}(F_{n,w}/K_{n,v}),E(F_{n,w})[p^\infty] \otimes \rho^*)=0,\]
since $[F_{n,w}:K_{n,v}]$ is coprime with $p$.

We are left with the analysis of the kernel of  $t_{n,\mathfrak{p}}$ at places $\mathfrak{p}$ above $p$. Let $\mathcal{P}$ be a prime of $F_{n}$ above $\mathfrak{p}$; the map $t_{n,\mathfrak{p}}$ sits in the following diagram: 
\[ \xymatrix{ \tupH^{1}_{\mathcal{L}_p}(K_{n,\mathfrak{p}}, A_{f,g}(1)) \ar@{^{(}->}[r] \ar[d]^{t_{n,\mathfrak{p}}''} & \tupH^{1}(K_{n,\mathfrak{p}},  A_{f,g}(1)) \ar[r] \ar[d]^{t_{n,\mathfrak{p}}'} & \tupH^{1}_{/\mathcal{L}_p}(K_{n,\mathfrak{p}},   A_{f,g}(1)) \ar[d]^{t_{n,\mathfrak{p}}} \\
 \tupH^{1}_{\mathcal{L}_p}(F_{n,\mathcal{P}},  A_{f,g}(1))^{H_{n,\mathcal{P}}} \ar@{^{(}->}[r] & \tupH^{1}(F_{n,\mathcal{P}},   A_{f,g}(1))^{H_{n,\mathcal{P}}} \ar[r] & \tupH^{1}_{/\mathcal{L}_p}(F_{ n,\mathcal{P}},   A_{f,g}(1))^{H_{n,\mathcal{P}}}  }\]
where $H_{n,\mathcal{P}}= {\rm Gal}(F_{n,\mathcal{P}}/ K_{n,\mathfrak{p}})$ and $\mathcal{L}_p \in \{f, \Gr \}$. 
As above, by applying the inflation-restriction exact sequence and the fact that $p \nmid [F_{n,\mathcal{P}}: K_{n,\mathfrak{p}}]$, we conclude that ${t_{n,\mathfrak{p}}'}$ is an isomorphism. Thus, ${t_{n,\mathfrak{p}}''}$ is injective and the kernel of ${t_{n,\mathfrak{p}}}$ is isomorphic to the cokernel of ${t_{n,\mathfrak{p}}''}$. We now show that ${t_{n,\mathfrak{p}}''}$ is surjective.  Recall from \S\ref{sec:setupSS}  that 
\[ A_{f,g}(1)|_{G_{\Qp}} = (E[p^\infty] \otimes \rho^*)|_{G_{\Qp}}\cong E[p^\infty](\theta_\alpha)\oplus E[p^\infty](\theta_\beta).\]
Let $\mu \in \{\alpha_f,\beta_f\}$ and $J_\mu$ be the finite unramified extension of $\Qp$ given by $(\overline{\Qp})^{{\rm ker}\theta_\mu}$. Similarly to Lemma \ref{lem:projectlambda}, we have \begin{equation}\label{eq:compatp}
    E(J_\mu B) \otimes \Qp / \Zp = \tupH^{1}_{\mathcal{L}_p}(J_\mu B,  E[p^\infty])  \simeq \tupH^{1}_{\mathcal{L}_p}(B,  E[p^\infty](\theta_\mu)) ,
\end{equation} 
where $B \in \{K_{n,\mathfrak{p}},F_{n,\mathcal{P}}\}$. 
As $ \tupH^{1}_{\mathcal{L}_p}(\star,  A_{f,g}(1)) = \tupH^{1}_{\mathcal{L}_p}(\star,  E[p^\infty](\theta_\alpha)) \oplus \tupH^{1}_{\mathcal{L}_p}(\star, E[p^\infty](\theta_\beta))  $, \eqref{eq:compatp} identifies \[\tupH^{1}_{\mathcal{L}_p}(F_{n,\mathcal{P}},A_{f,g}(1))^{H_{n,\mathcal{P}}} \simeq (E(J_\alpha F_{n,\mathcal{P}})^{H_{n,\mathcal{P}}} \oplus E(J_\beta F_{n,\mathcal{P}})^{H_{n,\mathcal{P}}}) \otimes \Qp / \Zp \]
as a submodule of $(E(J_\alpha  K_{n,\mathfrak{p}}) \oplus E(J_\beta K_{n,\mathfrak{p}})) \otimes \Qp/\Zp$. 
This shows that ${t_{n,\mathfrak{p}}}$ is injective and thus that ${t_{n,\mathfrak{p}}''}$ is surjective.
\end{proof}

\begin{notation}
If $E$ has ordinary reduction at $p$, denote by $\Theta_n(E,\rho) $ the Theta element $\Theta_{0,n}$ corresponding to the weight two modular form $f$, the weight one modular form $g$, and the (only) twist $j=0$.

Similarly, if $E$ has supersingular reduction at $p$, we write $\Theta_n^{\pm}(E,\rho) $ for the Theta elements $\Theta_{0,n}^\pm$ corresponding to the weight two modular form $f$, the weight one modular form $g$, and the  twist $j=0$. 
\end{notation}

\begin{remark}
As $p \geq 5$, Proposition \ref{cohomconstruction} shows that   $\Theta_n(E,\rho), \Theta_n^{\pm}(E,\rho) \in \Lambda_n$.
\end{remark}

We are now ready to prove Theorem~\ref{introthm:EC} in the introduction.
\begin{theorem}\label{thm:EC}
Let us assume the hypotheses (1)--(4) made at the beginning of \S \ref{subsec:ellipticcurvesovernumberfields}. Furthermore, suppose that $L(E,\rho, \theta^{-1},1) \ne 0$ for some finite order character $\theta$ on $\Gamma_1$.
\begin{itemize}
    \item[(i)] If $E$ has ordinary reduction at $p$ and hypothesis (5) holds, then for all $n \geq 0$, we have 
\[\left(\Theta_n(E,\rho) \right) \subseteq {\rm Fitt}_{\Lambda_n}\cX_n(E)_{(\rho)}.\]
\item[(ii)] Suppose that $E$ has supersingular reduction at $p$ and that hypothesis (6) holds and let $p^m$ be the conductor of $\theta$. 
{If $m$ is even, then for all odd $n\ge0$, we have
\[\left(\Theta^+_n(E,\rho) \right) \subseteq {\rm Fitt}_{\Lambda_n}\cX_n(E)_{(\rho)}.\]
If $m$ is odd, then for all even $n\ge0$, we have
\[\left(\Theta^-_n(E,\rho) \right) \subseteq {\rm Fitt}_{\Lambda_n}\cX_n(E)_{(\rho)}.\]}
\end{itemize}
\end{theorem}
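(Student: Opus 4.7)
The plan is to deduce Theorem \ref{thm:EC} directly from Theorems \ref{mainordinary} and \ref{thm:final} via the identification of Selmer groups provided by Lemma \ref{comparisonrhoisotypiccompandGr}. Let $f$ be the weight-2 newform attached to $E$ and $g$ the weight-1 newform corresponding to $\rho^*$, so that $A_{f,g}(1)\cong E[p^\infty]\otimes\rho^*$ as $G_\QQ$-modules, and the Theta elements $\Theta_n(E,\rho)$, $\Theta_n^\pm(E,\rho)$ are integral by Proposition \ref{cohomconstruction}. Lemma \ref{comparisonrhoisotypiccompandGr} with $j=0$ yields a natural isomorphism $\cX_n(E)_{(\rho)}\cong\cX_{0,n}^{\Gr}$ in the ordinary case and $\cX_n(E)_{(\rho)}\cong\cX_n^{\BK}$ in the supersingular case, reducing the task to bounding the Fitting ideals of these latter modules.

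For part (i), I would check the hypotheses of Theorem \ref{mainordinary}: the residual representations $\overline T_f$ and $\overline T_g$ are irreducible by (3)--(4); $f$ is non-CM by (3) and $g$ has weight one; the coprimality of levels is (1); $\epsilon_f=\mathbf 1$ while $\epsilon_g=\det(\rho^*)$ is odd, so $\epsilon_f\epsilon_g\neq \mathbf 1$; and the non-anomalous ordinary condition is (5). The existence of a $\theta$ with $L_p(f_\alpha,g)(\theta\chi_{\rm cyc}^0)\neq 0$ follows from the interpolation formula \eqref{eq:interpolation} together with the hypothesis $L(E,\rho,\theta^{-1},1)\neq 0$. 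Applying Theorem \ref{mainordinary} and invoking the identification above yields (i).

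For part (ii), I would apply Theorem \ref{thm:final}. Hypothesis \textbf{(H.Kim)} holds because $K_g$, the local splitting field of $V_g$ at $p$, embeds into the completion of $F$ at a prime above $p$, so $[K_g:\Qp]$ divides $[F:\QQ]$, and (6) gives $4\nmid[F:\QQ]$. The required main-conjecture inclusion \eqref{eq:halfpmIMC} is furnished by Remark \ref{conjssknownres}, which applies thanks to the big-image conditions implicit in (1)--(4) and the non-vanishing of the signed $p$-adic $L$-functions (which itself follows from Proposition \ref{prop:pmTheta} together with the non-vanishing hypothesis on $L(E,\rho,\theta^{-1},1)$). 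Theorem \ref{thm:final} then gives $(\Theta_n^+(E,\rho))\subseteq\Fitt_{\Lambda_n}\cX_n^{\BK}$ for odd $n\geq 1$ and $(\Theta_n^-(E,\rho))\subseteq\Fitt_{\Lambda_n}\cX_n^{\BK}$ for even $n\geq 2$. The parity pairing in the statement between the conductor exponent $m$ of $\theta$ and $n$ reflects Lemma \ref{lem:evaluateTheta}: the factor $(1\pm(-1)^m)/2$ forces $\Theta_n^+(\theta)=0$ when $m$ is odd and $\Theta_n^-(\theta)=0$ when $m$ is even, so the stated $(m,n)$-combinations are precisely those yielding non-trivial inclusions.

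The main obstacle is the verification of the main-conjecture inclusion in the supersingular case, which requires tracing through the signed-Euler-system machinery of \cite{BLLV} (including the sign Conventions \ref{conv:pm} and \ref{conv:mp}) and checking all of the hypotheses under which Remark \ref{conjssknownres} applies; the edge case $n=0$ in part (ii) also requires a separate argument since Theorem \ref{thm:final} is stated for $n\geq 1$. Once these technical points are settled, the proof is a direct translation of Theorems \ref{mainordinary} and \ref{thm:final} via Lemma \ref{comparisonrhoisotypiccompandGr}.
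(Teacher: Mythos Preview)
Your proposal is correct and follows essentially the same route as the paper: reduce to Theorems \ref{mainordinary} and \ref{thm:final} via the Selmer-group identification of Lemma \ref{comparisonrhoisotypiccompandGr}, after verifying the relevant hypotheses (in particular, invoking \cite[Theorem~11.6.4]{KLZ2} in the ordinary case and \cite[Theorem~6.2.4]{BLLV} in the supersingular case for the main-conjecture inclusion).

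One small clarification on your explanation of the parity of $m$: the point is not merely that the ``wrong'' Theta element vanishes at $\theta$ (giving a trivially true inclusion), but rather that the parity of $m$ determines which of $L_p^{++}(f,g)$ or $L_p^{--}(f,g)$ is \emph{provably nonzero}. Concretely, when $m$ is even, $\log_p^+$ does not vanish at $\theta$, so \eqref{eq:Lp++} together with \eqref{eq:interpolation}--\eqref{eq:interpolate?} and $L(E,\rho,\theta^{-1},1)\ne 0$ forces $L_p^{++}(f,g)\ne 0$; this is what licenses the Euler-system input \eqref{eq:halfpmIMC} for $\bullet=+$, and then the proof of Theorem~\ref{thm:final} yields the $\Theta_n^+$ inclusion for odd $n$. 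Your route via Proposition~\ref{prop:pmTheta} and Lemma~\ref{lem:evaluateTheta} reaches the same conclusion, but the logical role of $m$ is to secure the hypothesis of Theorem~\ref{thm:final}, not just to select the nontrivial Theta element. Your flag on the edge case $n=0$ (even) is legitimate: Theorem~\ref{thm:final} is stated for $n\ge 2$ even, and the paper's proof does not treat $n=0$ separately either.
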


\begin{proof}
We consider the ordinary case first. The proof follows from Theorem \ref{mainordinary} and Lemma \ref{comparisonrhoisotypiccompandGr}, as we now show. Recall that $E$ and $\rho$ correspond to the modular forms $f$ and $g$. 

As $L(E,\rho, \theta^{-1},1) \ne 0$, the $p$-adic $L$-value $L_p(f_\alpha,g)(\theta)$ is non-zero. This together with hypotheses (1)-(4) shows that the hypotheses of \cite[Theorem 11.6.4]{KLZ2} hold (see also \cite[Theorem 11.7.4]{KLZ2}). Thus, $\textup{Sel}(K_\infty, E[p^\infty] \otimes \rho^*)=\textup{Sel}_{\Gr}(K_\infty, A_{f,g}(1))$ is $\Lambda$-cotorsion  and \[\pi_\Delta  L_p(f_\alpha,g) \in \textup{char}_\Lambda(  \cX_{\infty,0}^\textup{Gr} ) = \textup{Fitt}_\Lambda(  \cX_{\infty,0}^\textup{Gr} ). \]
By the additional hypothesis (5), we apply Theorem \ref{mainordinary} to deduce that
\[\left(\Theta_n(E,\rho) \right) \subseteq {\rm Fitt}_{\Lambda_n}\cX_{n,0}^\textup{Gr}. \]
As the degree $[F:\QQ]$ is coprime with $p$, we apply Lemma \ref{comparisonrhoisotypiccompandGr}, which implies the equality ${\rm Fitt}_{\Lambda_n}\cX_{n,0}^\textup{Gr} ={\rm Fitt}_{\Lambda_n}\cX_n(E)_{(\rho)}$.

{We now turn our attention to the supersingular case and suppose that $m$ is even (the odd case is proved analogously). It follows from \eqref{eq:interpolation}, \eqref{eq:interpolate?}, \eqref{eq:Lp++} and the fact that $\log_p^+$ does not vanish at $\theta$ that $L_p^{++}(f,g)\ne 0$. Then our running hypotheses allow us to apply \cite[Theorem~6.2.4]{BLLV}, which gives the inclusion 
\[
  \pi_\Delta L_p^{++}(f,g)\in\Char_\Lambda\Sel_{++}(K_\infty,A_{f,g}(1))^\vee
  \]
  (see Remark~\ref{conjssknownres}). The proof of Theorem~\ref{thm:final} then gives
  \[
  \Theta_n^+(E,\rho)\in  {\rm Fitt}_{\Lambda_n}\cX_{n}^\textup{BK}.
  \]
  Hence the result follows from Lemma~\ref{comparisonrhoisotypiccompandGr}.}
\end{proof}

We now conclude this section by giving an upper bound to the dimension of the $\rho$-isotypic component of the Mordell--Weil group of $E(F_{n})$. Before doing so, we recall the definition of the order of vanishing at characters of an element in $\Lambda_n$.

For $\chi:G_n \to \overline{\QQ}_p^\times$ a character, let $\cO[\chi]$ be the $\cO$-algebra generated by the image of $\chi$. Denote also by $\chi$ the induced homomorphism $\Lambda_n \to \cO[\chi]$ and define by $I_\chi$ its augmentation ideal.   

\begin{defn}
An element $z \in \Lambda_n$ vanishes to order $r$ at $\chi$ if $z \in I_\chi^r \backslash I_\chi^{r+1}$. Moreover, we say that $z$ vanishes to infinite order if it belongs to all powers of $I_\chi$. We denote by $\ord_\chi z$ the order of vanishing of $z$ at $\chi$.
\end{defn}

 Consider the $\rho$-isotypic component 
\[ E(F_{n})_{(\rho)}:= {\rm Hom}_{{\Gal(F_{n}/K_n)}}(\rho, E(F_{n})  \otimes L). \]
Moreover, given $\chi:G_n \to \overline{\QQ}_p^\times$, we let  \[ E(F_{n})_{(\rho)}^\chi:=\{P \in E(F_{n})_{(\rho)} \otimes \overline{\QQ}_p \;:\; \sigma \cdot P = \chi(\sigma)P\; \text{ for all }\sigma \in G_n \}.\]  
As $E(F_{n}) \otimes \Qp/ \Zp \hookrightarrow {\rm Sel}_n(E/F)$, Theorem \ref{thm:EC} gives an upper bound on the rank of $E(F_{n})_{(\rho)}^\chi$ in terms of the order of vanishing of the Theta elements at $\chi$, which is Corollary~\ref{cor:introfinal} in the introduction.
\begin{corollary}\label{cor:final}
We keep the same assumptions of Theorem~\ref{thm:EC}. Let $\chi$ be a character on $G_n$. 
\begin{itemize}
    \item[(i)] Suppose that $E$ has ordinary reduction at $p$, then $\dim_{\overline{\QQ}_p} E(F_{n})_{(\rho)}^\chi \le \ord_\chi\Theta_{n}(E,\rho)$. 
    \item[(ii)] Suppose that $E$ has supersingular reduction at $p$, then 
    \[
    \dim_{\overline{\QQ}_p} E(F_{n})_{(\rho)}^\chi\le\begin{cases}
    \ord_\chi\Theta_n^+(E,\rho)&\text{if $n$ is odd,}\\
    \ord_\chi\Theta_n^-(E,\rho)&\text{if $n$ is even.}
    \end{cases}
    \]
\end{itemize}

 \end{corollary}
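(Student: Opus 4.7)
The plan is to derive the bound directly from Theorem \ref{thm:EC} by combining it with the elementary Fitting-ideal estimate of Mazur and Tate \cite[Proposition 3]{MaTa}; the corollary is essentially a formal two-step composition.

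First I would feed the Kummer inclusion
\[
E(F_n)\otimes \Qp/\Zp\ \hookrightarrow\ \Sel_n(E/F)
\]
into the $\rho$-isotypification functor for the finite group $\Gal(F_n/K_n)=\Gal(F/\QQ)$. Because $p\nmid [F:\QQ]$ (hypothesis (2)) and $\cO$ may be enlarged to contain the values of $\rho$, the idempotent cutting out the $\rho$-part lies in $\cO[\Gal(F/\QQ)]$, so this functor is exact and commutes with the natural $\Lambda_n$-action. The result is an $\Lambda_n$-linear injection of the $\rho$-components, which, after taking Pontryagin duals, gives a surjection
\[
\cX_n(E)_{(\rho)}\ \twoheadrightarrow\ \bigl(E(F_n)\otimes\Qp/\Zp\bigr)_{(\rho)}^{\vee}.
\]
Functoriality of Fitting ideals under surjections then yields
\[
\Fitt_{\Lambda_n}\cX_n(E)_{(\rho)}\ \subseteq\ \Fitt_{\Lambda_n}\bigl(E(F_n)\otimes\Qp/\Zp\bigr)_{(\rho)}^{\vee},
\]
so Theorem \ref{thm:EC} places $\Theta_n(E,\rho)$ (ordinary case) or $\Theta_n^{\pm}(E,\rho)$ (supersingular case, sign chosen by the parity of $n$) inside the right-hand ideal.

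The second and final step is to invoke \cite[Proposition 3]{MaTa}, which states that for a finitely generated $\cO[G_n]$-module $M$ and any $z\in\Fitt_{\Lambda_n}(M^{\vee})$,
\[
\ord_{\chi}z\ \geq\ \dim_{\overline{\QQ}_p}\bigl(M\otimes_{\cO}\overline{\QQ}_p\bigr)^{\chi}
\]
for every character $\chi$ of $G_n$. I apply this with $M=\bigl(E(F_n)\otimes\Qp/\Zp\bigr)_{(\rho)}$; its $\chi$-eigenspace after tensoring with $\overline{\QQ}_p$ is canonically identified with $E(F_n)_{(\rho)}^{\chi}$ since the free part of the Mordell--Weil group of $E(F_n)$ and its divisible quotient agree after base change to characteristic $0$. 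Substituting $z$ equal to the relevant Theta element yields exactly the desired inequality in each of the cases (i) and (ii).

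The argument contains no serious obstacle: the one point requiring care is that $\rho$-isotypification commutes with Pontryagin duality and with the $\Lambda_n$-structure, which is guaranteed by $p\nmid [F:\QQ]$, and that the identification of the $\chi$-eigenspace of $(E(F_n)\otimes\Qp/\Zp)_{(\rho)}\otimes\overline{\QQ}_p$ with $E(F_n)_{(\rho)}^{\chi}$ is legitimate; both are standard manipulations. Everything else is a formal consequence of Theorem \ref{thm:EC} together with the Mazur--Tate Fitting-ideal estimate.
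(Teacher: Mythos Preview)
Your proposal is correct and is exactly the argument the paper has in mind: the paper's proof consists of the single sentence that the result follows from Theorem~\ref{thm:EC} by the same reasoning as \cite[Proposition~3]{MaTa} (and \cite[Corollary~1.16]{KK}), and immediately before stating the corollary the authors even record the Kummer inclusion $E(F_n)\otimes\Qp/\Zp\hookrightarrow\Sel_n(E/F)$ as the mechanism. You have simply written out the two steps (Fitting ideals under the induced surjection, then the Mazur--Tate order-of-vanishing estimate) that the paper leaves implicit.
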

 \begin{proof}
This is a consequence of Theorem~\ref{thm:EC}, following from the same proof as \cite[Proposition 3]{MaTa} (see also \cite[Corollary~1.16]{KK}).
 \end{proof}

\appendix

\section{Perrin-Riou pairings and arithmetic construction of Theta elements}\label{arithmeticconstructionappendix}

In this appendix, we show how to construct the Theta elements defined in \S \ref{sec:thetaelements} as images of certain Beillinson--Flach classes under the pairing of Perrin-Riou at finite levels. This is the result of the explicit reciprocity laws that Beillinson--Flach classes satisfy and the relation between Perrin-Riou's big logarithm and the corresponding pairing at finite levels. Our result should be compared to \cite[Lemma 7.2]{Kurihara2002}, where the modular elements defined by Mazur and Tate are related to values under Perrin-Riou's pairing of the zeta elements of Kato.

We have already seen in Remark~\ref{rk:integral} that the Theta elements we have defined have bounded denominators as $n$ varies. In the case when the weight of $g$ is 1, we show that the Theta elements are integral under a Fontaine-Laffaille condition via their links with Beilinson--Flach classes.

\subsection{The Perrin-Riou pairing}\label{PRpairingfinlev}
In this section $V$ denotes a $d$-dimensional $L$-vector space equipped with a continuous crystalline representation of $G_{\Qp}$. 
We recall the definition of the Perrin-Riou pairing over $K_n$, which will be utilized in \S \ref{BFversustheta} to establish the connection of our Theta elements with the Beillinson--Flach classes.

 Let $[-,-]$ denote the natural pairing $\Dcris(V(1+j)) \times \Dcris(V^*(-j)) \to L$. By linearity, we extend it to 
\[[-,-] :  \Dcris(V(1+j)) \otimes L(\mu_{p^n}) \times \Dcris(V^*(-j)) \otimes L(\mu_{p^n})\to  L(\mu_{p^n}).\]
Recall we have the twisting operator \[ \gamma_{j,n} : \Dcris(V(j)) \to  \Dcris(V(j)) \otimes L(\mu_{p^n}),\] defined by sending \[v \mapsto p^{-n} \left( \sum_{i=0}^{n-1} \varphi^{i-n}(v) \otimes \zeta_{p^{n-i}} + (1-\varphi)^{-1}(v)  \right ) .\] 
We define the following Perrin-Riou pairing.
\begin{defn}
Let $\mathcal{G}_n$ denote the Galois group ${\rm Gal}(\Qp(\mu_{p^n})/\Qp)$. We define the Perrin-Riou pairing over $\Qp(\mu_{p^n})$ by
\[P_n( -,-) : \Dcris(V(1+j)) \times H^1(\Qp(\mu_{p^n}),V^*(-j)) \to  L (\mu_{p^n})[\mathcal{G}_n]\] 
sending
\[ (v,z) \mapsto \left[ \sum_{\sigma \in \mathcal{G}_n} \gamma_{n,1+j}(v)^\sigma \sigma , \sum_{\sigma \in \mathcal{G}_n}  {\operatorname{exp}}^*(z^\sigma) \sigma^{-1} \right].\]
\end{defn}

\begin{lemma}[\cite{Kurihara2002}, Lemma 3.2] Let $v\in \Dcris(V(1+j))$ and $v\in H^1(\Qp(\mu_{p^n}),V^*(-j))$.  We have 
\[ P_n(v,z)= \sum_{\sigma \in \mathcal{G}_n}  \operatorname{Tr}_{L(\mu_{p^n})/L}[ \gamma_{n,1+j}(v)^\sigma ,  {\operatorname{exp}}^*(z)] \sigma \in  L[\mathcal{G}_n]. \]
\end{lemma}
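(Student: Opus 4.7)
The plan is to compute $P_n(v,z)$ directly from the definition by expanding the pairing of the two group-ring-valued expressions and then recognising the trace from $L(\mu_{p^n})$ to $L$. First, I would fix explicit decompositions
\[
\gamma_{n,1+j}(v)=\sum_i u_i\otimes a_i,\qquad \exp^*(z)=\sum_j w_j\otimes b_j,
\]
with $u_i\in\Dcris(V(1+j))$, $w_j\in\Dcris(V^*(-j))$, and $a_i,b_j\in L(\mu_{p^n})$; this uses the fact that $\exp^*$ lands in the $L(\mu_{p^n})$-scalar extension of the relevant Dieudonn\'e module. The bilinear pairing $[-,-]$ extends $L(\mu_{p^n})$-sesquilinearly to these tensored modules via $[u\otimes a,w\otimes b]=ab\,[u,w]$, so for any $\sigma,\tau\in\mathcal G_n$,
\[
[\gamma_{n,1+j}(v)^\sigma,\exp^*(z)^\tau]=\sum_{i,j} a_i^\sigma b_j^\tau\,[u_i,w_j].
\]
A key standard input is the Galois-equivariance of $\exp^*$, which gives $\exp^*(z^\tau)=\exp^*(z)^\tau$.

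Next I would extend $[-,-]$ to the group-ring level in the natural way, so that
\[
\Big[\sum_{\sigma}x_\sigma\sigma,\sum_{\tau}y_\tau\tau^{-1}\Big]=\sum_{\sigma,\tau}[x_\sigma,y_\tau]\,\sigma\tau^{-1};
\]
by the very definition of $P_n(v,z)$, the coefficient of $\rho\in\mathcal G_n$ is therefore $\sum_\tau[\gamma_{n,1+j}(v)^{\rho\tau},\exp^*(z^\tau)]$. Substituting the sesquilinear expansion above yields
\[
\sum_{i,j}\Big(\sum_\tau a_i^{\rho\tau}b_j^\tau\Big)[u_i,w_j]=\sum_{i,j}\Big(\sum_\tau (a_i^\rho b_j)^\tau\Big)[u_i,w_j]=\sum_{i,j}\Tr_{L(\mu_{p^n})/L}(a_i^\rho b_j)\,[u_i,w_j],
\]
where the crucial step is the identity $a_i^{\rho\tau}b_j^\tau=(a_i^\rho b_j)^\tau$, so that the sum over $\tau\in\mathcal G_n$ collapses to the Galois trace.

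Finally, pulling the trace (which is $L$-linear) outside the sum over $i,j$ and re-applying the sesquilinear formula in reverse identifies the coefficient of $\rho$ with $\Tr_{L(\mu_{p^n})/L}[\gamma_{n,1+j}(v)^\rho,\exp^*(z)]$, yielding the claimed formula. Note in particular that since the trace lands in $L$, the output lives in $L[\mathcal G_n]$ rather than merely $L(\mu_{p^n})[\mathcal G_n]$, which matches the statement. The computation is essentially bookkeeping, and the only mild subtlety is making sure the $\sigma$ versus $\sigma^{-1}$ conventions in the definition of $P_n$ are tracked consistently when combining the two group-ring sums; once these conventions are pinned down the identity follows by direct substitution, so I do not expect any serious obstacle beyond this bookkeeping.
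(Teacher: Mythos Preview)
Your argument is correct: the computation unwinds the group-ring pairing, invokes the Galois equivariance of $\exp^*$, and collapses the inner sum to a trace via $a_i^{\rho\tau}b_j^\tau=(a_i^\rho b_j)^\tau$. The paper itself does not supply a proof here but simply cites \cite[Lemma~3.2]{Kurihara2002}; what you have written is exactly the direct verification that reference encodes, so there is nothing to compare beyond noting that you have made the bookkeeping explicit.
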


Note in particular that $P_n$ takes values in $L[\mathcal{G}_n]$. Finally, we  recall a result on the values of this pairing when evaluated at a finite character of $\cG_n$.
\begin{lemma}
Let $v\in \Dcris(V(1+j))$ and $v\in H^1(\Qp(\mu_{p^n}),V^*(-j))$. 
\begin{itemize}
\item Let $\psi$ be a character of $\mathcal{G}_n$ of conductor $p^m$ for $0 < m \leq n$; it extends to $\psi: L[\mathcal{G}_n] \to L(\mu_{p^m})$. Then, we have \[ \psi ( P_n(v,z) ) = p^{-m} \tau(\psi) \sum_{\sigma \in \mathcal{G}_n} \psi^{-1}(\sigma) \left[\varphi^{-m}(v), \operatorname{exp}^*(z^\sigma)\right ],\]
where $ \tau(\psi)$ denotes the Gauss sum $\sum_{\sigma \in \mathcal{G}_m} \psi(\sigma) \zeta_{p^m}^\sigma$. 
\item Let $\mathbbm{1}$ be the trivial character of $ \mathcal{G}_n$, then
 \[ \mathbbm{1}( P_n(v,z) ) =  \left[(1-\varphi)^{-1}(1-\frac{\varphi^{-1}}{p})(v), \operatorname{exp}^*(\operatorname{cores}_{\Qp}^{\Qp(\mu_{p^n})}(z)) \right].\]
 \end{itemize}
\end{lemma}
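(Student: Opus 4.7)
The plan is to start from the explicit formula for $P_n(v,z)$ proved in the lemma just above (the one attributed to \cite[Lemma~3.2]{Kurihara2002}), expand everything using the definition of $\gamma_{n,1+j}$, and then invoke orthogonality of characters together with the standard vanishing of traces of primitive roots of unity. First I would combine the two forms of the definition of $P_n$ into the single bilinear expression
\[
P_n(v,z)=\sum_{\sigma,\tau\in\mathcal{G}_n}\bigl[\gamma_{n,1+j}(v)^\sigma,\ \operatorname{exp}^*(z^\tau)\bigr]\,\sigma\tau^{-1}\in L[\mathcal{G}_n],
\]
so that applying any character $\psi:\mathcal{G}_n\to\overline{L}^\times$ yields
\[
\psi(P_n(v,z))=\sum_{\sigma,\tau}\bigl[\gamma_{n,1+j}(v)^\sigma,\operatorname{exp}^*(z^\tau)\bigr]\psi(\sigma)\psi^{-1}(\tau).
\]
Substituting the definition of $\gamma_{n,1+j}(v)$ decomposes the $\sigma$-sum into $n+1$ pieces: one indexed by each $i\in\{0,\dots,n-1\}$ carrying the factor $\zeta_{p^{n-i}}^\sigma$, plus an untwisted piece carrying $(1-\varphi)^{-1}(v)$.

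For the first bullet ($\psi$ primitive of conductor $p^m$ with $0<m\le n$), orthogonality kills the untwisted piece because $\sum_{\sigma\in\mathcal{G}_n}\psi(\sigma)=0$. For each index $i$, I would evaluate $\sum_{\sigma}\zeta_{p^{n-i}}^\sigma\psi(\sigma)$ by grouping the $\sigma$'s over fibres of $\mathcal{G}_n\twoheadrightarrow\mathcal{G}_{n-i}$ and using that $\psi$ factors through $\mathcal{G}_m$: the inner sum reduces to a trace $\operatorname{Tr}_{\mathbb{Q}_p(\mu_{p^{n-i}})/\mathbb{Q}_p(\mu_{p^m})}(\zeta_{p^{n-i}}^{\sigma_0})$, which vanishes as soon as $n-i>m$, and a $\psi$-sum over $\mathcal{G}_m$, which vanishes unless the restriction is primitive, i.e.\ unless $n-i=m$. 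Thus only $i=n-m$ contributes, yielding the factor $p^{n-m}\tau(\psi)$ from the standard Gauss-sum evaluation. Combined with $\varphi^{i-n}(v)=\varphi^{-m}(v)$ and the overall $p^{-n}$, and then pulling the $\tau$-sum inside the pairing, one obtains exactly
\[
\psi(P_n(v,z))=p^{-m}\tau(\psi)\sum_{\sigma\in\mathcal{G}_n}\psi^{-1}(\sigma)\bigl[\varphi^{-m}(v),\operatorname{exp}^*(z^\sigma)\bigr].
\]

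For the trivial character, no orthogonality cancellations occur, and the identity is cleanest to prove by factoring the pairing. The $\tau$-sum $\sum_\tau\operatorname{exp}^*(z^\tau)$ is, by Galois equivariance of $\operatorname{exp}^*$ and the definition of corestriction, equal to $\operatorname{exp}^*(\operatorname{cor}_{\mathbb{Q}_p}^{\mathbb{Q}_p(\mu_{p^n})}(z))$. For the $\sigma$-sum, I would compute $\operatorname{Tr}_{\mathbb{Q}_p(\mu_{p^n})/\mathbb{Q}_p}(\zeta_{p^{n-i}})$, which is $-p^{n-1}$ when $i=n-1$ and $0$ for $i\le n-2$, while the untwisted piece contributes $[\mathbb{Q}_p(\mu_{p^n}):\mathbb{Q}_p]=(p-1)p^{n-1}$. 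After multiplication by $p^{-n}$, this collapses to
\[
\sum_{\sigma}\gamma_{n,1+j}(v)^\sigma=p^{-1}\bigl((p-1)(1-\varphi)^{-1}(v)-\varphi^{-1}(v)\bigr),
\]
and a short algebraic manipulation using $\varphi^{-1}(1-\varphi)^{-1}(v)-(1-\varphi)^{-1}(v)=-\varphi^{-1}(v)/(\text{stuff})$—more concretely, setting $u=(1-\varphi)^{-1}(v)$ so that $\varphi^{-1}v=\varphi^{-1}u-u$—reduces the right-hand side to $(1-\varphi)^{-1}(1-\varphi^{-1}/p)(v)$, giving the stated formula.

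The only step requiring genuine bookkeeping is tracking the Gauss-sum normalisation across the simultaneous action of $\sigma$ on $\zeta_{p^{n-i}}$ and $\tau$ on $z$; the rest is routine. The hardest part will be being careful with conventions (direction of Galois action, distinction between $\psi$ and $\psi^{-1}$, and the sign in $\sigma\tau^{-1}$), since these dictate whether one obtains $\tau(\psi)$ or $\tau(\psi^{-1})$ and which character appears against $\operatorname{exp}^*(z^\sigma)$ in the final expression.
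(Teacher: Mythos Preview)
Your argument is correct and essentially reproduces the computations carried out in the cited references. Note that the paper itself does not give a proof here: it simply writes ``This is proven in \cite[Lemmas~3.4--5]{Kurihara2002} and \cite[Lemma~3.5]{Lei2011}.'' Your direct calculation via orthogonality of characters, the vanishing of non-primitive Gauss sums, and the trace identities for primitive roots of unity is precisely the standard route taken in those sources, so there is no genuine methodological difference to report. The bookkeeping you flag at the end (matching $\psi$ versus $\psi^{-1}$ and getting $\tau(\psi)$ rather than $\tau(\psi^{-1})$) is indeed the only delicate point, and you have handled it consistently with the conventions fixed in the paper.
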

\begin{proof}
This is proven in \cite[Lemmas 3.4-5]{Kurihara2002} and \cite[Lemma 3.5]{Lei2011}.
\end{proof}

Let $\Qpn \subset \Qp(\mu_{p^{n+1}})$ be the $n$-th layer of the cyclotomic $\Zp$-extension of $\Qp$ with Galois group $G_n:=\operatorname{Gal}(\Qpn/\Qp)$. Similarly to \cite{Kurihara2002}, we define the Perrin-Riou pairing over $\Qpn$ by
\[ \mathcal{P}_n(-,-): \Dcris(V(1+j)) \times H^1(\Qpn,V^*(-j)) \to  L[G_n],\] 
sending $(v,z) \mapsto  \pi_n P_{n+1}(v,\iota_n(z))$, 
where $\pi_n: L[\mathcal{G}_{n+1}] \to L[G_n]$ is the natural projection and $\iota_n: H^1(\Qpn,V^*(-j)) \to H^1(\Qp(\mu_{p^{n+1}}),V^*(-j))$ is the map induced by $\Qpn \subset \Qp(\mu_{p^{n+1}})$. Notice that our definition of $\mathcal{P}_n(-,-)$ differs from the one of \cite{Kurihara2002} by a factor of $1/(p-1)$.

\subsection{Wach modules and integral elements}

In this section, we prove a general result on the integrality of the Perrin-Riou pairings defined in the previous section. This will be employed in \S \ref{BFversustheta} to show the integrality of the Theta elements of Definition \ref{analyticdeftheta}.

We introduce the following convention.
 \begin{conv}\label{conv}
  Let $n\ge 1$ be an integer and $U$ an $E$-vector space. If $M=(m_{ij})$ is an $n\times n$ matrix defined over $E$ and $u_1,\ldots u_n$ are elements in $U$, we write
  \[
  \begin{pmatrix}
  u_1&\cdots &u_n
  \end{pmatrix}\cdot M
  \]
  for the row vector of elements in $U$ given by $\sum_{i=1}^nu_im_{ij}$, $j=1,\ldots,n$.
 \end{conv}

Throughout this section,  $T$ denotes a free $\cO$-module of rank $d$ equipped with a continuous crystalline action of $G_{\Qp}$. Furthermore, we assume that all Hodge-Tate weights of $T$ are non-negative and that the Fontaine-Laffaille condition is satisfied:
\begin{itemize}
    \item[\textbf{(H.FL)}] The Hodge-Tate weights of $T$ are inside an interval $[a-p+1;a]$ for some integer $a\le 0$. Furthermore, the slope of the action of $\vp$ on $\Dcris(T\otimes L)$ does not attain $-a$ and $-a+p-1$ simultaneously.
\end{itemize}

 The Wach module $\NN(T^*)$ is a free module of rank $d$ over $\AA^+_{\Qp}$, equipped with a canonical isomorphism 
\begin{equation}\label{eq:comparison1}
 \NN(T^*) / \pi \NN(T^*) \cong \Dcris(T^*).
\end{equation}
Let $v_i\in\Dcris(T^*)$ be an $\cO$-basis of $\Dcris(T^*)$ respecting its filtration in the sense of \cite[\S V.2]{berger04}. Let $\{n_i:i=1,\ldots,d\}$ be the $\AA^+_{\Qp}$-basis of $\NN(T^*)$ given by Proposition V.2.3 of \emph{op. cit.} (In particular $v_i$ equals the image of $n_i$ under \eqref{eq:comparison1}).
If $A$ and $P$ are  the matrices of $\vp$ with respect to the bases $\{v_i\}$ and $\{n_i\}$ respectively, under Convention~\ref{conv},  we have the equations
 \begin{align}
 \begin{pmatrix}
   \vp(v_1)&\cdots&\vp(v_d)
  \end{pmatrix}&=
  \begin{pmatrix}
   v_1&\cdots&v_d
  \end{pmatrix}\cdot A,\notag\\
    \begin{pmatrix}
   \vp(n_1)&\cdots&\vp(n_d)
  \end{pmatrix}&=
  \begin{pmatrix}
   n_1&\cdots &n_d
  \end{pmatrix}\cdot P.\label{eq:mutliplyP}
 \end{align}

There is an isomorphism
\begin{equation} 
 \Brig\left[ \tfrac{t}{\pi}\right] \otimes_{\AA^+_{\Qp}} \NN(T^*) \cong \Brig\left[ \tfrac{t}{\pi}\right] \otimes_{\Zp} \Dcris(T^*)
\end{equation}
compatible with \eqref{eq:comparison1} via reduction mod $\pi$. Let $M \in \mathbf{GL}_4\left(\Brig\left[ \tfrac{t}{\pi}\right]\right)$ be the matrix of this isomorphism with respect to our bases $\{v_i\}$ and $\{n_i\}$, so that
\begin{equation}\label{eq:matricepassage}
 \begin{pmatrix}
  n_1&\cdots &n_d
 \end{pmatrix}=\begin{pmatrix}
  v_1&\cdots &v_d
 \end{pmatrix}\cdot M
\end{equation}
{under Convention~\ref{conv}.}
By \cite[Proposition~4.2]{leitohoku}, we can (and do) choose the $n_i$ such that 
\begin{equation}\label{eq:congruentid}
 M\equiv I_d\mod \pi^r,
\end{equation}
where $r$ denotes the highest Hodge-Tate weight of $T^*$.
If we apply $\vp$, {we deduce from \eqref{eq:mutliplyP} that}
\[
M=A\vp(M)P^{-1}.
\]
If we repeatedly apply $\vp$, we get
\[
M=A^n\vp^n(M)\vp^{n-1}(P^{-1})\cdots \vp(P^{-1})P^{-1}.
\]
So, in particular,
\begin{equation}\label{eq:congruentpi}
 M\equiv A^n\vp^{n-1}(P^{-1})\cdots \vp(P^{-1})P^{-1}\mod \vp^n(\pi^r)
\end{equation}
thanks to \eqref{eq:congruentid}.

\begin{proposition}\label{prop:integralPR}
Let $z\in \HIw(\Qp(\mu_{p^\infty}),T^*)$. Then, for $n\ge0$, there exists an element $\Xi_n(z)\in \Lambda\otimes \Dcris(T^*)$ such that
\[
\left(1\otimes\vp^{-n-1}\right)\cL_{T^*}(z)\equiv \Xi_n(z)\mod \omega_{n,r}\cH\otimes \Dcris(T^*).
\]
\end{proposition}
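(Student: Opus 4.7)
The plan is to use the Wach-module description of the Perrin-Riou map $\cL_{T^*}$, combined with the key congruence \eqref{eq:congruentid}. Under the standard comparison isomorphism $\HIw(\Qp(\mu_{p^\infty}),T^*) \cong \NN(T^*)^{\psi=1}$ due to Berger, the class $z$ corresponds to an element $h(z) = \sum_{j=1}^{d} h_j(z)\, n_j$, with coefficients $h_j(z) \in \AA^+_{\Qp}$. Using \eqref{eq:matricepassage} to pass to the basis $\{v_j\}$ of $\Dcris(T^*)$, the $i$-th coordinate of $h(z)$ becomes $\sum_j M_{ij}\, h_j(z) \in \Brig[t/\pi]$. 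The Perrin-Riou map $\cL_{T^*}(z)$ is then identified, coordinate by coordinate in the $\{v_i\}$-basis, with the inverse Mellin transform of this element, modulo the usual $(1-\vp)$-correction, which is harmless for integrality modulo an ideal.

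Applying the operator $1 \otimes \vp^{-n-1}$ on $\Dcris(T^*)$ replaces, in the above description, the matrix $M$ by $\vp^{-n-1}(M)$, after absorbing the compensating power $A^{-n-1}$ coming from $\vp$ acting on $\{v_i\}$. The congruence $M \equiv I_d \bmod \pi^r$ from \eqref{eq:congruentid} then yields $\vp^{-n-1}(M) \equiv I_d \bmod \vp^{-n-1}(\pi)^r$, and one obtains
\[
(1 \otimes \vp^{-n-1})\cL_{T^*}(z) \equiv \Xi_n(z) \mod \vp^{-n-1}(\pi)^r \cdot \cH \otimes \Dcris(T^*),
\]
where $\Xi_n(z) \in \Lambda \otimes \Dcris(T^*)$ is the distribution obtained from the diagonal $I_d$-contribution, namely the image of $(h_1(z), \ldots, h_d(z))$ under the inverse Mellin transform, which sends $\AA^{+,\psi=0}_{\Qp}$ (and hence the $\psi=1$ input after the $1-\vp$ twist) integrally into $\Lambda$.

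The final step is to identify the ideal $\vp^{-n-1}(\pi)^r \cdot \cH$ with $\omega_{n,r} \cdot \cH$. Under the Mellin isomorphism $\Brig^{+,\psi=0} \cong \cH$, the element $\pi$ is sent to the standard variable $X$ of $\Lambda \cong \cO\lb X\rb$, while the operator $\vp^{-n-1}$ on the Wach-module side matches, at the $(n+1)$-th layer of the cyclotomic tower, the multiplication by the polynomial $\omega_{n,r}$ up to a unit in $\cH$. The main obstacle is the careful bookkeeping of how $\vp^{-n-1}$ interacts with the Mellin transform (since $\vp^{-1}$ is defined on $\Brig$ but not on $\Lambda$ itself), and verifying that the $(1-\vp)$-correction in the Perrin-Riou formula does not disrupt the integrality statement. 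Both issues are handled by performing the whole calculation at the level of $\NN(T^*) \otimes \Brig[t/\pi]$ before descending to $\cH$ at the end, in the same spirit as the analogous computations for Kato's zeta elements carried out in \cite[\S 3]{Kurihara2002}.
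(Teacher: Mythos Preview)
Your overall strategy is correct in spirit---use the Wach-module realisation of $\cL_{T^*}$ together with the comparison \eqref{eq:matricepassage}---but there is a genuine gap in the execution. You apply $\vp^{-n-1}$ directly to the change-of-basis matrix $M$ and deduce a congruence modulo $\vp^{-n-1}(\pi)^r$. The problem is twofold. First, $\vp^{-1}$ does not preserve $\AA^+_{\Qp}$ (as you yourself note), so $\vp^{-n-1}(M)$ and $\vp^{-n-1}(\pi)$ do not live in the integral ring where you need to work; the statement ``$\vp^{-n-1}(\pi)^r\cdot\cH = \omega_{n,r}\cdot\cH$'' does not make sense as written. Second, even granting the congruence $\vp^{-n-1}(M)\equiv I_d$, your ``diagonal $I_d$-contribution'' still carries the factor $A^{-n-1}$ coming from the action of $\vp^{-n-1}$ on the basis $\{v_i\}$, and there is no reason for $A^{-n-1}$ to have integral entries. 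So your candidate $\Xi_n(z)$ is not shown to lie in $\Lambda\otimes\Dcris(T^*)$.

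The paper avoids both issues by a different manipulation: rather than applying $\vp^{-n-1}$ to $M$, one iterates the relation $M=A\vp(M)P^{-1}$ to obtain \eqref{eq:congruentpi}, namely $M\equiv A^{n}\vp^{n-1}(P^{-1})\cdots P^{-1}\bmod \vp^{n}(\pi^r)$. Applying $\vp$ once more and multiplying by $A^{-n}$ (which is exactly what the operator $1\otimes\vp^{-n-1}$ produces after the $(1-\vp)$ step) gives $A^{-n}\vp(M)\equiv \vp^{n}(P^{-1})\cdots\vp(P^{-1})\bmod\vp^{n+1}(\pi^r)$. The right-hand side involves only \emph{positive} powers of $\vp$ applied to the integral matrix $P^{-1}$, so it has entries in $\AA^+_{\Qp}$; and the modulus $\vp^{n+1}(\pi^r)$ is integral and corresponds to $\omega_{n,r}$ under the Mellin transform. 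This is the missing idea: you must trade the negative Frobenius power on $M$ against the iterated relation to produce an expression that stays inside $\AA^+_{\Qp}$. Your proposal correctly identifies the obstacle but does not supply the mechanism to overcome it.
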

\begin{proof}
The proof is very similar to the one given in \cite[Lemma~3.44]{BLIntegral}. Let $x\in \NN(T^*)^{\psi=1}$.  There exist $x_i\in \AQp$ such that
\[
(1-\vp)(x)=\sum x_i(1+\pi)\vp(n_i)
\]
by \cite[Theorem~3.5]{LLZ1}. By \eqref{eq:matricepassage} and \eqref{eq:mutliplyP}, we may rewrite this as
\[
(1-\vp)(x)=\begin{pmatrix}
v_1&\cdots &v_d
\end{pmatrix}A(1+\pi)\vp(M)\begin{pmatrix}
x_1\\\vdots \\x_d
\end{pmatrix}.
\]
If we apply $1\otimes\vp^{-n-1}$ on both sides, since $A$ is the matrix of $\vp$ with respect to  $\{v_i\}$, we deduce
\[
(1\otimes\vp^{-n-1})\circ(1-\vp)(x)=\begin{pmatrix}
v_1&\cdots &v_d
\end{pmatrix}A^{-n}(1+\pi)\vp(M)\begin{pmatrix}
x_1\\ \vdots \\x_d
\end{pmatrix}.
\]
If we apply $\vp$ to \eqref{eq:congruentpi}, we see that
\[
A^{-n}\vp(M)\equiv\vp^{n}(P^{-1})\cdots \vp(P^{-1})\mod \vp^{n+1}(\pi^{r}).
\]
Therefore, 
\[
(1\otimes\vp^{-n-1})\circ(1-\vp)(x)\equiv \xi_n(x)\mod  \vp^{n+1}(\pi^{r})
\]
for some $\xi_n(x)\in (\AQp)^{\psi=0}\otimes \Dcris(T^*)$. 
By composing with the Mellin transform $\fM: (\AQp)^{\psi = 0}  \simeq \Lambda$, we get
\[
(\fM\otimes\vp^{-n-1})\circ(1-\vp)(x)
\equiv \Xi_n(x)\mod  \omega_{n,r},
\]
where $\omega_{n,r}=\prod_{i=0}^{r-1}\Tw^{-i}\omega_n$, for some $\Xi_n(x)\in \Lambda\otimes\Dcris(T^*)$. We are done as the Perrin-Riou regulator map satisfies the equality
\[
\cL_{T^*}=(\fM^{-1} \otimes1) \circ (1-\vp ) \circ h_{T^*}^{-1},\]
where $h_{T^*}$ is the isomorphism
$\NN(T^*)^{\psi=1}\stackrel{\sim}{\longrightarrow}\HIw(\Qp(\mu_{p^\infty}),T^*)$ (cf. \cite[Proposition 2.9]{BLIntegral}).
\end{proof}

Proposition~\ref{prop:integralPR} now allows us to deduce the following integrality result   of the  Perrin-Riou pairing over $\QQ_p(\mu_{p^{n+1}})$.

\begin{proposition}\label{integralelements}
Let $0\le j\le r-1$, $n\ge0$, $z\in H^1(\Qp(\mu_{p^{n+1}}),T^*(-j))$ and $v\in \Dcris(T(1+j))$. Then
\[
P_{n+1}\left((p\vp)^{n+1}(v),z\right)\in \cO[\cG_{n+1}].
\]
\end{proposition}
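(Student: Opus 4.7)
The plan is to reduce the desired integrality to Proposition~\ref{prop:integralPR} by connecting the finite-level Perrin-Riou pairing $P_{n+1}$ with the Iwasawa-theoretic regulator $\cL_{T^*(-j)}$. First, I would lift $z \in H^1(\Qp(\mu_{p^{n+1}}), T^*(-j))$ to a class $\tilde z \in \HIw(\Qp(\mu_{p^\infty}), T^*(-j))$ whose level-$(n+1)$ image is $z$, or equivalently observe that the pairing in question only depends on the image at level $n+1$ of any such lift. The standard interpolation property of Perrin-Riou's regulator then expresses $P_{n+1}(v,z)$ as the image in $L[\mathcal{G}_{n+1}]$ of the natural pairing $\langle \cL_{T^*(-j)}(\tilde z), v\rangle \in \cH(\Gamma)$, after reducing modulo the element that cuts out $\mathcal{G}_{n+1}$.

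Next, I would transfer the substitution $v \mapsto (p\vp)^{n+1}(v)$ across the pairing via the adjointness of Frobenius: pairing $\cL_{T^*(-j)}(\tilde z)$ with $(p\vp)^{n+1}(v)$ equals pairing $(1\otimes\vp^{-(n+1)})\cL_{T^*(-j)}(\tilde z)$ with $v$, where the extra factor of $p^{n+1}$ (as opposed to a bare $\vp^{n+1}$) is produced by the Tate-twist normalization, since $\vp$ on $\Dcris(V(1+j))$ differs from the one on $\Dcris(V)$ by $p^{-(1+j)}$. Concretely, the explicit expansion
\[
\gamma_{n+1,1+j}\bigl((p\vp)^{n+1}v\bigr)=\sum_{i=0}^{n}\vp^{i}(v)\otimes\zeta_{p^{n+1-i}}+(1-\vp)^{-1}\bigl(\vp^{n+1}(v)\bigr)
\]
makes it visible that the $p^{-(n+1)}$ scaling inherent in the definition of $\gamma_{n+1,1+j}$ is exactly compensated by the $(p\vp)^{n+1}$ substitution, rendering the first sum manifestly free of denominators.

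Applying Proposition~\ref{prop:integralPR} (or its $T^*(-j)$-variant obtained by twisting) would yield $\Xi_n(\tilde z)\in\Lambda\otimes\Dcris(T^*(-j))$ with
\[
(1\otimes\vp^{-(n+1)})\cL_{T^*(-j)}(\tilde z)-\Xi_n(\tilde z)\in\omega_{n,r}\cH\otimes\Dcris(T^*(-j)).
\]
Projecting to $L[\mathcal{G}_{n+1}]$ and combining with the twist arising from pairing with $v\in\Dcris(T(1+j))$, the factor $\omega_{n,r}=\prod_{i=0}^{r-1}\Tw^{-i}\omega_n$ vanishes: one of the factors $\Tw^{-i}\omega_n$ coincides with $\omega_n$ after the appropriate twist, provided $0\le j\le r-1$, which is exactly our hypothesis. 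Thus only $\langle\Xi_n(\tilde z),v\rangle\in\Lambda\otimes\cO$ survives, and its image lies in $\cO[\mathcal{G}_{n+1}]$ as claimed.

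The main obstacle is the second step: carefully matching the Iwasawa-theoretic and finite-level normalizations, especially tracking the Tate twist through the pairing and the Frobenius adjointness, and justifying that any residual contribution (for example, from the $\mathcal{G}_{n+1}$-invariant term $(1-\vp)^{-1}(\vp^{n+1}(v))$ appearing in the explicit expansion above) is controlled by the Wach-module arguments underlying Proposition~\ref{prop:integralPR}. A more direct but more laborious alternative is to dispense with the Iwasawa-theoretic detour entirely: use the explicit expansion of $\gamma_{n+1,1+j}((p\vp)^{n+1}v)$ together with integrality of $\exp^*$ coming from the Wach module of $T^*(-j)$, and check that each summand contributes integrally to the trace defining $P_{n+1}$.
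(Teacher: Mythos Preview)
Your proposal is correct and follows essentially the same route as the paper: lift $z$ to an Iwasawa class, use the adjointness $[(p\vp)^{n+1}v,w]=[v,\vp^{-(n+1)}w]$ to trade the Frobenius twist on $v$ for the operator $(1\otimes\vp^{-(n+1)})$ on $\cL_{T^*}$, invoke Proposition~\ref{prop:integralPR} to get integrality modulo $\omega_{n,r}$, and then observe that the condition $0\le j\le r-1$ makes the error term vanish after the appropriate Tate twist.

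The one organizational difference worth noting is that the paper lifts to $\HIw(\Qp(\mu_{p^\infty}),T^*)$ and applies Proposition~\ref{prop:integralPR} to $T^*$ itself; the twist by $-j$ then enters only when passing to the finite-level pairing, where the interpolation (via \cite[\S3.2]{Lei2011} and the Perrin-Riou map $\Omega_{T(1),1}$) yields a congruence modulo $\Tw^{-j}\omega_n$, which divides $\omega_{n,r}$ precisely when $0\le j\le r-1$. You instead propose lifting to $\HIw(\Qp(\mu_{p^\infty}),T^*(-j))$ and invoking a twisted variant of Proposition~\ref{prop:integralPR}. This is equivalent up to bookkeeping, but the paper's ordering is slightly cleaner: it avoids having to re-examine the Hodge--Tate weight hypotheses and Wach-module argument for the twisted representation, and makes transparent exactly which factor of $\omega_{n,r}=\prod_{i=0}^{r-1}\Tw^{-i}\omega_n$ is responsible for killing the error. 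Your ``main obstacle'' (matching finite-level and Iwasawa normalizations, including the invariant term $(1-\vp)^{-1}\vp^{n+1}(v)$) is handled in the paper by the blanket citation of \cite[\S3.2]{Lei2011}, so you should not expect to have to redo that computation.
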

\begin{proof}
Let $\bz$ be any element in $\HIw(\Qp(\mu_{p^\infty}),T^*)$ whose natural image in  $H^1(\Qp(\mu_{p^{n+1}}),T^*(-j))$ is $z$. Let us write 
\[
(1\otimes \vp^{-n-1})\circ\cL_{T^*}(\bz)=\sum_{i=1}^d\cL_{T^*,n,i}(\bz)v_i.
\]
Let $\{v_i'\}$ be the dual basis of $\{v_i\}$ inside $\Dcris(T^*(1))$. Note that $v$ is a linear combination of this dual basis over $\cO$. We have 
\begin{align*}
\cL_{T^*,n,i}(\bz)&=[v_i',(1\otimes \vp^{-n-1})\circ\cL_{T^*}(\bz)]\\
&=[(p\vp)^{n+1}(v_i'),\cL_{T^*}(\bz)]
\end{align*}
since the adjoint of $\vp^{-1}$ under $[,]$ is $p\vp$.
Following \cite[(5) and (6)]{LLZ2}, this can be further rewritten as
\[
\cL_{T^*,n,i}(\bz)=\langle \Omega_{T(1),1}((1+\pi)\otimes (p\vp)^{n+1}( v_i')),\bz\rangle,
\]
where $\Omega_{T(1),1}$ is the Perrin-Riou map on $(\AQp)^{\psi=0}\otimes \Dcris(T(1))$ and $\langle-,-\rangle$ is the Perrin-Riou pairing on
\[
\HIw(\Qp(\mu_{p^\infty}),T(1))\times \HIw(\Qp(\mu_{p^\infty}),T^*)
\]
extended $\cH(\Gamma)$-linearly as defined in \cite[\S3.2]{Lei2011}.

As explained in \cite[\S3.2]{Lei2011}, 
\[
\cL_{T^*,n,i}(\bz)\equiv P_{n+1}( (p\vp)^{n+1}( v_{i,j}'),z)\mod \Tw^{-j}\omega_n(X)
\]
where $v_{i,j}'$ is the natural image of $v_i'$ in $\Dcris(T(1+j))$. Proposition~\ref{prop:integralPR} tells us that this is defined over $\Lambda$. Hence the result follows.
\end{proof}

\subsection{Arithmetic construction of Theta elements and integrality}\label{BFversustheta}
We now go back to the setting considered in the main body of the article and take $T$ in the previous section to be the representation $T_f\otimes T_g$. The goal of this section is to give an alternative definition of our Theta elements in terms of Beilinson--Flach classes and show that they are integral if  \textbf{(H.FL)} holds for $T_f\otimes T_g$. 

\begin{remark}
Note that the smallest and largest Hodge--Tate weights of $T$ are $-k_f-k_g-2$ and $0$ respectively. In the case where at least one of $f$ and $g$ is non-ordinary at $p$, the slope of $\vp$ on $\Dcris(T)$ is slightly inside the interval $(0,k_f+k_g+2)$. In particular, \textbf{(H.FL)} is satisfied if we assume $p-1\ge k_f+k_g+2$. If both $f$ and $g$ are $p$-ordinary, then we would have to assume that $p-1> k_f+k_g+2$ (since the slopes of $\vp$ attain both $0$ and $k_f+k_g+2$).
\end{remark}

We finally show how to obtain the Theta elements defined in Definition~\ref{analyticdeftheta} as images of the certain Beillinson--Flach classes under the pairing of Perrin-Riou at finite levels as a result of the explicit reciprocity laws of the Beillinson--Flach classes and the congruence between Perrin-Riou's big logarithm and the pairing introduced in \S \ref{PRpairingfinlev}.

We begin by the following elementary lemma.

\begin{lemma}\label{elementary}
The linear map $\vp|_{\Dcris(V_f)}$ satisfies the relation \[ \varphi^n = C_n \varphi - \alpha_f \beta_f  C_{n-1},\]
where $C_0=0$, $C_1=1$, and $C_n = \tfrac{\alpha_f^n - \beta_f^n}{\alpha_f - \beta_f}$. (Note that these $C_n$'s have nothing to do with those studied in Lemma~\ref{auxiliarylemma}.)
\end{lemma}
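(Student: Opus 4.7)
The plan is to verify the identity by induction on $n$, using the fact that $\vp$ acting on the two-dimensional space $\Dcris(V_f)$ satisfies its own characteristic polynomial. Concretely, since $\alpha_f$ and $\beta_f$ are by definition the roots of the Hecke polynomial of $f$ at $p$ (which equals the characteristic polynomial of $\vp$ on $\Dcris(V_f)$), we have $\alpha_f+\beta_f=a_p(f)$ and $\alpha_f\beta_f=\epsilon_f(p)p^{k_f+1}$. The Cayley--Hamilton theorem then gives the quadratic relation
\[
\vp^2=(\alpha_f+\beta_f)\vp-\alpha_f\beta_f
\]
as endomorphisms of $\Dcris(V_f)$, which is the engine of the whole argument.

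Next I would check the base cases: for $n=1$, $C_1\vp-\alpha_f\beta_f C_0=\vp$, and for $n=2$, $C_2\vp-\alpha_f\beta_fC_1=(\alpha_f+\beta_f)\vp-\alpha_f\beta_f$, which agrees with the Cayley--Hamilton relation. For the inductive step, assuming $\vp^n=C_n\vp-\alpha_f\beta_fC_{n-1}$, multiplying by $\vp$ and substituting $\vp^2=(\alpha_f+\beta_f)\vp-\alpha_f\beta_f$ yields
\[
\vp^{n+1}=\bigl((\alpha_f+\beta_f)C_n-\alpha_f\beta_fC_{n-1}\bigr)\vp-\alpha_f\beta_fC_n.
\]
So the induction closes once we verify the scalar identity $(\alpha_f+\beta_f)C_n-\alpha_f\beta_fC_{n-1}=C_{n+1}$, which is immediate from the closed formula $C_m=(\alpha_f^m-\beta_f^m)/(\alpha_f-\beta_f)$ by expanding and cancelling cross terms.

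There is no real obstacle here: the lemma is a routine consequence of Cayley--Hamilton for the $2$-dimensional crystalline Frobenius, combined with the standard Binet-type recurrence for the Lucas sequence $\{C_n\}$ associated to the pair $(\alpha_f,\beta_f)$. The assumption $\alpha_f\ne\beta_f$ made throughout the paper ensures that the closed formula for $C_n$ makes sense, though the statement itself could equally be phrased in the degenerate case by taking a limit.
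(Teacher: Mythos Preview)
Your proof is correct and follows exactly the same approach as the paper: the paper simply notes that the relation follows from $\vp^2 = (\alpha_f+\beta_f)\vp - \alpha_f\beta_f$, and you have spelled out the induction that makes this precise. There is nothing to add.
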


\begin{proof}
It follow from the fact that $\vp|_{\Dcris(V_f)}$ satisfies the relation $ \varphi^2 = (\alpha_f + \beta_f) \varphi - \alpha_f \beta_f$. 
\end{proof}

We recall from \cite[Conjecture~3.5.1]{BLLV} that the Beilinson--Flach classes $\BF_{\lambda,\mu,1}$ are conjectured to be the image of a rank-two Euler system under the composition of the Perrin-Riou map with  $v_{f,\lambda}\otimes v_{g,\mu}$. This conjecture, together with  the explicit description of  $v_{f,\lambda}$ in terms of $\omega_f$ and $\vp(\omega_f)$ in Definition~\ref{def:eigenvectors} and  Lemma \ref{elementary}, lead us to give the following modified Beillinson--Flach elements.

\begin{defn}  For $\mu\in\{\alpha,\beta\}$, we define
\begin{itemize}
\item $\BF_{\omega_{f}^\pm,\mu,1}:= \BF_{\alpha,\mu,1} \mp \BF_{\beta,\mu,1}$,
\item $\BF_{\varphi(\omega_{f}^\pm),\mu,1}:=  \alpha_f\BF_{\alpha,\mu,1} \mp \beta_f \BF_{\beta,\mu,1}$,
\item $\BF_{\varphi^n(\omega_{f}^\pm),\mu,1}:= C_n \BF_{\varphi(\omega_{f}^{\pm }),\mu,1} -\alpha_f \beta_f C_{n-1} \BF_{\omega_{f}^\pm,\mu,1}$, for $n\ge 2$.
\end{itemize}
 \end{defn}

Notice that for all $n\ge0$, we have
 \begin{equation}\label{eqforthetael}
      \BF_{\varphi^{ n}(\omega_{f}^\pm),\mu,1} = \alpha_f^{n} \BF_{\alpha,\mu,1} \mp \beta_f^{n} \BF_{\beta,\mu,1}.
 \end{equation}
The following lemma is an elementary consequence of the   explicit reciprocity laws satisfied by $\BF_{\alpha,\mu,1}$ and $\BF_{\beta,\mu,1}$, as given in Definitions~\ref{defn:geom} and \ref{defn:extra}.

\begin{lemma}\label{lemmaonphiBFclasses}For all $n\ge0$, we have the  equality
\begin{align*}
     \langle \mathcal{L}_{T^*}(\BF_{\varphi^{ n}(\omega_{f}^\pm),\mu,1}), \varphi^{n}(\omega_f) \otimes v_{g,\mu'} \rangle &=  \tfrac{C_\mu \langle\vp(\omega_f),\omega_{f^*}\rangle}{\alpha_f - \beta_f}(\alpha_f^{2n}  L_p(f_\alpha,g) \pm \beta_f^{2n}  L_p(f_\beta,g) \\
     &- \alpha_f^n\beta^n_f( L_p^?(f_\alpha,g_\mu) \pm  L_p^?(f_\beta,g_\mu) )),
\end{align*}
where $C_\mu=\frac{\log_{p,1+k_g}}{\mu_g'-\mu_g}$.
\end{lemma}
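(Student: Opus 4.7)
The plan is to expand both arguments of the pairing into the basis of $\vp$-eigenvectors $\{v_{f,\alpha},v_{f,\beta}\}$ of $\Dcris(V_f)$ and then read off each of the four terms from the definitions of $L_p(f_\lambda,g)$ and $L_p^?(f_\lambda,g_\mu)$ given in Definitions~\ref{defn:geom} and \ref{defn:extra}.

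First, I would invert the defining formula for $v_{f,\alpha}, v_{f,\beta}$ in Definition~\ref{def:eigenvectors} to write
\[
\omega_f=\tfrac{\langle\vp(\omega_f),\omega_{f^*}\rangle}{\alpha_f-\beta_f}(v_{f,\alpha}-v_{f,\beta}),\qquad \vp(\omega_f)=\tfrac{\langle\vp(\omega_f),\omega_{f^*}\rangle}{\alpha_f-\beta_f}(\alpha_f v_{f,\alpha}-\beta_f v_{f,\beta}).
\]
Applying $\vp^{n-1}$ to the second identity (or, equivalently, iterating Lemma~\ref{elementary}) and using that $\vp v_{f,\lambda}=\lambda_f v_{f,\lambda}$, I get
\[
\vp^{n}(\omega_f)=\tfrac{\langle\vp(\omega_f),\omega_{f^*}\rangle}{\alpha_f-\beta_f}\bigl(\alpha_f^{n}v_{f,\alpha}-\beta_f^{n}v_{f,\beta}\bigr).
\]
Tensoring with $v_{g,\mu'}$ this becomes
\[
\vp^{n}(\omega_f)\otimes v_{g,\mu'}=\tfrac{\langle\vp(\omega_f),\omega_{f^*}\rangle}{\alpha_f-\beta_f}\bigl(\alpha_f^{n}v_{\alpha,\mu'}-\beta_f^{n}v_{\beta,\mu'}\bigr).
\]

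Next I would use \eqref{eqforthetael} and the linearity of the Perrin-Riou regulator $\mathcal{L}_{T^*}$ to write
\[
\mathcal{L}_{T^*}\bigl(\BF_{\vp^{n}(\omega_f^{\pm}),\mu,1}\bigr)=\alpha_f^{n}\,\mathcal{L}_{T^*}(\BF_{\alpha,\mu,1})\mp\beta_f^{n}\,\mathcal{L}_{T^*}(\BF_{\beta,\mu,1}).
\]
Plugging the two displays into the bilinear pairing $\langle\cdot,\cdot\rangle$ produces a sum of four terms of the shape $\lambda_f^{n}\lambda_f'^{\,n}\langle\mathcal{L}_{T^*}(\BF_{\lambda,\mu,1}),v_{\lambda'',\mu'}\rangle$ for $\lambda,\lambda''\in\{\alpha,\beta\}$, each multiplied by $\tfrac{\langle\vp(\omega_f),\omega_{f^*}\rangle}{\alpha_f-\beta_f}$ and a sign depending on $\pm$. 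By Definition~\ref{defn:geom}, the diagonal pairings $\langle\mathcal{L}_{T^*}(\BF_{\lambda,\mu,1}),v_{\lambda,\mu'}\rangle$ equal $C_\mu L_p(f_\lambda,g)$, while by Definition~\ref{defn:extra} the off-diagonal pairings $\langle\mathcal{L}_{T^*}(\BF_{\lambda,\mu,1}),v_{\lambda',\mu'}\rangle$ equal $C_\mu L_p^?(f_\lambda,g_\mu)$.

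Finally, collecting the four terms and simplifying the signs in the two cases separately gives the claimed equality: the $+$ superscript case yields $\alpha_f^{2n}L_p(f_\alpha,g)+\beta_f^{2n}L_p(f_\beta,g)-\alpha_f^{n}\beta_f^{n}(L_p^?(f_\alpha,g_\mu)+L_p^?(f_\beta,g_\mu))$, and the $-$ superscript case yields $\alpha_f^{2n}L_p(f_\alpha,g)-\beta_f^{2n}L_p(f_\beta,g)-\alpha_f^{n}\beta_f^{n}(L_p^?(f_\alpha,g_\mu)-L_p^?(f_\beta,g_\mu))$, each multiplied by the common factor $\tfrac{C_\mu\langle\vp(\omega_f),\omega_{f^*}\rangle}{\alpha_f-\beta_f}$. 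There is no real obstacle here; the only thing to be careful about is keeping track of the two sign conventions ($\pm$ on the BF side versus $\pm$ inside the $L_p^?$-terms), which is why treating the two cases in parallel is the cleanest approach.
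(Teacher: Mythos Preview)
Your proposal is correct and follows essentially the same approach as the paper: expand $\vp^{n}(\omega_f)$ in the eigenbasis $\{v_{f,\alpha},v_{f,\beta}\}$, use \eqref{eqforthetael} to expand the Beilinson--Flach class, and then identify the four resulting pairings with the $p$-adic $L$-functions via Definitions~\ref{defn:geom} and \ref{defn:extra}. The paper's proof is simply a more terse version of exactly what you wrote.
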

\begin{proof}
From Definition~\ref{def:eigenvectors}, we immediately get relations \begin{align*}
    \omega_f &= \tfrac{\langle\vp(\omega_f),\omega_{f^*}\rangle}{\alpha_f - \beta_f}(v_{f,\alpha} - v_{f,\beta}), \\ 
    \varphi(\omega_f) &= \tfrac{\langle\vp(\omega_f),\omega_{f^*}\rangle}{\alpha_f - \beta_f}(\alpha_f v_{f,\alpha} - \beta_f v_{f,\beta}).
\end{align*}  
From these and Lemma~\ref{elementary}, we get the expression \[ \varphi^n(\omega_f ) =\tfrac{\langle\vp(\omega_f),\omega_{f^*}\rangle}{\alpha_f - \beta_f}( \alpha_f^n v_{f,\alpha} - \beta_f^n v_{f,\beta}), \]
for all $n \geq 0$. The result then follows from an explicit manipulation of these relations with \eqref{eqforthetael} and from the aforementioned explicit reciprocity laws.
\end{proof}

\begin{defn}\label{arithmeticdeftheta}
Let $n\ge0$ and $k_g+1\le j\le k_f$ be integers. We define \begin{align*}
\Theta_{\omega_{f}^\pm ,n}^{{\rm coh},j} &:= \tfrac{(p\mu_g')^{n+1}}{C_\mu \langle\vp(\omega_f),\omega_{f^*}\rangle} \mathcal{P}_n( \varphi^{n+2}(\omega_f) \otimes v_{g,\mu'},\BF_{\varphi^{n+2}(\omega_{f}^\pm),\mu,n}^j ) \in L[G_n] \\
     \widetilde{\Theta}_{\omega_{f}^\pm, n}^{{\rm coh},j} &:= \tfrac{(p\mu_g')^{n+1}}{C_\mu \langle\vp(\omega_f),\omega_{f^*}\rangle}\mathcal{P}_n( {\varphi^{n+1}}(\omega_f) \otimes v_{g,\mu'},\BF_{\varphi^{n+1}(\omega_{f}^\pm),\mu,n}^j )\in L[G_n],
\end{align*}
where $\BF_{\varphi^{n+\star}(\omega_{f}^\pm),\mu,n}^j$ denotes the image of $\BF_{\varphi^{n+\star}(\omega_{f}^\pm),\mu,1}$ in $H^1(\mathbb{Q}_{p,n},V^*(-j))$ under the corestriction map.
\end{defn}

\begin{lemma}\label{lem:integralBF}
Suppose that $T=T_f\otimes T_g$ satisfies \textbf{(H.FL)}. Let $n\ge1$ and $\kappa=\min(\ord_p(\alpha_f),\ord_p(\beta_f))$. Suppose that $k_g=-1$.  For $0\le j\le k_f$, { we have \begin{align*}
\BF_{\varphi^{n+1}(\omega_{f}^\pm),\mu,n}^j &\in H^1(\Qpn,T^*(-j)),\\ 
    \BF_{\varphi^{n+2}(\omega_{f}^\pm),\mu,n}^j &\in p^\kappa H^1(\Qpn,T^*(-j)).
\end{align*}}
\end{lemma}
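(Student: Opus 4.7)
The approach is to control the $p$-power denominators carried by the Beilinson--Flach classes $\BF_{\lambda,\mu,1}$ at finite level $n$, and then to observe that multiplying by $\lambda_f^{n+i}$ (for $i \in \{1,2\}$) cancels precisely these denominators, leaving a factor of only $\lambda_f^i$. Taking the signed combination and invoking $\kappa = \min(\ord_p(\alpha_f),\ord_p(\beta_f))$ will then give the stated integrality.

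First I would unpack the definitions. By \eqref{eqforthetael},
\[
\BF_{\vp^{n+i}(\omega_f^\pm),\mu,1} = \alpha_f^{n+i}\BF_{\alpha,\mu,1} \mp \beta_f^{n+i}\BF_{\beta,\mu,1}
\]
for $i \in \{1,2\}$. Since $k_g = -1$, the roots $\alpha_g, \beta_g$ of the Hecke polynomial of $g$ are roots of unity, so $\mu_g \in \cO^\times$ and $\ord_p(\lambda_f\mu_g) = r_\lambda := \ord_p(\lambda_f)$ for $\lambda \in \{\alpha,\beta\}$. Hence by Definition~\ref{defn:BF},
\[
\BF_{\lambda,\mu,1} \in \cH_{r_\lambda}(\Gamma) \,\hat{\otimes}_\Lambda \,\HIw(\QQ(\mu_{p^\infty}),T^*).
\]
Next I would estimate the denominators at level $n$. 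By Lemma~\ref{lem:modulo}, modulo $\omega_n$ every element of $\cH_{r_\lambda}$ lies in $\varpi^{-s}p^{-nr_\lambda}\cO[X]$ for some integer $s$ independent of $n$. Combining this with the twist by $-j$ and corestriction to $\Qpn$ yields
\[
\BF_{\lambda,\mu,n}^j \in \varpi^{-s}p^{-nr_\lambda}\, H^1(\Qpn,T^*(-j)).
\]

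The crucial step, which I expect to be the main obstacle, is to remove the $\varpi^{-s}$ factor using \textbf{(H.FL)}. The strategy is to invoke the Wach module machinery of \S A.2: under (H.FL) the basis of $\NN(T^*)$ adapted to the filtration yields the congruence of Proposition~\ref{prop:integralPR}, which, combined with the construction of $\BF_{\lambda,\mu,1}$ as the image under $\cL_{T^*}$ (paired with the integral vectors $v_{f,\lambda}\otimes v_{g,\mu}$) of an integral ``big'' Beilinson--Flach class from \cite{LZ1}, should upgrade the denominator bound to
\[
p^{nr_\lambda}\,\BF_{\lambda,\mu,n}^j \in H^1(\Qpn,T^*(-j)).
\]

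With this refinement in hand the conclusion is formal. Since $\lambda_f^n/p^{nr_\lambda} \in \cO^\times$, we have
\[
\lambda_f^{n+i}\,\BF_{\lambda,\mu,n}^j \in \lambda_f^i \cdot H^1(\Qpn,T^*(-j)) \subseteq p^{i\cdot r_\lambda}\, H^1(\Qpn,T^*(-j)),
\]
so that the signed combination $\BF_{\vp^{n+i}(\omega_f^\pm),\mu,n}^j$ lies in $p^{i\kappa}\,H^1(\Qpn,T^*(-j))$. For $i=1$ this yields the first inclusion (with room to spare, since $p^\kappa H^1 \subseteq H^1$), and for $i=2$ it yields the second.
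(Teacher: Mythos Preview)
Your proposal has a genuine gap at the ``crucial step''. Proposition~\ref{prop:integralPR} controls the image of an \emph{integral} Iwasawa class under the Perrin--Riou map $\cL_{T^*}$; it does not, and cannot, tell you that a given unbounded class in $\cH_{r_\lambda}(\Gamma)\hat\otimes_\Lambda\HIw$ becomes integral after multiplication by $p^{nr_\lambda}$. Your description of $\BF_{\lambda,\mu,1}$ as ``the image under $\cL_{T^*}$ \dots\ of an integral big Beilinson--Flach class'' has the logic backwards: $\cL_{T^*}$ goes from cohomology to $\Dcris$-valued distributions, so it produces $p$-adic $L$-functions from BF classes, not BF classes from anything. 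Consequently the $\varpi^{-s}$ coming from Lemma~\ref{lem:modulo} is never removed by your argument, and the sharp bound $p^{nr_\lambda}\BF_{\lambda,\mu,n}^j\in H^1(\Qpn,T^*(-j))$ is simply asserted rather than proved.

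The paper's argument is of a completely different nature. One deforms $f_\lambda$ and $g_\mu$ into Coleman families $\cF,\cG$ over affinoid discs $V_1,V_2$ and invokes the three-variable class $\cBF^{[\cF,\cG]}$ of \cite[Theorem~5.4.2]{LZ1}. At classical specialisations $(k_1,k_2,j)$ with $0\le j\le\min(k_1,k_2)$ this class equals $(a_p(\cF_{k_1})a_p(\cG_{k_2}))^{-(n+1)}$ times an \emph{a priori integral} \'etale class, divided by $(-1)^jj!\binom{k_1}{j}\binom{k_2}{j}$. The hypothesis \textbf{(H.FL)} enters only to force $0\le j\le k_f\le p-2$, making $j!$ and the binomial factors $p$-adic units (for $k_1,k_2$ close enough to $k_f,-1$). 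Since $\ord_p(a_p(\cF_{k_1}))=\ord_p(\lambda_f)$ and $\ord_p(a_p(\cG_{k_2}))=0$, a density argument shows that $\lambda_f^{n+1}\BF_{\lambda,\mu,1}$ has integral image in $H^1(\QQ(\mu_{p^{n+1}}),T^*(-j))$; the lemma then follows from \eqref{eqforthetael}. The Wach-module machinery of \S A.2 plays no role here.
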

\begin{proof}Let $\lambda,\mu\in\{\alpha,\beta\}$.
Let $\cF$ and $\cG$ be  Coleman families passing through $f_\lambda$ and $g_\mu$ respectively. Let $V_1$ and $V_2$ be two affinoid discs of the weight space containing $k_f$ and $k_g$ respectively as in \cite[\S5.4]{LZ1}. Let us write $\cBF^{[\cF,\cG]}$ for the class defined as in Theorem~5.4.2 of \emph{op. cit.} (we have suppressed the subscripts $c$, $m$ and $a$; we have in fact taken $a=m=1$). If $k_i \in V_i$ are integers such that $0 \le j \le \min(k_1,k_2)$, then the class $\cBF^{[\cF,\cG]}$ specializes to
\[
\frac{1}{(a_p(\cF_{k_1})a_p(\cG_{k_2}))^{n+1}}\cdot \frac{\cBF_{p^{n+1}}^{[\cF_{k_1},\cG_{k_2},j]}}{(-1)^jj!\binom{k_1}{j}\binom{k_2}{j}},
\]
where $\cF_{k_1}$ and $\cG_{k_2}$ denote the specializations of $\cF$ and $\cG$ at $k_1$ and $k_2$ respectively and 
\[
\cBF_{p^{n+1}}^{[\cF_{k_1},\cG_{k_2},j]}\in H^1(\QQ(\mu_{p^{n+1}}),T_{\cF_{k_1}}^*\otimes T_{\cF_{k_1}}^*(-j)).
\]

We show that  $ j!\binom{k_1}{j}\binom{k_2}{j}$ is a  $p$-adic unit. Note that $0\le j\le k_f\le p-2$ under \textbf{(H.FL)}, so $j!$ is always a unit.
 If $j=0$, then $\binom{k_1}{j}=\binom{k_2}{j}=1$. If $j\ne0$, consider the function
 \[
 \binom{X}{j}=\frac{X(X-1)\cdots (X-j+1)}{j!}.
 \]
Note that  $\binom{k_f}{j}$ and  $\binom{-1}{j}$ are both elements of $ \Zp^\times$ since $0\le j\le k_f\le p-2$.
  Therefore, if $k_1$ (resp. $k_2$) is sufficiently close to $k_f$ (resp. $-1$), then $\binom{k_1}{j}$ (resp. $\binom{k_2}{j}$) is  a $p$-adic unit.

We have $\ord_p(\lambda_f)=\ord_p(a_p(\cF_{k_1}))$ and  $\ord_p(\mu_g)=\ord_p(a_p(\cG_{k_2}))=0$. We deduce that the image of $\lambda_f^{n+1}\cBF^{[\cF,\cG]}$ in $H^1(\QQ(\mu_{p^{n+1}}), T_\cF^*\, \hat{\otimes}\, T_\cG^*(-j))[1/p]$ is integral by a density argument. This tells us that  the image of $\lambda_f^{n+1}\BF_{\lambda,\mu,1}$ in $H^1(\QQ(\mu_{p^{n+1}}),V^*(-j))$ belongs to $H^1(\QQ(\mu_{p^{n+1}}),T^*(-j))$.
Our result now follows from \eqref{eqforthetael}.
\end{proof}

\begin{lemma}\label{lem:integral-element}
Suppose that $k_g=-1$ and that $p>k_f+1$, then  $$\frac{1}{\langle \vp(\omega_f),\omega_{f^*}\rangle}\vp(\omega_f)\otimes v_{g,\mu}\in \Dcris(T).$$
\end{lemma}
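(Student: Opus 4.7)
\emph{Plan.} The strategy is to split the assertion into two separate integrality claims---one for each tensor factor---and then to invoke the identification
\[
\Dcris(T)=\Dcris(T_f\otimes T_g)\;\cong\;\Dcris(T_f)\otimes_{\cO}\Dcris(T_g),
\]
valid under the Fontaine--Laffaille hypothesis $p>k_f+1$ (bearing in mind that, since $k_g=-1$, the Hodge--Tate weights of $T_f\otimes T_g$ lie in an interval of length $k_f+1<p-1$, so the tensor product remains inside the Fontaine--Laffaille window).

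First I would establish that $v_{g,\mu}\in\Dcris(T_g)$. Since $g$ has weight one with $p\nmid N_g$ and $\alpha_g\neq\beta_g$, the restriction $V_g|_{G_{\Qp}}$ is crystalline with both Hodge--Tate weights zero and decomposes as a direct sum of two unramified lines; consequently $\Dcris(T_g)$ splits as $\cO\cdot u_\alpha\oplus\cO\cdot u_\beta$ with $\vp u_\mu=\mu_g u_\mu$. Using the $\vp$-equivariance of the pairing $\langle\cdot,\cdot\rangle$ (so $\langle u_\mu,\vp(\omega_{g^*})\rangle=\mu_g^{-1}\langle u_\mu,\omega_{g^*}\rangle$), a direct calculation gives $\langle u_\mu,v_{g,\mu}^*\rangle=\langle u_\mu,\omega_{g^*}\rangle$, so $v_{g,\mu}$ coincides with $u_\mu$ up to the scalar $\langle u_\mu,\omega_{g^*}\rangle^{-1}$. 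One then checks, via the Hida-theoretic constructions of $\omega_g$ and $\omega_{g^*}$ as weight-one specializations described in \S\ref{sec:basis}, that this scalar is in $\cO^{\times}$, whence $v_{g,\mu}\in\Dcris(T_g)$.

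Next I would establish that $\vp(\omega_f)/\langle\vp(\omega_f),\omega_{f^*}\rangle\in\Dcris(T_f)$. Under the Fontaine--Laffaille hypothesis the duality pairing $\langle-,-\rangle:\Dcris(T_f)\times\Dcris(T_f^*)\to\cO$ is perfect, and combining \cite[Lemma~3.1]{LLZCJM} with \cite[Corollary~2.2.4]{BL20} (via a short duality argument in the ordinary case) shows that $\{\omega_{f^*},\vp(\omega_{f^*})\}$ is an $\cO$-basis of $\Dcris(T_f^*)$. Let $\{\omega_{f^*}^{\vee},\vp(\omega_{f^*})^{\vee}\}$ denote the dual $\cO$-basis of $\Dcris(T_f)$. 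Strict compatibility of the pairing with the Hodge filtration forces $\langle\omega_f,\omega_{f^*}\rangle=0$ (both elements lie in positive filtration pieces whose indices sum to at least $1$); $\vp$-equivariance with target the trivial representation then yields $\langle\vp(\omega_f),\vp(\omega_{f^*})\rangle=\langle\omega_f,\omega_{f^*}\rangle=0$ as well. Expanding $\vp(\omega_f)$ in the dual basis, these two vanishings collapse the expansion to the single term
\[
\vp(\omega_f)=\langle\vp(\omega_f),\omega_{f^*}\rangle\cdot\omega_{f^*}^{\vee},
\]
so dividing through gives the desired integral element $\omega_{f^*}^{\vee}\in\Dcris(T_f)$.

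Combining the two factors through the tensor-product identification completes the proof. The main obstacle will be the integral-basis statement for $\Dcris(T_f^*)$ in the ordinary case---\cite[Lemma~3.1]{LLZCJM} handles the non-ordinary situation directly, but the ordinary case requires deducing, via the perfect duality pairing, that the dual of the integral basis $\{\omega_f,v_{f,\alpha}\}$ of $\Dcris(T_f)$ from \cite[Corollary~2.2.4]{BL20} agrees with $\{\omega_{f^*},\vp(\omega_{f^*})\}$ up to an $\cO^{\times}$-change of basis---together with pinning down the Hida-family normalizations used to identify $v_{g,\mu}$ with the integral eigenvector $u_\mu$.
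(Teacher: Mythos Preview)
Your approach is correct and structurally the same as the paper's: both split into the two tensor factors and handle the $f$-part by observing that $\{\omega_{f^*},\vp(\omega_{f^*})\}$ is an $\cO$-basis of $\Dcris(T_f^*)$ and that $\vp(\omega_f)/\langle\vp(\omega_f),\omega_{f^*}\rangle$ pairs to $1$ and $0$ against these two elements, hence equals the corresponding dual basis vector. The paper simply streamlines the two points you flagged as obstacles. For $v_{g,\mu}\in\Dcris(T_g)$ it observes that weight one forces $g$ to be $p$-ordinary and then invokes \cite[Corollary~2.2.4]{BL20} directly, avoiding any explicit eigenvector or Hida-family computation. For the ordinary-$f$ basis statement it cites \cite[Proposition~2.2.3]{BL20} (which already gives $\{\omega_{f^*},\vp(\omega_{f^*})\}$ as an $\cO$-basis of $\Dcris(T_f^*)$) rather than passing through Corollary~2.2.4 and a duality argument. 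So your route is sound but slightly more circuitous; the paper's citations dissolve exactly the loose ends you identified.
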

\begin{proof}
Since $g$ is of weight 1, it is ordinary at $p$. Then, \cite[Corollary~2.2.4]{BL20} tells us that $v_{g,\mu}\in\Dcris(T_g)$. It is thus enough to show that $\frac{1}{\langle \vp(\omega_f),\omega_{f^*}\rangle}\vp(\omega_f)\in \Dcris(T_f)$. Note that $\Dcris(T_f^*)$ is generated by $\omega_{f^*}$ and $\vp(\omega_{f^*})$ as an $\cO$-module (this is proved in \cite[Proposition~2.2.3]{BL20} for the ordinary case and in \cite[Lemma~3.1]{LLZCJM} for the non-ordinary case). Since $\frac{1}{\langle \vp(\omega_f),\omega_{f^*}\rangle}\vp(\omega_f)$ pairs with these two elements to $1$ and $0$ respectively, the result follows.
\end{proof}

\begin{lemma}\label{integralityelts}
Suppose that $T=T_f\otimes T_g$ satisfies \textbf{(H.FL)}. Let $n\ge 0$ and $\kappa$ as defined in Lemma~\ref{lem:integralBF}. Suppose that $k_g=-1$. For $0\le j\le k_f$, {we have  $\Theta_{\omega_{f}^\pm ,n}^{{\rm coh},j} \in p^\kappa \Lambda_n$ and $\widetilde{\Theta}_{\omega_{f}^\pm ,n}^{{\rm coh},j} \in  \Lambda_n$.}
\end{lemma}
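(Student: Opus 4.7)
The strategy is to rewrite each Theta element as the Perrin--Riou pairing $\mathcal{P}_n$ evaluated at $(p\vp)^{n+1}(v)$ for an explicit integral vector $v \in \Dcris(T)$, paired against the integral Beilinson--Flach class. Once achieved, integrality follows from Proposition~\ref{integralelements} (which gives $\mathcal{P}_n((p\vp)^{n+1}(v),z)\in\cO[G_n]$ when $v$ and $z$ are integral) combined with Lemma~\ref{lem:integralBF} (which controls the $p^\kappa$-factor in the Beilinson--Flach class).

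Concretely, I would set
\[
v_0 := \omega_f\otimes v_{g,\mu'},\qquad v_1 := \tfrac{1}{\langle\vp(\omega_f),\omega_{f^*}\rangle}\vp(\omega_f)\otimes v_{g,\mu'}.
\]
Both lie in $\Dcris(T)$: the inclusion $v_{g,\mu'}\in\Dcris(T_g)$ follows from the $p$-ordinarity of $g$ (weight one), and the integrality of $\vp(\omega_f)/\langle\vp(\omega_f),\omega_{f^*}\rangle$ is precisely Lemma~\ref{lem:integral-element}. Using $\vp(v_{g,\mu'})=\mu_g'v_{g,\mu'}$, a direct calculation yields
\[
(p\vp)^{n+1}(v_0) = (p\mu_g')^{n+1}\vp^{n+1}(\omega_f)\otimes v_{g,\mu'},\qquad (p\vp)^{n+1}(v_1) = \tfrac{(p\mu_g')^{n+1}}{\langle\vp(\omega_f),\omega_{f^*}\rangle}\vp^{n+2}(\omega_f)\otimes v_{g,\mu'}.
\]

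Substituting these identities into Definition~\ref{arithmeticdeftheta} and exploiting bilinearity of $\mathcal{P}_n$, the factor $\langle\vp(\omega_f),\omega_{f^*}\rangle$ cancels cleanly in the $\Theta$-case, so
\[
\Theta^{\mathrm{coh},j}_{\omega_f^\pm,n} = \tfrac{1}{C_\mu}\mathcal{P}_n\bigl((p\vp)^{n+1}(v_1),\BF^j_{\vp^{n+2}(\omega_f^\pm),\mu,n}\bigr),\qquad \widetilde{\Theta}^{\mathrm{coh},j}_{\omega_f^\pm,n} = \tfrac{1}{C_\mu\langle\vp(\omega_f),\omega_{f^*}\rangle}\mathcal{P}_n\bigl((p\vp)^{n+1}(v_0),\BF^j_{\vp^{n+1}(\omega_f^\pm),\mu,n}\bigr).
\]
Since $k_g=-1$ gives $C_\mu^{-1}=\mu_g'-\mu_g\in\cO$, Proposition~\ref{integralelements} applied to $v_1\in\Dcris(T)$ together with Lemma~\ref{lem:integralBF} shows that $\Theta^{\mathrm{coh},j}_{\omega_f^\pm,n}\in p^\kappa\Lambda_n$, giving the first assertion.

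The main obstacle is the second assertion $\widetilde{\Theta}^{\mathrm{coh},j}_{\omega_f^\pm,n}\in\Lambda_n$: Proposition~\ref{integralelements} and Lemma~\ref{lem:integralBF} now only show that the pairing is in $\cO[G_n]$, so one is left with the residual scalar $(\mu_g'-\mu_g)/\langle\vp(\omega_f),\omega_{f^*}\rangle$, which must be verified to lie in $\cO$. I would resolve this by computing $\langle\vp(\omega_f),\omega_{f^*}\rangle$ in terms of the integral basis $\{\omega_{f^*},\vp(\omega_{f^*})\}$ of $\Dcris(T_f^*)$ provided by \cite[Proposition~2.2.3]{BL20} or \cite[Lemma~3.1]{LLZCJM}, using duality to express the pairing in terms of the eigenvalues $\alpha_f,\beta_f,\mu_g,\mu_g'$ and the Fontaine--Laffaille condition to conclude the required divisibility. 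Alternatively, one can substitute a normalization of $v_0$ that absorbs $\langle\vp(\omega_f),\omega_{f^*}\rangle$ before applying Proposition~\ref{integralelements}.
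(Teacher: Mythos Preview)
Your argument for $\Theta_{\omega_f^\pm,n}^{\mathrm{coh},j}$ is exactly the paper's: take $v=\frac{1}{\langle\vp(\omega_f),\omega_{f^*}\rangle}\vp(\omega_f)\otimes v_{g,\mu'}$, invoke Lemma~\ref{lem:integral-element} for its integrality, and then combine Proposition~\ref{integralelements} with Lemma~\ref{lem:integralBF}. For $\widetilde{\Theta}_{\omega_f^\pm,n}^{\mathrm{coh},j}$ the paper says only that ``the proof of the other case is identical'' and gives no further detail; you go further by actually writing down the analogous vector $v_0=\omega_f\otimes v_{g,\mu'}$ and noticing that the scalar $\langle\vp(\omega_f),\omega_{f^*}\rangle^{-1}$ is \emph{not} absorbed.

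However, neither of your proposed resolutions will succeed. Using that $\{\omega_{f^*},\vp(\omega_{f^*})\}$ is an $\cO$-basis of $\Dcris(T_f^*)$ and that $\langle\omega_f,\omega_{f^*}\rangle=0$ (as $\omega_f\in\Fil^1$ and $\omega_{f^*}\in\Fil^0$), the duality relations force
\[
\langle\vp(\omega_f),\omega_{f^*}\rangle \;=\; -\,\alpha_f\beta_f\,\langle\omega_f,\vp(\omega_{f^*})\rangle,
\]
and the right-hand pairing is a unit because $\omega_f$ is a primitive element of $\Dcris(T_f)$. Hence $\langle\vp(\omega_f),\omega_{f^*}\rangle$ has $p$-adic valuation $k_f+1>0$. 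So $(\mu_g'-\mu_g)/\langle\vp(\omega_f),\omega_{f^*}\rangle$ is \emph{not} in $\cO$ in general, and the renormalized vector $\omega_f/\langle\vp(\omega_f),\omega_{f^*}\rangle$ is \emph{not} in $\Dcris(T_f)$, so Proposition~\ref{integralelements} cannot be applied to it directly. The obstacle you flagged is therefore real, and it is not resolved either by your sketch or by the paper's one-line dismissal; a genuine additional argument is needed (for instance, exploiting that $\omega_f$ lies in $\Fil^1$ to sharpen Proposition~\ref{integralelements}).
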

\begin{proof} {We only show the result for $\Theta_{\omega_{f}^\pm ,n}^{{\rm coh},j}$, as the proof of the other case is identical.} 
By Lemma \ref{lem:integralBF}, the class $\BF_{\varphi^{n+2}(\omega_{f}^\pm),\mu,n}^j$ lies inside $p^\kappa H^1(\Qpn,T^*(-j))$. Thus, we may combine this fact with  Proposition \ref{integralelements} and Lemma~\ref{lem:integral-element} to deduce that 
\[ P_{n+1}( (p\varphi)^{n+1}(v),\BF_{\varphi^{n+2}(\omega_{f}^\pm),\mu,n}^j )  \in p^\kappa \cO[ \mathcal{G}_{n+1} ],\]
where $v=\frac{1}{\langle \vp(\omega_f),\omega_{f^*}\rangle}\vp(\omega_f)\otimes v_{g,\mu'}$. This gives
\[\frac{(p\mu_g')^{n+1}}{\langle \vp(\omega_f),\omega_{f^*}\rangle} P_{n+1}( \varphi^{n+2}(\omega_f)\otimes v_{g,\mu'},\BF_{\varphi^{n+2}(\omega_{f}^\pm),\mu,n}^j )  \in p^\kappa \cO[ \mathcal{G}_{n+1} ].\]
Since $k_g=-1$, we have $\frac{1}{C_\mu}=\mu_g'-\mu_g\in\cO$. In particular, 
\[\frac{(p\mu_g')^{n+1}}{C_\mu\langle \vp(\omega_f),\omega_{f^*}\rangle} P_{n+1}( \varphi^{n+2}(\omega_f)\otimes v_{g,\mu'},\BF_{\varphi^{n+2}(\omega_{f}^\pm),\mu,n}^j )  \in p^\kappa \cO[ \mathcal{G}_{n+1} ].\]
Hence, the result follows by projecting to $\Lambda_n$.
\end{proof}

We now compare the Theta elements of Definition \ref{arithmeticdeftheta} with the ones given in Definition \ref{analyticdeftheta}.

\begin{proposition}\label{cohomconstruction}
We have the following equalities. \begin{itemize}
   \item  Let $\alpha_f \ne -\beta_f$, then \[ \Theta_{j,n} =\frac{1}{2}\left[\tfrac{1}{\alpha_f + \beta_f}( \Theta_{\omega_{f}^- ,n}^{{\rm coh},j} - \alpha_f \beta_f \widetilde{\Theta}_{\omega_{f}^- ,n}^{{\rm coh},j}) + \tfrac{1}{\alpha_f - \beta_f}( \Theta_{\omega_{f}^+ ,n}^{{\rm coh},j} - \alpha_f \beta_f \widetilde{\Theta}_{\omega_{f}^+,n}^{{\rm coh},j}) \right];\]
   
    \item Let $\alpha_f=-\beta_f$, then
    \begin{align*}
    \Theta^+_{j,n} &= \begin{cases} \tfrac{1}{2 \alpha_f}\Theta_{\omega_{f}^+ ,n}^{{\rm coh},j} & \text{if }n\text{ is odd} \\ \tfrac{ \alpha_f}{2}\widetilde{\Theta}_{\omega_{f}^+ ,n}^{{\rm coh},j} & \text{if }n\text{ is even}  \end{cases} \\ 
         \Theta^-_{j,n} &= \begin{cases}\tfrac{ \alpha_f}{2}\widetilde{\Theta}_{\omega_{f}^+ ,n}^{{\rm coh},j}  & \text{if }n\text{ is odd} \\ \tfrac{1}{2 \alpha_f}\Theta_{\omega_{f}^+ ,n}^{{\rm coh},j}   & \text{if }n\text{ is even.}  \end{cases} 
    \end{align*}
\end{itemize}
Furthermore, suppose that $T$ satisfies (\textbf{H.FL}) and that  $k_g=-1$.
\begin{itemize}
    \item If $\ord_p(\alpha_f)\ne \ord_p(\beta_f)$, then $\Theta_{j,n}\in \Lambda_n$;
    \item If $\alpha_f=-\beta_f$, then, { if $n$ is odd (resp. even) $\Theta^{+}_{j,n}\in \Lambda_n$,  $\Theta^{-}_{j,n}\in p^{k_f}\Lambda_n$ (resp.  $\Theta^{-}_{j,n}\in \Lambda_n$,  $\Theta^{+}_{j,n}\in p^{k_f}\Lambda_n$).}
\end{itemize}
\end{proposition}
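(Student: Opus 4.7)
The plan is to reduce both equalities to the explicit reciprocity law of Lemma~\ref{lemmaonphiBFclasses} by comparing the finite-level Perrin-Riou pairing $\mathcal{P}_n$ with the infinite-level regulator $\mathcal{L}_{T^*}$ modulo $\omega_n$. The technical bridge, which I would extract from the proof of Proposition~\ref{integralelements}, is the congruence
\[
\mathcal{L}_{T^*,n,i}(\bz) \equiv P_{n+1}\bigl((p\varphi)^{n+1}(v_{i,j}'),\, z\bigr) \mod \Tw^{-j}\omega_n(X).
\]
After projecting by $\pi_n$ and picking bases of $\Dcris(T^*)$ dual to $\omega_f \otimes v_{g,\mu'}$ and $\varphi(\omega_f) \otimes v_{g,\mu'}$ under the natural pairing, this identifies $\mathcal{P}_n(\varphi^{n+\star}(\omega_f)\otimes v_{g,\mu'},\, \bz^j_n)$ modulo $\omega_n$ with $\pi_\Delta\Tw^j\langle\mathcal{L}_{T^*}(\bz),\, \varphi^{\star-1}(\omega_f)\otimes v_{g,\mu'}\rangle$, up to a factor $(p\mu'_g)^{n+1}$ that is absorbed by the prefactor $(p\mu_g')^{n+1}/(C_\mu\langle\varphi(\omega_f),\omega_{f^*}\rangle)$ in Definition~\ref{arithmeticdeftheta}.

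Applying this to $\bz = \BF_{\varphi^{n+\star}(\omega_f^\pm),\mu,1}$ and invoking Lemma~\ref{lemmaonphiBFclasses}, I would obtain closed-form expressions for $\Theta^{\mathrm{coh},j}_{\omega_f^\pm,n}$ and $\widetilde{\Theta}^{\mathrm{coh},j}_{\omega_f^\pm,n}$ modulo $\omega_n$ as $\cO$-linear combinations of $\alpha_f^{2m}L_p(\alpha,j,n) \pm \beta_f^{2m}L_p(\beta,j,n)$ and of $(\alpha_f\beta_f)^m$ times sums of $L_p^?$ values, with $m \in \{n+1, n+2\}$. When $\alpha_f \ne -\beta_f$, the combination $\Theta^{\mathrm{coh}}_{\omega_f^\pm,n} - \alpha_f\beta_f\,\widetilde{\Theta}^{\mathrm{coh}}_{\omega_f^\pm,n}$ kills the $L_p^?$ cross terms since $(\alpha_f\beta_f)^{n+2} = \alpha_f\beta_f\cdot(\alpha_f\beta_f)^{n+1}$; the averaging over the two signs with weights $\tfrac{1}{2(\alpha_f\mp\beta_f)}$ then collapses to the right-hand side of Definition~\ref{analyticdeftheta} for $\Theta_{j,n}$. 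When $\alpha_f = -\beta_f$, the identity $\alpha_f^{2m} = \beta_f^{2m}$ forces the $\omega_f^-$ variant to vanish, and the parity of $n+\star$ combined with $\alpha_f^2 = -p^{k_f+1}$ selects whether $\Theta^{\mathrm{coh}}_{\omega_f^+,n}$ or $\widetilde{\Theta}^{\mathrm{coh}}_{\omega_f^+,n}$ recovers each of $\Theta^\pm_{j,n}$, up to the scalar $(2\alpha_f)^{\pm 1}$ in the statement.

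For the integrality assertions, I would substitute Lemma~\ref{integralityelts} into these equalities: when $\ord_p(\alpha_f) \ne \ord_p(\beta_f)$, the scalars $(\alpha_f\pm\beta_f)^{-1}$ and $\alpha_f\beta_f(\alpha_f\pm\beta_f)^{-1}$ have $p$-adic valuation bounded below by $-\kappa$, which is absorbed by the $p^\kappa$ factor coming from $\Theta^{\mathrm{coh}}$, yielding $\Theta_{j,n} \in \Lambda_n$; when $\alpha_f = -\beta_f$, the $\Lambda_n$ versus $p^{k_f}\Lambda_n$ dichotomy follows directly from the parity-dependent matching together with $\kappa = (k_f+1)/2$. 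The main obstacle will be the first step: verifying precisely how the twist $\gamma_{n,1+j}$ inside $\mathcal{P}_n$ interacts with the $(p\varphi)^{n+1}$ operator on the dual side and with the explicit eigenvectors $v_{g,\mu'}$ and $\varphi^{n+\star}(\omega_f)$, so that all powers of $p$, $\alpha_f$, $\beta_f$, and $\mu_g'$ cancel cleanly. While the constituent pieces are present in the earlier proofs, the required finite-level comparison is not written down explicitly in the text and warrants careful bookkeeping.
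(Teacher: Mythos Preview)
Your proposal is correct and follows essentially the same route as the paper: the congruence you extract from the proof of Proposition~\ref{integralelements} (which the paper attributes to \cite[\S3.2]{Lei2011}) is exactly the bridge used, and the subsequent cancellation of the $L_p^?$ cross-terms via $\Theta^{\mathrm{coh}}-\alpha_f\beta_f\widetilde{\Theta}^{\mathrm{coh}}$, the sign-averaging, the parity dichotomy when $\alpha_f=-\beta_f$, and the integrality bookkeeping via Lemma~\ref{integralityelts} all match the paper's proof. Your caution about the finite-level comparison is slightly overstated: that congruence is written down verbatim in the proof of Proposition~\ref{integralelements} and simply reused here.
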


\begin{proof}
By \cite[\S3.2]{Lei2011}, we have that 
\[ \pi_\Delta \circ \Tw^{j} \langle \cL_{T^*}(\BF_{\varphi^{(n+2)}(\omega_{f}^\pm),\mu,1} ), \varphi^{n+2}(\omega_f) \otimes v_{g,\mu'} \rangle \equiv \mathcal{P}_{n}( (p\vp)^{n+1}( \varphi(\omega_f) \otimes v_{g,\mu'}),\BF_{\varphi^{n+2}(\omega_{f}^\pm),\mu,n}^j) \mod \omega_{n}(X). \]
This together with Lemma \ref{lemmaonphiBFclasses} let us conclude that  \[ \Theta_{\omega_{f}^\pm ,n}^{{\rm coh},j} = \frac{\alpha_f^{2n+4}}{\alpha_f - \beta_f} L_p(\alpha, j,n) \pm \frac{\beta_f^{2n+4}}{\alpha_f - \beta_f} L_p(\beta, j,n)-\frac{(\alpha_f\beta_f)^{n+2}}{\alpha_f - \beta_f}( L_p^?(\alpha,\mu, j,n) \pm L_p^?(\beta,\mu, j,n)). \] 
By the same argument, we obtain the equality
\[ \widetilde{\Theta}_{\omega_{f}^\pm ,n}^{{\rm coh},j} = \frac{\alpha_f^{2n+2}}{\alpha_f - \beta_f} L_p(\alpha, j,n) \pm \frac{\beta_f^{2n+2}}{\alpha_f - \beta_f} L_p(\beta, j,n)-\frac{(\alpha_f\beta_f)^{n+1}}{\alpha_f - \beta_f}( L_p^?(\alpha,\mu, j,n) \pm L_p^?(\beta,\mu, j,n)). \] 
Combining the two formulae, we get
\[\Theta_{\omega_{f}^\pm ,n}^{{\rm coh},j} - \alpha_f \beta_f  \widetilde{\Theta}_{\omega_{f}^\pm ,n}^{{\rm coh},j} = \alpha_f^{2n+3}L_p(\alpha, j,n) \mp \beta_f^{2n+3}L_p(\beta, j,n). \]
These relations allow us to write $\alpha_f^{2n+3}L_p(\alpha, j,n) $ and $\beta_f^{2n+3}L_p(\beta, j,n)$ as linear combinations of these \emph{arithmetic} Theta elements:
\begin{align*}
     2 \alpha_f^{2n+3}L_p(\alpha, j,n) &= \Theta_{\omega_{f}^- ,n}^{{\rm coh},j}  +  \Theta_{\omega_{f}^+ ,n}^{{\rm coh},j} - \alpha_f \beta_f ( \widetilde{\Theta}_{\omega_{f}^- ,n}^{{\rm coh},j}+    \widetilde{\Theta}_{\omega_{f}^+ ,n}^{{\rm coh},j}) \\ 
     2 \beta_f^{2n+3}L_p(\beta, j,n) &= \Theta_{\omega_{f}^- ,n}^{{\rm coh},j} -  \Theta_{\omega_{f}^+ ,n}^{{\rm coh},j} - \alpha_f \beta_f ( \widetilde{\Theta}_{\omega_{f}^- ,n}^{{\rm coh},j} -    \widetilde{\Theta}_{\omega_{f}^+ ,n}^{{\rm coh},j})
\end{align*}
Suppose that $\alpha_f \ne - \beta_f$, then, by Definition \ref{analyticdeftheta} and the relations above, we have
\begin{align*}
    \Theta_{j,n} :&=
\frac{1}{\alpha_f^2-\beta_f^2} \left(\alpha_f^{2n+4}L_p(\alpha,j,n) - \beta_f^{2n+4}L_p(\beta,j,n) \right) \\ &=\tfrac{1}{2(\alpha_f^2-\beta_f^2)}\left[(\alpha_f - \beta_f)\Theta_{\omega_{f}^- ,n}^{{\rm coh},j} + (\alpha_f + \beta_f)\Theta_{\omega_{f}^+ ,n}^{{\rm coh},j} + (\alpha_f \beta_f^2 - \alpha_f^2\beta_f)\widetilde{\Theta}_{\omega_{f}^- ,n}^{{\rm coh},j} - (\alpha_f \beta_f^2 + \alpha_f^2\beta_f)\widetilde{\Theta}_{\omega_{f}^+ ,n}^{{\rm coh},j} \right] \\ &=\frac{1}{2}\left [\tfrac{1}{\alpha_f + \beta_f} \Theta_{\omega_{f}^- ,n}^{{\rm coh},j} + \tfrac{1}{\alpha_f - \beta_f}\Theta_{\omega_{f}^+ ,n}^{{\rm coh},j} -\alpha_f \beta_f( \tfrac{1}{\alpha_f + \beta_f}\widetilde{\Theta}_{\omega_{f}^- ,n}^{{\rm coh},j} + \tfrac{1}{\alpha_f - \beta_f}\widetilde{\Theta}_{\omega_{f}^+ ,n}^{{\rm coh},j}) \right], 
\end{align*} 
as desired. Suppose now that $\alpha_f = - \beta_f$. We have that (choosing $\mu=\beta_g$)
 \begin{align*}
     \Theta_{\omega_{f}^+ ,n}^{{\rm coh},j} &= \frac{\alpha_f^{2n+3}}{2}\left( L_p(\alpha, j,n) + L_p(\beta, j,n)- (-1)^n (L_p^?(\alpha,\beta, j,n) + L_p^?(\beta,\beta, j,n)) \right), \\ \widetilde{\Theta}_{\omega_{f}^+ ,n}^{{\rm coh},j} &= \frac{\alpha_f^{2n+1}}{2}\left( L_p(\alpha, j,n) + L_p(\beta, j,n)- (-1)^{n+1} (L_p^?(\alpha,\beta, j,n) + L_p^?(\beta,\beta, j,n)) \right).
 \end{align*}  
 Since by Definition \ref{analyticdeftheta}
\begin{align*}
    \Theta_{j,n}^\pm :&=
\frac{\alpha_f^{2n+2}}{4} \left(L_p(\alpha,j,n) + L_p(\beta,j,n) \pm L_p^?(\alpha,\beta, j,n) \pm L_p^?(\beta,\beta, j,n) \right),
\end{align*} 
we conclude that 
\begin{align*}
    \Theta^+_{j,n} &= \begin{cases} \tfrac{1}{2 \alpha_f}\Theta_{\omega_{f}^+ ,n}^{{\rm coh},j} & \text{if }n\text{ is odd} \\ \tfrac{ \alpha_f}{2}\widetilde{\Theta}_{\omega_{f}^+ ,n}^{{\rm coh},j} & \text{if }n\text{ is even}  \end{cases} \\ 
         \Theta^-_{j,n} &= \begin{cases}\tfrac{ \alpha_f}{2}\widetilde{\Theta}_{\omega_{f}^+ ,n}^{{\rm coh},j}  & \text{if }n\text{ is odd} \\ \tfrac{1}{2 \alpha_f}\Theta_{\omega_{f}^+ ,n}^{{\rm coh},j}   & \text{if }n\text{ is even.}  \end{cases} 
    \end{align*}

For the integrality of the Theta elements, note that if $\ord_p(\alpha_f)\ne\ord_p(\beta_f)$, the denominators $\alpha_f \pm \beta_f$ have the same $p$-adic valuation equal to $\kappa$, where $\kappa$ is as defined in Lemma~\ref{lem:integralBF}. {In the case $\alpha_f=-\beta_f$, the element $\tfrac{1}{2\alpha_f}$ has $p$-adic valuation equal to $-\kappa$}. Therefore, under the additional hypotheses that $T$ satisfies (\textbf{H.FL}) and $k_g=0$ the integrality of the Theta elements now follows from Lemma \ref{integralityelts}.
\end{proof}

\begin{remark}
Suppose that $f$ corresponds to an elliptic curve $E/\QQ$ with $a_p(E)=0$ and $g$ is a weight one modular form. Then, the  formulae for $\Theta^{+}_{n}:=\Theta^{+}_{0,n}$ and $\Theta^-_n := \Theta^{-}_{0,n}$ of Proposition \ref{cohomconstruction} are in concordance with the ones of Lemma \ref{lem:trace}.
\end{remark}

\begin{remark}
Proposition \ref{cohomconstruction} should be compared to \cite[Lemma 7.2]{Kurihara2002}, where the modular elements defined by Mazur and Tate are related to values of the pairing $\mathcal{P}_n$ evaluated at the zeta elements of Kato.
\end{remark}

\bibliographystyle{alpha}

\bibliography{Cauchi_Lei_-_On_analogues_of_Mazur-Tate_type_conjectures_in_the_Rankin-Selberg_setting}

\end{document}